\newtheorem{theorem}{Theorem}[section]
\newtheorem{lemma}[theorem]{Lemma}
\newtheorem{proposition}[theorem]{Proposition}
\newtheorem{problem}[theorem]{Problem}
\newtheorem{conjecture}[theorem]{Conjecture}
\newtheorem{remark}[theorem]{Remark}
\numberwithin{equation}{section}
\begin{document}
	
	\title[Concavity property of minimal $L^2$ integrals]
	{Concavity property of minimal $L^2$ integrals with Lebesgue measurable gain \uppercase\expandafter{\romannumeral5}--fibrations over open Riemann surfaces}

	\author{Shijie Bao}
        \address{Shijie Bao: Institute of Mathematics, Academy of Mathematics
        and Systems Science, Chinese Academy of Sciences, Beijing 100190, China.}
        \email{bsjie@amss.ac.cn}
	
	\author{Qi'an Guan}
	\address{Qi'an Guan: School of
		Mathematical Sciences, Peking University, Beijing 100871, China.}
	\email{guanqian@math.pku.edu.cn}
	
	\author{Zheng Yuan}
	\address{Zheng Yuan: School of
		Mathematical Sciences, Peking University, Beijing 100871, China.}
	\email{zyuan@pku.edu.cn}
	
	\thanks{}
	
	\subjclass[2010]{32D15, 32E10, 32L10, 32U05, 32W05}
	
	\keywords{minimal $L^2$ integral, multiplier ideal sheaf, plurisubharmonic function}
	
	\date{\today}
	
	\dedicatory{}
	
	\commby{}
	
	%%% ----------------------------------------------------------------------
	
	\begin{abstract}
In this article, we present characterizations of the concavity property of minimal $L^2$ integrals degenerating to linearity in the case of fibrations over open Riemann surfaces.
As applications, we obtain characterizations of the holding of equality in optimal jets $L^2$ extension problem from fibers over analytic subsets to fibrations over open Riemann surfaces,
which implies characterizations of the fibration versions of the equality parts of Suita conjecture and extended Suita conjecture.
	\end{abstract}
	
	%%% ----------------------------------------------------------------------
	\maketitle
	%%% ----------------------------------------------------------------------

	\section{Introduction}
The strong openness property of multiplier ideal sheaves \cite{GZSOC} i.e. $\mathcal{I}(\varphi)=\mathcal{I}_+(\varphi):=\mathop{\cup} \limits_{\epsilon>0}\mathcal{I}((1+\epsilon)\varphi)$
	(conjectured by Demailly \cite{DemaillySoc}) has opened the door to new types of approximation techniques
	in several complex variables, complex algebraic geometry and complex differential geometry
	(see e.g. \cite{GZSOC,K16,cao17,cdM17,FoW18,DEL18,ZZ2018,GZ20,berndtsson20,ZZ2019,ZhouZhu20siu's,FoW20,KS20,DEL21}),
	where $\varphi$ is a plurisubharmonic function on a complex manifold $M$ (see \cite{Demaillybook}), and multiplier ideal sheaf $\mathcal{I}(\varphi)$ is the sheaf of germs of holomorphic functions $f$ such that $|f|^2e^{-\varphi}$ is locally integrable (see e.g. \cite{Tian,Nadel,Siu96,DEL,DK01,DemaillySoc,DP03,Lazarsfeld,Siu05,Siu09,DemaillyAG,Guenancia}).

	When $\mathcal{I}(\varphi)=\mathcal{O}$, the strong openness property degenerates to the openness property (conjectured by Demailly-Koll\'ar \cite{DK01}).
	Berndtsson \cite{Berndtsson2} proved the openness property by establishing an effectiveness result of the openness property.
	Stimulated by Berndtsson's effectiveness result, and continuing the solution of the strong openness property \cite{GZSOC},
	Guan-Zhou \cite{GZeff} established an effectiveness result of the strong openness property by considering the minimal $L^{2}$ integral on the pseudoconvex domain $D$.

	Considering the minimal $L^{2}$ integrals on the sublevel sets of the weight $\varphi$,
	Guan \cite{G16} obtained a sharp version of Guan-Zhou's effectiveness result,
	and established a concavity property of the minimal $L^2$ integrals on the sublevel sets of the weight $\varphi$ (with constant gain).
	The concavity property deduces a proof of Saitoh's conjecture for conjugate Hardy $H^2$ kernels \cite{Guan2019},
	and the sufficient and necessary condition of the existence of decreasing equisingular approximations with analytic singularities for the multiplier ideal sheaves with weights $\log(|z_{1}|^{a_{1}}+\cdots+|z_{n}|^{a_{n}})$ \cite{guan-20}.
	
	For smooth gain, Guan \cite{G2018} (see also \cite{GM}) obtained the concavity property on Stein manifolds (weakly pseudoconvex K\"{a}hler case was obtained by Guan-Mi\cite{GM_Sci}).
    The concavity property deduces an optimal support function related to the strong openness property (obtained by Guan-Yuan) \cite{GY-support}, and an effectiveness result of the strong openness property in $L^p$ (obtained by Guan-Yuan) \cite{GY-lp-effe}.
	For Lebesgue measurable gain, Guan-Yuan \cite{GY-concavity} obtained the concavity property on Stein manifolds (weakly pseudoconvex K\"{a}hler case was obtained by Guan-Mi-Yuan \cite{GMY}),
	which deduces a twisted $L^p$ version of strong openness property \cite{GY-twisted}.
	
	Note that the linearity is a degenerate concavity. A natural problem was posed in \cite{GY-concavity3}:
	
	\begin{problem}[\cite{GY-concavity3}]\label{Q:chara}
		How to characterize the concavity property degenerating to linearity?
	\end{problem}
Recall that for 1-dim case, Guan-Yuan \cite{GY-concavity} gave an answer to Problem \ref{Q:chara} for single point, i.e. for weights may not be subharmonic (the case of subharmonic weights was answered by Guan-Mi \cite{GM}),
and Guan-Yuan \cite{GY-concavity3} gave an answer to Problem \ref{Q:chara} for finite points.
For the case of products of open Riemann surfaces, Guan-Yuan \cite{GY-concavity4} gave an answer to Problem \ref{Q:chara} for product of finite points.
	
In this article, we give answers to Problem \ref{Q:chara} for the case of fibrations over open Riemann surfaces.
	
	Let $\Omega$ be an open Riemann surface, which admits a nontrivial Green function $G_{\Omega}$, and let $K_{\Omega}$ be the canonical (holomorphic) line bundle on $\Omega$. Let $Y$ be an $n-1$ dimensional weakly pseudoconvex K\"{a}hler manifold. Let $M=\Omega\times Y$ be a complex manifold, and $K_M$ be the canonical line bundle on $M$. Let $\pi_1$, $\pi_2$ be the natural projections from $M$ to $\Omega$ and $Y$.
	
	Let $Z^1_0$ be a (closed) analytic subset of $\Omega$, denote $Z_0:=\pi_1^{-1}(Z^1_0)$ be an analytic subset of $M$. Let $\psi_1$ be a negative subharmonic function on $\Omega$ such that $\psi_1(z)=-\infty$ for any $z\in Z^1_0$, and let $\varphi_1$ be a Lebesgue measurable function on $\Omega$ such that $\varphi_1+\psi_1$ is subharmonic on $\Omega$. Let $\varphi_2$ be a plurisubharmonic function on $Y$. Denote that $\psi:=\pi_1^*(\psi_1)$, $\varphi=\pi_1^*(\varphi_1)+\pi_2^*(\varphi_2)$. Let $c$ be a positive function on $(0,+\infty)$ such that $\int_0^{+\infty}c(t)e^{-t}dt<+\infty$, $c(t)e^{-t}$ is decreasing on $(0,+\infty)$ and $e^{-\varphi}c(-\psi)$ has a positive lower bound on any compact subset of $M\setminus Z_0$. Let $f$ be a holomorphic $(n,0)$ form on a neighborhood of $Z_0$. Denote
	\begin{flalign*}
		\begin{split}
			\inf\bigg\{\int_{\{\psi<-t\}}|\tilde{f}|^2e^{-\varphi}c(-\psi) :& (\tilde{f}-f,(z,y))\in (\mathcal{O}(K_M))_{(z,y)}\otimes\mathcal{I}(\varphi+\psi)_{(z,y)} \\
			&\text{\ for \ any } (z,y)\in Z_0  \\
			& \& \ \tilde{f}\in H^0(\{\psi<-t\},\mathcal{O}(K_M))\bigg\}
		\end{split}
	\end{flalign*}
	by $G(t;c)$ for any $t\in [0,+\infty)$. Here $|f|^2:=\sqrt{-1}^{n^2}f\wedge\bar{f}$ for any $(n,0)$ form $f$.  We may denote $G(t;c)$ by $G(t)$ if there are no misunderstandings.
	
	Recall that $G(h^{-1}(r))$ is concave with respect to $r$ (\cite{GMY}), where $h(t)$ $=\int_t^{+\infty}c(l)e^{-l}dl$ for any $t\in [0,+\infty)$.
	
	Recall some notation related to open Riemann surfaces (see \cite{OF81}, see also \cite{guan-zhou13ap,GY-concavity,GMY}). Let $P:\Delta\rightarrow \Omega$ be the universal covering from the unite disc $\Delta$ to $\Omega$. The holomorphic function $\hat{f}$ (resp. holomorphic $(1,0)$ form $\hat{F}$) on $\Delta$ is called as a multiplicative function (resp. multiplicative differential (Prym differential)), if there is a character $\chi$, which is the representation of the fundamental group of $\Omega$ such that $g^*\hat{f}=\chi(g)\hat{f}$ (resp. $g^*(\hat{F})=\chi(g)\hat{F}$), where $|\chi|=1$ and $g$ is an element of the fundamental group of $\Omega$. Denote the set of such $\hat{f}$ by $\mathcal{O}^{\chi}(\Omega)$ (resp. $\Gamma^{\chi}(\Omega)$).
	
	It is known that for any harmonic function $u$ of $\Omega$, there exists a $\chi_u$ and a multiplicative function $f_u\in\mathcal{O}^{\chi_u}(\Omega)$ such that $|f_u|=P^*e^u$. If $u_1-u_2=\log|\hat{f}|$, then $\chi_{u_1}=\chi_{u_2}$, where $u_1$ and $u_2$ are harmonic functions on $\Omega$ and $\hat{f}$ is a holomorphic function on $\Omega$. Recall that for the Green function $G_{\Omega}(z,z_0)$, there exists a $\chi_{z_0}$ and a multiplicative function $f_{z_0}\in\mathcal{O}^{\chi_{z_0}}(\Omega)$ such that $|f_{z_0}(z)|=P^*e^{G_{\Omega}(z,z_0)}$ (see \cite{suita72}).
	
	\subsection{Main result : linearity of the minimal $L^2$ integrals on fibrations over open Riemann surfaces}
 
 \
 
In this section, we present characterizations of the concavity property of minimal $L^2$ integrals degenerating to linearity in the case of fibrations over open Riemann surfaces.

	Let $Z^1_0=\{z_0\}\subset\Omega$. Let $w$ be a local coordinate on a neighborhood $V_{z_0}$ of $z_0$ satisfying $w(z_0)=0$. Let $f$ be a holomorphic $(n,0)$ form on $V_{z_0}\times Y$ which is a neighborhood of $Z_0=\pi_1^{-1}(z_0)$, and $G(t)$ be the minimal $L^2$ integral on $M$ with respect to $f$ for any $t\geq 0$.
	
	We present a characterization of the concavity of $G(h^{-1}(r))$ degenerating to linearity for the fibers over single point sets as follows.
	
	\begin{theorem}\label{one-p}
		Assume that $G(0)\in (0,+\infty)$ and $(\psi_1-2pG_{\Omega}(\cdot,z_0))(z_0)>-\infty$, where $p=\frac{1}{2}v(dd^c(\psi_1),z_0)>0$. Then $G(h^{-1}(r))$ is linear with respect to $r\in (0,\int_0^{+\infty}c(t)e^{-t}dt]$ if and only if the following statements hold:
		
		(1). $\psi_1=2p G_{\Omega}(\cdot,z_j)$;
		
		(2). On $V_{z_0}\times Y$, $f=\pi_1^*(w^kdw)\wedge \pi_2^*(f_{Y})+f_0$, where $k$ is a nonnegative integer, $f_{Y}$ is a holomorphic $(n-1,0)$ form on $Y$ such that $\int_{Y}|f_{Y}|^2e^{-\varphi_2}\in (0,+\infty)$, and $(f_0,(z_0,y))\in (\mathcal{O}(K_M))_{(z_0,y)}\otimes\mathcal{I}(\varphi+\psi)_{(z_0,y)}$ for any $(z_0,y)\in Z_0$;
		
		(3). $\varphi_1+\psi_1=2\log |g|+2G_{\Omega}(\cdot,z_0)+2u$, where $g$ is a holomorphic function on $\Omega$ such that $ord_{z_0}(g)=k$ and $u$ is a harmonic function on $\Omega$;
		
		(4). $\chi_{z_0}=\chi_{-u}$, where $\chi_{-u}$ and $\chi_{z_0}$ are the characters associated to the functions $-u$ and $G_{\Omega}(\cdot,z_0)$ respectively.
		
	\end{theorem}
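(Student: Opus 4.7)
My plan is to reduce the characterization on the fibration $M=\Omega\times Y$ to the one-dimensional base case on $\Omega$ (already handled in the series, e.g.\ \cite{GY-concavity}, \cite{GY-concavity3}), combined with a Fubini-type separation-of-variables argument on the fiber $Y$. The guiding principle is that for the product weight $\varphi=\pi_1^*\varphi_1+\pi_2^*\varphi_2$ and pulled-back barrier $\psi=\pi_1^*\psi_1$, any candidate extension of a product-type leading jet splits as a product whose $L^2$ norm factorizes as the product of a 1-dim $L^2$ norm on $\Omega$ and the fiber integral of $|f_Y|^2e^{-\varphi_2}$ on $Y$.

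\textbf{Sufficiency.} Assume (1)--(4). Let $G_\Omega(t)$ denote the 1-dim minimal $L^2$ integral on $\{\psi_1<-t\}$ associated with the germ $w^kdw$ at $z_0$ and the weights $\psi_1,\varphi_1,c$. By (1), (3), (4), the 1-dim version of this theorem (established in \cite{GY-concavity}) guarantees that $G_\Omega(h^{-1}(r))$ is linear. Put $C_Y:=\int_Y|f_Y|^2e^{-\varphi_2}\in(0,+\infty)$. Given a 1-dim minimizer $\tilde g_t$ on $\{\psi_1<-t\}$, the form $\pi_1^*\tilde g_t\wedge\pi_2^*f_Y$ is an admissible extension of $f$ at $Z_0$ modulo $\mathcal{I}(\varphi+\psi)$ by (2); a direct Fubini computation gives its norm as $G_\Omega(t)\cdot C_Y$. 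Hence $G(t)\leq G_\Omega(t)\cdot C_Y$. For the reverse inequality, I would decompose an arbitrary admissible $\tilde f$ into its "$f_Y$-component" along $Y$ and a part orthogonal to $f_Y$ in the fiberwise $L^2$ space $(H^0(Y,K_Y),e^{-\varphi_2})$; the jet condition at $Z_0$ combined with the fact that $f_Y$ gives the only nontrivial contribution to the leading term shows the orthogonal part can only enlarge the integral. Consequently $G=C_Y\cdot G_\Omega$ and linearity transfers.

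\textbf{Necessity.} Assume $G(h^{-1}(r))$ is linear on $(0,\int_0^{+\infty}c(t)e^{-t}dt]$. I would first establish a fiberwise decomposition: for (almost) every $y\in Y$ outside $\{\varphi_2=-\infty\}$, let $f^{(y)}:=f|_{\Omega\times\{y\}}$ and let $G_y(t)$ be the corresponding 1-dim minimal $L^2$ integral on $\Omega$ with weight $\varphi_1$. Using a Fubini/projection argument together with the product structure of $\psi$ and $\varphi$, I would derive the integral representation $G(t)=\int_Y G_y(t)\,e^{-\varphi_2(y)}dV_Y(y)$ (or at least a corresponding inequality matched with equality in the linear case). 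Since each $G_y(h^{-1}(r))$ is concave by the 1-dim concavity property, the linearity of $G(h^{-1}(r))$ forces $G_y(h^{-1}(r))$ to be linear for a.e.\ $y$ with $G_y(0)\in(0,+\infty)$. The 1-dim characterization then yields (1), (3), (4) immediately (these conditions depend only on $\psi_1,\varphi_1$ and the character data, which are $y$-independent) and tells us that $f^{(y)}=c(y)\,w^kdw+(\text{multiplier ideal term})$ for a uniform integer $k$ and a coefficient $c(y)$. A local holomorphic dependence argument together with the global $L^2$ bound from $G(0)<+\infty$ shows that the assignment $y\mapsto c(y)$ extends to a holomorphic $(n-1,0)$-form $f_Y$ on $Y$ with $\int_Y|f_Y|^2e^{-\varphi_2}\in(0,+\infty)$, establishing (2).

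\textbf{Main obstacle.} The hard step will be rigorously establishing the fiber-integration bridge $G(t)=\int_Y G_y(t)e^{-\varphi_2}dV_Y$ (with equality in the linear case), because an arbitrary global holomorphic extension $\tilde f$ on $\{\psi<-t\}$ does not obviously disassemble as a measurable family of fiberwise extensions; conversely, assembling fiberwise minimizers into a global holomorphic object requires the Ohsawa--Takegoshi-type extension together with the weakly pseudoconvex K\"ahler assumption on $Y$. Managing the non-generic fibers (where $G_y$ may degenerate, or where $\varphi_2=-\infty$) and upgrading the a.e.\ coefficient $c(y)$ to a genuine global holomorphic $(n-1,0)$-form $f_Y$ are the two places where the argument is most delicate; I anticipate that these are handled by passing through the universal cover $P:\Delta\to\Omega$ and the multiplicative-function framework recalled in the introduction, exactly as in the earlier papers of the series.
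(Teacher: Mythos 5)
Your sufficiency argument is essentially the paper's: reduce to the $1$-dimensional result on $\Omega$, exhibit the product competitor $\pi_1^*(\tilde g_t)\wedge\pi_2^*(f_Y)$ for the upper bound via Fubini and Lemma~\ref{f1zf2w}, and obtain the reverse inequality by slicing. The paper does the reverse inequality not by a Bergman-space orthogonal projection but by the Taylor decomposition of Lemma~\ref{decomp}: writing the candidate $\tilde F$ locally as $\sum_{\alpha'}\pi_1^*(\tilde F_{\alpha'})\wedge\pi_2^*({w'}^{\alpha'}dw')$ and observing that for each fixed $w'$ the $(1,0)$-form $\sum_{\alpha'}\tilde F_{\alpha'}{w'}^{\alpha'}$ is admissible for the $1$-dim problem $G_{w'}(t)$; Fubini then gives the pointwise lower bound. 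Your orthogonality idea can be made to work but requires care that the projection onto the $f_Y$-line preserves both holomorphy in $z$ and the jet condition at $z_0$; the paper's slicing sidesteps that by invoking the sharp $1$-dim estimate for every fiber coordinate value $w'$.

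The necessity argument has a genuine gap, and you have identified its location yourself. The bridge formula
\[
G(t)=\int_Y G_y(t)\,e^{-\varphi_2(y)}\,dV_Y(y)
\]
is the crux of your plan, and only the inequality $G(t)\geq\int_Y G_y(t)e^{-\varphi_2}$ is automatic (each global competitor slices to a fiberwise competitor, by Fubini and Lemma~\ref{local-germ}). The reverse inequality is not an innocent technicality: it asserts that the measurable family of fiberwise $1$-dim minimizers can be assembled into a single global holomorphic $(n,0)$-form achieving the infimum, which is precisely the kind of rigidity the theorem is trying to establish and which is \emph{false} for a general $\varphi,\psi$. Without equality you cannot run the concavity-of-$G_y$ argument, because $G$ linear and $G\geq\int_Y G_y\,e^{-\varphi_2}$ with $G_y$ concave does not force any $G_y$ to be linear. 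Your remark that this is handled ``by passing through the universal cover\ldots exactly as in the earlier papers'' does not point to a mechanism that produces the missing $\leq$.

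The paper's necessity proof avoids the fiberwise formula entirely. It starts from the unique global linear minimizer $F$ guaranteed by Lemma~\ref{linear}, Taylor-expands it in the $\Omega$-direction via Lemma~\ref{decomp} to extract the leading coefficient form $F_k$ on $Y$, then \emph{sandwiches}: Lemma~\ref{fanxiangineq} gives the lower bound
\[
\frac{\int_{\{\psi<-t\}}|F|^2e^{-\varphi}c(-\psi)}{\int_t^{+\infty}c(s)e^{-s}ds}\;\geq\;\frac{2\pi e^{-2u(z_0)}}{p|d|^2}\int_Y|F_k|^2e^{-\varphi_2},
\]
while the Ohsawa--Takegoshi-type extension Lemma~\ref{L2ext-finite-f} (applied on $\Omega$ alone) together with the product candidate $\pi_1^*(\tilde f_t)\wedge\pi_2^*(F_k)$ gives the matching upper bound $G(t)\leq G_\Omega(t)\int_Y|F_k|^2e^{-\varphi_2}\leq(\int_t^{+\infty}c e^{-s})\tfrac{2\pi e^{-2u(z_0)}}{p|d|^2}\int_Y|F_k|^2e^{-\varphi_2}$. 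The squeeze forces $G(t)=G_\Omega(t)\int_Y|F_k|^2e^{-\varphi_2}$, hence $G_\Omega$ is linear and the $1$-dim Theorem~\ref{thm:m-points} delivers (1), (3), (4); (2) follows from $k=k_0$ (proved via Lemmas~\ref{fanxiangineq} and~\ref{k>k0}) and the resulting membership of $F-\pi_1^*(w^kdw)\wedge\pi_2^*(F_k)$ in the multiplier ideal. To repair your proof you would need to replace the unproved fiberwise identity with this two-sided estimate on the single object $F$; in particular Lemmas~\ref{fanxiangineq}, \ref{k>k0} and~\ref{L2ext-finite-f} (or functional equivalents) are not optional, they are the engine.
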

	
	When $Y$ is a single point, Theorem \ref{one-p} can be referred to \cite{GY-concavity}.
	
	\begin{remark}\label{rem-p}
		When the four statements in Theorem \ref{one-p} hold,
		\begin{equation*}
			c_0\pi_1^*(gP_*(f_udf_{z_0}))\wedge\pi_2^*(f_{Y})
		\end{equation*}
		is the unique holomorphic $(n,0)$ form $F$ on $M$ such that $(F-f,(z_0,x))\in (\mathcal{O}(K_M))_{(z_0,x)}\otimes\mathcal{I}(\varphi+\psi)_{(z_0,x)}$ for any $(z_0,x)\in Z_0$ and
		\begin{equation*}
			G(t)=\int_{\{\psi<-t\}}|F|^2e^{-\varphi}c(-\psi)=\left(\int_t^{+\infty}c(s)e^{-s}ds\right)\frac{2\pi e^{-2u(z_0)}}{p|d|^2}\int_{Y}|f_{Y}|^2e^{-\varphi_2}
		\end{equation*}
		for any $t\geq 0$, where $f_u$ is a holomorphic function on $\Delta$ such that $|f_u|=P^*(e^u)$, $f_{z_0}$ is a holomorphic function on $\Delta$ such that $|f_{z_0}|=P^*(e^{G_{\Omega}(\cdot,z_0)})$,
		\begin{equation*}
			c_0:=\lim_{z\rightarrow z_0}\frac{w^kdw}{gP_*(f_udf_{z_0})}\in\mathbb{C}\setminus\{0\},
		\end{equation*}
		and
		\begin{equation*}
			d:=\lim_{z\rightarrow z_0}\frac{g}{w^k}(z).
		\end{equation*}
	\end{remark}
	
	Let $Z_0^1:=\{z_j:j\in \mathbb{N} \& 1\leq j\leq m\}$ be a finite subset of the open Riemann surface $\Omega$. Let $Y$ be an $n-1$ dimensional weakly pseudoconvex K\"{a}hler manifold. Let $M=\Omega\times Y$ be a complex manifold, and $K_M$ be the canonical line bundle on $M$. Let $\pi_1$, $\pi_2$ be the natural projections from $M$ to $\Omega$ and $Y$ and $Z_0:=\pi_1^{-1}(Z_0^1)$. Let $\psi_1$ be a subharmonic function on $\Omega$ such that $p_j=\frac{1}{2}v(dd^c\psi_1,z_j)>0$, and let $\varphi_1$ be a Lebesgue measurable function on $\Omega$ such that $\varphi_1+\psi_1$ is subharmonic on $\Omega$. Let $\varphi_2$ be a plurisubharmonic function on $Y$. Denote that $\psi:=\pi_1^*(\psi_1)$, $\varphi:=\pi_1^*(\varphi_1)+\pi_2^*(\varphi_2)$.
	
	Let $w_j$ be a local coordinate on a neighborhood $V_{z_j}\subset\subset\Omega$ of $z_j$ satisfying $w_j(z_j)=0$ for $z_j\in Z_0^1$, where $V_{z_j}\cap V_{z_k}=\emptyset$ for any $j,k$, $j\neq k$. Denote that $V_0:=\bigcup_{1\leq j\leq m}V_{z_j}$. Let $f$ be a holomorphic $(n,0)$ form on $V_0\times Y$ and $G(t)$ be the minimal $L^2$ integral on $M$ with respect to $f$ for any $t\geq 0$.
	
	We present a characterization of the concavity of $G(h^{-1}(r))$ degenerating to linearity for the fibers over sets of finite points as follows.

	\begin{theorem}\label{finite-p}
		Assume that $G(0)\in (0,+\infty)$ and $(\psi_1-2p_jG_{\Omega}(\cdot,z_j))(z_j)>-\infty$, where $p_j=\frac{1}{2}v(dd^c(\psi_1),z_j)>0$ for any $j\in\{1,2,\ldots,m\}$. Then $G(h^{-1}(r))$ is linear with respect to $r\in (0,\int_0^{+\infty}c(t)e^{-t}dt]$ if and only if the following statements hold:
		
		(1). $\psi_1=2\sum\limits_{j=1}^mp_j G_{\Omega}(\cdot,z_j)$;
		
		(2). for any $j\in\{1,2,\ldots,m\}$, $f=\pi_1^*(a_jw_j^{k_j}dw_j)\wedge \pi_2^*(f_{Y})+f_j$ on $V_{z_j}\times Y$, where $a_j\in\mathbb{C}\setminus \{0\}$ is a constant, $k_j$ is a nonnegative integer, $f_{Y}$ is a holomorphic $(n-1,0)$ form on $Y$ such that $\int_{Y}|f_{Y}|^2e^{-\varphi_2}\in (0,+\infty)$, and $(f_j,(z_j,y))\in (\mathcal{O}(K_M))_{(z_j,y)}\otimes\mathcal{I}(\varphi+\psi)_{(z_j,y)}$ for any $j\in\{1,2,\ldots,m\}$ and $y\in Y$;
		
		(3). $\varphi_1+\psi_1=2\log |g|+2\sum\limits_{j=1}^mG_{\Omega}(\cdot,z_j)+2u$, where $g$ is a holomorphic function on $\Omega$ such that $ord_{z_j}(g)=k_j$ and $u$ is a harmonic function on $\Omega$;
		
		(4). $\prod\limits_{j=1}^m\chi_{z_j}=\chi_{-u}$, where $\chi_{-u}$ and $\chi_{z_j}$ are the characters associated to the functions $-u$ and $G_{\Omega}(\cdot,z_j)$ respectively;
		
		(5). for any $j\in\{1,2,\ldots,m\}$,
		\begin{equation}
			\lim_{z\rightarrow z_j}\frac{a_jw_j^{k_j}dw_j}{gP_*\left(f_u\left(\prod\limits_{l=1}^mf_{z_l}\right)\left(\sum\limits_{l=1}^mp_l\dfrac{d{f_{z_{l}}}}{f_{z_{l}}}\right)\right)}=c_0,
		\end{equation}
		where $c_0\in\mathbb{C}\setminus\{0\}$ is a constant independent of $j$.
	\end{theorem}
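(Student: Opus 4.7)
The plan is to reduce Theorem \ref{finite-p} to the known one-dimensional, finite-points characterization of Guan--Yuan \cite{GY-concavity3} by exploiting the product structure $M=\Omega\times Y$, and to import the explicit extremal construction modelled on Remark \ref{rem-p}.

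For the sufficiency direction, I would define the candidate extremal
\begin{equation*}
F\;:=\;c_0\,\pi_1^*\!\left(g\,P_*\!\left(f_u\Big(\prod_{l=1}^m f_{z_l}\Big)\Big(\sum_{l=1}^m p_l\,\frac{df_{z_l}}{f_{z_l}}\Big)\right)\right)\wedge\pi_2^*(f_Y).
\end{equation*}
Condition (4) ensures the character of the multiplicative differential inside $P_*$ is trivial, so it descends to a single-valued holomorphic $(1,0)$ form on $\Omega$; conditions (2), (3), (5) ensure that near each $z_j$ the form $F$ matches the prescribed germ of $f$ modulo $\mathcal{O}(K_M)\otimes\mathcal{I}(\varphi+\psi)$. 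With condition (1) giving $\psi_1=2\sum p_j G_\Omega(\cdot,z_j)$, a Fubini computation combined with the explicit decomposition of $\varphi_1+\psi_1$ from (3) yields $\int_{\{\psi<-t\}}|F|^2e^{-\varphi}c(-\psi)=\big(\int_t^{+\infty}c(s)e^{-s}\,ds\big)\cdot C$ for an explicit constant $C>0$. Hence $G(h^{-1}(r))\le C r$; since $G(h^{-1}(r))$ is concave with $G(h^{-1}(0))=0$ (by \cite{GMY}) and this upper bound is achieved at $r=\int_0^{+\infty}c(s)e^{-s}\,ds$ by the known sharp lower bound on $G(0)$, linearity follows.

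For the necessity direction, assume $G(h^{-1}(r))$ is linear. The central step is to produce a product-form extremal: there is a unique holomorphic $(n,0)$ form $F$ on $M$ attaining $G$, and I would show $F=\pi_1^*(F_1)\wedge\pi_2^*(f_Y)$ with $\int_Y|f_Y|^2 e^{-\varphi_2}\in(0,+\infty)$. I would do this by slicing: for almost every $y\in Y$, the restriction $F|_{\Omega\times\{y\}}$ solves a one-dimensional extension problem with weight $\varphi_1+\varphi_2(y)$, and the linearity of $G(h^{-1}(r))$ forces equality in the fiberwise application of the concavity/Cauchy--Schwarz inequality underlying the $L^2$-minimization, which is rigid enough to separate variables. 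Once the product structure is established, Fubini gives $G(t)=G_\Omega(t)\cdot\int_Y|f_Y|^2 e^{-\varphi_2}$, so $G_\Omega(h^{-1}(r))$ is linear, and the finite-points theorem of \cite{GY-concavity3} applied on $\Omega$ delivers conditions (1), (3), (4), (5) as well as the planar part $F_1=\text{const}\cdot a_j w_j^{k_j}dw_j+\text{error}$ at each $z_j$; the full form of (2) on $V_{z_j}\times Y$ follows by combining the product structure with the germ-matching at $Z_0$.

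The main obstacle is precisely Step~1 of the necessity direction: extracting the product structure of the minimizer from linearity alone. The minimizer $F$ is a priori an arbitrary holomorphic $(n,0)$ form satisfying the germ constraint at $Z_0$, and the weight $\varphi=\pi_1^*\varphi_1+\pi_2^*\varphi_2$ is only a sum of pullbacks, not a product. I expect the argument to proceed by perturbing $F$ by test forms of the type $\pi_1^*(F_1)\wedge\pi_2^*(h)$ with $h$ varying among holomorphic $(n-1,0)$ forms on $Y$, differentiating the $L^2$ norm under the linearity constraint, and using that $\psi$ depends only on $\Omega$ to conclude that the $Y$-dependence of $F$ is frozen into a single holomorphic $(n-1,0)$ factor---essentially a rigidity argument in the equality case of a Fubini--Cauchy--Schwarz estimate, analogous in spirit to the product-of-Riemann-surfaces case treated in \cite{GY-concavity4}.
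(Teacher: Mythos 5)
Your sufficiency sketch is essentially aligned with the paper's argument (define the candidate extremal $F$, verify the germ condition via Lemma \ref{f1zf2w} and the one-dimensional Remark \ref{rem-finite-1d}, and compute $\int_{\{\psi<-t\}}|F|^2e^{-\varphi}c(-\psi)$ by Fubini), though where you invoke ``the known sharp lower bound on $G(0)$'' the paper instead shows directly, using Lemma \ref{decomp} to expand the actual minimizer $\tilde F$ fiberwise in $w'$ and applying the one-dimensional $G_{w'}(t)$, that the candidate $F$ already is the minimizer at every level $t$, which yields linearity without any separate lower bound.

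The genuine gap is in the necessity direction, and it is exactly the step you flag. First, the proposal does not address statement~(1) at all: the paper must first prove $\psi_1 = 2\sum_{j} p_j G_\Omega(\cdot,z_j)$, which it does by squeezing the minimizer between the lower bound of Lemma \ref{fanxiangineq} and the upper bound from Lemma \ref{L2ext-finite-f}, then invoking Lemma \ref{l:psi=G}; your plan skips this and jumps to the product structure. Second, and more seriously, your proposed mechanism for extracting the product structure---slicing and restricting $F$ to $\Omega\times\{y\}$, or perturbing by test forms $\pi_1^*(F_1)\wedge\pi_2^*(h)$ and differentiating---is not the route the paper takes and is not clearly valid: the restriction $F|_{\Omega\times\{y\}}$ need not solve the one-dimensional minimal $L^2$ problem for the weight $\varphi_1+\varphi_2(y)$ (the admissible class and the constraint at $z_j$ are not fiberwise decoupled), and a stationarity argument must be formulated carefully against the germ constraint $(F-f)\in H^0(Z_0,\mathcal{O}(K_M)\otimes\mathcal{I}(\varphi+\psi))$, which you do not do. The paper instead proceeds by localization: once $\psi_1 = 2\sum p_j G_\Omega(\cdot,z_j)$ is known, one chooses small coordinate balls $U_j$ around each $z_j$ so that $\{\psi_1<-s_0\}\subset\subset\bigcup U_j$, writes $G(t)=\sum_j G_j(t)$ where $G_j$ is the minimal $L^2$ integral over $(\{\psi_1<-t\}\cap U_j)\times Y$, uses concavity of each $G_j$ together with linearity of the sum to deduce each $G_j(h^{-1}(r))$ is linear, applies the \emph{single-point} Theorem \ref{one-p} on one such $U_{j_0}\times Y$ to obtain $F=\pi_1^*(f_0)\wedge\pi_2^*(F_Y)$ there, and then propagates the product structure to all of $M$ by the uniqueness-of-decomposition Lemma \ref{decom-product}. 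You should adopt this localization argument, or supply a complete rigidity proof in place of the heuristic Fubini--Cauchy--Schwarz equality case.
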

	
	\begin{remark}\label{rem-finite}
		When the five statements in Theorem \ref{one-p} hold,
		\begin{equation*}
			c_0\pi_1^*\left(gP_*\left(f_u\left(\prod\limits_{l=1}^mf_{z_l}\right)\left(\sum\limits_{l=1}^mp_l\dfrac{d{f_{z_{l}}}}{f_{z_{l}}}\right)\right)\right)\wedge\pi_2^*(f_{Y})
		\end{equation*}
		is the unique holomorphic $(n,0)$ form $F$ on $M$ such that $(F-f,(z_j,y))\in (\mathcal{O}(K_M))_{(z_j,y)}\otimes\mathcal{I}(\varphi+\psi)_{(z_j,y)}$ for any $(z_j,y)\in Z_0$ and
		\begin{equation*}
			G(t)=\int_{\{\psi<-t\}}|F|^2e^{-\varphi}c(-\psi)=\left(\int_t^{+\infty}c(s)e^{-s}ds\right)\left(\sum_{j=1}^m\frac{2\pi |a_j|^2e^{-2u(z_j)}}{p_j|d_j|^2}\right)\int_{Y}|f_{Y}|^2e^{-\varphi_2}
		\end{equation*}
		for any $t\geq 0$, where $f_u$ is a holomorphic function on $\Delta$ such that $|f_u|=P^*(e^u)$, $f_{z_j}$ is a holomorphic function on $\Delta$ such that $|f_{z_j}|=P^*(e^{G_{\Omega}(\cdot,z_j)})$ for any $j\in\{1,2,\ldots,m\}$,
		and $d_j:=\lim\limits_{z\rightarrow z_j}\frac{g}{w^{k_j}}(z_j)$.
	\end{remark}
	
	Let $Z_0^1:=\{z_j:j\in \mathbb{N}_+\}$ be an infinite discrete subset of the open Riemann surface $\Omega$. Let $Y$ be an $n-1$ dimensional weakly pseudoconvex K\"{a}hler manifold. Let $M=\Omega\times Y$ be a complex manifold, and $K_M$ be the canonical line bundle on $M$. Let $\pi_1$, $\pi_2$ be the natural projections from $M$ to $\Omega$ and $Y$ and $Z_0:=\pi_1^{-1}(Z_0^1)$. Let $\psi_1$ be a subharmonic function on $\Omega$ such that $p_j=\frac{1}{2}v(dd^c\psi_1,z_j)>0$, and let $\varphi_1$ be a Lebesgue measurable function on $\Omega$ such that $\varphi_1+\psi_1$ is subharmonic on $\Omega$. Let $\varphi_2$ be a plurisubharmonic function on $Y$. Denote that $\psi:=\pi_1^*(\psi_1)$, $\varphi:=\pi_1^*(\varphi_1)+\pi_2^*(\varphi_2)$.
	
	Let $w_j$ be a local coordinate on a neighborhood $V_{z_j}\subset\subset\Omega$ of $z_j$ satisfying $w_j(z_j)=0$ for $z_j\in Z_0^1$, where $V_{z_j}\cap V_{z_k}=\emptyset$ for any $j,k$, $j\neq k$. Denote that $V_0:=\bigcup_{j=1}^{\infty}V_{z_j}$. Let $f$ be a holomorphic $(n,0)$ form on $V_0\times Y$ and $G(t)$ be the minimal $L^2$ integral on $M$ with respect to $f$ for any $t\geq 0$.
	
	We present a necessary condition such that $G(h^{-1}(r))$ is linear for the fibers over infinite analytic subsets.
	\begin{proposition}\label{infinite-p}
		Assume that $G(0)\in (0,+\infty)$ and $(\psi_1-2p_jG_{\Omega}(\cdot,z_j))(z_j)>-\infty$, where $p_j=\frac{1}{2}v(dd^c(\psi_1),z_j)>0$ for any $j\in\mathbb{N}_+$. Assume that $G(h^{-1}(r))$ is linear with respect to $r\in (0,\int_0^{+\infty}c(t)e^{-t}dt]$, then the following statements hold:
		
		(1). $\psi_1=2\sum\limits_{j=1}^{\infty}p_j G_{\Omega}(\cdot,z_j)$;
		
		(2). for any $j\in\mathbb{N}_+$, $f=\pi_1^*(a_jw_j^{k_j}dw_j)\wedge \pi_2^*(f_{Y})+f_j$ on $V_{z_j}\times Y$, where $a_j\in\mathbb{C}\setminus \{0\}$ is a constant, $k_j$ is a nonnegative integer, $f_{Y}$ is a holomorphic $(n-1,0)$ form on $Y$ such that $\int_{Y}|f_{Y}|^2e^{-\varphi_2}\in (0,+\infty)$, and $(f_j,(z_j,y))\in (\mathcal{O}(K_M))_{(z_j,y)}\otimes\mathcal{I}(\varphi+\psi)_{(z_j,y)}$ for any $j\in\mathbb{N}_+$ and $y\in Y$;
		
		(3). $\varphi_1+\psi_1=2\log |g|$, where $g$ is a holomorphic function on $\Omega$ such that $ord_{z_j}(g)=k_j+1$ for any $j\in\mathbb{N}_+$;
		
		(4). for any $j\in\mathbb{N}_+$,
		\begin{equation}
			\frac{p_j}{ord_{z_j}g}\lim_{z\rightarrow z_j}\frac{dg}{a_jw_j^{k_j}dw_j}=c_0,
		\end{equation}
		where $c_0\in\mathbb{C}\setminus\{0\}$ is a constant independent of $j$;
		
		(5). $\sum\limits_{j\in\mathbb{N}_+}p_j<+\infty$.
	\end{proposition}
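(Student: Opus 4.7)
The plan is to adapt the techniques of Theorem~\ref{one-p} and Theorem~\ref{finite-p} to the infinite-point case, proceeding through the global minimizer and local asymptotics. The linearity of $G(h^{-1}(r))$ produces, via the concavity-equality arguments of \cite{GMY} and earlier papers of this series, a holomorphic $(n,0)$ form $F$ on $M$ realizing $G(t)$ for every $t \geq 0$ and satisfying $(F - f, (z_j, y)) \in (\mathcal{O}(K_M))_{(z_j,y)} \otimes \mathcal{I}(\varphi+\psi)_{(z_j,y)}$ for all $j$ and $y$. All five conclusions of the proposition are then extracted from the structure of this global minimizer combined with the product structure $M = \Omega \times Y$.

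As in the finite-point case (cf.\ Remark~\ref{rem-finite}), $F$ must factor as $F = \pi_1^*(\omega) \wedge \pi_2^*(f_Y)$ for a holomorphic $(1,0)$-form $\omega$ on $\Omega$ and a fixed holomorphic $(n-1,0)$-form $f_Y$ on $Y$ with $\int_Y |f_Y|^2 e^{-\varphi_2} \in (0, +\infty)$: otherwise the Fubini decomposition of $\int |F|^2 e^{-\varphi} c(-\psi)$ would produce strict concavity instead of linearity. This immediately gives statement (2) with $f_Y$ independent of $j$. Matching the local expansion of $F$ near each $(z_j, y)$ against the constraint $f = \pi_1^*(a_j w_j^{k_j} dw_j) \wedge \pi_2^*(f_Y) + f_j$ forces $ord_{z_j}(\omega) = k_j$ with leading coefficient proportional to $a_j$. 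A local computation at each $z_j$ using the identity $G(t) = \int_{\{\psi<-t\}}|F|^2 e^{-\varphi}c(-\psi)$, the Fubini decomposition, and the hypothesis $(\psi_1 - 2p_j G_\Omega(\cdot,z_j))(z_j) > -\infty$ then yields the asymptotics $\psi_1 \sim 2p_j G_\Omega(\cdot, z_j)$ and $\varphi_1 + \psi_1 \sim 2(k_j+1)\log|w_j|$ near $z_j$.

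The globalization step uses classical Riemann-surface potential theory. The function $\psi_1 - 2\sum_{j} p_j G_\Omega(\cdot, z_j)$ is harmonic on $\Omega \setminus Z_0^1$ by the local asymptotics, extends harmonically across each $z_j$ (logarithmic singularities cancel), and vanishes by maximum-principle / boundary-behavior arguments for negative subharmonic functions on $\Omega$; this yields (1). Analogously $\varphi_1 + \psi_1 - 2\log|g| = 0$ for a single-valued holomorphic $g$ on $\Omega$ with $ord_{z_j}(g) = k_j + 1$, giving (3). The vanishing of the harmonic term $u$ and the absorption of $\sum G_\Omega$ into $\log|g|$ (unlike Theorem~\ref{finite-p}(3)) comes from the infinite-product analogue of the character condition in Theorem~\ref{finite-p}(4) collapsing in the limit. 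Statement (4) is then obtained by equating leading coefficients of the explicit form $F = c_0 \pi_1^*(dg/\cdots) \wedge \pi_2^*(f_Y)$ with $a_j w_j^{k_j} dw_j$ at each $z_j$, producing a common constant $c_0$. Finally, the summability $\sum p_j < +\infty$ in (5) is forced as the Blaschke-type condition ensuring that $\sum p_j G_\Omega(\cdot, z_j)$ defines a subharmonic function not identically $-\infty$, equivalently that the globally defined holomorphic $g$ in (3) exists on $\Omega$.

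The main obstacle is the rigorous treatment of the limiting process. One cannot directly apply Theorem~\ref{finite-p} to a truncation $\{z_1, \ldots, z_m\}$, since its conclusion (1) would conflict with $\psi_1$ having singularities at $z_{m+1}, z_{m+2}, \ldots$. Instead the analysis must proceed through the global minimizer $F$, with the Blaschke-type summability (5) derived as the precise analytic condition under which the infinite-product structure (infinitely many $G_\Omega$'s absorbed into a single holomorphic $g$) is admissible. A secondary difficulty is establishing the tensor-product structure $F = \pi_1^*(\omega) \wedge \pi_2^*(f_Y)$ in the infinite-point setting: extending the Fubini-decomposition arguments used in the finite case to an infinite collection of marked fibers requires additional care, particularly to ensure that $f_Y$ remains the same form across all $z_j$.
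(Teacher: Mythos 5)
Your high-level plan (extract a global minimizer $F$, establish a tensor-product structure $F=\pi_1^*(\omega)\wedge\pi_2^*(f_Y)$, then reduce to a one-dimensional statement on $\Omega$) matches the paper's, but several of the justifications you supply for the individual steps are either wrong or too vague to constitute a proof. The most serious gap is your argument for statement (5): you claim $\sum p_j<+\infty$ is forced ``as the Blaschke-type condition ensuring that $\sum p_j G_\Omega(\cdot,z_j)$ defines a subharmonic function not identically $-\infty$,'' but Lemma~\ref{l:green-sup2} already shows $2\sum p_j G_\Omega(\cdot,z_j)$ is a well-defined negative subharmonic function (it dominates $\psi_1$) with \emph{no} summability hypothesis on $p_j$ at all. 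The finiteness of $\sum p_j$ is a genuinely nontrivial consequence of linearity and is obtained in the paper only after reducing to the one-dimensional Proposition~\ref{p:infinite}.

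The factorization argument is also not substantiated: you assert that failure of $F$ to factor would ``produce strict concavity instead of linearity,'' but since $\varphi$ and $\psi$ are themselves pullbacks from the two factors, Fubini alone does not yield this; a non-product $F$ is not obviously inconsistent with linearity of the integral. The paper's actual route is to split $\Omega$ near a single marked point: set $D_1=\{\psi_1<-s_1\}\cap U_1$ and $D_2=\{\psi_1<-s_1\}\setminus\overline{U_1}$, observe that the local functionals $G_1,G_2$ are concave (Theorem~\ref{Concave}) and sum to the linear $G$, hence each is linear; then Theorem~\ref{one-p} (the single-point case) applied to $G_1$ gives the product form on $D_1\times Y$, and Lemma~\ref{decom-product} extends it to all of $M$. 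Similarly, your derivation of (1) by ``maximum-principle / boundary-behavior'' is not what is needed: the paper obtains (1) by combining the asymptotic lower bound of Lemma~\ref{fanxiangineq} with the $L^2$-extension upper bound of Lemma~\ref{L2ext-finite-f} and invoking Lemma~\ref{l:psi=G}, not by a harmonicity/maximum-principle argument. Finally, your appeal to an ``infinite-product analogue of the character condition collapsing in the limit'' for (3) is a gesture rather than an argument; in the paper this and statements (4)--(5) all come out of Proposition~\ref{p:infinite} once the reduction to the $(1,0)$-form $F_0$ on $\Omega$ is made.
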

	
	Let $Z_0^1:=\{z_j:j\in \mathbb{N} \& 1\leq j<\gamma\}$ be a discrete subset of the open Riemann surface $\Omega$, where $\gamma\in\mathbb{N}_+\cup\{+\infty\}$. Let $Y$ be an $n-1$ dimensional weakly pseudoconvex K\"{a}hler manifold. Let $M=\Omega\times Y$ be a complex manifold. Assume that $\tilde{M}\subset M$ is an $n-$dimensional weakly pseudoconvex submanifold satisfying that $Z_0\subset\tilde{M}$. Let $f$ be a holomorphic $(n,0)$ form on a neighborhood of $Z_0$ in $\tilde{M}$. Denote that
	\begin{flalign*}
		\begin{split}
		\tilde{G}(t):=\inf\bigg\{\int_{\{\psi<-t\}\cap \tilde{M}}|\tilde{f}|^2e^{-\varphi}c(-\psi) :& (\tilde{f}-f,(z,y))\in (\mathcal{O}(K_{\tilde{M}}))_{(z,y)}\otimes\mathcal{I}(\varphi+\psi)_{(z,y)} \\
			&\text{\ for \ any } (z,y)\in Z_0  \\
			& \& \ \tilde{f}\in H^0(\{\psi<-t\}\cap\tilde{M},\mathcal{O}(K_{\tilde{M}}))\bigg\}
		\end{split}
	\end{flalign*} 
	for any $t\geq 0$, where $K_{\tilde{M}}$ is the canonical line bundle on $\tilde{M}$, and $\psi, \varphi$ are as above.

We present a necessary condition such that $\tilde{G}(h^{-1}(r))$ is linear for the fibrations over analytic subsets.
	
	\begin{proposition}\label{tildeM}
		Assume that $\tilde{G}(0)\in (0,+\infty)$ and $(\psi_1-2p_jG_{\Omega}(\cdot,z_j))(z_j)>-\infty$, where $p_j=\frac{1}{2}v(dd^c(\psi_1),z_j)>0$ for any $j\in\mathbb{N}$. If $\tilde{G}(h^{-1}(r))$ is linear with respect to $r\in (0,\int_0^{+\infty}c(t)e^{-t}dt]$, then $\tilde{M}=M$.
	\end{proposition}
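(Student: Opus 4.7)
The key strategic observation is that $\tilde{M}$ has the same complex dimension $n$ as $M$, so $\tilde{M}$ is in fact a weakly pseudoconvex open submanifold of $M$ (not a lower-dimensional complex submanifold). The plan is to (i) re-run the arguments used in Theorem \ref{finite-p} and Proposition \ref{infinite-p} on $\tilde{M}$ in place of $M$ to produce the same necessary data and an explicit form of the minimizer $\tilde{F}$; (ii) realize $\tilde{F}$ as the restriction to $\tilde{M}$ of a global holomorphic form $F$ on $M$ whose $L^2$ norm on $M$ still equals $\tilde{G}(0)$; and (iii) compare the integrals of $|F|^2 e^{-\varphi}c(-\psi)$ on $\tilde{M}$ and on $M$ to conclude that $\tilde{M}=M$.

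For step (i), the local-at-$z_j$ arguments in the proofs of Theorem \ref{one-p}, Theorem \ref{finite-p} and Proposition \ref{infinite-p} proceed by extending a prescribed jet along the fiber $\pi_1^{-1}(z_j)=\{z_j\}\times Y$ to a holomorphic section of the canonical bundle of the ambient manifold via an Ohsawa--Takegoshi-type extension, and then comparing with $\tilde G(t)/h(t)$ as $t\to\infty$. These arguments go through on $\tilde{M}$ since $\tilde{M}$ is weakly pseudoconvex and contains every such fiber, so Ohsawa--Takegoshi applies on $\tilde{M}$. Linearity of $\tilde G(h^{-1}(r))$ together with the equality case of the resulting inequality produces the five conditions of Theorem \ref{finite-p} (or their analogs in Proposition \ref{infinite-p}), and the explicit description of $\tilde{F}$ from Remarks \ref{rem-p} and \ref{rem-finite} carries over verbatim.

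For step (ii), the explicit description exhibits $\tilde{F}$ as a wedge of a pullback from $\Omega$ and a pullback from $Y$; e.g.\ in the finite case $\tilde{F}=c_0\,\pi_1^*\bigl(gP_*(f_u\prod_l f_{z_l}\sum_l p_l\,df_{z_l}/f_{z_l})\bigr)\wedge\pi_2^*(f_Y)$. Both factors are globally defined on $\Omega$ and $Y$ respectively, so setting $F$ equal to the very same expression produces a holomorphic $(n,0)$ form on all of $M$ restricting to $\tilde{F}$ on $\tilde{M}$. The germ condition $(F-f,(z_j,y))\in(\mathcal{O}(K_M))_{(z_j,y)}\otimes\mathcal{I}(\varphi+\psi)_{(z_j,y)}$ is inherited from $\tilde{F}$ because germs are local and $\tilde{M}$ contains a neighborhood of $Z_0$, so $F$ is admissible for $G(0)$. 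By Fubini together with $\psi=\pi_1^*\psi_1$ and $\varphi=\pi_1^*\varphi_1+\pi_2^*\varphi_2$, the calculation of $\int_M|F|^2e^{-\varphi}c(-\psi)$ reduces to the same residue computation on $\Omega$ performed in the proofs of Theorem \ref{one-p}/\ref{finite-p}, and yields exactly $\tilde{G}(0)$.

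For step (iii), restricting admissible sections for $G(0)$ on $M$ to $\tilde{M}$ preserves admissibility (since $Z_0\subset\tilde{M}$) and decreases the integral, giving $\tilde{G}(0)\le G(0)$; conversely, $F$ is $G(0)$-admissible with integral equal to $\tilde{G}(0)$, yielding $G(0)\le \tilde{G}(0)$. Thus $\int_M|F|^2e^{-\varphi}c(-\psi) = \int_{\tilde{M}}|F|^2e^{-\varphi}c(-\psi) = \tilde{G}(0)$, and hence $\int_{M\setminus\tilde M}|F|^2 e^{-\varphi}c(-\psi)=0$. Since $F\not\equiv0$ (else $\tilde{G}(0)=0$, contradicting $\tilde{G}(0)>0$), its zero set is a proper analytic subset; combined with the positivity of $e^{-\varphi}c(-\psi)$ off $Z_0$, this forces $M\setminus\tilde{M}$ to have Lebesgue measure zero, and openness of $\tilde{M}$ in $M$ gives $\tilde{M}=M$. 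The main obstacle is step (i): one must verify that the delicate extension-and-equality arguments in the proofs of Theorem \ref{finite-p} and Proposition \ref{infinite-p} rely only on weak pseudoconvexity of the ambient manifold and on the product structure of the weights $\psi$ and $\varphi$ (both available on $\tilde{M}$), rather than on any product structure of the ambient manifold itself.
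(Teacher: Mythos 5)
Your overall plan shares the paper's final comparison step, but step (i) — re-running the full characterization from Theorem~\ref{finite-p} on $\tilde M$ — contains the gap that you flag at the end, and this gap is genuine: the paper deliberately avoids that route.

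The difficulty is not merely delicate; it is structural. The necessity proof of Theorem~\ref{finite-p} obtains the explicit product form of the minimizer in two steps, both of which use the global product structure of the ambient manifold in an essential way. First, it restricts to a set of the form $U_{j_0}\times Y$, with $U_{j_0}$ a small disc around $z_{j_0}$, and applies Theorem~\ref{one-p} there. But on a proper open $\tilde M\subsetneq M$ containing $Z_0=\pi_1^{-1}(Z_0^1)$, there is in general no product $U_{j_0}\times Y\subset\tilde M$ with $U_{j_0}$ a fixed disc: $\tilde M$ may shrink in the $\Omega$-direction as you go to infinity in $Y$ (a "horn"). This is precisely why the paper proves Lemma~\ref{decomp-tildeM} separately, which only provides products $\Delta^m_{r_V}\times V\subset\tilde M$ for $V\subset\subset Y$, with $r_V$ depending on $V$. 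Second, even granting a local product decomposition, the necessity proof of Theorem~\ref{finite-p} uses Lemma~\ref{decom-product} to upgrade $F=\pi_1^*(f_0)\wedge\pi_2^*(F_Y)$ from a neighborhood to all of $M$ — and Lemma~\ref{decom-product} is stated and proved for a product $X\times Y$, not for an arbitrary open subset of one. So your step~(i), as written, does not go through, and step~(ii) (extending $\tilde F$ to $M$ by reading off its pullback-product expression) inherits the same gap.

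The paper's proof of Proposition~\ref{tildeM} sidesteps this entirely by never attempting to characterize the minimizer $F$ on $\tilde M$. It only extracts local data: via Lemma~\ref{decomp-tildeM} it reads off the lowest-order coefficients $\tilde F_j$ of $F$ near each fiber $\{z_j\}\times Y$, and via a Lemma~\ref{fanxiangineq}-type computation on the local product pieces $U_{j,W}\times W\subset\tilde M$ it derives the lower bound
\[
\frac{\tilde G(0)}{\int_0^{+\infty}c(s)e^{-s}ds}\;\geq\;\sum_{j\in I_F}\frac{2\pi e^{-2u_0(z_j)}}{p_j|d_j|^2}\int_Y|\tilde F_j|^2e^{-\varphi_2}.
\]
Then, independently, it applies Lemma~\ref{L2ext-finite-f} on the genuine product $M$ to produce a \emph{new} holomorphic $(n,0)$ form $\tilde F$ on $M$ (not an extension of $F$) matching the same local data and satisfying the reverse inequality $\int_M|\tilde F|^2e^{-\varphi}c(-\psi)\leq$ the same quantity. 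Since $\tilde F$ restricted to $\tilde M$ is admissible, the chain $\tilde G(0)\leq\int_{\tilde M}|\tilde F|^2e^{-\varphi}c(-\psi)\leq\int_M|\tilde F|^2e^{-\varphi}c(-\psi)\leq(\cdots)\leq\tilde G(0)$ forces $\int_{\tilde M}=\int_M$, hence $\tilde M=M$. I would suggest reworking step~(i) to extract only this local information and introducing the extension $\tilde F$ on $M$ by Lemma~\ref{L2ext-finite-f} as a separate competitor, rather than attempting to decompose and extend the minimizer on $\tilde M$ itself.
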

	
	\subsection{Applications: optimal $L^2$ extension problem from fibers over analytic subsets to fibrations over open Riemann surfaces}
	
	\
	
In this section, we give characterizations of the holding of equality in optimal jets $L^2$ extension problem from fibers over analytic subsets to fibrations over open Riemann surfaces. 	
	
	Let $Z_0^1:=\{z_j:j\in \mathbb{N} \& 1\leq j\leq m\}$ be a finite subset of the open Riemann surface $\Omega$. Let $Y$ be an $n-1$ dimensional weakly pseudoconvex K\"{a}hler manifold. Let $M=\Omega\times Y$ be a complex manifold, and $K_M$ be the canonical line bundle on $M$. Let $\pi_1$, $\pi_2$ be the natural projections from $M$ to $\Omega$ and $Y$, and $Z_0:=\pi_1^{-1}(Z_0^1)$.
	
	Let $w_j$ be a local coordinate on a neighborhood $V_{z_j}\subset\subset\Omega$ of $z_j$ satisfying $w_j(z_j)=0$ for $z_j\in Z_0^1$, where $V_{z_j}\cap V_{z_k}=\emptyset$ for any $j,k$, $j\neq k$. Let $c_{\beta}(z)$ be the logarithmic capacity (see \cite{S-O69}) on $\Omega$ which is defined by
	\begin{equation*}
		c_{\beta}(z_j):=\exp \lim_{z\rightarrow z_0}(G_{\Omega}(z,z_j)-\log|w_j(z)|).
	\end{equation*}
	Denote that $V_0:=\bigcup_{1\leq j\leq m}V_{z_j}$. Assume that $\tilde{M}\subset M$ is an $n-$dimensional weakly pseudoconvex submanifold satisfying that $Z_0\subset \tilde{M}$.

As an application of Theorem \ref{finite-p}, we give a characterization of the holding of equality in optimal $L^2$ extension problem from analytic subsets to fibrations over open Riemann surfaces, where the analytic subsets are fibers over finite points on open Riemann surfaces.
	
	\begin{theorem}\label{fib-L2ext}
		Let $k_j$ be a nonnegative integer for any $j\in\{1,2,...,m\}$. Let $\psi_1$ be a negative subharmonic function on $\Omega$ satisfying that   $\frac{1}{2}v(dd^{c}\psi_1,z_j)=p_j>0$ for any $j\in\{1,2,...,m\}$. Let $\varphi_1$ be a Lebesgue measurable function on $\Omega$ such that $\varphi_1+\psi_1$ is subharmonic on $\Omega$, $\frac{1}{2}v(dd^c(\varphi_1+\psi_1),z_j)=k_j+1$ and $\alpha_j:=(\varphi_1+\psi_1-2(k_j+1)G_{\Omega}(\cdot,z_j))(z_j)>-\infty$ for any $j$. Let $\varphi_2$ be a plurisubharmonic function on $Y$. Let $c(t)$ be a positive measurable function on $(0,+\infty)$ satisfying that $c(t)e^{-t}$ is decreasing on $(0,+\infty)$ and $\int_{0}^{+\infty}c(s)e^{-s}ds<+\infty$. Let $a_j$ be a constant for any $j$. Let $F_j$ be a holomorphic $(n-1,0)$ form on $Y$ such that $\int_Y|F_j|^2e^{-\varphi_2}<+\infty$ for any $j$.
		
		Let $f$ be a holomorphic $(n,0)$ form on $V_0\times Y$ satisfying that $f=\pi_1^*(a_jw_j^{k_j}dw_j)\wedge\pi_2^*(F_j)$ on $V_{z_j}\times Y$. Then there exists a holomorphic $(n,0)$ form $F$ on $\tilde{M}$ such that $(F-f,(z_j,y))\in(\mathcal{O}(K_{\tilde{M}})\otimes\mathcal{I}(\varphi+\psi))_{(z_j,y)}$ for any $(z_j,y)\in Z_0$ and
		\begin{equation}\label{L2result}
			\int_{\tilde{M}}|F|^2e^{-\varphi}c(-\psi)\leq\left(\int_0^{+\infty}c(s)e^{-s}ds\right)\sum_{j=1}^m\frac{2\pi|a_j|^2e^{-\alpha_j}}{p_jc_{\beta}(z_j)^{2(k_j+1)}}\int_Y|F_j|^2e^{-\varphi_2}.
		\end{equation}
		
		Moreover, equality $\left(\int_0^{+\infty}c(s)e^{-s}ds\right)\sum\limits_{j=1}^m\frac{2\pi|a_j|^2e^{-\alpha_j}}{p_jc_{\beta}(z_j)^{2(k_j+1)}}\int_Y|F_j|^2e^{-\varphi_2}=\inf\big\{$ 
		$\int_M|\tilde{F}|^2e^{-\varphi}c(-\psi):\tilde{F}$ is a holomorphic $(n,0)$ form on $\tilde{M}$ such that $(\tilde{F}-f,(z_j,y))\in(\mathcal{O}(K_{\tilde{M}})\otimes\mathcal{I}(\varphi+\psi))_{(z_j,y)}$ for any $(z_j,y)\in Z_0\big\}$ holds if and only if the following statements hold:
		
		(1). $\psi_1=2\sum\limits_{j=1}^mp_j G_{\Omega}(\cdot,z_j)$;
		
		(2). $\varphi_1+\psi_1=2\log |g|+2\sum\limits_{j=1}^m(k_j+1)G_{\Omega}(\cdot,z_j)+2u$, where $g$ is a holomorphic function on $\Omega$ such that $g(z_j)\neq 0$ for any $j\in\{1,2,\ldots,m\}$ and $u$ is a harmonic function on $\Omega$;
		
		(3). $\prod\limits_{j=1}^m\chi_{z_j}^{k_j+1}=\chi_{-u}$, where $\chi_{-u}$ and $\chi_{z_j}$ are the characters associated to the functions $-u$ and $G_{\Omega}(\cdot,z_j)$ respectively;
		
		(4). for any $j\in\{1,2,\ldots,m\}$,
		\begin{equation}
			\lim_{z\rightarrow z_j}\frac{a_jw_j^{k_j}dw_j}{gP_*\left(f_u\left(\prod\limits_{l=1}^mf_{z_l}^{k_l+1}\right)\left(\sum\limits_{l=1}^mp_l\dfrac{d{f_{z_{l}}}}{f_{z_{l}}}\right)\right)}=c_j\in\mathbb{C}\setminus\{0\},
		\end{equation}
		and there exist $c_0\in\mathbb{C}\setminus\{0\}$ and a holomorphic $(n-1,0)$ form $F_Y$ on $Y$ which are independent of $j$ such that $c_0F_Y=c_jF_j$ for any $j\in\{1,2,\ldots,m\}$;
		
		(5). $\tilde{M}=M$.
		
	\end{theorem}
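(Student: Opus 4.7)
The plan is to split the theorem into two parts: (a) existence of $F$ on $\tilde{M}$ satisfying the $L^{2}$ bound \eqref{L2result}, equivalently $\tilde{G}(0)\le$ RHS of \eqref{L2result}, and (b) the characterization of equality. For (a), set $r_{0}=\int_{0}^{+\infty}c(s)e^{-s}\,ds$; the concavity of $\tilde{G}\circ h^{-1}$ on $[0,r_{0}]$ (from \cite{GMY}) together with $\tilde{G}(h^{-1}(0))=\tilde{G}(+\infty)=0$ forces $r\mapsto \tilde{G}(h^{-1}(r))/r$ to be nonincreasing, giving
\[
\tilde{G}(0)\;\le\;r_{0}\cdot\lim_{t\to+\infty}\frac{\tilde{G}(t)}{h(t)}.
\]
It therefore suffices to evaluate and bound the right-hand limit by $\sum_{j}\frac{2\pi|a_{j}|^{2}e^{-\alpha_{j}}}{p_{j}c_{\beta}(z_{j})^{2(k_{j}+1)}}\int_{Y}|F_{j}|^{2}e^{-\varphi_{2}}$.

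For the local asymptotic, as $t\to+\infty$ the set $\{\psi<-t\}\cap\tilde{M}$ shrinks to a disjoint union of fiber-tubes around the fibers $\{z_{j}\}\times Y\subset Z_{0}\subset\tilde{M}$, and on each tube $\tilde{f}=\pi_{1}^{*}(a_{j}w_{j}^{k_{j}}\,dw_{j})\wedge\pi_{2}^{*}(F_{j})$ is an admissible test form. The hypothesis $\tfrac12 v(dd^{c}\psi_{1},z_{j})=p_{j}$ combined with subharmonicity gives $\psi_{1}(z)=2p_{j}\log|w_{j}(z)|+O(1)$ near $z_{j}$, while $\tfrac12 v(dd^{c}(\varphi_{1}+\psi_{1}),z_{j})=k_{j}+1$ and finiteness of $\alpha_{j}$ give $(\varphi_{1}+\psi_{1})(z)=2(k_{j}+1)\log|w_{j}(z)|+\alpha_{j}+2(k_{j}+1)\log c_{\beta}(z_{j})+o(1)$. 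The substitution $s=-\psi_{1}$ then reduces the local integrand to a constant multiple of $c(s)e^{-s}$ and yields
\[
\int_{\{\psi<-t\}\cap(V_{z_{j}}\times Y)}|\tilde{f}|^{2}e^{-\varphi}c(-\psi)\;=\;\frac{2\pi|a_{j}|^{2}e^{-\alpha_{j}}}{p_{j}c_{\beta}(z_{j})^{2(k_{j}+1)}}\int_{Y}|F_{j}|^{2}e^{-\varphi_{2}}\cdot h(t)+o(h(t));
\]
summing over $j$ then yields \eqref{L2result}.

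For (b), equality forces $\tilde{G}(h^{-1}(r))/r$ constant and hence $\tilde{G}(h^{-1}(r))$ linear on $[0,r_{0}]$. Proposition \ref{tildeM} then forces $\tilde{M}=M$, which is condition (5); Theorem \ref{finite-p} now applies on $M$ and produces its conditions (1)--(5). Condition (1) is immediate. To convert Theorem \ref{finite-p}(3) into \ref{fib-L2ext}(2), absorb the factor $\prod_{j}|w_{j}|^{2k_{j}}$ of $|g_{\rm old}|^{2}$ (with $\mathrm{ord}_{z_{j}}g_{\rm old}=k_{j}$) into $\prod_{j}e^{2k_{j}G_{\Omega}(\cdot,z_{j})}$, whose modulus is $\prod_{j}|f_{z_{j}}|^{2k_{j}}$; this turns $g_{\rm old}$ into $g$ with $g(z_{j})\neq 0$ and shifts $u$. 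Theorem \ref{finite-p}(4), $\prod\chi_{z_{j}}=\chi_{-u_{\rm old}}$, then becomes $\prod\chi_{z_{j}}^{k_{j}+1}=\chi_{-u}$, which is \ref{fib-L2ext}(3); Theorem \ref{finite-p}(5) with the new $g$ becomes \ref{fib-L2ext}(4), and the common $f_{Y}$ factor from \ref{finite-p}(2) forces each $F_{j}$ to be a common scalar multiple of some $F_{Y}$, finishing \ref{fib-L2ext}(2) and (4).

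The main obstacle is the asymptotic identification in (a): I must verify that no ``coupled'' extension mixing $w_{j}$- and $y$-dependence beats the separated product model. Using the product structure $\varphi+\psi=\pi_{1}^{*}(\varphi_{1}+\psi_{1})+\pi_{2}^{*}(\varphi_{2})$, the jet constraint imposed by $\mathcal{I}(\varphi+\psi)$ at $(z_{j},y)$ decouples into vanishing to order $k_{j}$ in $w_{j}$ at $z_{j}$, pointwise in $y$, reducing the minimization to an elementary one-variable $L^{2}$ problem whose extremal is $w_{j}^{k_{j}}\,dw_{j}$ (up to scalar) paired with the identity extension in $y$. The subsequent character bookkeeping in (b) is routine but requires care to match exponents $k_{j}+1$ versus $1$.
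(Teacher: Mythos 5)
Your high-level route (concavity of $\tilde{G}\circ h^{-1}$ to push the estimate from $t\to+\infty$ back to $t=0$, then linearity plus Theorem \ref{finite-p} for the equality case) is a plausible alternative to the paper's argument, which instead proves \eqref{L2result} directly via the Ohsawa--Takegoshi-type extension Lemma \ref{L2ext-finite-f}. However, there are two genuine gaps.

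First, the claimed local expansion $\psi_1(z)=2p_j\log|w_j(z)|+O(1)$ near $z_j$ does \emph{not} follow from the hypothesis $\tfrac12 v(dd^c\psi_1,z_j)=p_j$: the Lelong number controls the logarithmic singularity but not the $O(1)$ term (e.g.\ $\psi_1=2p_j\log|w_j|-(\log 1/|w_j|)^{1/2}+\cdots$ has the right Lelong number but $(\psi_1-2p_j\log|w_j|)(z_j)=-\infty$). Your asymptotic $\int_{\{\psi<-t\}\cap(V_{z_j}\times Y)}|\tilde f|^2e^{-\varphi}c(-\psi)=(\cdot)h(t)+o(h(t))$ is therefore not justified as stated. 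It can be repaired by first replacing $(\psi_1,\varphi_1)$ with $(2\sum p_lG_\Omega(\cdot,z_l),\ \varphi_1+\psi_1-2\sum p_lG_\Omega)$ using Lemma \ref{l:green-sup2} and the monotonicity of $c(t)e^{-t}$ (which can only increase the integrand and preserves $\varphi+\psi$, hence the multiplier ideal), exactly as the paper does at the start of Lemma \ref{L2ext-finite-f}; but this reduction must be stated.

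Second, and more seriously, in (b) you invoke Proposition \ref{tildeM} and Theorem \ref{finite-p}, but \emph{both} have the hypothesis $(\psi_1-2p_jG_\Omega(\cdot,z_j))(z_j)>-\infty$, which is \emph{not} among the hypotheses of Theorem \ref{fib-L2ext}. It cannot be deduced from Theorem \ref{finite-p}'s condition (1) without circularity. The paper bridges this by first deriving $\tilde M=M$ and $\psi_1=2\sum p_jG_\Omega(\cdot,z_j)$ \emph{independently} of Theorem \ref{finite-p}: it compares the extension built with the model weight $2\sum p_jG_\Omega$ against the assumed equality, and then applies Lemma \ref{l:psi=G} to force $\psi_1$ to equal the model (and $\tilde M=M$). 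Only after that are the hypotheses of Theorem \ref{finite-p} met. Your outline omits this step, and also omits the sufficiency direction of the equality characterization entirely (one must show conditions (1)--(5) imply equality, which in the paper uses Theorem \ref{finite-p} together with a lower bound from Lemma \ref{fanxiangineq} and the identity $e^{-2u_1(z_j)}/|d_j|^2=e^{-\alpha_j}/c_\beta(z_j)^{2(k_j+1)}$). The character/order bookkeeping you sketch to convert Theorem \ref{finite-p}(2)--(5) into \ref{fib-L2ext}(2)--(4) does match the paper's absorption trick with the auxiliary holomorphic function $g_2$ and the rescaled $f_u$.
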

	
	\begin{remark}\label{rem:fib-L2ext}
		When the five statements in Theorem \ref{fib-L2ext} hold,
		\[c_0\pi_1^*\left(gP_*\left(f_u\left(\prod\limits_{l=1}^mf_{z_l}^{k_l+1}\right)\left(\sum\limits_{l=1}^mp_l\dfrac{d{f_{z_{l}}}}{f_{z_{l}}}\right)\right)\right)\wedge\pi_2^*(F_{Y})\]
		is the unique holomorphic $(n,0)$ form $F$ on $M$ such that $(F-f,(z_j,y))\in(\mathcal{O}(K_M)\otimes\mathcal{I}(\varphi+\psi))_{(z_j,y)}$ for any $(z_j,y)\in Z_0$ and
		\begin{equation*}
			\int_M|F|^2e^{-\varphi}c(-\psi)\leq\left(\int_0^{+\infty}c(s)e^{-s}ds\right)\sum_{j=1}^m\frac{2\pi|a_j|^2e^{-\alpha_j}}{p_jc_{\beta}(z_j)^{2(k_j+1)}}\int_Y|F_j|^2e^{-\varphi_2}.
		\end{equation*}
	\end{remark}

	Let $Z_0^1:=\{z_j:j\in \mathbb{N}_+\}$ be an infinite discrete subset of the open Riemann surface $\Omega$. Let $Y$ be an $n-1$ dimensional weakly pseudoconvex K\"{a}hler manifold. Let $M=\Omega\times Y$ be a complex manifold, and $K_M$ be the canonical line bundle on $M$. Let $\pi_1$, $\pi_2$ be the natural projections from $M$ to $\Omega$ and $Y$, and $Z_0:=\pi_1^{-1}(Z_0^1)$.
	
	Let $w_j$ be a local coordinate on a neighborhood $V_{z_j}\subset\subset\Omega$ of $z_j$ satisfying $w_j(z_j)=0$ for $z_j\in Z_0^1$, where $V_{z_j}\cap V_{z_k}=\emptyset$ for any $j,k$, $j\neq k$. Denote that $V_0:=\bigcup_{j=1}^{\infty}V_{z_j}$. 

We give an $L^2$ extension result from fibers over analytic subsets to fibrations over open Riemann surfaces, where the analytic subsets are  infinite points on open Riemann surfaces.

	\begin{theorem}\label{fib-L2ext-infinite}
		Let $k_j$ be a nonnegative integer for any $j\in\mathbb{N}_+$. Let $\psi_1$ be a negative subharmonic function on $\Omega$ satisfying that   $\frac{1}{2}v(dd^{c}\psi_1,z_j)=k_j+1>0$ for any $j\in\mathbb{N}_+$. Let $\varphi_1$ be a Lebesgue measurable function on $\Omega$ such that $\varphi_1+\psi_1$ is subharmonic on $\Omega$, $\frac{1}{2}v(dd^c(\varphi_1+\psi_1),z_j)=k_j+1$ and $\alpha_j:=(\varphi_1+\psi_1-2(k_j+1)G_{\Omega}(\cdot,z_j))(z_j)>-\infty$ for any $j$. Let $\varphi_2$ be a plurisubharmonic function on $Y$. Let $c(t)$ be a positive measurable function on $(0,+\infty)$ satisfying that $c(t)e^{-t}$ is decreasing on $(0,+\infty)$ and $\int_{0}^{+\infty}c(s)e^{-s}ds<+\infty$. Let $a_j$ be a constant for any $j$. Let $F_j$ be a holomorphic $(n-1,0)$ form on $Y$ such that $\int_Y|F_j|^2e^{-\varphi_2}<+\infty$ for any $j$.
		
		Let $f$ be a holomorphic $(n,0)$ form on $V_0\times Y$ satisfying that $f=\pi_1^*(a_jw_j^{k_j}dw_j)\wedge\pi_2^*(F_j)$ on $V_{z_j}\times Y$.
		If
		\begin{equation*}
			\sum_{j=1}^{\infty}\frac{2\pi|a_j|^2e^{-\alpha_j}}{(k_j+1)c_{\beta}(z_j)^{2(k_j+1)}}\int_Y|F_j|^2e^{-\varphi_2}<+\infty,
		\end{equation*}	
		then there exists a holomorphic $(n,0)$ form $F$ on $M$ such that $(F-f,(z_j,y))\in(\mathcal{O}(K_M)\otimes\mathcal{I}(\varphi+\psi))_{(z_j,y)}$ for any $(z_j,y)\in Z_0$ and
		\begin{equation}\label{L2result-infinite}
			\int_M|F|^2e^{-\varphi}c(-\psi)<\left(\int_0^{+\infty}c(s)e^{-s}ds\right)\sum_{j=1}^{\infty}\frac{2\pi|a_j|^2e^{-\alpha_j}}{(k_j+1)c_{\beta}(z_j)^{2(k_j+1)}}\int_Y|F_j|^2e^{-\varphi_2}.
		\end{equation}
	\end{theorem}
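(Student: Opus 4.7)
The plan is to reduce to the finite-points case (Theorem~\ref{fib-L2ext}) via truncation, extract a subsequential limit of the finite extensions, and then promote the resulting weak $L^2$ bound to a strict one by invoking Proposition~\ref{infinite-p}.

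First, for each $N\in\mathbb{N}_+$ I would apply Theorem~\ref{fib-L2ext} (taking $\tilde{M}=M$) to the truncated set $\{z_1,\ldots,z_N\}$ and the restricted data $(a_j,k_j,F_j)_{j=1}^{N}$. The assumption $p_j=\frac{1}{2}v(dd^c\psi_1,z_j)=k_j+1=\frac{1}{2}v(dd^c(\varphi_1+\psi_1),z_j)$ in the hypothesis of Theorem~\ref{fib-L2ext-infinite} matches the finite-case setup. This yields a holomorphic $(n,0)$-form $F_N$ on $M$ with $(F_N-f,(z_j,y))\in(\mathcal{O}(K_M)\otimes\mathcal{I}(\varphi+\psi))_{(z_j,y)}$ for $1\le j\le N$ and
\[
\int_M|F_N|^2 e^{-\varphi}c(-\psi)\le h(0)\sum_{j=1}^{N}\frac{2\pi|a_j|^2 e^{-\alpha_j}}{(k_j+1)c_\beta(z_j)^{2(k_j+1)}}\int_Y|F_j|^2 e^{-\varphi_2},
\]
where $h(0):=\int_0^{+\infty}c(s)e^{-s}ds$. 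Setting $C:=\sum_{j=1}^{\infty}\frac{2\pi|a_j|^2 e^{-\alpha_j}}{(k_j+1)c_\beta(z_j)^{2(k_j+1)}}\int_Y|F_j|^2 e^{-\varphi_2}<+\infty$ by hypothesis, the sequence $\{F_N\}$ is uniformly bounded in the weighted $L^2$-space by $C\cdot h(0)$.

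Second, a Montel-type weak compactness argument for holomorphic forms with uniform weighted $L^2$ bounds extracts a subsequence $F_{N_k}$ converging locally uniformly on $M$ to a holomorphic $(n,0)$-form $F$. For any fixed $j_0$, once $N_k\ge j_0$ we have $(F_{N_k}-f,(z_{j_0},y))\in(\mathcal{O}(K_M)\otimes\mathcal{I}(\varphi+\psi))_{(z_{j_0},y)}$; the closedness of the multiplier ideal sheaf under locally uniform convergence of holomorphic germs (equivalently, a local Fatou estimate on $|F_{N_k}-f|^2 e^{-\varphi-\psi}$ near $(z_{j_0},y)$) transfers this membership to $F-f$. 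The global Fatou lemma then gives $\int_M|F|^2 e^{-\varphi}c(-\psi)\le C\cdot h(0)$.

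The main obstacle is upgrading this nonstrict bound to the strict inequality. Let $G(t)$ denote the minimal $L^2$-integral on $\{\psi<-t\}$ for the infinite-point extension problem. Repeating the truncation argument on each sublevel set $\{\psi<-t\}$ yields $G(t)\le C\cdot h(t)$, i.e.\ $G(h^{-1}(r))\le Cr$ for $r\in(0,h(0)]$. Suppose for contradiction that $G(0)=C\cdot h(0)$. Since $G(h^{-1}(r))$ is concave with $G(h^{-1}(0^+))=0$, the concavity lower chord estimate combined with the upper bound forces $G(h^{-1}(r))\equiv Cr$, so $G(h^{-1}(r))$ is linear in $r$. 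Proposition~\ref{infinite-p} then applies and yields, in particular, its condition~(5): $\sum_{j=1}^\infty p_j<+\infty$. However, in our setting $p_j=k_j+1\ge 1$ for every $j\in\mathbb{N}_+$, so $\sum_{j=1}^\infty p_j=+\infty$, a contradiction. Therefore $G(0)<C\cdot h(0)$, and any extension $F$ with $L^2$-value sufficiently close to $G(0)$ satisfies the desired strict inequality~\eqref{L2result-infinite}.
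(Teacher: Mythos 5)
Your overall strategy --- produce the extension with the non-strict bound, then rule out equality by forcing linearity of $G(h^{-1}(r))$ and contradicting statement~(5) of Proposition~\ref{infinite-p} --- is the same as the paper's, and the concavity-plus-chord argument for upgrading $G(0)=C\,h(0)$ to full linearity is correct \emph{given} concavity. However, there is a genuine gap: you invoke concavity of $G(h^{-1}(r))$ as if it were automatic, but Theorem~\ref{Concave} requires $c\in\mathcal{P}_{0,M}$, i.e.\ that $e^{-\varphi}c(-\psi)$ has a positive lower bound on compact subsets of $M\setminus E$ for a suitable closed $E\subset Z\cap\{\psi=-\infty\}$ with $Z$ analytic. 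Since $c(t)e^{-t}$ may decay to $0$, this forces $\psi_1$ to be locally bounded below off $Z_0^1$; but the hypotheses of Theorem~\ref{fib-L2ext-infinite} only control the Lelong numbers of $\psi_1$ at the $z_j$, and $\{\psi_1=-\infty\}$ could a priori be a non-discrete polar set strictly larger than $Z_0^1$, in which case no admissible analytic $Z$ exists and $c\notin\mathcal{P}_{0,M}$. The paper avoids this by \emph{first} proving $\psi_1=2\sum_{j}(k_j+1)G_\Omega(\cdot,z_j)$: under the assumed equality it compares the integral with the modified weight $e^{-\tilde\varphi}c(-\tilde\psi)$, $\tilde\psi_1:=2\sum_j(k_j+1)G_\Omega(\cdot,z_j)$, $\tilde\varphi_1:=\varphi_1+\psi_1-\tilde\psi_1$, and then applies Lemma~\ref{l:psi=G}. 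Only after that identity --- which makes $\psi_1$ harmonic on $\Omega\setminus Z_0^1$ and hence locally bounded --- is the $\mathcal{P}_{0,M}$ condition verified and concavity available. This identity is also silently needed at a second point in your argument: Proposition~\ref{infinite-p} assumes $(\psi_1-2p_jG_\Omega(\cdot,z_j))(z_j)>-\infty$, which is not an explicit hypothesis of Theorem~\ref{fib-L2ext-infinite} and is obtained precisely from $\psi_1=2\sum_j(k_j+1)G_\Omega(\cdot,z_j)$ (the residual sum $2\sum_{l\neq j}(k_l+1)G_\Omega(\cdot,z_l)$ is harmonic, hence finite, near $z_j$). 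You must insert this step before asserting concavity and before invoking Proposition~\ref{infinite-p}.

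A lesser remark: for the existence part, your truncation plus Montel-extraction scheme essentially re-derives Lemma~\ref{L2ext-finite-f} from the finite-case Theorem~\ref{fib-L2ext}, whereas the paper just applies Lemma~\ref{L2ext-finite-f} directly (that lemma already handles infinite discrete $Z_0^1$ by approximating through the exhaustion $\Omega_l$). This is a detour rather than an error, but note that the compactness extraction via Lemma~\ref{s_K} itself needs an $s_K$-integrability condition on $e^{\varphi}/c(-\psi)$, which the paper verifies only after the reduction $\psi_1\le 2\sum_j(k_j+1)G_\Omega(\cdot,z_j)$; so the same caution applies there.
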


	\begin{remark}
		Assume that $\tilde{M}\subset M$ is an $n-$dimensional weakly pseudoconvex submanifold satisfying that $Z_0\subset \tilde{M}$. Then according to Theorem \ref{fib-L2ext-infinite} there exists a holomorphic $(n,0)$ form $F$ on $\tilde{M}$ such that $(F-f,(z_j,y))\in(\mathcal{O}(K_{\tilde{M}})\otimes\mathcal{I}(
		\varphi+\psi)_{(z_j,y)}$ for any $(z_j,y)\in Z_0$ and
		\begin{flalign*}
			\begin{split}
				\int_{\tilde{M}}|F|^2e^{-\varphi}c(-\psi)<&\int_M|F|^2e^{-\varphi}c(-\psi)\\
				\leq&\left(\int_0^{+\infty}c(s)e^{-s}ds\right)\sum_{j=1}^{\infty}\frac{2\pi|a_j|^2e^{-\alpha_j}}{(k_j+1)c_{\beta}(z_j)^{2(k_j+1)}}\int_Y|F_j|^2e^{-\varphi_2}.
			\end{split}
		\end{flalign*}
	\end{remark}
	
	\subsection{Suita conjecture and extended Suita conjecture}
	
	\
	
In this section, we present the characterizations of the fibration versions of the equality parts of Suita conjecture and extended Suita conjecture.

	Let $\Omega$  be an open Riemann surface, which admits a nontrivial Green function $G_{\Omega}$, and let $K_{\Omega}$ be the canonical (holomorphic) line bundle on $\Omega$. Let $w$ be a local coordinate on a neighborhood $V_{z_0}$ of $z_0\in\Omega$ satisfying $w(z_0)=0$. Denote the space of $L^2$ integrable holomorphic section of $K_{\Omega}$ by $A^2(\Omega,K_{\Omega},dV_{\Omega}^{-1},dV_{\Omega})$.
	Let $\{\phi_l\}_{l=1}^{+\infty}$ be a complete orthogonal system of $A^2(\Omega,K_{\Omega},dV_{\Omega}^{-1},dV_{\Omega})$ satisfying $(\sqrt{-1})\int_{M}\frac{\phi_i}{\sqrt{2}}\wedge\frac{\overline{\phi}_j}{\sqrt{2}}=\delta_i^j$. Put $\kappa_{\Omega}=\sum_{l=1}^{+\infty}\phi_l\otimes\overline\phi_l\in C^{\omega}(\Omega,K_{\Omega}\otimes\overline{K_{\Omega}})$. Denote that
	\[B_{\Omega}(z)dw\otimes\overline{dw}:=\kappa_{\Omega}|_{V_{z_0}}.\]
	Let $c_{\beta}(z)$ be the logarithmic capacity (see \cite{S-O69}) which is locally defined by
	\[c_{\beta}(z_0):=\exp\lim_{z\rightarrow z_0}(G_{\Omega}(z,z_0)-\log|w(z)|)\]
	on $\Omega$.
	In \cite{suita72}, Suita stated a conjecture as below.
	\begin{conjecture}
		$c_{\beta}(z_0)^2\le\pi B_{\Omega}(z_0)$ holds for any $z_0\in \Omega$, and equality holds if and only if $\Omega$ is conformally equivalent to the unit disc less a (possible) closed set of inner capacity zero.
	\end{conjecture}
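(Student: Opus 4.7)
The plan is to derive both the inequality $c_\beta(z_0)^2\le\pi B_\Omega(z_0)$ and its equality characterisation by specialising Theorem \ref{fib-L2ext} (and invoking the linearity criterion Theorem \ref{one-p}) to the one-dimensional setting $n=1$, $Y=\{\mathrm{pt}\}$, $m=1$, $z_1=z_0$, $k_1=0$, $a_1=1$, $\psi_1=2G_\Omega(\cdot,z_0)$, $\varphi_1\equiv 0$, $\varphi_2\equiv 0$, $c(t)\equiv 1$, and $f=dw$ on a neighbourhood of $z_0$. In this configuration $\mathcal{I}(2G_\Omega(\cdot,z_0))_{z_0}$ is the ideal of germs vanishing at $z_0$, so the jet condition $(F-dw,z_0)\in\mathcal{O}(K_\Omega)_{z_0}\otimes\mathcal{I}(\psi)_{z_0}$ collapses to $F(z_0)=dw|_{z_0}$. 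With the paper's normalisation $|F|^2=\sqrt{-1}\,F\wedge\overline{F}=2|h|^2\,dm$ for $F=h\,dw$, the Bergman extremal property yields $G(0)=\inf\bigl\{\int_\Omega|F|^2:F(z_0)=dw|_{z_0}\bigr\}=2/B_\Omega(z_0)$.

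For the inequality I would apply Theorem \ref{fib-L2ext} with $\alpha_1=(\varphi_1+\psi_1-2G_\Omega(\cdot,z_0))(z_0)=0$, $p_1=1$, $\int_0^{+\infty}e^{-s}ds=1$, and trivial $Y$-factor $\int_Y|F_1|^2e^{-\varphi_2}=1$, obtaining
\[
G(0)\;\le\;\frac{2\pi\,|a_1|^2\,e^{-\alpha_1}}{p_1\,c_\beta(z_0)^{2(k_1+1)}}\;=\;\frac{2\pi}{c_\beta(z_0)^2},
\]
which combined with $G(0)=2/B_\Omega(z_0)$ delivers exactly $c_\beta(z_0)^2\le\pi B_\Omega(z_0)$.

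For the equality I would use the characterisation portion of Theorem \ref{fib-L2ext}. Equality forces (1) $\psi_1=2G_\Omega(\cdot,z_0)$ (automatic); (2) a decomposition $\varphi_1+\psi_1=2\log|g|+2G_\Omega(\cdot,z_0)+2u$ with $g$ holomorphic on $\Omega$ and $u$ harmonic; (3) the character identity $\chi_{z_0}=\chi_{-u}$; and (5) $\tilde M=M$. Since $\varphi_1+\psi_1=2G_\Omega(\cdot,z_0)$ in our setting, $u=-\log|g|$ must be harmonic, hence $g$ is nowhere vanishing on $\Omega$; then the choice $f_u=1/(P^*g)$ has trivial character, so (3) forces $\chi_{z_0}$ to be trivial. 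That is, the multiplicative function $f_{z_0}$ descends to a genuine single-valued holomorphic function $\Phi_{z_0}$ on $\Omega$ with $|\Phi_{z_0}|=e^{G_\Omega(\cdot,z_0)}<1$ and $\Phi_{z_0}(z_0)=0$.

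The main obstacle is to convert this single-valued realisation of $e^{G_\Omega(\cdot,z_0)}$ into the conformal statement $\Omega\cong\Delta\setminus E$ with $E$ closed of inner capacity zero. The plan is a maximum-principle/covering-space argument: since $-\log|\Phi_{z_0}|=-G_\Omega(\cdot,z_0)$ has exactly the singular profile of the Green function at a single point, $\Phi_{z_0}$ must be injective, for otherwise one would produce a positive harmonic function on $\Omega$ with more logarithmic singularities than $G_\Omega(\cdot,z_0)$ can support. The image $\Omega':=\Phi_{z_0}(\Omega)$ is then open in $\Delta$, and the residual closed set $E:=\Delta\setminus\Omega'$ is of inner capacity zero because the pull-back of $G_\Delta(\cdot,0)$ through $\Phi_{z_0}^{-1}$ agrees with $G_\Omega(\cdot,z_0)$ and extends harmonically across $E$. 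Conversely, when $\Omega\cong\Delta\setminus E$ for such $E$, the Green function is literally $\log|\Phi_{z_0}|$, all conditions (1)--(5) of Theorem \ref{fib-L2ext} are satisfied, and the extremal extension displayed in Remark \ref{rem-p} realises equality in the $L^2$-bound, closing the equivalence.
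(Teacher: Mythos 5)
The paper does not supply its own proof of this statement: it is displayed as the Suita conjecture and the proof is attributed to B\l ocki (inequality for planar domains) and Guan--Zhou (general inequality and the equality part), see \cite{Blo13,gz12,guan-zhou13ap}. That said, your plan of re-deriving it from the paper's machinery is exactly the degenerate case $Y=\{\mathrm{pt}\}$ of the paper's own proof of Theorem \ref{thm:suita}, so the overall strategy is sound. In fact, with $Y$ a point, Theorem \ref{fib-L2ext} collapses to Theorem \ref{c:L2-1d-char} (quoted from \cite{GY-concavity3}); you could cite the latter directly rather than route through the fibration version. Your Bergman-extremal identity $G(0)=2/B_\Omega(z_0)$ is correct in the paper's normalization, the resulting inequality $c_\beta(z_0)^2\le\pi B_\Omega(z_0)$ follows, and the analysis showing that equality forces $u=-\log|g|$ with $g$ holomorphic and zero-free, hence $\chi_{z_0}=\chi_{-u}=1$, is also correct.

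The genuine gap is the passage from ``$\chi_{z_0}$ is trivial'' to the conformal statement. You assert that $\Phi_{z_0}$ must be injective ``for otherwise one would produce a positive harmonic function on $\Omega$ with more logarithmic singularities than $G_\Omega(\cdot,z_0)$ can support,'' but this is not an argument: a holomorphic map with a single simple zero need not be injective, and the contradiction you allude to is never constructed. The classical proof is sharper: for any $w_0\in\Phi_{z_0}(\Omega)\setminus\{0\}$, the negative subharmonic function $v:=\log\bigl|\frac{\Phi_{z_0}-w_0}{1-\bar w_0\Phi_{z_0}}\bigr|$ has logarithmic poles precisely on $\Phi_{z_0}^{-1}(w_0)=\{z_1,\ldots,z_n\}$ (with multiplicity), so $v\le\sum_{j}G_\Omega(\cdot,z_j)$; evaluating at $z_0$ and using $G_\Omega(z_0,z_j)=G_\Omega(z_j,z_0)=\log|\Phi_{z_0}(z_j)|=\log|w_0|$ gives $\log|w_0|\le n\log|w_0|$, i.e.\ $n\le 1$, whence $\Phi_{z_0}$ is injective. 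In practice, this equivalence between triviality of $\chi_{z_0}$ and $\Omega\cong\Delta\setminus E$ with $E$ a closed set of inner capacity zero is exactly the classical fact the paper itself cites, without re-proving it, in the proof of Theorem \ref{thm:suita} (referencing \cite{suita72,Yamada,GZ15}); citing it would close your argument.
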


	The inequality part of  Suita conjecture for bounded planar domain was proved by B\l ocki \cite{Blo13}, and original form of the inequality was proved by Guan-Zhou \cite{gz12}.
	The equality part of Suita conjecture was proved by Guan-Zhou \cite{guan-zhou13ap}, which completed the proof of Suita conjecture.

	Let $\Omega$  be an open Riemann surface, which admits a nontrivial Green function $G_{\Omega}$. Let $Y$ be an $(n-1)-$dimensional weakly pseudoconvex K\"ahler manifold, and let $K_Y$ be the canonical (holomorphic) line bundle on $Y$. Let $M=\Omega\times Y$ be an $n-$dimensional complex manifold. Let $\pi_1$, $\pi_2$ be the natural projections from $M$ to $\Omega$ and $Y$ respectively. Let $K_M$ be the canonical (holomorphic) line bundle on $M$.

	Denote the space of $L^2$ integrable holomorphic section of $K_M$ (resp. $K_Y$) by $A^2(M,K_M,dV_M^{-1},dV_M)$ (resp. $A^2(Y,K_Y,dV_Y^{-1},dV_Y)$).
	Let $\{\sigma_l\}_{l=1}^{+\infty}$ (resp. $\{\tau_l\}_{l=1}^{+\infty}$) be a complete orthogonal system of $A^2(M,K_M,dV_M^{-1},dV_M)$ (resp. $A^2(Y,K_Y,dV_Y^{-1},dV_Y)$) satisfying $(\sqrt{-1})^{n^2}\int_{M}\frac{\sigma_i}{\sqrt{2^n}}\wedge\frac{\overline{\sigma}_j}{\sqrt{2^n}}=\delta_i^j$ (resp. $(\sqrt{-1})^{(n-1)^2}\int_{Y}\frac{\tau_i}{\sqrt{2^{n-1}}}\wedge\frac{\overline{\tau}_j}{\sqrt{2^{n-1}}}=\delta_i^j$). Put $\kappa_M=\sum_{l=1}^{+\infty}\sigma_l\otimes\overline\sigma_l\in C^{\omega}(M,K_M\otimes\overline{K_M})$ and $\kappa_Y=\sum_{l=1}^{+\infty}\tau_l\otimes\overline\tau_l\in C^{\omega}(Y,K_Y\otimes\overline{K_Y})$.

	Let $z_0\in\Omega$, $y_0\in Y$.
	Let $w$ be a local coordinate on a neighborhood $V_{z_0}$ of $z_0$ in $\Omega$ satisfying $w(z_0)=0$. Let $\tilde w=(\tilde w_1,\ldots,\tilde w_{n-1})$ be a local coordinate on a neighborhood $U_{y_0}$ of $y_0$ in $Y$ satisfying that $\tilde w_j(y_0)=0$ for any $j\in\{1,2,\ldots,n-1\}$. Denote that
	\[B_{M}((z,y))dw\wedge d\tilde w_1\wedge\ldots d\tilde w_{n-1} \otimes\overline{dw\wedge d\tilde w_1\wedge\ldots d\tilde w_{n-1}}:=\kappa_{M}\]
	on $V_{z_0}\times U_{y_0}$ and
	\[B_{Y}(y) d\tilde w_1\wedge\ldots d\tilde w_{n-1} \otimes\overline{d\tilde w_1\wedge\ldots d\tilde w_{n-1}}:=\kappa_{Y}\]
	on $U_{y_0}$.
	Let $c_{\beta}(z_0)$ be the logarithmic capacity which is locally defined by
	\[c_{\beta}(z_0):=\exp\lim_{z\rightarrow z_0}(G_{\Omega}(z,z_0)-\log|w(z)|).\]
	
	Assume that $B_Y(y_0)>0$. We give the following theorem as a characterization of the holding of equality in the fibration version of  Suita conjecture.
	\begin{theorem}
		\label{thm:suita}
		$c_{\beta}(z_0)^2B_Y(y_0)\leq \pi B_M((z_0,y_0))$ holds, and equality holds if and only if $\Omega$ is conformally equivalent to the unit disc less a (possible) closed set of inner capacity zero.
	\end{theorem}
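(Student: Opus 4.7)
The plan is to derive Theorem \ref{thm:suita} from the optimal $L^2$ extension result Theorem \ref{fib-L2ext} (applied with a single base point) together with the equality part of Suita's conjecture (Guan-Zhou).

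For the inequality, I would specialize Theorem \ref{fib-L2ext} to $m=1$, $z_1=z_0$, $k_1=0$, $a_1=1$, $\tilde M=M$, $\psi_1=2G_{\Omega}(\cdot,z_0)$, $\varphi_1\equiv 0$, $\varphi_2\equiv 0$, $c\equiv 1$. All hypotheses are met, with $p_1=1$ and $\alpha_1=0$. For any $F_1\in A^2(Y,K_Y,dV_Y^{-1},dV_Y)$, the form $f=\pi_1^*(dw)\wedge\pi_2^*(F_1)$ admits a holomorphic extension $F$ on $M$ with $(F-f,(z_0,y))\in(\mathcal O(K_M)\otimes\mathcal I(2\pi_1^*G_{\Omega}(\cdot,z_0)))_{(z_0,y)}$ for every $y\in Y$ and
\[
\int_M|F|^2\leq\frac{2\pi}{c_{\beta}(z_0)^2}\int_Y|F_1|^2.
\]
The ideal condition at $(z_0,y_0)$ pins down the coefficient of $dw\wedge d\tilde w_1\wedge\cdots\wedge d\tilde w_{n-1}$ in $F$ to equal that of $F_1$ at $y_0$. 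Combined with the extremal characterizations $1/B_M((z_0,y_0))=\inf\{\tfrac{1}{2^n}\int_M|F|^2\}$ and $1/B_Y(y_0)=\inf\{\tfrac{1}{2^{n-1}}\int_Y|F_1|^2\}$ (over forms whose leading coefficient at the relevant point is $1$), and using $|F|^2=2^n|F^{(0)}|^2\,dV$, this yields $c_{\beta}(z_0)^2B_Y(y_0)\leq\pi B_M((z_0,y_0))$.

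For the ``only if'' direction, assume equality. Take the extremal $F_1^*$ realizing $1/B_Y(y_0)$ (which exists since $B_Y(y_0)>0$) and the corresponding minimizer $F^*$ of the extension problem; the entire inequality chain must then collapse, so the extension bound in Theorem \ref{fib-L2ext} is attained. Its equality characterization applies: condition (2) combined with $\varphi_1\equiv 0$ and $\psi_1=2G_{\Omega}(\cdot,z_0)$ forces $u=-\log|g|$ for a holomorphic $g$ on $\Omega$ which, by harmonicity of $u$ on $\Omega$, has no zero on $\Omega$; hence $P^*g$ is a single-valued holomorphic function on $\Delta$ with $|P^*g|=P^*e^{-u}$, and $\chi_{-u}$ is trivial. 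Condition (3) of Theorem \ref{fib-L2ext} (with $k_1+1=1$) now reads $\chi_{z_0}=\chi_{-u}=1$. By the Guan-Zhou equality characterization in Suita's conjecture, triviality of $\chi_{z_0}$ is equivalent to $\Omega$ being conformally equivalent to the unit disc less a closed set of inner capacity zero. Conversely, on such an $\Omega$ the classical equality $c_{\beta}(z_0)^2=\pi B_{\Omega}(z_0)$ holds, and combining with the product formula $B_M((z_0,y_0))=B_{\Omega}(z_0)B_Y(y_0)$ (immediate by tensoring orthonormal bases of $A^2(\Omega,K_{\Omega},dV_{\Omega}^{-1},dV_{\Omega})$ and $A^2(Y,K_Y,dV_Y^{-1},dV_Y)$) produces equality in Theorem \ref{thm:suita}.

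The main obstacle is the ``only if'' direction: one must correctly identify the characters $\chi_{z_0}$ and $\chi_{-u}$ produced by Theorem \ref{fib-L2ext} with the classical objects of Suita's conjecture so that the Guan-Zhou rigidity applies, and one must verify that the extremal for $1/B_Y(y_0)$ together with the corresponding minimizer of $G(0)$ really saturates the extension bound of Theorem \ref{fib-L2ext}, not merely at the level of infima. Tracking the normalizing factors $2^n$ versus $2^{n-1}$ in the definitions of $\kappa_M$ and $\kappa_Y$ is the routine but crucial bookkeeping that produces $\pi$ rather than $2\pi$ in the final estimate.
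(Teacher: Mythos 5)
Your proof of the inequality and of the ``only if'' direction follows the paper's approach: specialize Theorem \ref{fib-L2ext} to $m=1$, $z_1=z_0$, $k_1=0$, $\psi_1=2G_\Omega(\cdot,z_0)$, $\varphi\equiv 0$, $c\equiv 1$, use the extremal characterizations of $B_Y(y_0)$ and $B_M((z_0,y_0))$, observe that equality forces the chain to collapse so the infimum in Theorem \ref{fib-L2ext} is attained, and read off $\chi_{z_0}=\chi_{-u}=1$ from conditions (2) and (3). Your deduction that $u=-\log|g|$ with $g$ nowhere vanishing, hence $\chi_{-u}$ trivial, and the appeal to Suita's classical criterion for triviality of $\chi_{z_0}$ are exactly what the paper does.

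Where you diverge is the sufficiency direction. You invoke the product formula $B_M((z_0,y_0))=B_\Omega(z_0)B_Y(y_0)$ together with the one-dimensional Suita equality $c_\beta(z_0)^2=\pi B_\Omega(z_0)$. That is a shorter path in principle, but the product formula is not ``immediate by tensoring orthonormal bases'': you must show that the tensor ONB $\{\pi_1^*\phi_i\wedge\pi_2^*\tau_j\}$ is \emph{complete} in $A^2(M,K_M,dV_M^{-1},dV_M)$ when $Y$ is a general $(n-1)$-dimensional weakly pseudoconvex K\"ahler manifold. This requires a Fubini-plus-holomorphy argument (slice $F(\cdot,y)$ for a.e.\ $y$, show the coefficient functions $c_i(y)=\langle F(\cdot,y),\phi_i\rangle$ are holomorphic and $L^2$, then expand again in $\{\tau_j\}$); the claim is true but needs to be established, and the paper does not prove or use it. Instead, the paper's sufficiency proof runs a contradiction through Theorem \ref{fib-L2ext} directly: assuming strict inequality $c_\beta(z_0)^2 B_Y(y_0)<\pi B_M((z_0,y_0))$, one would produce an extension $\tilde F_0$ of some $\tilde f_0$ with $\tilde f_0(y_0)=f_2(y_0)$, hence $\int_Y|\tilde f_0|^2\ge\int_Y|f_0|^2$, and $\int_M|\tilde F_0|^2<\frac{2\pi}{c_\beta(z_0)^2}\int_Y|\tilde f_0|^2$, which makes the infimum strictly less than the optimal bound; but under $\chi_{z_0}=1$ the five conditions of Theorem \ref{fib-L2ext} are satisfied, forcing the infimum to equal the bound --- a contradiction. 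This avoids the product-kernel completeness question entirely, which is why the paper structures the sufficiency that way. Your route would be correct once the product formula is proved, but as written there is a gap at that step.
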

	
	Let $\tilde{M}\subset M$ be an $n-$dimensional complex manifold satisfying that $\{z_0\}\times Y\subset \tilde{M}$. Similar to $M$, the Bergman kernel $B_{\tilde{M}}$ can be defined. Theorem \ref{thm:suita} implies the following result.
	
	\begin{remark}
		\label{r:suita}	$c_{\beta}(z_0)^2B_Y(y_0)\leq \pi B_{\tilde{M}}((z_0,y_0))$ holds, and equality holds if and only if $\tilde{M}=M$ and $\Omega$ is conformally equivalent to the unit disc less a (possible) closed set of inner capacity zero.
	\end{remark}
	
	Let $\Omega$  be an open Riemann surface which admits a nontrivial Green function $G_{\Omega}$, and let $K_{\Omega}$ be the canonical (holomorphic) line bundle on $\Omega$. Let $w$ be a local coordinate on a neighborhood $V_{z_0}$ of $z_0\in\Omega$ satisfying $w(z_0)=0$. Let $\rho:=e^{-2u}$ on $\Omega$, where $u$ is a harmonic function on $\Omega$. Denote that
	$$B_{\Omega,\rho}dw\otimes\overline{dw}:=\sum_{l=1}^{+\infty}\sigma_l\otimes\overline{\sigma}_l|_{V_{z_0}}\in C^{\omega}(V_{z_0},K_{\Omega}\otimes\overline{K_{\Omega}}),$$
	where $\{\sigma_l\}_{l=1}^{+\infty}$ are holomorphic $(1,0)$ forms on $\Omega$ satisfying \[\sqrt{-1}\int_{\Omega}\rho\frac{\sigma_i}{\sqrt{2}}\wedge\frac{\overline{\sigma}_j}{\sqrt{2}}=\delta_i^j\]
	 and
	 \[\{F\in H^0(\Omega,K_{\Omega}):\int_{\Omega}\rho|F|^2<+\infty\ \&\ \int_{\Omega}\rho\sigma_l\wedge\overline F=0 \text{ \ for \ any \ } l\in\mathbb{N}_+\}=\{0\}.\]
	
	In \cite{Yamada}, Yamada  stated a conjecture as below (so-called extended Suita conjecture).
	\begin{conjecture}
		$c_{\beta}(z_0)^2\le\pi \rho(z_0) B_{\Omega,\rho}(z_0)$ holds for any $z_0\in \Omega$, and equality holds if and only $\chi_{-u}=\chi_{z_0}$, where $\chi_{-u}$ and $\chi_{z_0}$ are the characters associated to the functions $-u$ and $G_{\Omega}(\cdot,z_0)$ respectively.
	\end{conjecture}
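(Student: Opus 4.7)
My plan is to derive the extended Suita conjecture as a one-point specialization of Theorem~\ref{fib-L2ext}, with the fiber $Y$ collapsed to a single point. First I would recast the statement as a sharp $L^2$ extension problem on $\Omega$. By the reproducing property of the weighted Bergman kernel,
\[
\frac{1}{B_{\Omega,\rho}(z_0)}=\inf\left\{\frac{\sqrt{-1}}{2}\int_{\Omega}\rho\, F\wedge\overline F:F\in H^0(\Omega,K_\Omega),\ F(z_0)/dw=1\right\},
\]
so the inequality $c_\beta(z_0)^2\le\pi\rho(z_0)B_{\Omega,\rho}(z_0)$ is equivalent to the estimate $\inf\int_\Omega\rho|F|^2\le 2\pi\rho(z_0)/c_\beta(z_0)^2$ over $F\in H^0(\Omega,K_\Omega)$ normalized by $F(z_0)/dw=1$, and the equality case corresponds to this infimum being attained at the value $2\pi\rho(z_0)/c_\beta(z_0)^2$.

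Next I would invoke Theorem~\ref{fib-L2ext} with the data $n=1$, $Y=\{\mathrm{pt}\}$, $\varphi_2\equiv 0$, $F_1\equiv 1$, $m=1$, $z_1=z_0$, $k_1=0$, $a_1=1$, $\tilde M=M=\Omega$, together with $\psi_1=2G_\Omega(\cdot,z_0)$ (giving $p_1=1$) and $\varphi_1=2u$ (so that $e^{-\varphi_1}=\rho$). The hypotheses are satisfied: $\varphi_1+\psi_1=2u+2G_\Omega(\cdot,z_0)$ is subharmonic with half Lelong number $k_1+1=1$ at $z_0$, and $\alpha_1=(\varphi_1+\psi_1-2G_\Omega(\cdot,z_0))(z_0)=2u(z_0)>-\infty$. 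Moreover $\mathcal{I}(\varphi+\psi)_{z_0}$ coincides with the vanishing ideal at $z_0$, so the ideal condition on $F$ becomes exactly $F(z_0)/dw=1$. Choosing an approximating family $c_\varepsilon(t)=e^{-\varepsilon t}$ (so $c_\varepsilon(t)e^{-t}$ is decreasing and $\int_0^{+\infty}c_\varepsilon(s)e^{-s}ds=(1+\varepsilon)^{-1}\to 1$), the $L^2$ bound of Theorem~\ref{fib-L2ext} supplies sections $F_\varepsilon$ with
\[
\int_\Omega|F_\varepsilon|^2\rho\,e^{2\varepsilon G_\Omega(\cdot,z_0)}\ \le\ \frac{1}{1+\varepsilon}\cdot\frac{2\pi\rho(z_0)}{c_\beta(z_0)^2}.
\]
Extracting a locally uniform limit $F$ as $\varepsilon\to 0$ and using Fatou's lemma, $F$ inherits the normalization $F(z_0)/dw=1$ and satisfies $\int_\Omega\rho|F|^2\le 2\pi\rho(z_0)/c_\beta(z_0)^2$, which is the extended Suita inequality.

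For the equality part I would apply the \emph{if and only if} clause of Theorem~\ref{fib-L2ext} at each $\varepsilon$. In our specialization, conditions~(1) and~(5) of that theorem are built into the choice of data; condition~(2) holds with $g\equiv 1$ in view of $\varphi_1+\psi_1=2\log|1|+2G_\Omega(\cdot,z_0)+2u$; and condition~(4), with $g=1$, $m=1$, $k_1=0$, reduces to the statement that $P_*(f_u\,df_{z_0})$ is a holomorphic $(1,0)$ form on $\Omega$ that is nonvanishing at $z_0$, which follows automatically whenever $f_uf_{z_0}$ descends from $\Delta$, that is, whenever the characters cancel. The remaining condition~(3) reads $\chi_{z_0}^{\,1}=\chi_{-u}$, i.e.\ $\chi_{z_0}=\chi_{-u}$. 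Therefore equality in Theorem~\ref{fib-L2ext} at each $\varepsilon$ holds iff $\chi_{z_0}=\chi_{-u}$, and after passing to the limit $\varepsilon\to 0$ this characterizes saturation of the extended Suita inequality.

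The step that I expect to require the most care is the weight-approximation passage $c_\varepsilon\to 1$: one must verify that $\{F_\varepsilon\}$ is locally uniformly bounded on $\Omega\setminus\{z_0\}$ (which follows from $e^{2\varepsilon G_\Omega}\ge e^{2\varepsilon\sup_K G_\Omega}>0$ on compacta $K\Subset\Omega\setminus\{z_0\}$), that the limit $F$ retains the jet condition at $z_0$ via Cauchy estimates plus $F_\varepsilon(z_0)/dw=1$, and that both directions of the equality characterization survive the limit. A secondary delicate point is to confirm that in this one-point setting condition~(4) of Theorem~\ref{fib-L2ext} is genuinely redundant given condition~(3), so that the sole effective equality requirement reduces cleanly to the character identity $\chi_{z_0}=\chi_{-u}$.
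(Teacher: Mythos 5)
Your route—specializing Theorem~\ref{fib-L2ext} to $Y=\{\mathrm{pt}\}$, $n=1$, $m=1$, $\psi_1=2G_{\Omega}(\cdot,z_0)$, $\varphi_1=2u$, and then reading the extremal characterization of the weighted Bergman kernel out of the optimal $L^2$ estimate—is the same mechanism the paper uses to prove the fibration version (Theorem~\ref{thm:extend}); the paper itself does not re-prove the one-variable conjecture but cites Guan--Zhou for it. Your reduction of the five equality conditions to the single character identity $\chi_{z_0}=\chi_{-u}$ is correct: conditions (1), (2), (5) are forced by the data, condition (3) is the identity $\chi_{z_0}^{k_1+1}=\chi_{z_0}=\chi_{-u}$, and condition (4) becomes automatic once (3) holds because $\chi_u\chi_{z_0}=\chi_{-u}^{-1}\chi_{-u}=1$ makes $P_*(f_u\,df_{z_0})$ a well-defined holomorphic $(1,0)$ form on $\Omega$, and it is nonvanishing at $z_0$ since $f_{z_0}$ has a simple zero there; the constraint $c_0F_Y=c_jF_j$ is vacuous for $m=1$, $Y$ a point.

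The genuine gap is the detour through $c_\varepsilon(t)=e^{-\varepsilon t}$ and the passage $\varepsilon\to 0$. This approximation is unnecessary: $c\equiv 1$ is already admissible in Theorem~\ref{fib-L2ext} ($c(t)e^{-t}=e^{-t}$ is decreasing and $\int_0^{+\infty}e^{-s}ds=1<+\infty$), and applying the theorem with $c\equiv 1$ gives the estimate $\int_\Omega\rho|F|^2\le 2\pi\rho(z_0)/c_\beta(z_0)^2$ directly, with no limit. Worse, the approximation does not just cost extra work: the ``if and only if'' clause you apply at each $\varepsilon>0$ characterizes when
\[
\frac{1}{1+\varepsilon}\cdot\frac{2\pi\rho(z_0)}{c_\beta(z_0)^2}
=\inf\left\{\int_\Omega |\tilde F|^2\rho\,e^{2\varepsilon G_\Omega(\cdot,z_0)}:\tilde F(z_0)=dw\right\},
\]
and the minimal integral on the right carries the extra weight $e^{2\varepsilon G_\Omega(\cdot,z_0)}$, which is not the Bergman-kernel extremal problem (that is the $\varepsilon=0$ case). ``Equality for each $\varepsilon$'' is therefore neither the same statement as nor obviously equivalent to $c_\beta(z_0)^2=\pi\rho(z_0)B_{\Omega,\rho}(z_0)$, and the sentence ``after passing to the limit $\varepsilon\to 0$ this characterizes saturation'' is not justified in either direction. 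The clean fix is simply to drop the approximation: apply Theorem~\ref{fib-L2ext} with $c\equiv 1$, in which case the equality clause is verbatim $\frac{2\pi\rho(z_0)}{c_\beta(z_0)^2}=\inf\{\int_\Omega\rho|\tilde F|^2:\tilde F(z_0)=dw\}$, i.e.\ $c_\beta(z_0)^2=\pi\rho(z_0)B_{\Omega,\rho}(z_0)$, and the equivalence with $\chi_{z_0}=\chi_{-u}$ follows from your analysis of conditions (1)--(5) without any limiting argument.
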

	The inequality part of extended Suita conjecture  was proved by Guan-Zhou \cite{GZ15}.
	The equality part of extended Suita conjecture was proved by Guan-Zhou \cite{guan-zhou13ap}.

	Let $\rho:=e^{-2\pi_1^*(u)}$ on $M$, where $u$ is a harmonic function on $\Omega$. Denote that
	\[B_{M,\rho}d\wedge d\tilde w_1\wedge\ldots d\tilde w_{n-1} \otimes\overline{dw\wedge d\tilde w_1\wedge\ldots d\tilde w_{n-1}}:=\sum_{l=1}^{+\infty}e_l\otimes\overline{e}_l\]
	on $V_0\times Y$,
	where $\{e_l\}_{l=1}^{+\infty}$ are holomorphic $(n,0)$ forms on $M$ satisfying
	 \[(\sqrt{-1})^{n^2}\int_{M}\rho\frac{e_i}{\sqrt{2^n}}\wedge\frac{\overline{e}_j}{\sqrt{2^n}}=\delta_i^j\]
	 and
	 \[\{F\in H^0(M,K_{M}):\int_{M}\rho|F|^2<+\infty\ \&\ \int_{M}\rho e_l\wedge\overline F=0 \text{\ for \ any \ } l\in\mathbb{N}_+\}=\{0\}.\]
	
	Assume that $B_Y(y_0)>0$. We give the following theorem as a characterization of the holding of equality in the fibration version of extended Suita conjecture.
	\begin{theorem}
		\label{thm:extend}
		$c_{\beta}(z_0)^2B_Y(y_0)\leq \pi \rho(z_0) B_{M,\rho}(z_0)$ holds, and equality holds if and only if $\chi_{-u}=\chi_{z_0}$, where $\chi_{-u}$ and $\chi_{z_0}$ are the characters associated to the functions $-u$ and $G_{\Omega}(\cdot,z_0)$ respectively.
	\end{theorem}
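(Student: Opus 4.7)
The strategy is to deduce Theorem \ref{thm:extend} from the single-point case of Theorem \ref{fib-L2ext} by choosing parameters so that the weighted $L^2$ extension problem on $M$ aligns with the reproducing-kernel characterizations of $B_{M,\rho}$ and $B_Y$. I take $m=1$, $z_1=z_0$, $p_1=1$, $k_1=0$, $a_1=1$, $c(t)\equiv 1$, $\varphi_2\equiv 0$, $\psi_1=2G_\Omega(\cdot,z_0)$, and $\varphi_1=2u$, so that $\psi=2\pi_1^*G_\Omega(\cdot,z_0)$, $e^{-\varphi}=\rho$, and $\alpha_1=(\varphi_1+\psi_1-2G_\Omega(\cdot,z_0))(z_0)=2u(z_0)$ gives $e^{-\alpha_1}=\rho(z_0)$. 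Because $\varphi+\psi=2\log|w|+O(1)$ in a product chart about $\{z_0\}\times Y$, the multiplier-ideal vanishing in Theorem \ref{fib-L2ext} forces every admissible extension $F$ to satisfy $F(z_0,y)=dw\wedge F_Y(y)$.

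For the inequality, I fix $F_Y$ realizing the Bergman minimum at $y_0$, namely $F_Y(y_0)=d\tilde w_1\wedge\cdots\wedge d\tilde w_{n-1}$ with $\int_Y|F_Y|^2=2^{n-1}/B_Y(y_0)$, set $f=\pi_1^*(dw)\wedge\pi_2^*(F_Y)$, and invoke Theorem \ref{fib-L2ext} with $\tilde M=M$ to produce an extension $F$ satisfying $F(z_0,y_0)=dw\wedge d\tilde w_1\wedge\cdots\wedge d\tilde w_{n-1}$ and $\int_M|F|^2\rho\le 2^n\pi\rho(z_0)/(c_\beta(z_0)^2B_Y(y_0))$. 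Since $F$ is a candidate in the extremal problem defining $B_{M,\rho}$, one has $2^n/B_{M,\rho}((z_0,y_0))\le\int_M|F|^2\rho$, and chaining the two bounds yields $c_\beta(z_0)^2B_Y(y_0)\le\pi\rho(z_0)B_{M,\rho}((z_0,y_0))$.

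For the equality characterization, I first verify that in this setup conditions (1), (2), (4), (5) of Theorem \ref{fib-L2ext} are automatic: (1) and (5) are built into the choice of $\psi_1$ and $\tilde M=M$; (2) holds with $g\equiv 1$ and $u'=u$; and with $m=1$ condition (4) merely defines a nonzero constant $c_0=\lim_{z\to z_0}dw/P_*(f_u\,df_{z_0})$, while the consistency $c_0F_Y=c_1F_1$ is trivially true. Only condition (3) carries content, and it collapses to $\chi_{z_0}=\chi_{-u}$. In the ``only if'' direction, Suita equality forces both inequalities in the chain above to be tight, so the $L^2$-extension bound is sharp and the equality case of Theorem \ref{fib-L2ext} delivers $\chi_{z_0}=\chi_{-u}$. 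Conversely, assuming $\chi_{z_0}=\chi_{-u}$, Theorem \ref{fib-L2ext} states that the infimum of $\int_M|\widetilde F|^2\rho$ over extensions of any factorized jet $\pi_1^*(dw)\wedge\pi_2^*(F'_Y)$ equals $\frac{2\pi\rho(z_0)}{c_\beta(z_0)^2}\int_Y|F'_Y|^2$. For an arbitrary candidate $F'$ in the Bergman extremal problem I write $F'=h'(w,\tilde w)\,dw\wedge d\tilde w_1\wedge\cdots\wedge d\tilde w_{n-1}$ near $\{z_0\}\times Y$ and set $F'_Y(y):=h'(0,y)\,d\tilde w_1\wedge\cdots\wedge d\tilde w_{n-1}$, so that $F'_Y(y_0)=d\tilde w_1\wedge\cdots\wedge d\tilde w_{n-1}$ and $\int_M|F'|^2\rho\ge \frac{2\pi\rho(z_0)}{c_\beta(z_0)^2}\int_Y|F'_Y|^2\ge 2^n\pi\rho(z_0)/(c_\beta(z_0)^2B_Y(y_0))$. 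Taking the infimum gives the reverse inequality $B_{M,\rho}((z_0,y_0))\le c_\beta(z_0)^2B_Y(y_0)/(\pi\rho(z_0))$, which combined with the earlier inequality yields equality.

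The main obstacle is precisely this reduction in the converse direction: Theorem \ref{fib-L2ext}'s equality applies a priori only to extensions of factorized jets $dw\wedge F'_Y$, whereas the Bergman extremum ranges over all holomorphic $(n,0)$ forms on $M$ with prescribed value at the single point $(z_0,y_0)$. The observation that makes the reduction work is that on the product $\Omega\times Y$ every holomorphic $(n,0)$ form is intrinsically factorized in a product chart, $F'=h'(w,\tilde w)\,dw\wedge d\tilde w_1\wedge\cdots\wedge d\tilde w_{n-1}$, so restricting to $w=0$ canonically produces an admissible jet $F'_Y$ on $Y$. A secondary point of care is the $2^n$ normalization relating the integrand $|\cdot|^2=\sqrt{-1}^{n^2}(\cdot)\wedge\overline{(\cdot)}$ used throughout the $L^2$ extension machinery with the Bergman normalization $(\sqrt{-1})^{n^2}\int\frac{\cdot}{\sqrt{2^n}}\wedge\frac{\overline{\cdot}}{\sqrt{2^n}}=1$ that defines $B_{M,\rho}$ and $B_Y$.
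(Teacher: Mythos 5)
Your proof is correct and follows essentially the same route as the paper: both reduce to the single-point, unit-gain instance of Theorem \ref{fib-L2ext} (with $m=1$, $p_1=1$, $k_1=0$, $c\equiv1$, $\psi_1=2G_\Omega(\cdot,z_0)$, $\varphi_1=2u$), both invert the Bergman-kernel normalizations, and both use the fiber-restriction $F\mapsto F|_{\{z_0\}\times Y}$ to pass between the $L^2$-extension extremal problem and the Bergman extremal problem. The only cosmetic difference is that you argue the sufficiency directly (bounding $\int_M|F'|^2\rho$ from below for an arbitrary Bergman candidate $F'$ via the sharpness of Theorem \ref{fib-L2ext}), whereas the paper runs the logically equivalent contrapositive by contradiction.
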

	
	Let $\tilde{M}\subset M$ be an $n-$dimensional complex manifold satisfying that $\{z_0\}\times Y \subset \tilde{M}$. Similar to $M$, the Bergman kernel $B_{\tilde{M},\rho}$ can be defined. Theorem \ref{thm:extend} implies the following result.
	
	\begin{remark}
		\label{r:extend}
		$c_{\beta}(z_0)^2B_Y(y_0)\leq \pi B_{\tilde{M},\rho}((z_0,y_0))$ holds, and equality holds if and only if $\tilde{M}=M$ and $\chi_{-u}=\chi_{z_0}$.
	\end{remark}
	
	\section{preparation}
	\subsection{Concavity property of minimal $L^2$ integrals}
	
	\
	
	Recall the concavity property of minimal $L^2$ integrals on weakly pseudoconvex K\"{a}hler manifolds in \cite{GMY}. Let $M$ be an $n-$dimensional complex manifold. Let $X$ and $Z$ be closed subsets of $M$. A triple $(M,X,Z)$ satisfies condition $(A)$, if the following statements hold:
	
	(1). $X$ is a closed subset of $M$ and $X$ is locally negligible with respect to $L^2$ holomorphic functions, i.e. for any coordinated neighborhood $U\subset M$ and for any $L^2$ holomorphic function $f$ on $U\setminus X$, there exists an $L^2$ holomorphic function $\tilde{f}$ on $U$ such that $\tilde{f}|_{U\setminus X}=f$ with the same $L^2$ norm;
	
	(2). $Z$ is an analytic subset of $M$ and $M\setminus(X\cup Z)$ is a weakly pseudoconvex K\"{a}hler manifold.
	
	Let $(M,X,Z)$ be a triple satisfying condition $(A)$. Let $K_M$ be the canonical line bundle on $M$. Let $\psi$ be a plurisubharmonic function on $M$ such that $\{\psi<-t\}\setminus(X\cup Z)$ is a weakly pseudoconvex K\"{a}hler manifold for any $t\in\mathbb{R}$, and let $\varphi$ be a Lebesgue measurable function on $M$ such that $\varphi+\psi$ is a plurisubharmonic function on $M$. Denote $T=-\sup_M\psi$.
	
	Recall the concept of ``gain'' in \cite{GMY}. A positive measurable function $c$ on $(T,+\infty)$ is in the class $\mathcal{P}_{T,M}$ if the following two statements hold:
	
	(1). $c(t)e^{-t}$ is decreasing with respect to $t$;
	
	(2). there is a closed subset $E$ of $M$ such that $E\subset Z\cap\{\psi=-\infty\}$ and for any compact subset $K\subset M\setminus E$, $e^{-\varphi}c(-\psi)$ has a positive lower bound on $K$.
	
	Let $Z_0$ be a subset of $M$ such that $Z_0\cap\text{Supp} (\mathcal{O}/\mathcal{I}(\varphi+\psi))\neq\emptyset$. Let $U\supset Z_0$ be an open subset of $M$, and let $f$ be a holomorphic $(n,0)$ form on $U$. Let $\mathcal{F}_{z_0}\supset\mathcal{I}(\varphi+\psi)_{z_0}$ be an ideal of $\mathcal{O}_{z_0}$ for any $z_0\in Z_0$.
	
	Denote that
	\begin{flalign}
		\begin{split}
			G(t;c):=\inf\bigg\{\int_{\{\psi<-t\}}|\tilde{f}|^2e^{-\varphi}c(-\psi) : \tilde{f}\in H^0(\{\psi<-t\}, \mathcal{O}(K_M))&\\
			\& (\tilde{f}-f)\in H^0(Z_0, (\mathcal{O}(K_M)\otimes\mathcal{F})|_{Z_0})\bigg\}&,
		\end{split}
	\end{flalign}
	and
	\begin{flalign}
		\begin{split}
			\mathcal{H}^2(c,t):=\bigg\{\tilde{f} : \int_{\{\psi<-t\}}|\tilde{f}|^2e^{-\varphi}c(-\psi)<+\infty, \tilde{f}\in H^0(\{\psi<-t\}, \mathcal{O}(K_M))&\\
			\& (\tilde{f}-f)\in H^0(Z_0, (\mathcal{O}(K_M)\otimes\mathcal{F})|_{Z_0})\bigg\}&,
		\end{split}
	\end{flalign}
	where $t\in [T,+\infty)$, and $c$ is a nonnegative measurable function on $(T,+\infty)$. Here $|\tilde{f}|^2:=\sqrt{-1}^{n^2}\tilde{f}\wedge\bar{\tilde{f}}$ for any $(n,0)$ form $\tilde{f}$. And $(\tilde{f}-f)\in H^0(Z_0, (\mathcal{O}(K_M)\otimes\mathcal{F})|_{Z_0})$ means that $(\tilde{f}-f,z_0)\in (\mathcal{O}(K_M)\otimes\mathcal{F})_{z_0}$ for any $z_0\in Z_0$. If there is no holomorphic $(n,0)$ form $\tilde{f}$ on $\{\psi<-t\}$ satisfying $(\tilde{f}-f)\in H^0(Z_0, (\mathcal{O}(K_M)\otimes\mathcal{F})|_{Z_0})$, we set $G(t;c)=+\infty$. We may denote $G(t;c)$ by $G(t)$ if there are no misunderstandings.
	
	In \cite{GMY}, Guan-Mi-Yuan obtained the following concavity of $G(t;c)$.
	
	\begin{theorem}\label{Concave}
		Let $c\in\mathcal{P}_{T,M}$ such that $\int_T^{+\infty}c(s)e^{-s}ds<+\infty$. If there exists $t\in [T,+\infty)$ satisfying that $G(t)<+\infty$, then $G(h^{-1}(r))$ is concave with respect to $r\in (0,\int_T^{+\infty}c(t)e^{-t}dt)$, $\lim\limits_{t\rightarrow T+0}G(t)=G(T)$ and $\lim\limits_{t\rightarrow +\infty}G(t)=0$, where $h(t)=\int_T^{+\infty}c(t_1)e^{-t_1}dt_1$.
	\end{theorem}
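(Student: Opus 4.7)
The plan is to follow the standard strategy for concavity of minimal $L^2$ integrals originating in Guan--Zhou \cite{gz12,guan-zhou13ap} and Guan \cite{G16}, and refined further in \cite{GM_Sci,GMY}: prove concavity via an Ohsawa--Takegoshi-type $L^2$ extension inequality with sharp constant, then extract the two limit statements by softer arguments. The monotonicity of $G$ in $t$ is automatic from the fact that $\{\psi<-t\}$ shrinks as $t$ increases, and both $\liminf_{t\to T+0} G(t)\geq G(T)$ and the fact that $G$ is nonnegative are trivial; the content is the opposite direction.

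The technical heart of the argument is the following key extension inequality, which I would prove first: for any $T\leq t_1<t_2$ with $G(t_2)<+\infty$ and any near-minimizer $F_2\in\mathcal{H}^2(c,t_2)$ for $G(t_2)$, there exists an admissible extension $\tilde F\in\mathcal{H}^2(c,t_1)$, equal to $F_2$ on $\{\psi<-t_2\}$ and satisfying the ideal condition on $Z_0$, whose $L^2$ norm on $\{\psi<-t_1\}$ is controlled in terms of $\int_{\{\psi<-t_2\}}|F_2|^2e^{-\varphi}c(-\psi)$ with an excess measured precisely by $h(t_1)-h(t_2)$. To obtain this I would apply the optimal Ohsawa--Takegoshi extension theorem (in the Błocki--Guan--Zhou sharp form from \cite{Blo13,gz12}) on the weakly pseudoconvex K\"ahler manifold $M\setminus(X\cup Z)$ furnished by condition (A)(2), using weight $\varphi+\psi$ twisted by an auxiliary plurisubharmonic function built from a cutoff of $\psi$ at level $-t_2$; condition (A)(1) then allows one to fill $X$ back in harmlessly. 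The ideal condition at $Z_0$ is carried through the construction by standard multiplier-ideal bookkeeping because the extension step is local away from $Z_0$.

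From this extension inequality, concavity of $\tilde G(r):=G(h^{-1}(r))$ follows by rewriting everything in the variable $r=h(t)$. Applying the inequality to near-minimizers at several levels and rearranging gives the three-point inequality $(r_3-r_1)\tilde G(r_2)\geq (r_3-r_2)\tilde G(r_1)+(r_2-r_1)\tilde G(r_3)$ for $0<r_1<r_2<r_3<h(T)$, which is exactly concavity of $\tilde G$. For the limit $\lim_{t\to+\infty}G(t)=0$, fix any single admissible extension $\tilde f$ and bound $G(t)\leq\int_{\{\psi<-t\}}|\tilde f|^2e^{-\varphi}c(-\psi)$; this tends to $0$ by dominated convergence since the integrand is globally integrable using $\int_T^{+\infty}c(s)e^{-s}\,ds<+\infty$. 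For $\lim_{t\to T+0}G(t)=G(T)$, take near-minimizers $F_n$ at levels $t_n\downarrow T$; monotonicity of $G$ gives uniform $L^2$ bounds on a common reference set, a Montel/weak compactness argument produces a weak limit $F_\infty$ admissible on $\{\psi<-T\}$, and Fatou yields $G(T)\leq\liminf_n G(t_n)$, which combined with $G(t_n)\leq G(T)$ gives equality.

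The main obstacle is the extension step: the choice of auxiliary weight and cutoff must be arranged so that the extra contribution in the $L^2$ estimate integrates against $c(-\psi)e^{-\psi}$ to produce exactly the gap $h(t_1)-h(t_2)$, which is what turns a crude monotonicity bound into true concavity. The correct cutoff construction (analogous to the family $v_{t_0,B}$ used in \cite{GMY}) together with the sharp constant from the optimal Ohsawa--Takegoshi theorem is essential; once this is in place, the rest of the argument is essentially formal algebra and standard functional-analytic compactness.
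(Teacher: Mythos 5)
The paper does not reprove this theorem; it quotes it from \cite{GMY}, though it records the central technical tool here as Lemma \ref{dbarequa} (the $\bar\partial$-estimate with the cutoff pair $b_{t_0,B}$, $v_{t_0,B}$). Your overall strategy matches the cited proof, but two descriptions of the key step are imprecise. First, the step is not literally an application of the optimal Ohsawa--Takegoshi extension theorem, which extends sections from an analytic subvariety; what is needed is to propagate a form from the open sublevel set $\{\psi<-t_2\}$ to all of $M$, and the actual tool is a twisted $\bar\partial$-estimate: cut $F$ off by $1-b_{t_0,B}(\psi)$, solve $\bar\partial$ of the resulting error with weight $\varphi-v_{t_0,B}(\psi)$ and gain $c(-v_{t_0,B}(\psi))$, and read off the sharp constant $\int_0^{t_0+B}c(t)e^{-t}\,dt$. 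Second, the $\tilde F$ so produced is not equal to $F$ on $\{\psi<-t_2\}$, and claiming so would be false in general (there is no reason $F$ extends holomorphically across $\{\psi=-t_2\}$); what Lemma \ref{dbarequa} actually delivers is that $\tilde F-(1-b_{t_0,B}(\psi))F$ is small in weighted $L^2$, which together with $v_{t_0,B}\geq\psi$ and $c(t)e^{-t}$ decreasing is enough to place $\tilde F-f$ in the required sheaf $\mathcal{O}(K_M)\otimes\mathcal{F}$ along $Z_0$. Passing from this estimate to concavity also requires the orthogonality identity of Lemma \ref{F_t} and a limiting argument in $B\to0$ with near-minimizers. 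Your treatment of the two limits is correct, provided you invoke Lemma \ref{module} to see that the germ condition along $Z_0$ survives the Montel limit.
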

	
	Guan-Mi-Yuan also obtained the following corollary of Theorem \ref{Concave}, which is a neccessary condition for the concavity degenerating to linearity.
	
	\begin{lemma}[\cite{GMY}]\label{linear}
		Let $c(t)\in\mathcal{P}_{T,M}$ such that $\int_T^{+\infty}c(s)e^{-s}ds<+\infty$. If $G(t)\in (0,+\infty)$ for some $t\geq T$ and $G(h^{-1}(r))$ is linear with respect to $r\in (0,\int_T^{+\infty}c(s)e^{-s}ds)$, where $h(t)=\int_t^{+\infty}c(l)e^{-l}dl$, then there exists a unique holomorphic $(n,0)$ form $F$ on $M$ satisfying $(F-f)\in H^0(Z_0,(\mathcal{O}(K_M)\otimes\mathcal{F})|_{Z_0})$, and $G(t;c)=\int_{\{\psi<-t\}}|F|^2e^{-\varphi}c(-\psi)$ for any $t\geq T$.
		
		Furthermore, we have
		\begin{equation}
			\int_{\{-t_2\leq\psi<-t_1\}}|F|^2e^{-\varphi}a(-\psi)=\frac{G(T_1;c)}{\int_{T_1}^{+\infty}c(t)e^{-t}dt}\int_{t_1}^{t_2}a(t)e^{-t}dt,
		\end{equation}
		for any nonnegative measurable function $a$ on $(T,+\infty)$, where $T\leq t_1<t_2\leq +\infty$.
		
		Especially, if $\mathcal H^2(\tilde{c},t_0)\subset\mathcal H^2(c,t_0)$ for some $t_0\geq T$, where $\tilde{c}$ is a nonnegative measurable function on $(T,+\infty)$, we have
		\begin{equation}
			G(t_0;\tilde{c})=\int_{\{\psi<-t_0\}}|F|^2e^{-\varphi}\tilde{c}(-\psi)=\frac{G(T_1;c)}{\int_{T_1}^{+\infty}c(s)e^{-s}ds}\int_{t_0}^{+\infty} \tilde{c}(s)e^{-s}ds.
		\end{equation}		
	\end{lemma}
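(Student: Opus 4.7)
The plan is to use the linearity hypothesis as the equality case in the concavity Theorem \ref{Concave} and extract a single holomorphic form $F$ on $M$ that is compatible with the minimizers on every sublevel set $\{\psi<-t\}$. The argument splits into four stages: producing level-wise minimizers by Hilbert-space theory, a rigidity upgrade to a global $F$, deriving the annular integral formula via Radon--Nikodym, and transferring optimality to the weight $\tilde c$ by an orthogonality argument.

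First, for each $t\geq T$ with $G(t;c)<+\infty$, I would view $\mathcal H^2(c,t)$ as a nonempty closed affine subspace of the Hilbert space of $c$-weighted holomorphic $(n,0)$ forms on $\{\psi<-t\}$: the constraint $(\tilde f-f)\in H^0(Z_0,(\mathcal O(K_M)\otimes\mathcal F)|_{Z_0})$ is closed because $L^2_c$-convergence of holomorphic forms is locally uniform and therefore preserves stalks. Hilbert space theory then yields a unique minimizer $F_t$. The crucial rigidity step is the identity $F_{t_1}|_{\{\psi<-t_2\}}=F_{t_2}$ for $T\leq t_1<t_2$; since the restriction lies in $\mathcal H^2(c,t_2)$ one direction is automatic, while for the reverse one must reopen the proof of Theorem \ref{Concave}: concavity is established there via an Ohsawa--Takegoshi type $\bar\partial$-estimate that produces from any candidate on $\{\psi<-t_2\}$ an extension to $\{\psi<-t_1\}$ whose annular contribution is bounded by the slope of the concave envelope, and this inequality is tight precisely when $G(h^{-1}(r))$ is linear. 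Tightness then forces the $c$-mass of $F_{t_1}$ on $\{\psi<-t_2\}$ to equal $G(t_2;c)$, so uniqueness gives compatibility, $\{F_t\}$ glues into a holomorphic form on $\{\psi<-T\}=\bigcup_{t>T}\{\psi<-t\}$, and condition (A) extends it across the negligible residual set to $F$ on all of $M$. This rigidity step is the main obstacle; everything else is Hilbert-space geometry and measure theory.

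For the annular formula, subtraction gives $\int_{\{-t_2\leq\psi<-t_1\}}|F|^2 e^{-\varphi}c(-\psi)=G(t_1;c)-G(t_2;c)=A\int_{t_1}^{t_2}c(s)e^{-s}\,ds$ with $A:=G(T_1;c)/\int_{T_1}^{+\infty}c(s)e^{-s}\,ds$, independent of $T_1$ by linearity. Pushing $|F|^2 e^{-\varphi}$ forward along $-\psi$ to a Borel measure $\mu$ on $(T,+\infty)$ turns this into $\int_{t_1}^{t_2}c(s)\,d\mu(s)=A\int_{t_1}^{t_2}c(s)e^{-s}\,ds$ for all $t_1<t_2$; since $c>0$ the common factor cancels and $d\mu(s)=Ae^{-s}\,ds$, so integrating against any nonnegative measurable $a$ yields the stated general formula.

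The ``especially'' clause follows in two steps. Taking $a=\tilde c$ and $t_2=+\infty$ in the annular formula gives $\int_{\{\psi<-t_0\}}|F|^2 e^{-\varphi}\tilde c(-\psi)=A\int_{t_0}^{+\infty}\tilde c(s)e^{-s}\,ds$, the claimed right-hand side, and in particular $F\in\mathcal H^2(\tilde c,t_0)$. For minimality, any competitor $\tilde f\in\mathcal H^2(\tilde c,t_0)\subset\mathcal H^2(c,t_0)$ produces $g:=\tilde f-F$ with $g|_{Z_0}\in\mathcal F|_{Z_0}$ and $\int|g|^2 e^{-\varphi}c(-\psi)<+\infty$; the $c$-Euler--Lagrange identity at every level $t\geq t_0$ yields $\int_{\{\psi<-t\}}F\overline{g}\,e^{-\varphi}c(-\psi)=0$, and pushing the complex measure $F\overline{g}\,e^{-\varphi}$ forward along $-\psi$ to $d\nu_g$ gives $\int_t^{+\infty}c(s)\,d\nu_g(s)=0$ for all $t\geq t_0$, so $c\,d\nu_g\equiv 0$ and hence $d\nu_g\equiv 0$. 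Consequently $\int F\overline{g}\,e^{-\varphi}\tilde c(-\psi)=0$, which together with the decomposition $|\tilde f|^2=|F|^2+|g|^2+2\mathrm{Re}(F\overline{g})$ forces $F$ to be the unique $\tilde c$-minimizer with value $A\int_{t_0}^{+\infty}\tilde c(s)e^{-s}\,ds$.
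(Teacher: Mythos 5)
Your overall architecture (unique level-wise minimizers via Hilbert-space projection, gluing into a global $F$ by a rigidity argument, pushforward measures to extract the annular formula, and an orthogonality/pushforward argument for the $\tilde c$-optimality) is the right one and matches the strategy of the cited source \cite{GMY}. Your treatment of the annular formula is clean and correct: pushing $|F|^2 e^{-\varphi}$ forward along $-\psi$, noting that $c$ is bounded above and below on compact intervals of $(T,+\infty)$ because $c(t)e^{-t}$ is decreasing and positive, so that $c\,d\mu=Ae^{-s}c(s)\,ds$ forces $d\mu=Ae^{-s}ds$ and then integration against any nonnegative $a$ follows by monotone convergence. Likewise, your ``especially'' argument is correct: from the inclusion $\mathcal H^2(\tilde c,t_0)\subset\mathcal H^2(c,t_0)$, any competitor $\tilde f$ and its defect $g=\tilde f-F$ satisfy the $c$-orthogonality $\int_{\{\psi<-t\}}F\bar g\,e^{-\varphi}c(-\psi)=0$ for all $t\geq t_0$, the pushforward complex measure therefore vanishes, the integrability of $F\bar g\,e^{-\varphi}\tilde c(-\psi)$ follows from Cauchy--Schwarz (both $F$ and $g$ are $\tilde c$-$L^2$, using the annular formula with $a=\tilde c$ for $F$), and hence the $\tilde c$-orthogonality holds, giving minimality via the Pythagorean identity.

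The genuine gap is exactly where you flag it: Stage 2, the rigidity upgrade. Your argument asserts that the Ohsawa--Takegoshi type estimate behind Theorem \ref{Concave} ``produces from any candidate on $\{\psi<-t_2\}$ an extension to $\{\psi<-t_1\}$ whose annular contribution is bounded by the slope of the concave envelope, and this inequality is tight precisely when $G(h^{-1}(r))$ is linear'', and then that ``tightness forces the $c$-mass of $F_{t_1}$ on $\{\psi<-t_2\}$ to equal $G(t_2;c)$''. As stated this is not a proof. The $\bar\partial$-estimate of Lemma \ref{dbarequa} yields an extension $\tilde F$ controlling $\int_M|\tilde F-(1-b_{t_0,B}(\psi))F|^2e^{-\varphi+v_{t_0,B}(\psi)}c(-v_{t_0,B}(\psi))$, not a direct bound on the annular $c$-mass of $F_{t_1}$, and recovering the slope estimate requires comparing $e^{v_{t_0,B}(\psi)}c(-v_{t_0,B}(\psi))$ with $e^{\psi}c(-\psi)$ via the monotonicity of $c(t)e^{-t}$, combining with the orthogonality relation of Lemma \ref{F_t} applied to $\tilde F$ versus the global minimizer $F_T$, and then passing $B\to0^+$ with care about the annular average $\frac{1}{B}\int_{\{-t_0-B<\psi<-t_0\}}|F|^2e^{-\varphi}$. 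Moreover, the object that is being deduced to equal $G(t_2;c)$ is $\int_{\{\psi<-t_2\}}|F_{t_1}|^2e^{-\varphi}c(-\psi)$, but the extension produced by the $\bar\partial$-lemma is constructed from $F_{t_2}$, not $F_{t_1}$; the transfer of information from the tightness of the extension back to the restriction of $F_{t_1}$ needs the uniqueness in Lemma \ref{F_t} plus a separate lower bound on the annular mass of $F_{t_1}$, which is the nontrivial content of the \cite{GMY} argument. Without filling in this chain, the compatibility $F_{t_1}|_{\{\psi<-t_2\}}=F_{t_2}$, and hence the existence of the single $F$ on $M$, is not established.
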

	
	\begin{remark}\label{ctildec}(\cite{GMY})
		Let $c(t)\in\mathcal{P}_{T,M}$. If $\mathcal{H}^2(\tilde{c},t_1)\subset\mathcal{H}^2(c,t_1)$, then $\mathcal{H}^2(\tilde{c},t_2)\subset\mathcal{H}^2(c,t_2)$, where $t_1>t_2>T$. In the following, we give some sufficient conditions of $\mathcal{H}^2(\tilde{c},t_0)\subset\mathcal{H}^2(c,t_0)$ for $t_0>T$:
		
		(1). $\tilde{c}\in\mathcal{P}_{T,M}$ and $\lim\limits_{t\rightarrow+\infty}\frac{\tilde{c}(t)}{c(t)}>0$. Especially, $\tilde{c}\in\mathcal{P}_{T,M}$, $c$ and $\tilde{c}$ are smooth on $(T,+\infty)$ and $\frac{d}{dt}(\log\tilde{c}(t))\geq\frac{d}{dt}(\log c(t))$;
		
		(2). $\tilde{c}\in\mathcal{P}_{T,M}$, $\mathcal{H}^2(c,t_0)\neq\emptyset$ and there exists $t>t_0$ such that $\{\psi<-t\}\subset\subset\{\psi<-t_0\}$, $\{z\in\overline{\{\psi<-t\}} : \mathcal{I}(\varphi+\psi)_z\neq\mathcal{O}_z\}\subset Z_0$ and $\mathcal{F}|_{\overline{\{\psi<-t\}}}=\mathcal{I}(\varphi+\psi)|_{\overline{\{\psi<-t\}}}$.
	\end{remark}

	\subsection{Open Riemann surface case}
	\
	
	Let $M=\Omega$  be an open Riemann surface, which admits a nontrivial Green function $G_{\Omega}$.
	Let $P:\Delta\rightarrow\Omega$ be the universal covering from unit disc $\Delta$ to $\Omega$.
	It is known that for any harmonic function $u$ on $\Omega$,
	there exists a $\chi_{u}$ (the  character associate to $u$) and a multiplicative function $f_u\in\mathcal{O}^{\chi_{u}}(\Omega)$,
	such that $|f_u|=P^{*}e^{u}$.
	Let $z_0\in \Omega$.
	Recall that for the Green function $G_{\Omega}(z,z_0)$,
	there exist a $\chi_{z_0}$ and a multiplicative function $f_{z_0}\in\mathcal{O}^{\chi_{z_0}}(\Omega)$, such that $|f_{z_0}(z)|=P^{*}e^{G_{\Omega}(z,z_0)}$ .
	
	Let $Z_0:=\{z_1,z_2,...,z_m\}\subset\Omega$ be a subset of $\Omega$ satisfying that $z_j\not=z_k$ for any $j\not=k$.
	Let $w_j$ be a local coordinate on a neighborhood $V_{z_j}\Subset\Omega$ of $z_j$ satisfying $w_j(z_j)=0$ for $j\in\{1,2,...,m\}$, where $V_{z_j}\cap V_{z_k}=\emptyset$ for any $j\not=k$. Denote that $V_0:=\cup_{1\le j\le n}V_{z_j}$.
	
	Let $f$ be a holomorphic $(1,0)$ form on $V_0$, and let $f=f_1dw_j$ on $V_{z_j}$, where $f_1$ is a holomorphic function on $V_0$. Let $\psi$ be a negative subharmonic function on $\Omega$, and let $\varphi$ be a Lebesgue measurable function on $\Omega$ such that $\varphi+\psi$ is subharmonic on $\Omega$.
	
	The following Theorem gives a characterization of the concavity of $G(h^{-1}(r))$ degenerating to linearity.
	\begin{theorem}[\cite{GY-concavity3}]
		\label{thm:m-points}
		Let $c\in\mathcal{P}_{0,\Omega}.$
		Assume that $G(0)\in(0,+\infty)$ and $(\psi-2p_jG_{\Omega}(\cdot,z_j))(z_j)>-\infty$ for  $j\in\{1,2,..,m\}$, where $p_j=\frac{1}{2}v(dd^c(\psi),z_j)>0$. Then $G(h^{-1}(r))$ is linear with respect to $r$ if and only if the following statements hold:
		
		$(1)$ $\psi=2\sum_{1\le j\le m}p_jG_{\Omega}(\cdot,z_j)$;
		
		$(2)$ $\varphi+\psi=2\log|g|+2\sum_{1\le j\le m}G_{\Omega}(\cdot,z_j)+2u$ and $\mathcal{F}_{z_j}=\mathcal{I}(\varphi+\psi)_{z_j}$ for any $j\in\{1,2,...,m\}$, where $g$ is a holomorphic function on $\Omega$ such that $ord_{z_j}(g)=ord_{z_j}(f_1)$ for any $j\in\{1,2,...,m\}$ and $u$ is a harmonic function on $\Omega$;
		
		$(3)$ $\prod_{1\le j\le m}\chi_{z_j}=\chi_{-u}$, where $\chi_{-u}$ and $\chi_{z_j}$ are the  characters associated to the functions $-u$ and $G_{\Omega}(\cdot,z_j)$ respectively;
		
		$(4)$
		\[\lim_{z\rightarrow z_k}\frac{f}{gP_*\left(f_u(\prod_{1\le j\le m}f_{z_j})(\sum_{1\le j\le m}p_{j}\frac{d{f_{z_{j}}}}{f_{z_{j}}})\right)}=c_0\]
		for any $k\in\{1,2...,m\}$, where $c_0\in\mathbb{C}\backslash\{0\}$ is a constant independent of $k$.
	\end{theorem}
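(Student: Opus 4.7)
The equivalence splits into a constructive "if" direction and a structural "only if" direction, linked by Lemma~\ref{linear}, which translates linearity of $G\circ h^{-1}$ into the existence of a unique global extremal form together with a sharp level-set identity.

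For the "if" direction, I plan to exhibit the explicit candidate
\[F := c_0\, gP_*\Bigl(f_u\Bigl(\prod_{j=1}^{m} f_{z_j}\Bigr)\Bigl(\sum_{j=1}^{m} p_j\frac{df_{z_j}}{f_{z_j}}\Bigr)\Bigr).\]
Condition~(3) ensures that the bracketed expression on $\Delta$ is invariant under deck transformations, so $P_*$ descends to a meromorphic $(1,0)$ form on $\Omega$; multiplication by $g$ together with the zero orders prescribed by~(2) makes $F$ extend holomorphically across each $z_j$, while~(4) gives the germ matching $(F-f)\in H^0(Z_0,(\mathcal{O}(K_\Omega)\otimes\mathcal{F})|_{Z_0})$. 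A direct computation using $|f_u|=P^*e^u$, $|f_{z_j}|=P^*e^{G_\Omega(\cdot,z_j)}$ and the decompositions~(1)--(2) converts $|F|^2 e^{-\varphi}c(-\psi)$ to a product of a smooth finite weight with $c(-\psi)e^{-\psi}$, and the co-area formula applied to $-\psi$ then yields $\int_{\{\psi<-t\}}|F|^2 e^{-\varphi}c(-\psi) = C\int_t^{+\infty} c(s)e^{-s}\,ds$ for a positive constant $C$, whence $G(h^{-1}(r))$ is linear in $r$.

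For the "only if" direction, Lemma~\ref{linear} produces a unique holomorphic $(1,0)$ form $F$ on $\Omega$ extremal for every $t\ge 0$, together with the sharp level-set identity
\[\int_{\{-t_2\le\psi<-t_1\}}|F|^2 e^{-\varphi}a(-\psi) = \frac{G(0)}{\int_0^{+\infty}c(s)e^{-s}\,ds}\int_{t_1}^{t_2}a(t)e^{-t}\,dt\]
for all nonnegative measurable $a$. Specialising $a$ to approximate indicators of small intervals and differentiating in $t$ forces $|F|^2 e^{-\varphi+\psi}$ to be essentially constant along almost every level set of $\psi$; combined with the subharmonicity of $\varphi+\psi$ and the smoothness of $F$ on $\Omega\setminus Z_0^1$, this propagates to the global decomposition $\varphi+\psi = 2\log|g|+2\sum G_\Omega(\cdot,z_j)+2u$ of~(2). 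That $F$ descends from $\Delta$ to a single-valued holomorphic form on $\Omega$ forces the character identity~(3), and matching the germs of $F$ versus $f$ at each $z_k$ (guided by the expression in Remark~\ref{rem-finite}) produces the coefficient condition~(4).

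The main obstacle is establishing~(1): upgrading the pointwise Lelong-number data $2p_j=v(dd^c\psi,z_j)$ and the finiteness $(\psi-2p_jG_\Omega(\cdot,z_j))(z_j)>-\infty$ into the global identity $\psi=2\sum p_jG_\Omega(\cdot,z_j)$. The plan is to set $\psi_0:=2\sum p_jG_\Omega(\cdot,z_j)$ and use Green-function maximality (equivalently, a Perron-envelope argument applied to the subharmonic function $\psi-\psi_0$, which extends across $Z_0^1$ by the finiteness hypothesis) to obtain $\psi\le\psi_0$, hence the sublevel inclusion $\{\psi_0<-t\}\subset\{\psi<-t\}$. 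Running the minimization with $\psi_0$ in place of $\psi$ (and $\varphi$ adjusted to $\varphi+\psi-\psi_0$) and invoking the explicit extremal from the "if" direction produces a competing value which one shows coincides with $G(0;c)$; the uniqueness and sharpness clauses of Lemma~\ref{linear} together with Remark~\ref{ctildec}(2) then force $\{\psi_0<-t\}=\{\psi<-t\}$ for all $t>0$, whence $\psi=\psi_0$. Rigorously coordinating the two $G$-functionals and promoting the equality of sublevel sets to pointwise equality of $\psi$ and $\psi_0$ is the technical heart of the argument.
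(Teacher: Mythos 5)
The paper does not prove Theorem~\ref{thm:m-points}; it is recalled from \cite{GY-concavity3} and used as an ingredient in the fibration results, so there is no internal proof to compare against. Assessing your proposal on its own terms: the sufficiency sketch is essentially sound in outline — the explicit extremal $F=c_0 gP_*\bigl(f_u(\prod f_{z_j})(\sum p_j\,df_{z_j}/f_{z_j})\bigr)$, the character and zero-order bookkeeping, and a flux/co-area computation on level sets of $\psi$ — though the flux computation must be carried out carefully and matched against the constant $\sum_j 2\pi|a_j|^2e^{-2u(z_j)}/(p_j|d_j|^2)$ appearing in Remark~\ref{rem-finite-1d}.

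The necessity direction contains two genuine gaps. First, the step ``specialising $a$ to approximate indicators of small intervals $\ldots$ forces $|F|^2e^{-\varphi+\psi}$ to be essentially constant along almost every level set of $\psi$'' does not follow. The identity in Lemma~\ref{linear} constrains only the pushforward of the measure $|F|^2e^{-\varphi}$ under $-\psi$, i.e.\ its integral over each level shell; it says nothing pointwise about the density along a given level set (a toy example: on the disc with $\psi=2\log|z|$, any angular profile $\rho(\theta)$ gives the same pushforward). Some additional rigidity mechanism — holomorphy of $F$, subharmonicity of $\varphi+\psi$, harmonicity of $2\sum p_jG_\Omega(\cdot,z_j)$ off $Z_0$ — must be invoked to upgrade these shell averages to the structural conclusions (2)--(4), and your proposal does not supply it.

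Second, your route to (1) is circular: you propose to compare $G(0;c)$ against the explicit extremal ``from the if direction'' with $\psi$ replaced by $\psi_0=2\sum p_jG_\Omega(\cdot,z_j)$, but invoking that extremal presupposes the decompositions (2)--(4) for the modified weight, which is precisely what remains to be proved. What is actually required is an independent sharp upper bound — an optimal $L^2$ extension result with the modified weight (the $1$-dimensional case of Lemma~\ref{L2ext-finite-f}, namely Theorem~\ref{c:L2-1d-char}) — together with a matching lower bound for the extremal (a level-shell estimate of the type in Lemma~\ref{fanxiangineq}), and then a strict-inequality comparison à la Lemma~\ref{l:psi=G} to rule out $\psi\not\equiv\psi_0$. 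This is exactly the pattern the present paper follows in proving its higher-dimensional analogues (Theorems~\ref{one-p} and~\ref{finite-p}), and it is not captured by the ``specialising $a$'' argument you sketch.
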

	
	\begin{remark}[\cite{GY-concavity3}]\label{rem-finite-1d}
		When the four statements in Theorem \ref{thm:m-points} hold,
		\begin{equation*}
			c_0gP_*\left(f_u(\prod_{1\leq l\leq m}f_{z_l})(\sum_{1\leq l\leq m}p_l\dfrac{d{f_{z_{l}}}}{f_{z_{l}}})\right)
		\end{equation*}
		is the unique holomorphic $(n,0)$ form $F$ on $M$ such that $(F-f,z_j)\in (\mathcal{O}(K_{\Omega}))_{z_j}\otimes\mathcal{F}_{z_j}$ for any $j\in\{1,2,\ldots,m\}$ and
		\begin{equation*}
			G(t)=\int_{\{\psi<-t\}}|F|^2e^{-\varphi}c(-\psi)=\left(\int_t^{+\infty}c(s)e^{-s}ds\right)\sum_{j=1}^m\frac{2\pi |a_j|^2e^{-2u(z_j)}}{p_j|d_j|^2}
		\end{equation*}
		for any $t\geq 0$, where $f_u$ is a holomorphic function on $\Delta$ such that $|f_u|=P^*(e^u)$, $f_{z_j}$ is a holomorphic function on $\Delta$ such that $|f_{z_j}|=P^*(e^{G_{\Omega}(\cdot,z_j)})$ for any $j\in\{1,2,\ldots,m\}$, $a_j:=\lim\limits_{z\rightarrow z_j}\frac{f_1}{w^{k_j}}(z_j)$
		and $d_j:=\lim\limits_{z\rightarrow z_j}\frac{g}{w^{k_j}}(z_j)$, here $k_j:=ord_{z_j}(g)=ord_{z_j}(f_1)$.
	\end{remark}
	
	\begin{remark}[\cite{GY-concavity3}]\label{r:chi}
		For any $\{z_1,z_2,..,z_m\}$, there exists a harmonic function $u$ on $\Omega$ such that $\prod_{1\le j\le m}\chi_{z_j}=\chi_{-u}$. In fact, as $\Omega$ is an open Riemann surface, then there exists a holomorphic function $\tilde{f}$ on $\Omega$ satisfying that $u:=\log|\tilde{f}|-\sum_{1\le j\le m}G_{\Omega}(\cdot,z_j)$ is harmonic on $\Omega$, which implies that $\prod_{1\le j\le m}\chi_{z_j}=\chi_{-u}$.
	\end{remark}
	
	We recall a  characterization of the holding of equality in optimal jets $L^2$ extension problem from finite points to open Riemann surfaces.
	\begin{theorem}[\cite{GY-concavity3}]\label{c:L2-1d-char}
		Let $k_j$ be a nonnegative integer for any $j\in\{1,2,...,m\}$. Let $\psi$ be a negative  subharmonic function on $\Omega$ satisfying that   $\frac{1}{2}v(dd^{c}\psi,z_j)=p_j>0$ for any $j\in\{1,2,...,m\}$. Let $\varphi$ be a Lebesgue measurable function on $\Omega$  such that $\varphi+\psi$ is subharmonic on $\Omega$, $\frac{1}{2}v(dd^c(\varphi+\psi),z_j)=k_j+1$ and $\alpha_j:=(\varphi+\psi-2(k_j+1)G_{\Omega}(\cdot,z_j))(z_j)>-\infty$ for any $j$. Let $c(t)$ be a positive measurable function on $(0,+\infty)$ satisfying $c(t)e^{-t}$ is decreasing on $(0,+\infty)$ and $\int_{0}^{+\infty}c(s)e^{-s}ds<+\infty$. Let $a_j$ be a constant for any $j$.
		
		Let $f$ be a holomorphic $(1,0)$ form on $V_0$ satisfying that $f=a_jw_j^{k_j}dw_j$ on $V_{z_j}$. Then there exists a holomorphic $(1,0)$ form $F$ on $\Omega$ such that $(F-f,z_j)\in(\mathcal{O}(K_{\Omega})\otimes\mathcal{I}(2(k_j+1)G_{\Omega}(\cdot,z_j)))_{z_j}$ and
		\begin{equation}
			\label{eq:210902a}
			\int_{\Omega}|F|^2e^{-\varphi}c(-\psi)\leq\left(\int_0^{+\infty}c(s)e^{-s}ds\right)\sum_{1\le j\le m}\frac{2\pi|a_j|^2e^{-\alpha_j}}{p_jc_{\beta}(z_j)^{2(k_j+1)}}.
		\end{equation}
		
		Moreover, equality $(\int_0^{+\infty}c(s)e^{-s}ds)\sum_{1\le j\le m}\frac{2\pi|a_j|^2e^{-\alpha_j}}{p_jc_{\beta}(z_j)^{2(k_j+1)}}=\inf\big\{\int_{\Omega}|\tilde{F}|^2e^{-\varphi}c(-\psi):\tilde{F}$ is a holomorphic $(1,0)$ form on $\Omega$ such that $(\tilde{F}-f,z_j)\in(\mathcal{O}(K_{\Omega})\otimes\mathcal{I}(2(k_j+1)G_{\Omega}(\cdot,z_j)))_{z_j}$ for any $j\big\}$ holds if and only if the following statements hold:

		$(1)$ $\psi=2\sum_{1\le j\le m}p_jG_{\Omega}(\cdot,z_j)$;
		
		$(2)$ $\varphi+\psi=2\log|g|+2\sum_{1\le j\le m}(k_j+1)G_{\Omega}(\cdot,z_j)+2u$, where $g$ is a holomorphic function on $\Omega$ such that $g(z_j)\not=0$ for any $j\in\{1,2,...,m\}$ and $u$ is a harmonic function on $\Omega$;
		
		$(3)$ $\prod_{1\le j\le m}\chi_{z_j}^{k_j+1}=\chi_{-u}$, where $\chi_{-u}$ and $\chi_{z_j}$ are the  characters associated to the functions $-u$ and $G_{\Omega}(\cdot,z_j)$ respectively;
		
		$(4)$
		\[\lim_{z\rightarrow z_k}\frac{f}{gP_*\left(f_u(\prod_{1\le j\le m}f_{z_j}^{k_j+1})(\sum_{1\le j\le m}p_{j}\frac{d{f_{z_{j}}}}{f_{z_{j}}})\right)}=c_0\]
		for any $k\in\{1,2...,m\}$, where $c_0\in\mathbb{C}\backslash\{0\}$ is a constant independent of $k$.
	\end{theorem}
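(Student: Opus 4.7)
The plan is to reduce Theorem \ref{c:L2-1d-char} to the linearity characterization already recorded in Theorem \ref{thm:m-points}. Let $\mathcal{F}_{z_j}:=\mathcal{I}(2(k_j+1)G_{\Omega}(\cdot,z_j))_{z_j}$ and let $G(t)$ denote the associated minimal $L^2$ integral on $\Omega$ with weights $\varphi,\psi$, data $f=a_jw_j^{k_j}dw_j$ near $z_j$, and ideals $\mathcal{F}_{z_j}$. Observe first that the hypotheses $\frac{1}{2}v(dd^c(\varphi+\psi),z_j)=k_j+1$ and $\alpha_j>-\infty$ force $\mathcal{I}(\varphi+\psi)_{z_j}=\mathcal{F}_{z_j}$, so the infimum on the right-hand side of the ``moreover'' part of the theorem coincides with $G(0)$.

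To obtain the sharp inequality \eqref{eq:210902a}, I would invoke Theorem \ref{Concave}: $G(h^{-1}(r))$ is concave on $(0,\int_0^{+\infty}c(s)e^{-s}ds]$ and tends to $0$ as $r\to 0^+$ (equivalently $t\to+\infty$). Concavity of a function vanishing at $0$ yields $G(0)/h(0)\le \liminf_{t\to+\infty}G(t)/h(t)$. The limsup on the right is a local computation: for $t\gg 0$ the set $\{\psi<-t\}$ is a disjoint union of small coordinate discs centered at the $z_j$, on each of which the conditions $\psi\sim 2p_j\log|w_j|$, $\varphi+\psi\sim 2(k_j+1)\log|w_j|+\alpha_j$, and the asymptotic $e^{G_{\Omega}(\cdot,z_j)}\sim c_{\beta}(z_j)|w_j|$ reduce the problem to the standard one-disc jet-extension computation and produce the summand $\frac{2\pi|a_j|^2 e^{-\alpha_j}}{p_j c_{\beta}(z_j)^{2(k_j+1)}}$. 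Summing the local contributions gives \eqref{eq:210902a}; existence of a minimizer $F$ then follows from standard weak-limit arguments.

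For the equality characterization, if equality holds in \eqref{eq:210902a} then by strict concavity analysis the chord $r\mapsto \frac{r}{h(0)}G(0)$ equals $G(h^{-1}(r))$ on the whole interval, so $G(h^{-1}(r))$ is linear. Theorem \ref{thm:m-points} then applies and yields (a) $\psi=2\sum p_jG_{\Omega}(\cdot,z_j)$, (b) a factorization $\varphi+\psi=2\log|\tilde g|+2\sum G_{\Omega}(\cdot,z_j)+2\tilde u$ with $\mathrm{ord}_{z_j}(\tilde g)=k_j$, (c) $\prod\chi_{z_j}=\chi_{-\tilde u}$, and (d) the ratio-equality at each $z_j$. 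The conditions in Theorem \ref{c:L2-1d-char} are then obtained by repackaging: since $\mathrm{ord}_{z_j}(\tilde g)=k_j$, write (locally) $\tilde g=g\cdot (\text{unit})\cdot w_j^{k_j}$ and absorb the $k_j\log|w_j|$ into $k_j G_{\Omega}(\cdot,z_j)$ plus a harmonic correction; using the existence of a multiplicative function representing each $f_{z_j}$, this turns (b) into condition (2) of Theorem \ref{c:L2-1d-char} with a new $g$ satisfying $g(z_j)\ne 0$ and a new $u$, and simultaneously promotes $\prod\chi_{z_j}=\chi_{-\tilde u}$ to $\prod\chi_{z_j}^{k_j+1}=\chi_{-u}$. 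Substituting the new $g$ and $u$ into the ratio limit of Theorem \ref{thm:m-points}(4) gives condition (4) of Theorem \ref{c:L2-1d-char}.

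For the converse, assume (1)--(4). I would reverse the repackaging to put $\varphi+\psi$ into the Theorem \ref{thm:m-points} form, then construct the candidate extension explicitly as in Remark \ref{rem-finite-1d},
\[F:=c_0\,gP_*\!\left(f_u\Bigl(\prod_{l=1}^{m}f_{z_l}^{k_l+1}\Bigr)\Bigl(\sum_{l=1}^{m}p_l\frac{df_{z_l}}{f_{z_l}}\Bigr)\right).\]
The character condition (3) guarantees that the expression inside $P_*$ descends to $\Omega$, and condition (4) guarantees that its jet at each $z_j$ matches $a_jw_j^{k_j}dw_j$ modulo $\mathcal{F}_{z_j}$. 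A direct computation of $\int_{\Omega}|F|^2e^{-\varphi}c(-\psi)$ via the change of variables $s=-\psi$ and the Green-function level-set formula yields equality in \eqref{eq:210902a}. The main obstacle is the bookkeeping in the repackaging step, where one must track how the orders of vanishing of $g$ at the $z_j$ are absorbed into Green-function terms and how the corresponding characters transform — the uniqueness assertion in Theorem \ref{thm:m-points} and Remark \ref{rem-finite-1d} makes this rigorous but requires care.
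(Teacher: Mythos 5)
The main gap is in the inequality part. You propose to deduce \eqref{eq:210902a} from the concavity of $G(h^{-1}(r))$ together with a ``local computation'' of $\lim_{t\to+\infty}G(t)/h(t)$. The concavity step is sound, but the claimed local computation conceals the hard part. After the Weierstrass/Siu decomposition $\varphi+\psi=2\log|g_0|+2u_0$, the residual subharmonic term $u_0$ is only upper semicontinuous with $u_0(z_j)>-\infty$; it need not be continuous at $z_j$, so $e^{-2u_0}$ has no a priori upper bound near $z_j$, and the level sets $\{\psi<-t\}$ are not literal coordinate discs. Consequently the naive choice $\tilde f_t=f$ on $\{\psi<-t\}$ does not obviously satisfy $\int_{\{\psi<-t\}}|f|^2e^{-\varphi}c(-\psi)\le (1+o(1))\,h(t)\cdot\text{RHS}$, so the bound on $\lim_{t\to\infty}G(t)/h(t)$ is not a ``standard one-disc jet-extension computation.'' Establishing it actually requires approximating $u_0$ from above by smooth subharmonic $u_l$ and running the $\bar\partial$-estimate of Lemma \ref{dbarequa} — which is precisely the content of Lemma \ref{L2ext-finite-f}, the tool the paper invokes directly to prove the inequality (and also, with $\psi$ replaced by $\psi+t$, to get $G(t)/h(t)\le\text{RHS}$ for all $t$, which you need — not merely the limit — when passing from equality to linearity).

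The equality characterization itself matches the paper's strategy for its fibration analogue (Theorem \ref{fib-L2ext}): deduce linearity, invoke Theorem \ref{thm:m-points}, and repackage. The repackaging you sketch — choosing an auxiliary holomorphic $g_2$ on $\Omega$ with $\mathrm{ord}_{z_j}g_2=k_j$ and no other zeros, setting $g:=\tilde g/g_2$ (so $g(z_j)\neq0$) and $u:=\tilde u+\log|g_2|-\sum_j k_jG_\Omega(\cdot,z_j)$, which promotes $\prod_j\chi_{z_j}=\chi_{-\tilde u}$ to $\prod_j\chi_{z_j}^{k_j+1}=\chi_{-u}$ since $\chi_{\log|g_2|}$ is trivial, and transforming $f_u=f_{\tilde u}\cdot P^*(g_2)/\prod_j f_{z_j}^{k_j}$ so that the ratio condition in Theorem \ref{thm:m-points}(4) becomes condition (4) of the present theorem — is exactly what the paper carries out. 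That part of your proposal is correct, only compressed.
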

	
	\begin{remark}[\cite{GY-concavity3}]\label{rem:1.2}
		When the four statements in Theorem \ref{c:L2-1d-char} hold,
		\[c_0gP_*\left(f_u(\prod_{1\le j\le m}f_{z_j}^{k_j+1})(\sum_{1\le j\le m}p_{j}\frac{d{f_{z_{j}}}}{f_{z_{j}}})\right)\]
		is the unique holomorphic $(1,0)$ form $F$ on $\Omega$ such that $(F-f,z_j)\in(\mathcal{O}(K_{\Omega})\otimes\mathcal{I}(2(k_j+1)G_{\Omega}(\cdot,z_j)))_{z_j}$ and
		\begin{equation*}
			\int_{\Omega}|F|^2e^{-\varphi}c(-\psi)\leq\left(\int_0^{+\infty}c(s)e^{-s}ds\right)\sum_{1\le j\le m}\frac{2\pi|a_j|^2e^{-\alpha_j}}{p_jc_{\beta}(z_j)^{2(k_j+1)}}.
		\end{equation*}
	\end{remark}

	Let $Z_0:=\{z_j:j\in\mathbb{Z}_{\ge1}\}\subset\Omega$ be a  discrete set of infinite points.
	Let $w_j$ be a local coordinate on a neighborhood $V_{z_j}\Subset\Omega$ of $z_j$ satisfying $w_j(z_j)=0$ for $j\in\mathbb{Z}_{\ge1}$, where $V_{z_j}\cap V_{z_k}=\emptyset$ for any $j\not=k$. Denote that $V_0:=\cup_{j\in\mathbb{Z}_{\ge1}}V_{z_j}$.
	Let $f$ be a holomorphic $(1,0)$ form on $V_0$, and let $f=f_1dw_j$ on $V_{z_j}$, where $f_1$ is a holomorphic function on $V_0$. Let $\psi$ be a negative subharmonic function on $\Omega$, and let $\varphi$ be a Lebesgue measurable function on $\Omega$ such that $\varphi+\psi$ is subharmonic on $\Omega$.
	
	The following result gives a necessary condition for $G(h^{-1}(r))$ is linear.
	
	\begin{proposition}[\cite{GY-concavity3}]
		Let $c\in\mathcal{P}_{0,\Omega}.$	\label{p:infinite}
		Assume that $G(0)\in(0,+\infty)$ and $(\psi-2p_jG_{\Omega}(\cdot,z_j))(z_j)>-\infty$ for  $j\in\mathbb{Z}_{\ge1}$, where $p_j=\frac{1}{2}v(dd^c(\psi),z_j)>0$. Assume that $G(h^{-1}(r))$ is linear with respect to $r$. Then the following statements hold:
		
		$(1)$ $\psi=2\sum_{j\in\mathbb{Z}_{\ge1}}p_jG_{\Omega}(\cdot,z_j)$;

		$(2)$ $\varphi+\psi=2\log|g|$ and $\mathcal{F}_{z_j}=\mathcal{I}(\varphi+\psi)_{z_j}$ for any $j\in\mathbb{Z}_{\ge1}$, where $g$ is a holomorphic function on $\Omega$ such that $ord_{z_j}(g)=ord_{z_j}(f_1)+1$ for any $j\in\mathbb{Z}_{\ge1}$;
		
		$(3)$  $\frac{p_j}{ord_{z_j}g}\lim_{z\rightarrow z_j}\frac{dg}{f}=c_0$ for any $j\in\mathbb{Z}_{\ge1}$, where $c_0\in\mathbb{C}\backslash\{0\}$ is a constant independent of $j$;
		
		$(4)$ $\sum_{j\in\mathbb{Z}_{\ge1}}p_j<+\infty$.
	\end{proposition}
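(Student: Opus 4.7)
The plan is to obtain the global minimizer via Lemma \ref{linear}, perform a local analysis at each $z_j$ to extract pointwise structural information (mirroring what Theorem \ref{thm:m-points} gives in the finite case), and then glue these local data into the claimed global statements. Because $\varphi$, $\psi$, $f$ and the minimizer are all globally defined on $\Omega$, the patching is forced and the summability (4) emerges as a consistency condition.

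Concretely, the first step is to invoke Lemma \ref{linear}: linearity of $G(h^{-1}(r))$ yields a unique holomorphic $(1,0)$-form $F$ on $\Omega$ with $(F-f,z_j)\in(\mathcal{O}(K_\Omega)\otimes\mathcal{F})_{z_j}$ for every $j$, together with the distributional identity
\[
\int_{\{-t_2\le\psi<-t_1\}}|F|^2 e^{-\varphi}a(-\psi) \;=\; \frac{G(0)}{\int_0^{+\infty}c(s)e^{-s}ds}\int_{t_1}^{t_2}a(s)e^{-s}\,ds
\]
for every nonnegative measurable $a$ on $(0,+\infty)$. The second step localizes at each $z_j$: since $\psi-2p_j G_\Omega(\cdot,z_j)$ is bounded above near $z_j$, choose $a$ supported on a small sub-level segment that singles out a neighborhood of $z_j$ and insert it into the identity. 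Comparing the asymptotics of the left side (which only sees $|F|^2 e^{-\varphi}$ near $z_j$) with the explicit right side forces, in exactly the manner of the one-point argument behind Theorem \ref{thm:m-points}, that locally $\psi=2p_j G_\Omega(\cdot,z_j)+(\text{harmonic})$ and $\varphi+\psi=2\log|g_j|+(\text{harmonic})$ with $\text{ord}_{z_j}(g_j)=\text{ord}_{z_j}(f_1)+1$, and simultaneously determines the local ratio $\frac{p_j}{\text{ord}_{z_j}g_j}\lim_{z\to z_j}\frac{dg_j}{f}$ in terms of the common slope $\frac{G(0)}{\int_0^{+\infty}c(s)e^{-s}ds}$. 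Because this slope is independent of $j$, one obtains a single constant $c_0$ across all $z_j$, which is (3).

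The third step is globalization. The function $\psi-2\sum_{j}p_j G_\Omega(\cdot,z_j)$ (interpreted as a monotone limit of its finite partial sums) is subharmonic, bounded above by $\psi\le 0$, and has zero Lelong number at every $z_j$; since an open Riemann surface admitting a nontrivial Green function has no nonconstant negative subharmonic functions beyond those captured by the Green kernel, the difference must vanish, proving (1). The same local-to-global principle, combined with the matching of vanishing orders and characters forced by the fact that $F$ is a single-valued holomorphic form on $\Omega$, shows that the local holomorphic pieces $g_j$ fit together into one global holomorphic function $g$ with $\text{ord}_{z_j}g=k_j+1$, yielding (2) with no residual harmonic factor (the infinite-point character constraints absorb the harmonic term into $g$, which is the structural difference from the finite case).

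The main obstacle is (4): one must prove that $\sum p_j<+\infty$ as an \emph{a posteriori} consequence. Given (1), this is equivalent to showing that the series $\sum p_j G_\Omega(\cdot,z_j)$ converges to a subharmonic function rather than diverging to $-\infty$. I would test at a point $z_*\in\Omega\setminus\{z_j\}_{j\ge 1}$ where $\psi(z_*)>-\infty$ (such a point exists since $G(0)<+\infty$ forces $\psi\not\equiv-\infty$ and $\{\psi=-\infty\}\subset\{z_j\}$), and then exhaust $\Omega$ by relatively compact sublevel sets of $-G_\Omega(\cdot,z_*)$ on which each $-G_\Omega(z_*,z_j)$ is uniformly bounded below by a positive constant for all but finitely many $j$ (using that $z_j$ accumulate, if at all, only on the ideal boundary); this converts $\psi(z_*)>-\infty$ into $\sum p_j<+\infty$. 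Handling the case where $\{z_j\}$ accumulates in $\Omega$ itself (which would contradict discreteness) or at the boundary (where $G_\Omega(z_*,z_j)\to 0$) is the delicate technical point.
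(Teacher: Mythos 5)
Note first that the paper does not actually prove Proposition~\ref{p:infinite} --- it is imported verbatim from \cite{GY-concavity3} and stated without proof --- so the comparison has to be against the machinery that the paper deploys for the fibration analogue, Proposition~\ref{infinite-p}. Against that yardstick your proposal has two genuine gaps and one misattributed fact.

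Your localization mechanism does not work. You propose to single out $z_j$ by "choosing $a$ supported on a small sub-level segment," but $a$ in Lemma~\ref{linear} acts on the value $-\psi$, not on the point: the region $\{-t_2\le\psi<-t_1\}$ contains an annulus about \emph{every} $z_k$ simultaneously, so no weight $a$ isolates a single puncture. The device that actually works, and which the paper uses in the proof of Proposition~\ref{infinite-p}, is domain splitting: take a small disc $U_1$ about $z_1$ with $\{\psi<-s_1\}\cap\partial U_1=\emptyset$, set $G_1, G_2$ to be the minimal $L^2$ integrals over $\{\psi<-t\}\cap U_1$ and its complement, note $G=G_1+G_2$ with each $G_l(h^{-1}(r))$ concave by Theorem~\ref{Concave}, and then deduce from linearity of the sum that each summand is linear; the one-point theorem then applies to $G_1$. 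This structural argument has no counterpart in varying $a$.

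Your justification for statement (1) invokes a fact that is not a theorem: an open Riemann surface with nontrivial Green function has many nonconstant negative subharmonic functions that are not sums of Green kernels (any negative harmonic function is one), and having zero Lelong numbers at the $z_j$ does not force such a function to vanish. The mechanism in this paper is Lemma~\ref{l:psi=G} paired with a saturated two-sided $L^2$ estimate: the reverse inequality (the one-variable analogue of Lemma~\ref{fanxiangineq}) and the extension bound (the analogue of Lemma~\ref{L2ext-finite-f}) pin the slope of $G$ from both sides, and if $\psi\not\equiv 2\sum_j p_j G_\Omega(\cdot,z_j)$, Lemma~\ref{l:psi=G} produces a positive-measure set where replacing $\psi$ by $2\sum_j p_jG_\Omega(\cdot,z_j)$ strictly improves the weight $e^{-\varphi}c(-\psi)$, which contradicts equality in that chain of estimates.

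The argument for (4) is unrecoverable as stated, and this is where the proposal genuinely fails. You want $\sum_j p_j<+\infty$ to follow from $\psi(z_*)=2\sum_j p_j G_\Omega(z_*,z_j)>-\infty$ at a fixed $z_*$. That implication is false: $G_\Omega(z_*,z_j)\to 0$ as $z_j$ tends to the ideal boundary, so the series can converge while $\sum_j p_j=+\infty$ (on $\Omega=\Delta$ with $z_*=0$, $p_j\equiv1$, take any Blaschke sequence $\{z_j\}$). No exhaustion scheme repairs this; convergence of $\sum_j p_j G_\Omega(z_*,z_j)$ is strictly weaker than summability of the $p_j$. The route that works is $L^2$-theoretic and runs through (2) and (3) together with $G(0)<+\infty$: statement (3) gives $p_j d_j/a_j=c_0$, hence $|a_j|^2/|d_j|^2=p_j^2/|c_0|^2$, where $a_j$ is the leading coefficient of $f_1$ at $z_j$ and $d_j$ the leading coefficient of $g$; statement (2) removes the residual Siu factor $u_0$; and then the one-variable analogue of Lemma~\ref{fanxiangineq} (applied to the minimizer from Lemma~\ref{linear}, whose leading coefficient at $z_j$ coincides with $a_j$) reads
\begin{equation*}
	\frac{G(0)}{\int_0^{+\infty}c(s)e^{-s}ds}\;\geq\;\sum_{j}\frac{2\pi |a_j|^2}{p_j|d_j|^2}\;=\;\frac{2\pi}{|c_0|^2}\sum_j p_j,
\end{equation*}
so $G(0)<+\infty$ forces $\sum_j p_j<+\infty$. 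In other words (4) is a consequence of the $L^2$-mass carried by the linear minimizer, visible precisely because imposing (3) makes the $j$-th term of the lower bound linear in $p_j$; it is not a potential-theoretic consequence of (1) alone.
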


	\subsection{Basic properties of the Green functions}
	\
	
	In this subection, we recall some basic properties of the Green functions. Let $\Omega$ be an open Riemann surface, which admits a nontrivial Green function $G_{\Omega}$, and let $z_0\in\Omega$.
	
	\begin{lemma}[see \cite{S-O69}, see also \cite{Tsuji}] 	\label{l:green-sup}
		Let $w$ be a local coordinate on a neighborhood of $z_0$ satisfying $w(z_0)=0$.  $G_{\Omega}(z,z_0)=\sup_{v\in\Delta_{\Omega}^*(z_0)}v(z)$, where $\Delta_{\Omega}^*(z_0)$ is the set of negative subharmonic function on $\Omega$ such that $v-\log|w|$ has a locally finite upper bound near $z_0$. Moreover, $G_{\Omega}(\cdot,z_0)$ is harmonic on $\Omega\backslash\{z_0\}$ and $G_{\Omega}(\cdot,z_0)-\log|w|$ is harmonic near $z_0$.
	\end{lemma}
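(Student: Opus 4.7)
The plan is to follow the standard Perron approach, treating $\Delta_\Omega^*(z_0)$ as a Perron family and identifying its upper envelope with the Green function.

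First I would verify that the family is nonempty: choose a local coordinate chart $(V_{z_0},w)$ with $w(z_0)=0$ and a small disc $\{|w|<r\}\Subset V_{z_0}$, then set $v_0:=\max(\log|w|-\log r,-\infty)$ on $\{|w|<r\}$ and $v_0:=0$ extended by $0$ outside, suitably truncated with a negative constant, to produce an element of $\Delta_\Omega^*(z_0)$. Next I would verify the Perron family axioms: members are closed under $\max$ of finitely many, and replacement by the harmonic extension on a small disc disjoint from $z_0$ (i.e.\ Poisson modification) preserves membership; in particular, the family is a saturated Perron family on $\Omega\setminus\{z_0\}$. Define $u(z):=\sup_{v\in\Delta_\Omega^*(z_0)}v(z)$.

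Second, since $\Omega$ admits a nontrivial Green function, the constant $0$ dominates all elements of the family and members are uniformly bounded above by $0$; the hypothesis that $\Omega$ carries a Green function also rules out $u\equiv 0$. Standard Perron theory then gives that $u$ (or its upper semicontinuous regularization $u^*$, which in fact equals $u$ for Perron families of this type) is harmonic on $\Omega\setminus\{z_0\}$. To analyze $u$ near $z_0$, I would compare any $v\in\Delta_\Omega^*(z_0)$ with $\log|w|+C$ on a small disc $\{|w|<r\}$: the assumption that $v-\log|w|$ has a locally finite upper bound near $z_0$ implies, via the maximum principle on $\{|w|<r\}\setminus\{z_0\}$ after subtracting $\log|w|$, that $v\le\log|w|+\max_{|w|=r}(v-\log|w|)^+$ locally. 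Taking the sup this yields $u-\log|w|$ bounded above near $z_0$; symmetrically, the family contains barriers giving the lower bound, so $u-\log|w|$ is bounded, whence by the removable singularity theorem for bounded harmonic functions it extends harmonically across $z_0$.

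Finally I would identify $u$ with $G_\Omega(\cdot,z_0)$. By construction $u\le 0$, $u$ is harmonic on $\Omega\setminus\{z_0\}$, and $u-\log|w|$ is harmonic near $z_0$; these are exactly the defining properties of the Green function as the maximal negative subharmonic function with prescribed logarithmic pole at $z_0$ (equivalently, the unique such harmonic function with the correct ideal boundary behavior characterizing the Green function on a hyperbolic Riemann surface). Since every competitor $v\in\Delta_\Omega^*(z_0)$ also satisfies the hypotheses defining $G_\Omega(\cdot,z_0)$ as supremum, maximality gives $u=G_\Omega(\cdot,z_0)$, proving both the supremum formula and the asserted harmonicity.

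The main obstacle is the behavior at $z_0$: one has to promote the qualitative hypothesis ``$v-\log|w|$ locally bounded above near $z_0$'' into a quantitative local bound stable under taking the supremum, and then rule out the possibility that $u-\log|w|$ blows up at $z_0$. This is handled by the maximum principle on a punctured disc together with the removable singularity theorem, but it is the step where one must be careful, and it is also the step that justifies identifying the Perron envelope with the classical Green function rather than with some larger majorant.
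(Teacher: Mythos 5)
The paper does not prove this lemma: it is stated as a classical fact and cited to Sario--Oikawa \cite{S-O69} and Tsuji \cite{Tsuji}, so there is no in-paper argument to compare against. Your Perron-envelope argument is the standard route taken in those references, and the overall structure is sound, but two steps as written need attention.

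First, the explicit barrier you propose for nonemptiness does not work: $\max(\log|w|-\log r,-\infty)$ is just $\log|w/r|$, and gluing it to $0$ outside $\{|w|<r\}$ produces $\min(\log|w/r|,0)$, which is not subharmonic across $\{|w|=r\}$; truncating below by $-C$ instead kills the logarithmic pole, so $v-\log|w|$ then blows up near $z_0$ and membership in $\Delta_\Omega^*(z_0)$ fails. Fortunately this construction is unnecessary under the paper's standing hypothesis that $\Omega$ already admits a nontrivial Green function: $G_\Omega(\cdot,z_0)$ itself is negative, harmonic off $z_0$, and has $G_\Omega(\cdot,z_0)-\log|w|$ harmonic near $z_0$, so it lies in $\Delta_\Omega^*(z_0)$ and nonemptiness, the uniform lower bound $u\ge G_\Omega(\cdot,z_0)$, and the lower bound on $u-\log|w|$ near $z_0$ are all immediate. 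Your uniform upper bound is correct and worth stating cleanly: since each $v\le 0$, on $\{|w|=r\}$ one has $v-\log|w|\le -\log r$ with a constant independent of $v$; extending the subharmonic function $v-\log|w|$ across $z_0$ (it is bounded above there by hypothesis) and applying the maximum principle gives $v-\log|w|\le -\log r$ on all of $\{|w|<r\}$, hence the same bound for $u-\log|w|$.

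Second, the identification step is at risk of circularity. You invoke ``the defining properties of the Green function as the maximal negative subharmonic function with prescribed logarithmic pole,'' but that is essentially the statement to be proved. To make the argument non-circular you must fix a definition of $G_\Omega$ independent of the Perron envelope — e.g.\ via an exhaustion $\Omega_n\nearrow\Omega$ with $G_{\Omega}=\lim_n G_{\Omega_n}$, or via harmonic measure — and then show both $u\le G_\Omega$ (every competitor $v$ is dominated by $G_\Omega$, typically by comparison on each $\Omega_n$) and $u\ge G_\Omega$ (since $G_\Omega\in\Delta_\Omega^*(z_0)$). With one of those anchors in place, the argument goes through exactly as you sketch, and the harmonicity statements follow from the Perron machinery plus the removable-singularity theorem as you indicate.
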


	\begin{lemma}[see \cite{GY-concavity3}]
		\label{l:green-sup2}
		Let $K=\{z_j:j\in\mathbb{Z}_{\ge1}\,\&\,j<\gamma \}$ be a discrete subset of $\Omega$, where $\gamma\in\mathbb{Z}_{>1}\cup\{+\infty\}$. Let $\psi$ be a negative subharmonic function on $\Omega$ such that $\frac{1}{2}v(dd^c\psi,z_j)\geq p_j$ for any $j$, where $p_j>0$ is a constant. Then $2\sum_{1\le j< \gamma}p_jG_{\Omega}(\cdot,z_j)$ is a subharmonic function on $\Omega$ satisfying that $2\sum_{1\le j<\gamma }p_jG_{\Omega}(\cdot,z_j)\ge\psi$ and $2\sum_{1\le j<\gamma }p_jG_{\Omega}(\cdot,z_j)$ is harmonic on $\Omega\backslash K$.
	\end{lemma}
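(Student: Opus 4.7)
The plan is to prove the three claims---subharmonicity of $\Phi:=2\sum_{1\le j<\gamma}p_jG_\Omega(\cdot,z_j)$ on $\Omega$, the domination $\Phi\ge\psi$, and harmonicity of $\Phi$ on $\Omega\setminus K$---by first handling the finite case $\gamma<\infty$ and then passing to a monotone limit. For the finite case, subharmonicity of $\Phi_m:=2\sum_{j=1}^{m}p_jG_\Omega(\cdot,z_j)$ on $\Omega$ and harmonicity on $\Omega\setminus\{z_1,\ldots,z_m\}$ are immediate from Lemma \ref{l:green-sup}, which provides the basic properties of each $G_\Omega(\cdot,z_j)$ including the local boundedness of $G_\Omega(\cdot,z_j)-\log|w_j|$ near $z_j$. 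The nontrivial assertion is the domination $\psi\le\Phi_m$.

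To establish $\psi\le\Phi_m$, I would introduce the auxiliary function $h:=\psi/2-\sum_{j=1}^{m}p_jG_\Omega(\cdot,z_j)$, which is subharmonic on $\Omega\setminus\{z_1,\ldots,z_m\}$ (difference of a subharmonic function and a harmonic function). The Lelong number hypothesis $\tfrac{1}{2}v(dd^c\psi,z_j)\ge p_j$ forces $\psi-2p_j\log|w_j|$ to be locally bounded above near $z_j$; combined with boundedness of $G_\Omega(\cdot,z_j)-\log|w_j|$, this shows that $h$ is locally bounded above near each $z_j$. By the removable singularity theorem for subharmonic functions, $h$ extends uniquely to a subharmonic function $\tilde h$ on $\Omega$. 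The main obstacle is then showing $\tilde h\le 0$ on $\Omega$. I would exhaust $\Omega$ by a sequence of relatively compact subdomains $\Omega_n\Subset\Omega$ eventually containing $\{z_1,\ldots,z_m\}$ and apply the maximum principle on each $\Omega_n$. On $\partial\Omega_n$ we have $\psi/2\le 0$; and since $\Omega$ admits a nontrivial Green function, $-G_\Omega(\cdot,z_j)$ tends uniformly to $0$ on $\partial\Omega_n$ as $n\to\infty$, which is the standard boundary behaviour of Green functions on hyperbolic Riemann surfaces. Hence $\sup_{\partial\Omega_n}\tilde h\to 0$, and the maximum principle yields $\tilde h\le 0$ on $\Omega$, i.e.\ $\psi\le\Phi_m$.

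For the infinite case $\gamma=\infty$, the partial sums $\Phi_m$ decrease pointwise in $m$ because each additional term $2p_jG_\Omega(\cdot,z_j)$ is negative, and $\Phi_m\ge\psi$ by the finite case applied to the first $m$ points. The pointwise limit $\Phi=\lim_m\Phi_m$ therefore satisfies $\Phi\ge\psi$ and in particular is not identically $-\infty$. Since a decreasing limit of subharmonic functions on a connected open set is either identically $-\infty$ or subharmonic, $\Phi$ is subharmonic on $\Omega$. Finally, any compact subset of $\Omega\setminus K$ is disjoint from $K$ by the discreteness assumption on $K$, so each $\Phi_m$ is harmonic on a fixed neighbourhood of such a compact set; a decreasing limit of harmonic functions being either identically $-\infty$ or harmonic, $\Phi$ is harmonic on $\Omega\setminus K$.
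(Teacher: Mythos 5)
Your plan is sound for the reduction to the finite case, the removable singularity step, and the monotone passage to the infinite case; the gap is in the maximum principle step for the finite case. You assert that ``since $\Omega$ admits a nontrivial Green function, $-G_{\Omega}(\cdot,z_j)$ tends uniformly to $0$ on $\partial\Omega_n$ as $n\to\infty$,'' but this is not true for an arbitrary relatively compact exhaustion of a general hyperbolic Riemann surface. Take $\Omega=\Delta\setminus\{0\}$: deleting the polar set $\{0\}$ does not change the Green function, so $G_{\Omega}(\cdot,z_j)=G_{\Delta}(\cdot,z_j)$, and $G_{\Delta}(z,z_j)\to\log|z_j|<0$ as $z\to 0$. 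Every exhaustion $\Omega_n\Subset\Omega$ with $\bigcup_n\Omega_n=\Omega$ has $\partial\Omega_n$ eventually containing points arbitrarily close to $0$, so $\sup_{\partial\Omega_n}\bigl(-G_{\Omega}(\cdot,z_j)\bigr)$ stays bounded away from $0$. Thus $\sup_{\partial\Omega_n}\tilde h$ need not tend to $0$, and the maximum principle does not yield $\tilde h\le0$. The ``standard boundary behaviour'' you invoke only holds at regular ideal boundary points and is exactly what fails in the presence of polar pieces of the boundary.

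The repair is to run the maximum principle with the Green functions of the exhausting subdomains rather than of $\Omega$. Use the exhaustion $\{\Omega_l\}$ of Lemma \ref{green-approx}: each $\Omega_l$ has smooth (hence regular) boundary, so $G_{\Omega_l}(\cdot,z_j)\to0$ on approach to $\partial\Omega_l$. Replacing $G_{\Omega}$ by $G_{\Omega_l}$ in your function $h$, the boundary estimate and the removable singularity step give $\psi/2-\sum_{j=1}^{m}p_jG_{\Omega_l}(\cdot,z_j)\le0$ on $\Omega_l$. Since Lemma \ref{green-approx} also provides $G_{\Omega_l}(\cdot,z_j)\downarrow G_{\Omega}(\cdot,z_j)$, letting $l\to\infty$ recovers $\psi\le 2\sum_{j=1}^{m}p_jG_{\Omega}(\cdot,z_j)$. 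Alternatively, one can bypass exhaustions altogether: apply the Riesz decomposition to the negative subharmonic function $\psi$ on $\Omega$ to write it as a Green potential of its Riesz measure $\mu$ plus a nonpositive harmonic function; the hypothesis $\frac{1}{2}v(dd^c\psi,z_j)\ge p_j$ gives $\mu\ge\sum_j 2p_j\delta_{z_j}$ as measures, and since $G_{\Omega}\le0$ the Green potential of $\mu$ is bounded above by $\sum_j 2p_jG_{\Omega}(\cdot,z_j)$. The remainder of your argument (infinite case by monotone limits, harmonicity off $K$) is correct as written.
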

	
	\begin{lemma}[see \cite{GY-concavity}]\label{l:G-compact}
		For any  open neighborhood $U$ of $z_0$, there exists $t>0$ such that $\{G_{\Omega}(z,z_0)<-t\}$ is a relatively compact subset of $U$.
	\end{lemma}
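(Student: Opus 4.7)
The plan is a short compactness argument built on two ingredients: (a) the local logarithmic behaviour of $G_{\Omega}(\cdot,z_0)$ near $z_0$, which is recorded in Lemma \ref{l:green-sup}, and (b) the classical fact that for every $t>0$ the closed sublevel set
\[K_t:=\{z\in\Omega : G_{\Omega}(z,z_0)\le -t\}\]
is a \emph{compact} subset of $\Omega$. Fact (b) is the quantitative form of the statement that the Green function of a Greenian open Riemann surface vanishes at the ideal boundary of $\Omega$; I would invoke it from the classical potential-theoretic literature on Riemann surfaces (e.g.\ \cite{S-O69} or \cite{Tsuji}), where it is typically established by comparing $G_{\Omega}$ with Green functions of a relatively compact exhaustion $D_1\subset D_2\subset\cdots\subset\Omega$ and applying the maximum principle in the limit. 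This step (b) is what I regard as the principal obstacle; everything after it is soft topology.

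Granting (a) and (b), I would first observe that $G_{\Omega}(\cdot,z_0)$ is harmonic, hence continuous, on $\Omega\setminus\{z_0\}$, and that $G_{\Omega}(z,z_0)-\log|w(z)|$ is harmonic (so locally bounded) near $z_0$; in particular $G_{\Omega}(z,z_0)\to -\infty$ as $z\to z_0$. Consequently the family $\{K_t\}_{t>0}$ is a nested decreasing family of compacts with
\[\bigcap_{t>0}K_t=\{z_0\}.\]

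Next I would run the standard finite-intersection argument. Since $U$ is open with $z_0\in U$, the sets $K_t\setminus U$ are compact, decreasing in $t$, and have empty total intersection. Compactness therefore forces $K_{t_0}\setminus U=\emptyset$, i.e.\ $K_{t_0}\subset U$, for some $t_0>0$.

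Finally, using continuity of $G_{\Omega}$ on $\Omega\setminus\{z_0\}$ together with $G_{\Omega}(z_0,z_0)=-\infty$, the closure in $\Omega$ of the open set $\{G_{\Omega}(\cdot,z_0)<-t_0\}$ is contained in $K_{t_0}$, hence is a compact subset of $U$. This is precisely the assertion $\{G_{\Omega}(\cdot,z_0)<-t_0\}\subset\subset U$, completing the proof. As remarked above, the only non-trivial input is the compactness of $K_t$ in $\Omega$; all subsequent steps are elementary.
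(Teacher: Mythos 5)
The weight of your argument rests on claim (b): that $K_t=\{z\in\Omega:G_{\Omega}(z,z_0)\le -t\}$ is a compact subset of $\Omega$ for \emph{every} $t>0$, which you justify by saying the Green function ``vanishes at the ideal boundary.'' This is false for a general open Riemann surface admitting a nontrivial Green function. Take $\Omega=\Delta\setminus\{0\}$ and $z_0=\tfrac12$: a point is polar, so $G_{\Omega}(\cdot,z_0)$ is the restriction of $G_{\Delta}(\cdot,z_0)=\log\bigl|\tfrac{z-1/2}{1-z/2}\bigr|$, and therefore $G_{\Omega}(z,z_0)\to\log\tfrac12<0$ as $z\to 0$. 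Hence for every $0<t<\log 2$ the set $K_t$ contains a punctured neighbourhood of $0$, accumulates at the puncture, and is \emph{not} compact in $\Omega$. The same failure occurs whenever $\Omega$ is a Greenian surface with a relatively closed polar set deleted. Your nested-intersection step needs at least one $K_{t_0}$ to be compact in order to get off the ground, and establishing compactness of a single $K_{t_0}$ is, after taking $U=\Omega$, essentially the content of the lemma itself, so the reduction becomes circular. This is a genuine gap, and it is precisely the step you flagged as ``the principal obstacle.''

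The argument the paper relies on (from \cite{GY-concavity}) does not pass through any global boundary-vanishing statement; it uses the extremal description of $G_{\Omega}$ recorded in Lemma~\ref{l:green-sup}. Fix a coordinate disc with $\{|w|\le r\}\subset\subset U$ compact, and choose $t_0>-\min_{|w|=r}G_{\Omega}(\cdot,z_0)$, so that $G_{\Omega}>-t_0$ on an open neighbourhood of the circle $\{|w|=r\}$. Define $v=G_{\Omega}(\cdot,z_0)$ on $\{|w|<r\}$ and $v=\max(G_{\Omega}(\cdot,z_0),-t_0)$ on $\Omega\setminus\{|w|<r\}$. Both expressions reduce to $G_{\Omega}(\cdot,z_0)$ on a neighbourhood of the circle, so $v$ is continuous, subharmonic and negative on $\Omega$, and $v-\log|w|$ is bounded above near $z_0$; thus $v\in\Delta^{*}_{\Omega}(z_0)$. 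By Lemma~\ref{l:green-sup}, $v\le G_{\Omega}(\cdot,z_0)$, while $v\ge G_{\Omega}(\cdot,z_0)$ by construction, so the two are equal; in particular $G_{\Omega}(\cdot,z_0)\ge -t_0$ on $\Omega\setminus\{|w|<r\}$. Therefore $\{G_{\Omega}(\cdot,z_0)<-t_0\}\subset\{|w|<r\}$, its closure in $\Omega$ misses $\{|w|=r\}$ because $G_{\Omega}>-t_0$ there, and hence $\{G_{\Omega}(\cdot,z_0)<-t_0\}\subset\subset U$. The missing ingredient in your proposal is this comparison/maximum-principle argument via the sup-characterization of $G_{\Omega}$; the boundary-vanishing of $G_{\Omega}$, which does not hold in general, is not the right tool.
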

	
	\begin{lemma}[see \cite{GY-concavity3}]
		\label{l:psi=G}
		Let $\Omega$ be an open Riemann surface which admits a nontrivial Green function $G_{\Omega}$. Let $Z_0'=\{z_j: j\in\mathbb{Z}_+ \& j<\gamma\}$ be a discrete subset of $\Omega$, where $\gamma\in \mathbb{Z}_+\cup\{+\infty\}$. Let $\psi$ be a negative plurisubharmonic function on $\Omega$ satisfying $\frac{1}{2}v(dd^c\psi,z_0)\geq p_j>0$ for any $j$, where $p_j$ is a constant. Assume that $\psi\not\equiv 2\sum_{1\le k<\gamma}p_jG_{\Omega}(\cdot,z_j)$. Let $l(t)$ be a positive Lebesgue measurable function on $(0,+\infty)$ satisfying $l$ is decreasing on $(0,+\infty)$ and $\int_0^{+\infty}l(t)dt<+\infty$. Then there exists a Lebesgue measurable subset $V$ of $M$ such that $l(-\psi(z))<l(-2\sum_{1\le k<\gamma}p_jG_{\Omega}(z,z_j))$ for any  $z\in V$ and $\mu(V)>0$, where $\mu$ is the Lebesgue measure on $\Omega$.
	\end{lemma}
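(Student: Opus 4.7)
The plan is to work with the function $u := 2\sum_{1 \leq k < \gamma} p_j G_\Omega(\cdot, z_j)$ provided by Lemma \ref{l:green-sup2}, which is subharmonic on $\Omega$, harmonic on $\Omega \setminus Z_0'$, and satisfies $\psi \leq u$. First I would upgrade this to the strict inequality $\psi < u$ on $\Omega \setminus Z_0'$: the discrete set $Z_0'$ does not disconnect the Riemann surface, so $\Omega \setminus Z_0'$ is connected, and on it $u - \psi$ is nonnegative and superharmonic (sum of the harmonic $u$ and the superharmonic $-\psi$). The hypothesis $\psi \not\equiv u$ together with the minimum principle for superharmonic functions then forces $u - \psi > 0$ everywhere on $\Omega \setminus Z_0'$.

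Next I would recast the desired conclusion as an integral identity. Writing $l(t) = \lambda((t, +\infty))$ for the positive Borel measure $\lambda := -dl$ on $(0, +\infty)$---which is nontrivial, since $l > 0$ everywhere and $\int_0^{+\infty} l < +\infty$ force $l$ to strictly decrease---Tonelli's theorem yields
\[
\int_\Omega \bigl[l(-u(z)) - l(-\psi(z))\bigr]\, dV(z) \;=\; \int_0^{+\infty} G(t)\, d\lambda(t),
\]
where $G(t) := \mu\{z \in \Omega : u(z) \geq -t > \psi(z)\} \geq 0$. Assuming for contradiction that $\mu(V) = 0$, the integrand on the left vanishes almost everywhere, hence $G \equiv 0$ $\lambda$-a.e.; my goal is to disprove this by exhibiting positivity of $G$ on a $\lambda$-positive set.

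The main step, which I expect to be the principal obstacle, is to show $G(t) > 0$ for every $t$ in the image of $-u$. The input is that $u$ is real-analytic, harmonic and non-constant on $\Omega \setminus Z_0'$, so the meromorphic $1$-form $\partial u$ has discrete zero set, and hence every non-empty level curve $\{u = -t\}$ has a regular point $z^*$ with $du(z^*) \neq 0$. Because $u \to -\infty$ at each $z_j$ and $u$ is continuous on the connected $\Omega \setminus Z_0'$, the image of $-u$ contains an unbounded interval. Fix any $t$ in this image and a regular $z^* \in \{u = -t\}$: the implicit function theorem makes $\{u \geq -t\}$ a local closed half-plane near $z^*$, so every small ball around $z^*$ meets $\{u \geq -t\}$ in positive $\mu$-measure. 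The strict inequality $\psi(z^*) < u(z^*) = -t$ from the first paragraph combined with the upper semicontinuity of $\psi$ furnishes a neighborhood of $z^*$ inside $\{\psi < -t\}$; intersecting with the above yields a positive-measure subset of $\{u \geq -t, \psi < -t\}$, proving $G(t) > 0$.

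Since $\lambda$ is nontrivial on any unbounded subinterval of $(0, +\infty)$ (because $l > 0$ everywhere), combining with the pointwise positivity of $G$ on the image of $-u$ gives $\int_0^{+\infty} G\, d\lambda > 0$, contradicting the conclusion forced by the hypothetical $\mu(V) = 0$. Hence $\mu(V) > 0$, as required.
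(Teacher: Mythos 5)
Since the paper cites Lemma \ref{l:psi=G} from \cite{GY-concavity3} without reproducing a proof, your argument can only be judged on its own terms, and it is correct. The structure is sound: the superharmonic function $u-\psi$ on the connected surface $\Omega\setminus Z_0'$ (where $u:=2\sum p_jG_\Omega(\cdot,z_j)$) is nonnegative, so by the minimum principle $\psi\not\equiv u$ upgrades to $\psi<u$ off $Z_0'$; the layer-cake representation $l(s)=\lambda((s,+\infty))$ with $\lambda=-dl$ and Tonelli convert the hypothesis $\mu(V)=0$ into $\int_0^{+\infty}G(t)\,d\lambda(t)=0$; and the openness of a nonconstant harmonic map combined with upper semicontinuity of $\psi$ gives $G(t)>0$ for every $t$ in the (unbounded) image of $-u$, so the integral cannot vanish. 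Two small remarks. First, your parenthetical claim that ``$l>0$ everywhere and $\int_0^{+\infty}l<+\infty$ force $l$ to strictly decrease'' is false (a positive step function decaying to $0$ in jumps is a counterexample); what you actually use, and correctly assert two paragraphs later, is that $\lambda((a,+\infty))\geq l(b)>0$ for any $b>a$, which holds simply because $l$ is decreasing and everywhere positive. Second, you do not need the digression through the zero set of $\partial u$ and regular points of the level curve: since a nonconstant harmonic function is an open map, for any $z_0$ with $u(z_0)=-t$ the set $\{u>-t\}$ is already a nonempty open (hence positive-measure) subset of every small ball around $z_0$, and intersecting with the open neighborhood on which $\psi<-t$ (from $\psi(z_0)<u(z_0)=-t$ and upper semicontinuity) gives $G(t)>0$ directly.
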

	
	\begin{lemma}[see \cite{GY-concavity3}]
		\label{green-approx} There exists a sequence of open Riemann surfaces $\{\Omega_l\}_{l\in\mathbb{Z}^+}$ such that $z_0\in\Omega_l\Subset\Omega_{l+1}\Subset\Omega$, $\cup_{l\in\mathbb{Z}^+}\Omega_l=\Omega$, $\Omega_l$ has a smooth boundary $\partial\Omega_l$ in $\Omega$  and $e^{G_{\Omega_l}(\cdot,z_0)}$ can be smoothly extended to a neighborhood of $\overline{\Omega_l}$ for any $l\in\mathbb{Z}^+$, where $G_{\Omega_l}$ is the Green function of $\Omega_l$. Moreover, $\{{G_{\Omega_l}}(\cdot,z_0)-G_{\Omega}(\cdot,z_0)\}$ is decreasingly convergent to $0$ on $\Omega$.
	\end{lemma}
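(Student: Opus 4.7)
The plan is to construct the exhausting sequence $\{\Omega_l\}$ via sublevel sets of a real-analytic strictly subharmonic exhaustion function on $\Omega$, and then verify the three required properties in turn: the nested real-analytic exhaustion, the smooth extension of $e^{G_{\Omega_l}(\cdot,z_0)}$ past $\partial\Omega_l$, and the monotone decreasing convergence of $G_{\Omega_l}(\cdot,z_0)-G_\Omega(\cdot,z_0)$ to $0$.

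First I would fix a real-analytic strictly subharmonic exhaustion function $\rho\colon\Omega\to\mathbb{R}$. Such a $\rho$ exists because the open Riemann surface $\Omega$ is Stein (Behnke--Stein), so it admits a proper real-analytic embedding into some $\mathbb{C}^N$ whose pullback of $|z|^2$ gives the required function. By Sard's theorem, the regular values of $\rho$ are dense; I pick an increasing sequence $c_l\to\sup_\Omega\rho$ of regular values with $\rho(z_0)<c_1$ and set $\Omega_l:=\{\rho<c_l\}$. Then each $\Omega_l$ is relatively compact in $\Omega_{l+1}$ and in $\Omega$, $\cup_l\Omega_l=\Omega$, and $\partial\Omega_l=\{\rho=c_l\}$ is a real-analytic (hence smooth) $1$-manifold embedded in $\Omega$.

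Next I would establish the smooth extension of $e^{G_{\Omega_l}(\cdot,z_0)}$. Away from $z_0$, $G_{\Omega_l}(\cdot,z_0)$ is harmonic and extends continuously to be zero on $\partial\Omega_l$; since $\partial\Omega_l$ is real-analytic, the Schwarz reflection principle (applied locally after a real-analytic conformal flattening of the boundary) produces a harmonic, in fact real-analytic, continuation across $\partial\Omega_l$ to a neighborhood of $\overline{\Omega_l}\setminus\{z_0\}$. Exponentiating yields a smooth extension of $e^{G_{\Omega_l}(\cdot,z_0)}$ to this neighborhood. Near $z_0$, Lemma \ref{l:green-sup} gives that $G_{\Omega_l}(\cdot,z_0)-\log|w|$ is harmonic, so $e^{G_{\Omega_l}(\cdot,z_0)}=|w|\cdot e^{G_{\Omega_l}(\cdot,z_0)-\log|w|}$ is already smooth near $z_0$. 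Patching these two pieces gives a smooth extension of $e^{G_{\Omega_l}(\cdot,z_0)}$ to an open neighborhood of $\overline{\Omega_l}$.

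Finally, for the monotone decreasing convergence to $0$, I would appeal to the variational characterization in Lemma \ref{l:green-sup}. For $\Omega_l\subset\Omega_{l+1}\subset\Omega$, restriction of the comparison class gives $G_\Omega(\cdot,z_0)\leq G_{\Omega_{l+1}}(\cdot,z_0)\leq G_{\Omega_l}(\cdot,z_0)$ on $\Omega_l$, so $G_{\Omega_l}(\cdot,z_0)-G_\Omega(\cdot,z_0)$ is nonnegative and decreasing in $l$. The pointwise decreasing limit $v:=\lim_l G_{\Omega_l}(\cdot,z_0)$ is a negative subharmonic function on $\Omega$, harmonic on $\Omega\setminus\{z_0\}$, and satisfies $v-\log|w|=O(1)$ near $z_0$; by the maximality of $G_\Omega(\cdot,z_0)$ among such functions, $v=G_\Omega(\cdot,z_0)$, giving the claimed convergence. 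The step I expect to be the main obstacle is the smooth extension across $\partial\Omega_l$: it is crucial that the exhaustion is built from a real-analytic function, so that $\partial\Omega_l$ is real-analytic and Schwarz reflection produces a genuine extension to an open neighborhood of $\overline{\Omega_l}$, not merely $C^\infty$ smoothness up to the boundary.
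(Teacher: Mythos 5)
The paper itself does not prove this lemma---it is quoted from \cite{GY-concavity3}, so there is no in-paper argument against which to compare. Assessed on its own, your proof skeleton---a real-analytic strictly subharmonic exhaustion of the Stein surface $\Omega$, Schwarz reflection across the real-analytic level curves to continue $G_{\Omega_l}(\cdot,z_0)$ harmonically past $\partial\Omega_l$, and the monotone decrease of $G_{\Omega_l}(\cdot,z_0)$ to $G_\Omega(\cdot,z_0)$ via the sup-characterization of Lemma \ref{l:green-sup}---is correct, and you have correctly identified the crux: it is real-analyticity of $\partial\Omega_l$ (not mere $C^\infty$ smoothness) that lets the reflection produce a genuine harmonic continuation to an open neighborhood of $\overline{\Omega_l}$.

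Two details need repair. First, the sublevel set $\{\rho<c_l\}$ of a strictly subharmonic exhaustion need not be connected; $\Omega_l$ should be taken to be the connected component of $\{\rho<c_l\}$ containing $z_0$. This is still a relatively compact open Riemann surface, its boundary is a union of components of the regular level set $\{\rho=c_l\}$ and hence still real-analytic, and the components exhaust $\Omega$ because a compact path from $z_0$ to any given point is eventually absorbed into a sublevel set. Second, your remark that $e^{G_{\Omega_l}(\cdot,z_0)}=|w|\cdot e^{G_{\Omega_l}(\cdot,z_0)-\log|w|}$ is ``already smooth near $z_0$'' is not literally true: $|w|$ is only Lipschitz at $w=0$, so $e^{G_{\Omega_l}(\cdot,z_0)}$ is continuous but not $C^1$ there. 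The substantive content of the smooth-extension claim is the real-analytic continuation across $\partial\Omega_l$, which your Schwarz-reflection step supplies; the behavior at the logarithmic pole is inherent to the Green function and should not be overstated. With these adjustments, the argument constitutes a complete proof.
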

	
	\subsection{Some other required results}
	\
	
	\begin{lemma}[\cite{GMY}]\label{dbarequa}
	Let $c$ be a positive function on $(0,+\infty)$, such that $\int_{0}^{+\infty}c(t)e^{-t}dt<+\infty$ and $c(t)e^{-t}$ is decreasing on $(0,+\infty)$.
	Let $B\in(0,+\infty)$ and $t_{0}\geq 0$ be arbitrarily given.
	Let $M$ be an $n-$dimensional weakly pseudoconvex K\"ahler  manifold.
	Let $\psi<0$ be a plurisubharmonic function on $M$.
	Let $\varphi$ be a plurisubharmonic function on $M$.
	Let $F$ be a holomorphic $(n,0)$ form on $\{\psi<-t_{0}\}$,
	such that
	\begin{equation}
		\int_{K\cap\{\psi<-t_{0}\}}|F|^{2}<+\infty
	\end{equation}
	for any compact subset $K$ of $M$, and
	\begin{equation}
		\int_{M}\frac{1}{B}\mathbb{I}_{\{-t_{0}-B<\psi<-t_{0}\}}|F|^{2}e^{-\varphi}\leq C<+\infty.
	\end{equation}
	Then there exists a
	holomorphic $(n,0)$ form $\tilde{F}$ on $M$, such that
	\begin{equation}
		\begin{split}
			\int_{M}&|\tilde{F}-(1-b_{t_0,B}(\psi))F|^{2}e^{-\varphi+v_{t_0,B}(\psi)}c(-v_{t_0,B}(\psi))\leq C\int_{0}^{t_{0}+B}c(t)e^{-t}dt.
		\end{split}
	\end{equation}
	where $b_{t_0,B}(t)=\int_{-\infty}^{t}\frac{1}{B}\mathbb{I}_{\{-t_{0}-B< s<-t_{0}\}}ds$ and
	$v_{t_0,B}(t)=\int_{-t_0}^{t}b_{t_0,B}(s)ds-t_0$.
	\end{lemma}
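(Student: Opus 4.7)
\textbf{Proof proposal for Lemma \ref{dbarequa}.}

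The plan is to realize $\tilde F$ in the form $\tilde F = (1-b_{t_0,B}(\psi))F - u$, where $u$ is a suitable $L^2$ solution of an auxiliary $\bar\partial$-equation, and to control $u$ via a twisted Hörmander-type estimate on the weakly pseudoconvex Kähler manifold $M$. First I would observe that $b_{t_0,B}$ is a smooth non-decreasing cut-off with $b_{t_0,B}=0$ on $\{t\le -t_0-B\}$ and $b_{t_0,B}=1$ on $\{t\ge -t_0\}$, so the form $(1-b_{t_0,B}(\psi))F$ is globally defined on $M$, agrees with $F$ where $\psi<-t_0-B$, and vanishes where $\psi\ge -t_0$. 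Its $\bar\partial$ equals $-b_{t_0,B}'(\psi)\,\bar\partial\psi\wedge F=-\tfrac{1}{B}\mathbb{I}_{\{-t_0-B<\psi<-t_0\}}\bar\partial\psi\wedge F$, a $\bar\partial$-closed $(n,1)$ form supported in the annular strip where $F$ is defined and where the hypothesized integral bound $C$ lives.

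Next I would exhaust $M$ by relatively compact weakly pseudoconvex Kähler sub-level sets and, on each, solve $\bar\partial u = \bar\partial[(1-b_{t_0,B}(\psi))F]$ using the twisted Bochner--Kodaira--Hörmander identity of the Guan--Zhou optimal $L^2$ extension type, with the weight $\Phi:=\varphi-v_{t_0,B}(\psi)-\log c(-v_{t_0,B}(\psi))$ and an auxiliary smooth function built from $b_{t_0,B}$ and $v_{t_0,B}$. The key design feature is that $v_{t_0,B}'=b_{t_0,B}\in[0,1]$ and $v_{t_0,B}(t)=t$ for $t\ge -t_0$, so $v_{t_0,B}(\psi)\le \psi<0$ everywhere and $-v_{t_0,B}(\psi)\in[t_0,t_0+B/2]$ on the strip where $\bar\partial$ lives; combined with the monotonicity of $c(t)e^{-t}$, a direct computation of $i\partial\bar\partial\Phi$ produces a non-negative curvature term plus a term $\tfrac{1}{B}\mathbb{I}_{\{-t_0-B<\psi<-t_0\}}e^{v_{t_0,B}(\psi)}c(-v_{t_0,B}(\psi))^{-1}i\partial\psi\wedge\bar\partial\psi$ of exactly the right size to dominate the right-hand side.

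The resulting estimate gives
\[
\int_M |u|^2 e^{-\varphi+v_{t_0,B}(\psi)}c(-v_{t_0,B}(\psi))\le \int_M \tfrac{1}{B}\mathbb{I}_{\{-t_0-B<\psi<-t_0\}}|F|^2 e^{-\varphi}\,G(t_0,B),
\]
where $G(t_0,B)=\int_0^{t_0+B}c(s)e^{-s}ds$ emerges from integrating $c(-v_{t_0,B}(\psi))e^{v_{t_0,B}(\psi)}$ against the profile of $b_{t_0,B}'$; using the assumed upper bound $C$ on the strip integral of $|F|^2e^{-\varphi}/B$ yields the claimed inequality. Finally, passing to the limit along the exhaustion, plus a standard weak-compactness / Montel argument, produces a global holomorphic $(n,0)$ form $\tilde F=(1-b_{t_0,B}(\psi))F-u$ on $M$ with the required bound.

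The step I expect to be the main obstacle is the weight calibration: choosing $v_{t_0,B}$ so that the unfavourable sign term produced by $i\partial\bar\partial(-v_{t_0,B}(\psi))$ and $i\partial\bar\partial(-\log c(-v_{t_0,B}(\psi)))$ not only balances the cut-off error but extracts precisely the factor $\int_0^{t_0+B}c(s)e^{-s}ds$. This is where the specific definitions $b_{t_0,B}=\mathbb{I}_{(-t_0-B,-t_0)}/B$ and $v_{t_0,B}(t)=\int_{-t_0}^t b_{t_0,B}(s)ds-t_0$ are tailored to make the Hörmander error identity integrable against $c(s)e^{-s}$ with the correct constant.
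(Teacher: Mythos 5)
The paper does not prove this lemma; it is imported verbatim from \cite{GMY}. Your outline does, however, follow the same route that \cite{GMY} (and, further back, Guan--Zhou) take: cut off $F$ by $(1-b_{t_0,B}(\psi))$, solve the resulting $\bar\partial$-equation with a twisted H\"ormander estimate whose twist is built from $b_{t_0,B}$, $v_{t_0,B}$ and $c$, and pass to the limit along an exhaustion. So the strategy is the right one. Two concrete corrections are needed before it would compile into a proof.

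First, the sign of the key pointwise comparison is reversed. Since $v'_{t_0,B}=b_{t_0,B}\in[0,1]$ and $v_{t_0,B}(t)=t$ for $t\geq -t_0$, moving left from $-t_0$ the function $v_{t_0,B}$ decreases strictly slower than the identity; in fact $v_{t_0,B}(t)=-t_0-B/2$ for $t\leq -t_0-B$. Hence $v_{t_0,B}(\psi)\geq \psi$ (with equality only on $\{\psi\geq -t_0\}$), not $v_{t_0,B}(\psi)\leq\psi$ as you wrote. This inequality is what one actually needs: combined with the monotonicity of $c(t)e^{-t}$ it gives $c(-v_{t_0,B}(\psi))e^{v_{t_0,B}(\psi)}\geq c(-\psi)e^{\psi}$, which is the direction used both inside the twisted estimate and later when Lemma~\ref{dbarequa} is applied to compare minimal $L^2$ integrals with weight $c(-\psi)$. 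With the inequality as you stated it, the weight comparison would point the wrong way.

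Second, $b_{t_0,B}$ is only Lipschitz (its derivative is the indicator $\tfrac{1}{B}\mathbb{I}_{(-t_0-B,-t_0)}$), so $(1-b_{t_0,B}(\psi))F$ is not $C^1$ and the $\bar\partial$ computation you write down is formal. The proof needs a regularization step: replace $\mathbb{I}_{(-t_0-B,-t_0)}$ by smooth approximants, run the twisted Bochner--Kodaira--H\"ormander estimate on a weakly pseudoconvex exhaustion $D_j\Subset M$ with the smoothed weights, and then pass to the limit in the cut-off and in $j$; along the way one also typically needs to regularize $\varphi$ and $\psi$ by decreasing sequences of smooth plurisubharmonic functions. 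This is exactly where the hypotheses ``$M$ weakly pseudoconvex K\"ahler'' and ``$\varphi,\psi$ psh'' are used, and where the bound $\int_K |F|^2<+\infty$ on compacts enters to justify the limit. You gesture at the exhaustion and the weak-compactness/Montel argument, which is correct, but the smoothing of $b_{t_0,B}$ and of the weights should be explicit, since without it the curvature terms you compute do not literally exist. With these two repairs, the derivation of the constant $\int_0^{t_0+B}c(s)e^{-s}ds$ by integrating $c(-v_{t_0,B})e^{v_{t_0,B}}$ against the profile of $b'_{t_0,B}$ on the strip is indeed how the sharp constant arises in \cite{GMY}.
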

	
	Follow the assumptions of Lemma \ref{Concave} in the following two lemmas.
	\begin{lemma}[\cite{GMY}]\label{G(t)=0}
		The following three statements are equivalent:
		
		(1). $f\in H^0(Z_0,(\mathcal{O}(K_M)\otimes\mathcal{F})|_{Z_0})$;
		
		(2). $G(t)=0$ for some $t\geq T$;
		
		(3). $G(t)=0$ for any $t\geq T$.
	\end{lemma}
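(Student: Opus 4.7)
The plan is to prove the cycle (1) $\Rightarrow$ (3) $\Rightarrow$ (2) $\Rightarrow$ (1), where the first two implications are essentially tautological and all the content sits in (2) $\Rightarrow$ (1). For (1) $\Rightarrow$ (3), given $f\in H^{0}(Z_{0},(\mathcal{O}(K_{M})\otimes\mathcal{F})|_{Z_{0}})$, the form $\tilde{f}\equiv 0$ is admissible in the infimum defining $G(t;c)$ because $\tilde{f}-f=-f$ has germ in $(\mathcal{O}(K_{M})\otimes\mathcal{F})_{z_{0}}$ at every $z_{0}\in Z_{0}$, so $G(t;c)=0$ for every $t\geq T$. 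The implication (3) $\Rightarrow$ (2) is immediate.

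For (2) $\Rightarrow$ (1), suppose $G(t_{0};c)=0$ for some $t_{0}\geq T$. I would choose a minimizing sequence $\{\tilde{f}_{n}\}\subset\mathcal{H}^{2}(c,t_{0})$ with $\int_{\{\psi<-t_{0}\}}|\tilde{f}_{n}|^{2}e^{-\varphi}c(-\psi)\to 0$ and $(\tilde{f}_{n}-f,z_{0})\in(\mathcal{O}(K_{M})\otimes\mathcal{F})_{z_{0}}$ for every $z_{0}\in Z_{0}$. Fix such a $z_{0}$; it suffices to show $(f,z_{0})\in(\mathcal{O}(K_{M})\otimes\mathcal{F})_{z_{0}}$. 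Since $\psi$ is upper semicontinuous with $\psi(z_{0})=-\infty$ at the relevant points, a coordinate neighborhood $U$ of $z_{0}$ can be chosen inside $\{\psi<-t_{0}\}$ and, when $z_{0}\notin E$, disjoint from $E$; on such $U$ the definition of $\mathcal{P}_{T,M}$ supplies a positive lower bound $\delta>0$ for $e^{-\varphi}c(-\psi)$. Hence $\int_{U}|\tilde{f}_{n}|^{2}\leq\delta^{-1}\int_{\{\psi<-t_{0}\}}|\tilde{f}_{n}|^{2}e^{-\varphi}c(-\psi)\to 0$, and the sub-mean-value property for holomorphic forms upgrades $L^{2}$ convergence to locally uniform convergence $\tilde{f}_{n}\to 0$ on a smaller neighborhood of $z_{0}$.

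I would then conclude using Krull's intersection theorem. Uniform convergence $\tilde{f}_{n}\to 0$ near $z_{0}$ forces every Taylor coefficient of $\tilde{f}_{n}$ at $z_{0}$ to tend to zero, so the image of $\tilde{f}_{n}$ in the finite-dimensional $\mathbb{C}$-vector space $\mathcal{O}(K_{M})_{z_{0}}/(\mathcal{F}_{z_{0}}+\mathfrak{m}_{z_{0}}^{k}\mathcal{O}(K_{M})_{z_{0}})$ tends to zero for each fixed $k$. Since $(\tilde{f}_{n}-f,z_{0})\in\mathcal{F}_{z_{0}}$, the image of $f$ agrees with that of $\tilde{f}_{n}$ in this quotient; but that common image is both independent of $n$ and a limit of vectors tending to zero, so it must equal zero. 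Therefore $(f,z_{0})\in\mathcal{F}_{z_{0}}+\mathfrak{m}_{z_{0}}^{k}\mathcal{O}(K_{M})_{z_{0}}$ for every $k$, and Krull's intersection theorem applied to the finitely generated $\mathcal{O}_{z_{0}}$-module $\mathcal{O}(K_{M})_{z_{0}}/\mathcal{F}_{z_{0}}$ yields $(f,z_{0})\in\mathcal{F}_{z_{0}}$, as desired.

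The step I expect to be the main obstacle is the borderline case $z_{0}\in Z_{0}\cap E$, where the above argument collapses because $e^{-\varphi}c(-\psi)$ need not have any positive lower bound on neighborhoods of $z_{0}$. Here one must exploit that $E\subset Z\cap\{\psi=-\infty\}$ is contained in an analytic set and that $X$ is locally negligible for $L^{2}$ holomorphic functions: run the preceding analysis on $U\setminus E$, where $\tilde{f}_{n}\to 0$ locally uniformly, and then extend the germ conclusion across the thin exceptional locus using the analytic structure of $Z$ together with local negligibility. This step is purely local but will require some care to handle cleanly.
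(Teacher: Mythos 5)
Your cycle of implications is the natural one, and the Krull intersection argument for the implication $(2)\Rightarrow(1)$ is a valid, self-contained substitute for citing the Grauert--Remmert closedness of submodules (Lemma \ref{module} in this paper), provided one knows $\tilde f_n\to 0$ locally uniformly near $z_0$. The implications $(1)\Rightarrow(3)\Rightarrow(2)$ are correct as written.

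The genuine gap is exactly the one you flag at the end, and your proposed patch does not close it. Two issues. First, the $z_0\in E$ case is not a degenerate corner: in the paper's applications $Z_0\subset Z\cap\{\psi=-\infty\}$, so $Z_0\cap E$ is typically all of $Z_0$, and the ``generic'' case $z_0\notin E$ may simply not occur. Second, and more seriously, your sketch ``run the analysis on $U\setminus E$ and extend across the thin set'' does not work as stated: from $\int_K|\tilde f_n|^2\to 0$ for each compact $K\subset U\setminus E$ you get $\tilde f_n\to 0$ locally uniformly on $U\setminus E$, but with no a priori bound on $\tilde f_n$ near $E$ this does not propagate to a neighborhood of $z_0\in E$ (neither the maximum modulus principle nor the Riemann/Hartogs removable-singularity mechanism applies, because you do not yet know $\tilde f_n$ is uniformly bounded across $E$ on any sphere). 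You also invoke the local negligibility of $X$, but that is irrelevant here: the obstruction is $E\subset Z$, not $X$.

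The correct way to close the gap is already in the toolbox the paper sets up: apply Lemma \ref{s_K} to the minimizing sequence $\{\tilde f_n\}$ on the open manifold $\{\psi<-t_0\}$, with $g_j=g=e^{-\varphi}c(-\psi)$ and $S$ an analytic set containing $E$ (e.g.\ $S=Z$). The hypothesis $\int_K g^{-s_K}\,dV<+\infty$ for compacts $K$ disjoint from $S$ holds because $e^{-\varphi}c(-\psi)$ has a positive lower bound there, and $\liminf_n\int|\tilde f_n|^2g=0$. The conclusion of Lemma \ref{s_K} gives a subsequence $\tilde f_{n_l}$ converging uniformly on compact subsets of \emph{all} of $\{\psi<-t_0\}$ (including near $E$) to a holomorphic $(n,0)$ form $F$ with $\int|F|^2e^{-\varphi}c(-\psi)\le 0$, whence $F\equiv 0$ since the weight is positive a.e. Then $f-\tilde f_{n_l}\to f$ uniformly near each $z_0\in Z_0$, with germs in $\mathcal F_{z_0}$, and either Lemma \ref{module} or your Krull argument finishes. (Even shorter: when $G(t_0)=0<+\infty$, Lemma \ref{F_t} already produces a unique minimizer $F_{t_0}$ with $\int|F_{t_0}|^2e^{-\varphi}c(-\psi)=0$, forcing $F_{t_0}\equiv 0$ and hence $f\in H^0(Z_0,(\mathcal O(K_M)\otimes\mathcal F)|_{Z_0})$; but that lemma rests on the same compactness input.) In short, the missing ingredient is the weighted compactness Lemma \ref{s_K}, not an ad hoc extension across $E$.
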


	\begin{lemma}[\cite{GMY}]\label{F_t}
		Assume that $G(t)<+\infty$ for some $t\in [T,+\infty)$. Then there exists a unique holomorphic $(n,0)$ form $F_t$ on $\{\psi<-t\}$ satisfying
		\begin{equation}
			\int_{\{\psi<-t\}}|F_t|^2e^{-\varphi}c(-\psi)=G(t)
		\end{equation}
		and $(F_t-f)\in H^0(Z_0,(\mathcal{O}(K_M)\otimes\mathcal{F})|_{Z_0})$.
		
		Furthermore, for any holomorphic $(n,0)$ form $\hat{F}$ on $\{\psi<-t\}$ satisfying
		\begin{equation}
			\int_{\{\psi<-t\}}|\hat{F}|^2e^{-\varphi}c(-\psi)<+\infty
		\end{equation}
		and $(\hat{F}-f)\in H^0(Z_0,(\mathcal{O}(K_M)\otimes\mathcal{F})|_{Z_0})$, we have the following equality,
		\begin{flalign}
			\begin{split}
				&\int_{\{\psi<-t\}}|F_t|^2e^{-\varphi}c(-\psi)+\int_{\{\psi<-t\}}|\hat{F}-F_t|^2e^{-\varphi}c(-\psi)\\
				=&\int_{\{\psi<-t\}}|\hat{F}|^2e^{-\varphi}c(-\psi).
			\end{split}	
		\end{flalign}
	\end{lemma}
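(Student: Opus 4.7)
The plan is to recognize the statement as the standard minimum-norm/projection theorem on a closed affine subspace of a Hilbert space of holomorphic $(n,0)$ forms on $\{\psi<-t\}$, and read off uniqueness and the Pythagorean identity from the projection theorem.

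First I would set up the Hilbert space $\mathcal{H}^2(c,t)$ (the space named in the excerpt) equipped with the inner product $\langle g, h\rangle := (\sqrt{-1})^{n^2}\int_{\{\psi<-t\}}g\wedge\overline{h}\,e^{-\varphi}c(-\psi)$. Completeness requires that $L^2$-convergence upgrades to locally uniform convergence away from the negligible set $E\subset Z\cap\{\psi=-\infty\}$; this is standard from the mean-value/Cauchy-integral estimates, using the positive lower bound of $e^{-\varphi}c(-\psi)$ on compact subsets of $M\setminus E$ guaranteed by $c\in\mathcal{P}_{T,M}$, together with the condition $(A)$ hypothesis that $X$ is locally $L^2$-negligible, so holomorphic limits extend across $X\cup E$. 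Thus $\mathcal{H}^2(c,t)$ is a Hilbert space.

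Next, consider the admissible affine set $S_t:=\{\tilde f\in\mathcal{H}^2(c,t) : (\tilde f-f,z_0)\in(\mathcal{O}(K_M)\otimes\mathcal{F})_{z_0} \text{ for every } z_0\in Z_0\}$. Since $G(t)<+\infty$, $S_t$ is nonempty; it is an affine translate of the linear subspace $L_t$ defined by the same germ condition with $f$ replaced by $0$. The key closedness claim is that $L_t$ is closed in $\mathcal{H}^2(c,t)$: if $v_n\to v$ in norm, then after passing to a subsequence $v_n\to v$ locally uniformly on $\{\psi<-t\}\setminus E$, and the coherence of the ideals $\mathcal{F}_{z_0}\supset\mathcal{I}(\varphi+\psi)_{z_0}$ combined with local $L^2$-control lets us conclude that the limiting germ $(v,z_0)$ still lies in $\mathcal{O}(K_M)\otimes\mathcal{F}_{z_0}$ (this is the main technical point, and the place where the structural hypotheses in condition $(A)$ really get used).

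Having a nonempty closed affine convex subset $S_t$ of a Hilbert space, the projection theorem produces a unique minimizer $F_t\in S_t$; its squared norm is exactly $G(t)$. Uniqueness is immediate from the parallelogram identity: two minimizers $F_t, F'_t$ would give $\tfrac{1}{2}(F_t+F'_t)\in S_t$ and hence $\|F_t-F'_t\|^2 = 2\|F_t\|^2+2\|F'_t\|^2 - 4\|\tfrac{1}{2}(F_t+F'_t)\|^2 \le 0$. For any $\hat F\in S_t$ with finite norm, the difference $\hat F-F_t$ belongs to $L_t$, so $F_t+\lambda(\hat F-F_t)\in S_t$ for every $\lambda\in\mathbb{C}$; minimality of $F_t$ forces $\langle F_t,\hat F-F_t\rangle = 0$. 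Expanding $\|\hat F\|^2 = \|F_t+(\hat F-F_t)\|^2$ then yields exactly the claimed equality $\int|F_t|^2 e^{-\varphi}c(-\psi)+\int|\hat F-F_t|^2 e^{-\varphi}c(-\psi) = \int|\hat F|^2 e^{-\varphi}c(-\psi)$.

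The main obstacle I anticipate is verifying closedness of $L_t$, i.e.\ that the germ condition at each point of $Z_0$ is preserved under $L^2$-limits in $\mathcal{H}^2(c,t)$. One needs to combine (i) extension of the holomorphic limit across the locally $L^2$-negligible set $X$ and across $E$ (via Riemann-type removable singularity arguments for $L^2$ holomorphic forms), and (ii) the fact that for coherent $\mathcal{F}\supset\mathcal{I}(\varphi+\psi)$ the germ condition can be tested by finitely many local $\mathcal{O}$-linear equations, whose solution set is closed in the local $L^2$-topology. Everything else (existence via projection, uniqueness via parallelogram, orthogonality via first-variation) is a routine application of Hilbert space geometry once this closedness is in hand.
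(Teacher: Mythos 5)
Your proposal is correct and takes essentially the same approach as the proof in the cited source \cite{GMY}: build the weighted Hilbert space $\mathcal{H}^2(c,t)$, check completeness via the local lower bound on $e^{-\varphi}c(-\psi)$ off the negligible set together with removable singularities, show the admissible affine set is closed using locally uniform convergence and the Grauert--Remmert module closedness lemma (Lemma \ref{module}), and then deduce existence, uniqueness, and the Pythagorean identity from the parallelogram law and a first-variation argument. One small remark: you do not need $\mathcal{F}$ to be coherent --- $\mathcal{F}_{z_0}$ is only assumed to be an ideal of the (Noetherian) local ring $\mathcal{O}_{z_0}$, and Lemma \ref{module} applies to an arbitrary submodule at a single point, which is all the closedness step requires.
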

	
	We also need the following lemmas.
	\begin{lemma}[see \cite{G-R}]\label{module}
		Let $N$ be a submodule of $\mathcal{O}_{\mathbb{C}^n,o}^q$, $q\in\mathbb{Z}_+$. Let $\{f_j\}\subset\mathcal{O}_{\mathbb{C}^n}(U)^q$ be a sequence of $q-$tuples holomorphic functions in an open neighborhood $U$ of the origin $o$. Assume that $\{f_j\}$ converges uniformly in $U$ towards a $q-$tuples $f\in\mathcal{O}_{\mathbb{C}^n,o}^q$, and assume furthermore that all the germs $(f_j,o)$ belong to $N$. Then $(f,o)\in N$.
	\end{lemma}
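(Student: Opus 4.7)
My plan is to reduce the closure statement to a normal-family argument after extracting a finite generating set for $N$. Since $\mathcal{O}_{\mathbb{C}^n,o}$ is Noetherian by R\"uckert's basis theorem, the submodule $N\subset\mathcal{O}_{\mathbb{C}^n,o}^q$ is finitely generated; choose generators $g_1,\ldots,g_r$ and, shrinking $U$ if necessary, represent each $g_i$ by a holomorphic $q$-tuple on $U$ (still denoted $g_i$). For each $j$, the hypothesis $(f_j,o)\in N$ produces germs $h_{j,1},\ldots,h_{j,r}\in\mathcal{O}_{\mathbb{C}^n,o}$ with $f_j=\sum_{i=1}^r h_{j,i}g_i$ on some neighborhood of $o$ that may a priori depend on $j$.

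The crucial step is a uniform representation estimate: there exist a neighborhood $V\Subset U$ of $o$ and a constant $C>0$, independent of $j$, such that the $h_{j,i}$ admit holomorphic representatives on $V$ satisfying $\sup_V|h_{j,i}|\le C\sup_U|f_j|$. Once this is available, the uniform convergence $f_j\to f$ on $U$ bounds $\sup_U|f_j|$ in $j$, and hence bounds $\sup_V|h_{j,i}|$. Montel's theorem then makes $\{h_{j,i}\}_j$ a normal family on $V$ for each $i$, and a diagonal extraction yields a subsequence $j_k\to\infty$ together with holomorphic functions $h_1,\ldots,h_r$ on $V$ such that $h_{j_k,i}\to h_i$ locally uniformly on $V$. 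Passing to the limit in the identity $f_{j_k}=\sum_i h_{j_k,i}g_i$ on $V$ gives $f=\sum_i h_ig_i$ on $V$, whence $(f,o)\in N$.

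The main obstacle is the uniform estimate in the preceding paragraph; Noetherianity is purely algebraic and does not by itself supply any analytic control on the coefficients. I would prove this by induction on $n$ via the Weierstrass preparation and division theorems: after a generic linear change of coordinates, a suitable combination of the components of the $g_i$ becomes a Weierstrass polynomial in $z_n$ with coefficients in $\mathcal{O}_{\mathbb{C}^{n-1},o}$, and the Weierstrass division theorem provides quotients and remainders with sup-norm estimates on a slightly smaller polydisc that are linear in the sup-norm of the dividend. Iterating this procedure reduces the problem to the one-variable setting, where the analogous estimate is elementary. Equivalently, one may package the same content as an open mapping theorem applied to the continuous linear surjection $\mathcal{O}(V)^r\twoheadrightarrow\{\,F\in\mathcal{O}(V)^q:(F,o)\in N\,\}$ of Fr\'echet spaces sending $(h_1,\ldots,h_r)$ to $\sum_i h_ig_i$. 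In either formulation the analytic heart of the proof lies precisely here; the subsequent Montel compactness and term-by-term passage to the limit are routine.
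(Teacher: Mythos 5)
The paper offers no proof of this lemma, citing Grauert--Remmert directly; your outline reproduces exactly the argument in that source (reduce to finitely many generators by Noetherianity, obtain a uniform coefficient bound on a privileged polydisc via Weierstrass division with estimates, apply Montel and a diagonal extraction, pass to the limit term by term), and you correctly isolate the uniform representation estimate as the genuine analytic content. One minor caveat worth flagging: the open-mapping-theorem ``repackaging'' at the end is circular if read as an independent derivation, since both the surjectivity of $\mathcal{O}(V)^r\to\{F\in\mathcal{O}(V)^q:(F,o)\in N\}$ and the closedness of that target are precisely what the Weierstrass estimate furnishes --- so it is a restatement of the conclusion, not an alternative route to the estimate, which you in fact acknowledge by placing the analytic heart squarely in the division-with-bounds step.
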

	
	\begin{lemma}[see \cite{guan-zhou13ap}]\label{s_K}
		Let $M$ be a complex manifold. Let $S$ be an analytic subset of $M$. Let $\{g_j\}_{j=1,2,\ldots}$ be a sequence of nonnegative Lebesgue measurable functions on $M$, which satisfies that $g_j$ are almost everywhere convergent to $g$ on $M$ when $j\rightarrow +\infty$, where $g$ is a nonnegative Lebesgue measurable function on $M$. Assume that for any compact subset $K$ of $M\setminus S$, there exist $s_K\in (0,+\infty)$ and $C_K\in (0,+\infty)$ such that
		\begin{equation}
			\int_Kg_j^{-s_K}dV_M\leq C_K
		\end{equation}
		for any $j$, where $dV_M$ is a continuous volume form on $M$.
		
		Let $\{F_j\}_{j=1,2,\ldots}$ be a sequence of holomorphic $(n,0)$ form on $M$. Assume that $\liminf\limits_{j\rightarrow +\infty}\int_M|F_j|^2g_j\leq C$, where $C$ is a positive constant. Then there exists a subsequence $\{F_{j_l}\}_{l=1,2,\ldots}$, which satisfies that $\{F_{j_l}\}$ is uniformly convergent to a holomorphic $(n,0)$ form $F$ on $M$ on any compact subset of $M$ when $l\rightarrow +\infty$, such that
		\begin{equation}
			\int_M|F|^2g\leq C.
		\end{equation}
	\end{lemma}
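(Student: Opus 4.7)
The plan is to convert the weighted $L^2$-bound into unweighted $L^{p_K}$-bounds on compacts of $M\setminus S$ via H\"older's inequality, obtain pointwise bounds via plurisubharmonicity of $|F_j|^{p_K}$, extract a Montel-convergent subsequence, extend the limit across the analytic set $S$, and finish by Fatou's lemma.

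First I would replace $\{F_j\}$ by a subsequence realizing $\int_M |F_j|^2 g_j\le C+1$. For any compact $K\Subset M\setminus S$ with the given $s_K$ and $C_K$, H\"older's inequality with conjugate exponents $(s_K+1)/s_K$ and $s_K+1$ gives
\begin{equation*}
\int_K |F_j|^{2s_K/(s_K+1)}\,dV_M=\int_K (|F_j|^2 g_j)^{s_K/(s_K+1)}\,g_j^{-s_K/(s_K+1)}\,dV_M\le (C+1)^{s_K/(s_K+1)}\,C_K^{1/(s_K+1)}.
\end{equation*}
Setting $p_K:=2s_K/(s_K+1)>0$, this is a uniform $L^{p_K}$-bound on compacts of $M\setminus S$. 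Since $|F_j|^{p_K}$ is plurisubharmonic on $M$, the sub-mean-value inequality promotes this into a locally uniform pointwise bound on $|F_j|$ over any slightly smaller compact $K'\Subset M\setminus S$.

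Next I would propagate these bounds across $S$. For a point $z_0\in S$ and a coordinate polydisc $P$ centered at $z_0$ with $\overline P\subset M$, the distinguished boundary $\partial_0 P$ is a real $n$-dimensional torus, while $S$ is a proper analytic subset of real codimension at least two; for a generic choice of radii, $\partial_0 P\cap S$ therefore has $n$-dimensional Lebesgue measure zero in $\partial_0 P$. Each $F_j$ is continuous on $M$ and uniformly bounded on the dense subset $\partial_0 P\setminus S$, hence is uniformly bounded on $\partial_0 P$; Cauchy's integral formula then provides a uniform bound on a smaller polydisc $P'\Subset P$. Combined with the bounds from the previous paragraph, this shows that $\{F_j\}$ is uniformly bounded on every compact subset of $M$. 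Montel's theorem together with a diagonal argument over an exhaustion of $M$ by compacts then produces a subsequence $\{F_{j_l}\}$ converging uniformly on compacts of $M$ to a holomorphic $(n,0)$-form $F$ on $M$.

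Finally, since $F_{j_l}\to F$ uniformly on compacts and $g_{j_l}\to g$ almost everywhere, Fatou's lemma gives
\begin{equation*}
\int_M |F|^2 g\le\liminf_{l\to\infty}\int_M |F_{j_l}|^2 g_{j_l}\le C,
\end{equation*}
which is the desired inequality. The main obstacle will be propagating bounds across $S$: because the weight $g$ may degenerate on $S$, the weighted $L^2$-bound does not by itself control $F_j$ near $S$, and one must exploit the fact that each $F_j$ is already holomorphic on all of $M$, transferring uniform boundary bounds on $\partial_0 P\setminus S$ to interior bounds on $P'\Subset P$ via Cauchy's integral formula on polydiscs whose distinguished boundary meets $S$ only in a null set.
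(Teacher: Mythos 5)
Your overall strategy is the right one: H\"older to convert the weighted $L^2$ bound into an $L^{p_K}$ bound on compacts of $M\setminus S$, the sub-mean-value property of $|F_j|^{p_K}$ to obtain locally uniform pointwise bounds on $M\setminus S$, a propagation argument to carry the bound across $S$, then Montel with a diagonal argument and Fatou's lemma.

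The gap is in the propagation step. You choose a polydisc $P$ around $z_0\in S$ with generic radii so that $\partial_0 P\cap S$ has $n$-dimensional measure zero, and then assert that $\{F_j\}$ is \emph{uniformly bounded} on $\partial_0 P\setminus S$, concluding by density and continuity that it is uniformly bounded on $\partial_0 P$. That assertion does not follow from what you have. The bound obtained in the previous paragraph is \emph{locally} uniform on $M\setminus S$: for each compact $K\Subset M\setminus S$ there is a constant $M_K$ with $\sup_j\sup_K|F_j|\le M_K$, and a priori $M_K\to\infty$ as $K$ approaches $S$. Unless $\partial_0 P\cap S=\emptyset$, the set $\partial_0 P\setminus S$ is \emph{not} relatively compact in $M\setminus S$, so these compact-set bounds do not combine into a single finite bound over $\partial_0 P\setminus S$. ``Measure zero intersection'' is too weak; nothing in your argument rules out $\sup_j\sup_{\partial_0 P\setminus S}|F_j|=+\infty$. (For $n=1$ the issue disappears since $S$ is discrete, but for $n\ge 2$ it is genuine.)

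The standard fix is to arrange the stronger condition $\partial_0 P\cap S=\emptyset$. Since $S$ is a proper analytic subset, after a linear change of local coordinates centered at $z_0$ one may choose a regular direction so that $z_0=0$ is isolated in $S\cap\{z'=0\}$; then there are $\delta>0$ and $r>0$ with $S\cap\bigl(\overline{\Delta_\delta^{\,n-1}}\times\{|z_n|=r\}\bigr)=\emptyset$. Taking $r_1,\dots,r_{n-1}\le\delta$ and $r_n=r$, the set $\overline{\Delta_\delta^{\,n-1}}\times\{|z_n|=r\}$ (and in particular $\partial_0 P$) is a compact subset of $M\setminus S$, on which your pointwise bound applies. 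The one-variable Cauchy integral formula in $z_n$ for fixed $z'$ (or the maximum modulus principle on the polydisc) then yields the locally uniform bound across $S$. With this repair the remaining steps — Montel plus diagonalization on an exhaustion of $M$, and Fatou applied to $|F_{j_l}|^2 g_{j_l}\to|F|^2g$ a.e. along the subsequence realizing the $\liminf$ — are correct.
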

	
	\begin{lemma}[\cite{FN1980}]\label{l:FN1} 
		Let $X$ be a Stein manifold and $\varphi \in PSH(X)$. Then there exists a sequence
		$\{\varphi_{n}\}_{n=1,\cdots}$ of smooth strongly plurisubharmonic functions such that
		$\varphi_{n} \downarrow \varphi$.
	\end{lemma}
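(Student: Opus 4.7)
The plan is to invoke the classical Fornaess--Narasimhan construction: combine local convolution smoothing with Richberg's regularized maximum to patch between coordinate charts, then add a small multiple of a strongly plurisubharmonic exhaustion function to upgrade the sequence to \emph{strongly} plurisubharmonic.

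First I would use that $X$ is Stein to fix a smooth strongly plurisubharmonic exhaustion $\rho\colon X\to\mathbb{R}$ and an exhaustion $X_1\Subset X_2\Subset\cdots$ of $X$ by relatively compact open subsets with $\bigcup_n X_n=X$. Choose a locally finite cover $\{U_\alpha\}$ of $X$ by holomorphic coordinate charts biholomorphic to Euclidean balls in $\mathbb{C}^n$. On each chart, the convolution $\varphi*\chi_\epsilon$ with a smooth radial mollifier of radius $\epsilon$ yields smooth plurisubharmonic functions on a slightly shrunken subdomain, decreasing to $\varphi$ as $\epsilon\downarrow 0$. These local smoothings do not directly assemble into a global smooth function, which is the reason one needs a patching step.

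The patching is supplied by Richberg's regularized maximum $M_\eta\colon\mathbb{R}^p\to\mathbb{R}$: a smooth convex function, nondecreasing in each variable, coinciding with $\max$ outside an $\eta$-neighborhood of the diagonals and satisfying $\max\le M_\eta\le\max+\eta$. Composition with plurisubharmonic inputs preserves plurisubharmonicity. By a diagonal procedure one chooses convolution radii $\epsilon_{\alpha,n}$ and patching parameters $\eta_n$ so that on each $X_n$ the finitely many relevant local smoothings can be combined via $M_{\eta_n}$ into a globally defined smooth plurisubharmonic function $\tilde\varphi_n$ on $X$, with $\tilde\varphi_n\downarrow\varphi$ pointwise. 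Finally, set $\varphi_n:=\tilde\varphi_n+\delta_n\rho$ for a sequence $\delta_n\downarrow 0$ chosen small enough in comparison with the convergence rate of $\tilde\varphi_n-\varphi$ on $\overline{X_n}$; each $\varphi_n$ is then smooth and strongly plurisubharmonic, and monotonicity $\varphi_n\downarrow\varphi$ is preserved.

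The main obstacle is the bookkeeping in the patching step: one must simultaneously arrange that on each $X_n$ only finitely many charts are active, that the regions where the maximum is non-unique contain only smooth inputs so that $M_{\eta_n}$ composed with them is smooth, that the sequence $\{\tilde\varphi_n\}$ is pointwise decreasing in $n$, and that its pointwise limit is exactly $\varphi$ rather than $\varphi$ plus an accumulated error. Harmonizing all of these constraints along a diagonal sequence of parameters $(\epsilon_{\alpha,n},\eta_n,\delta_n)$, using only the upper semicontinuity of $\varphi$ and the monotone convergence $\varphi*\chi_\epsilon\downarrow\varphi$ on each chart, is the central technical content of the Fornaess--Narasimhan argument.
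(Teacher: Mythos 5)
The paper does not prove this lemma; it is quoted verbatim from Forn\ae ss--Narasimhan \cite{FN1980} as a black box, so there is no ``paper's proof'' to compare against. Your sketch follows the Richberg-style heuristic, which is the right family of ideas, but it contains a genuine gap at the patching step. You claim that ``by a diagonal procedure one chooses convolution radii $\epsilon_{\alpha,n}$ and patching parameters $\eta_n$'' so that the regularized maximum of the local mollifications assembles into a global smooth psh function. For a \emph{continuous} psh $\varphi$ this can be made to work because the local convolutions $\varphi\ast\chi_\epsilon$ converge to $\varphi$ uniformly on compacts, so the dominance needed for $M_{\eta}$ to ``forget'' each local piece near the boundary of its chart can be arranged. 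For a general psh $\varphi$ (upper semicontinuous, possibly taking the value $-\infty$ on a large set), the convergence is only pointwise monotone, and near $\partial U_\alpha$ you cannot force the piece from a neighboring chart $U_\beta$ to dominate the piece from $U_\alpha$ by tuning only $\epsilon$'s and $\eta$'s. Moreover, the natural fix---subtracting from the $\alpha$-th piece a function that blows up toward $\partial U_\alpha$---would require a plurisubharmonic function on a ball tending to $-\infty$ at the boundary, which cannot exist by the sub-mean-value inequality. So the perturbations needed to create the dominance must be designed differently (strictly psh bumps adapted to the cover, or a device built from the exhaustion function), and this design is precisely the nontrivial content you have skipped; as written the construction of $\tilde\varphi_n$ does not go through.

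A cleaner route in the Stein \emph{manifold} case, which sidesteps multi-chart Richberg patching entirely, is the embedding-plus-retraction reduction: properly embed $X\hookrightarrow\mathbb{C}^N$, invoke Docquier--Grauert to get a Stein (hence pseudoconvex) neighborhood $W$ of $X$ in $\mathbb{C}^N$ together with a holomorphic retraction $r\colon W\to X$, and pull $\varphi$ back to $\varphi\circ r\in PSH(W)$. On the pseudoconvex open $W\subset\mathbb{C}^N$ one has global coordinates, so one can take global convolutions $\varphi\circ r\ast\chi_{\epsilon_\nu}$ on an exhaustion by sublevel sets of a psh exhaustion function of $W$, and the patching is now only along a \emph{nested} family of opens---a genuinely easier combinatorics, handled by the standard regularized-maximum argument with an auxiliary function of the exhaustion. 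Restricting the resulting decreasing sequence of smooth strongly psh functions from $W$ to $X$ gives the desired $\varphi_n\downarrow\varphi$. This is likely closer to what you should spell out; note however that \cite{FN1980} treats Stein \emph{spaces} with singularities, where no such retraction exists, so their actual argument is necessarily more involved than either of the two routes discussed here.
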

	
	\begin{lemma}[see \cite{GY-concavity3}]\label{tildecincreasing}
		If $c(t)$ is a positive measurable function on $(T,+\infty)$ such
		that $c(t)e^{-t}$ is decreasing on $(T,+\infty)$ and $\int_{T_1}^{+\infty}c(s)e^{-s}ds<+\infty$ for some $T_1>T$, then there exists a positive measurable function $\tilde{c}$ on $(T,+\infty)$ satisfying the
		following statements:
		
		(1). $\tilde{c}\geq c$ on $(T,+\infty)$;
		
		(2). $\tilde{c}(t)e^{-t}$ is strictly decreasing on $(T,+\infty)$ and $\tilde{c}$ is increasing on $(a,+\infty)$,	where $a>T$ is a real number;
		
		(3). $\int_{T_1}^{+\infty}\tilde{c}(s)e^{-s}ds<+\infty$.
		
		Moreover, if $\int_T^{+\infty}c(s)e^{-s}ds<+\infty$ and $c\in\mathcal{P}_T$, we can choose $\tilde{c}$ satisfying the above conditions, $\int_T^{+\infty}\tilde{c}(s)e^{-s}ds<+\infty$ and $\tilde{c}\in\mathcal{P}_T$.
	\end{lemma}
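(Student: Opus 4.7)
The plan is to construct $\tilde c$ from $c$ in two stages: a pointwise additive perturbation to enforce strict monotonicity of $\tilde c(t)e^{-t}$, followed by a running upper envelope to enforce eventual increase of $\tilde c$. First, fix $\varepsilon\in(0,1)$ and set
\[
\tilde c_0(t):=c(t)+e^{(1-\varepsilon)t},\qquad t\in (T,+\infty).
\]
Then $\tilde c_0\ge c>0$, and $\tilde c_0(t)e^{-t}=c(t)e^{-t}+e^{-\varepsilon t}$ is a sum of a weakly decreasing and a strictly decreasing positive function, hence strictly decreasing on $(T,+\infty)$; integrability $\int_{T_1}^{+\infty}\tilde c_0(s)e^{-s}\,ds<+\infty$ is immediate. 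So $\tilde c_0$ already satisfies (1), (3), and the first half of (2), but $\tilde c_0$ itself need not be eventually non-decreasing, since $c(t)e^{-t}$ decreasing permits $c$ to drop.

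To remedy this, choose $a>T_1$ and define
\[
\tilde c(t):=\tilde c_0(t)\ \text{for}\ t\in(T,a],\quad \tilde c(t):=\sup_{a\le s\le t}\tilde c_0(s)\ \text{for}\ t>a.
\]
Then $\tilde c\ge\tilde c_0\ge c$, and $\tilde c$ is non-decreasing on $(a,+\infty)$ by construction. Strict decrease of $\tilde c(t)e^{-t}$ on $(T,+\infty)$ is verified by case analysis: on $(T,a]$ it coincides with $\tilde c_0(t)e^{-t}$; and for $a\le t_1<t_2$, either the supremum is unchanged between $t_1$ and $t_2$ (in which case the $e^{-t}$ factor alone forces strict decrease), or it is attained at some $s^*\in(t_1,t_2]$, and then using that $\tilde c_0(s)e^{-s}$ is strictly decreasing,
\[
\tilde c(t_2)e^{-t_2}=\tilde c_0(s^*)e^{-s^*}\cdot e^{s^*-t_2}\le\tilde c_0(s^*)e^{-s^*}<\tilde c_0(t_1)e^{-t_1}\le\tilde c(t_1)e^{-t_1}.
\]
Continuity at $t=a$ glues the two regimes.

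The main obstacle is integrability $\int_a^{+\infty}\tilde c(t)e^{-t}\,dt<+\infty$, because a running supremum can in principle amplify $c$. The plan is to exploit that (a) the same case analysis shows $\tilde c(t)e^{-t}$ is itself non-increasing, and (b) $\phi(s):=\tilde c_0(s)e^{-s}$ is decreasing and integrable on $(T_1,+\infty)$, so $\phi(s)=o(1/s)$ as $s\to+\infty$. Writing $\tilde c(t)e^{-t}=\sup_{a\le s\le t}\phi(s)e^{s-t}$ and splitting $[a,t]$ into $[a,t-K(t)]$ and $[t-K(t),t]$ (for $t\ge 2a$, e.g.\ $K(t):=t/2$) gives the pointwise majorant $\tilde c(t)e^{-t}\le\phi(a)e^{-K(t)}+\phi(t-K(t))$; with $K(t)=t/2$ both terms are integrable in $t$ (the first via $\int e^{-t/2}\,dt<+\infty$, the second via the change of variable $s=t/2$ and the integrability of $\phi$). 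For $t\in(a,2a)$ the bound $\tilde c(t)e^{-t}\le\tilde c(2a)e^{-2a}$ is trivially integrable. This final bookkeeping is the delicate technical step.

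For the ``Moreover'' statement, if $\int_T^{+\infty}c(s)e^{-s}\,ds<+\infty$ then the contribution $\int_T^{a}\tilde c(s)e^{-s}\,ds=\int_T^a\tilde c_0(s)e^{-s}\,ds$ is manifestly finite, so integrability on $(T,+\infty)$ follows. If additionally $c\in\mathcal P_{T,M}$, then $\tilde c\in\mathcal P_{T,M}$: condition~(1) of $\mathcal P_{T,M}$ is the already-established decrease of $\tilde c(t)e^{-t}$; for condition~(2), take the same closed subset $E\subset Z\cap\{\psi=-\infty\}$ as for $c$, and note $\tilde c\ge c$ yields $e^{-\varphi}\tilde c(-\psi)\ge e^{-\varphi}c(-\psi)$ pointwise, so the positive lower bound on compact subsets of $M\setminus E$ transfers from $c$ to $\tilde c$.
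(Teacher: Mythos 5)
The paper cites this lemma from \cite{GY-concavity3} without reproducing the proof, so I can only assess your argument on its own merits. The two-stage construction ($\tilde c_0=c+e^{(1-\varepsilon)t}$ to enforce strict decrease of $\tilde c_0(t)e^{-t}$, then a running supremum from $a$ onward to enforce eventual monotonicity) is sound, and the integrability argument via the split at $t/2$ is correct; so is the ``Moreover'' part. However, two technical points in your writeup need repair.

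First, in the case analysis for strict decrease of $\tilde c(t)e^{-t}$ on $(a,+\infty)$, you split into ``sup unchanged'' versus ``sup attained at some $s^*\in(t_1,t_2]$.'' This is not a dichotomy: since $\tilde c_0$ is only assumed measurable, the supremum over $(t_1,t_2]$ need not be attained. The conclusion is still true, but you have to argue it without attainment; for instance, set $m=(t_1+t_2)/2$ and observe that for $s\in(t_1,m]$ one has $\tilde c_0(s)e^{-t_2}\le\tilde c_0(t_1)e^{-t_1}e^{m-t_2}$, while for $s\in(m,t_2]$ one has $\tilde c_0(s)e^{-t_2}\le\tilde c_0(s)e^{-s}\le\tilde c_0(m)e^{-m}<\tilde c_0(t_1)e^{-t_1}$, so the whole supremum is bounded by a quantity strictly less than $\tilde c_0(t_1)e^{-t_1}\le\tilde c(t_1)e^{-t_1}$. (Alternatively, use $\tilde c_0(s)<\tilde c_0(t_1)e^{s-t_1}$ together with a sequence/left-limit argument.) Second, for $t\in(a,2a)$ you write $\tilde c(t)e^{-t}\le\tilde c(2a)e^{-2a}$; since $\tilde c(t)e^{-t}$ is \emph{decreasing}, the inequality goes the other way. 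The correct trivial bound is $\tilde c(t)e^{-t}\le\tilde c(a)e^{-a}=\tilde c_0(a)e^{-a}$ on $(a,2a)$, which still gives a finite integral over the bounded interval. With these two patches the proof is complete.
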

	
	\begin{lemma}
		\label{c(t)e^{-at}}
		Let $c(t)$ be a positive measurable function on $(0,+\infty)$, and let $a\in\mathbb{R}$. Assume that $\int_{t}^{+\infty}c(s)e^{-s}ds\in(0,+\infty)$ when $t$ near $+\infty$. Then we have
		
		$(1)$ $\lim_{t\rightarrow+\infty}\frac{\int_{t}^{+\infty}c(s)e^{-as}ds}{\int_t^{+\infty}c(s)e^{-s}ds}=1$ if and only if $a=1$;
		
		$(2)$ $\lim_{t\rightarrow+\infty}\frac{\int_{t}^{+\infty}c(s)e^{-as}ds}{\int_t^{+\infty}c(s)e^{-s}ds}=0$ if and only if $a>1$;
		
		$(3)$ $\lim_{t\rightarrow+\infty}\frac{\int_{t}^{+\infty}c(s)e^{-as}ds}{\int_t^{+\infty}c(s)e^{-s}ds}=+\infty$ if and only if $a<1$.
	\end{lemma}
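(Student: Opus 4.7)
The plan is to reduce all three cases to a single observation: writing $e^{-as} = e^{-s}\cdot e^{-(a-1)s}$, the numerator becomes $\int_t^{+\infty} c(s)e^{-s}\,e^{-(a-1)s}\,ds$, so the ratio is a weighted average of $e^{-(a-1)s}$ on $[t,+\infty)$ with respect to the probability measure $\frac{c(s)e^{-s}\,ds}{\int_t^{+\infty} c(u)e^{-u}du}$. The key will be to bound the factor $e^{-(a-1)s}$ on $[t,+\infty)$ using its monotonicity, which depends on the sign of $a-1$.

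First I would establish the three ``if'' directions. For $a=1$ the ratio is identically $1$, giving (1). For $a>1$, the factor $e^{-(a-1)s}$ is decreasing in $s$, hence bounded by $e^{-(a-1)t}$ on $[t,+\infty)$; therefore
\begin{equation*}
0\leq\frac{\int_{t}^{+\infty}c(s)e^{-as}ds}{\int_t^{+\infty}c(s)e^{-s}ds}\leq e^{-(a-1)t}\longrightarrow 0,
\end{equation*}
giving (2). For $a<1$, the factor $e^{-(a-1)s}=e^{(1-a)s}$ is increasing in $s$, hence bounded below by $e^{(1-a)t}$ on $[t,+\infty)$; therefore
\begin{equation*}
\frac{\int_{t}^{+\infty}c(s)e^{-as}ds}{\int_t^{+\infty}c(s)e^{-s}ds}\geq e^{(1-a)t}\longrightarrow +\infty,
\end{equation*}
giving (3). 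Note these bounds only use the hypothesis that $\int_t^{+\infty} c(s)e^{-s}ds\in(0,+\infty)$ for large $t$, so no finiteness of the numerator needs to be assumed a priori.

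The ``only if'' directions then follow automatically from the trichotomy: the three possibilities $a<1$, $a=1$, $a>1$ are exhaustive and mutually exclusive, and the corresponding limits $+\infty$, $1$, $0$ are pairwise distinct. Hence if the ratio tends to $1$ (resp.\ $0$, resp.\ $+\infty$), the value of $a$ is forced.

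I do not anticipate a genuine obstacle here: the lemma is purely a calculus fact and the monotonicity argument above is the whole content. The only mild point to verify is measurability/integrability of $c(s)e^{-as}$ on $[t,+\infty)$ in the cases $a\geq 1$, which is immediate from the domination $c(s)e^{-as}\leq c(s)e^{-s}$ for $a\geq 1$ and $s\geq 0$ (the case $a<1$ does not need integrability of the numerator, as the lower bound above already forces the ratio, interpreted in $[0,+\infty]$, to diverge).
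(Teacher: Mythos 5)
Your proof is correct and follows essentially the same route as the paper: both use the monotonicity of $e^{-(a-1)s}$ on $[t,+\infty)$ to bound the ratio. Your version is marginally cleaner — you bound the factor directly by its value at $s=t$, yielding ratio $\le e^{-(a-1)t}$ (resp.\ $\ge e^{(1-a)t}$), whereas the paper fixes an auxiliary $s_0$, obtains $\limsup \le e^{(1-a)s_0}$ (resp.\ $\liminf \ge e^{(1-a)s_0}$), and then lets $s_0\to+\infty$; the two are logically equivalent. Your explicit remark that the "only if" directions follow by trichotomy is also correct and is left implicit in the paper.
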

	\begin{proof}
		If $a=1$, it clear that $\lim_{t\rightarrow+\infty}\frac{\int_{t}^{+\infty}c(s)e^{-as}ds}{\int_t^{+\infty}c(s)e^{-s}ds}=1$.
		
		If $a>1$, then $c(s)e^{-as}\le e^{(1-a)s_0} c(s)e^{-s}$ for $s\ge s_0>0$, which implies that
		\[\limsup_{t\rightarrow+\infty}\frac{\int_{t}^{+\infty}c(s)e^{-as}ds}{\int_t^{+\infty}c(s)e^{-s}ds}\le e^{(1-a)s_0}.\]
		Let $s_0\rightarrow+\infty$, we have \[\lim_{t\rightarrow+\infty}\frac{\int_{t}^{+\infty}c(s)e^{-as}ds}{\int_t^{+\infty}c(s)e^{-s}ds}=0.\]
		
		If $a<1$, then $c(s)e^{-as}\ge e^{(1-a)s_0} c(s)e^{-s}$ for $a>s_0>0$, which implies that \[\liminf_{t\rightarrow+\infty}\frac{\int_{t}^{+\infty}c(s)e^{-as}ds}{\int_t^{+\infty}c(s)e^{-s}ds}\ge e^{(1-a)s_0}.\]
		Let $s_0\rightarrow+\infty$, we have \[\lim_{t\rightarrow+\infty}\frac{\int_{t}^{+\infty}c(s)e^{-as}ds}{\int_t^{+\infty}c(s)e^{-s}ds}=+\infty.\]
	\end{proof}

	\section{some required lemmas}
	Before the proofs of the main results, we need some lemmas, and we present them in the following.
	
	\subsection{Some lemmas in the local case}
	\
	We need the following lemmas in the local cases.
	
	Let $X=\Delta^m$,$Y=\Delta^{n-m}$ be the unit polydiscs in $\mathbb{C}^m$ and $\mathbb{C}^{n-m}$. Let $M=X\times Y$. Let $\pi_1$, $\pi_2$ be the natural projections from $M$ to $X$ and $Y$. Let $\psi_1$ be a negative plurisubharmonic function on $X$ such that $\psi_1(o)=-\infty$, where $o$ is the origin of $\mathbb{C}^m$, and let $\varphi_1$ be a Lebesgue measurable function on $X$ such that $\varphi_1+\psi_1$ is plurisubharmonic on $X$. Let $\varphi_2$ be a plurisubharmonic function on $Y$. Denote that $\psi:=\pi_1^*(\psi_1)$, $\varphi:=\pi_1^*(\varphi_1)+\pi_2^*(\varphi_2)$. Let $f(z,w)$ be a holomorphic function on $M$, where $z=(z_1,\ldots,z_m)\in X$, $w=(w_1,\ldots,w_{n-m})\in Y$.
	
	\begin{lemma}\label{local-germ}
		Assume that $f\in \mathcal{I}(\varphi+\psi)_{(o,w)}$ for any $(o,w)\in Y_o$, then $(f(\cdot,w),o)\in\mathcal{I}(\varphi_1+\psi_1)_o$ on $X$ for any $w\in Y$, and $(f(o,\cdot),w)\in\mathcal{I}(\varphi_2)_w$ on $Y$ for any $w\in Y$.
	\end{lemma}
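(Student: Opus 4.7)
The plan is to reduce both assertions to a single idea: combine the sub-mean-value inequality for plurisubharmonic functions with Fubini's theorem, exploiting that the two weights $\varphi_2$ and $\varphi_1 + \psi_1$ are each locally bounded above (so $e^{-\varphi_2}$ and $e^{-\varphi_1 - \psi_1}$ are locally bounded below by positive constants). By the hypothesis, for each $w_0 \in Y$ one can find polydisc neighborhoods $V_0 \ni o$ in $X$ and $W_0 \ni w_0$ in $Y$, of arbitrarily small radius, with
\[
I := \int_{V_0 \times W_0} |f(z,w)|^{2}\, e^{-\varphi_{1}(z) - \psi_{1}(z) - \varphi_{2}(w)} \, dV_{X}(z)\, dV_{Y}(w) < +\infty.
\]

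For the first conclusion, I would apply the sub-mean-value inequality to the plurisubharmonic function $w \mapsto |f(z,w)|^{2}$ at the point $w_0$: for a small polydisc $B_\varepsilon(w_0) \subset W_0$,
\[
|f(z,w_0)|^{2} \;\leq\; \operatorname{vol}(B_\varepsilon(w_0))^{-1} \int_{B_\varepsilon(w_0)} |f(z,w)|^{2}\, dV_Y(w).
\]
Multiplying by $e^{-\varphi_{1}(z)-\psi_{1}(z)}$, integrating in $z$ over $V_0$, applying Fubini, and using that $\varphi_2$ is plurisubharmonic, hence bounded above by some finite constant $C_\varepsilon$ on $\overline{B_\varepsilon(w_0)}$, one obtains
\[
\int_{V_{0}} |f(z,w_{0})|^{2}\, e^{-\varphi_{1}(z)-\psi_{1}(z)}\, dV_{X}(z) \;\leq\; \frac{e^{C_\varepsilon}}{\operatorname{vol}(B_\varepsilon(w_0))}\, I \;<\; +\infty,
\]
which gives $(f(\cdot,w_0),o) \in \mathcal{I}(\varphi_1 + \psi_1)_o$.

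For the second conclusion, I would dually apply the sub-mean-value inequality to the plurisubharmonic function $z \mapsto |f(z,w)|^{2}$ at the point $o$: for a small polydisc $B_\delta(o) \subset V_0$,
\[
|f(o,w)|^{2} \;\leq\; \operatorname{vol}(B_\delta(o))^{-1} \int_{B_\delta(o)} |f(z,w)|^{2}\, dV_X(z).
\]
Multiplying by $e^{-\varphi_{2}(w)}$, integrating in $w$ over $W_0$, applying Fubini, and using that $\varphi_{1}+\psi_{1}$ is plurisubharmonic, hence bounded above by some finite constant $C'_\delta$ on $\overline{B_\delta(o)}$, one obtains
\[
\int_{W_{0}} |f(o,w)|^{2}\, e^{-\varphi_{2}(w)}\, dV_{Y}(w) \;\leq\; \frac{e^{C'_\delta}}{\operatorname{vol}(B_\delta(o))}\, I \;<\; +\infty,
\]
so $(f(o,\cdot), w_0) \in \mathcal{I}(\varphi_2)_{w_0}$ for every $w_0 \in Y$.

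I do not foresee any serious obstacle here: the whole argument is pointwise averaging plus Fubini, once one observes that the two psh weights $\varphi_{1}+\psi_{1}$ and $\varphi_{2}$ can blow down to $-\infty$ but never up to $+\infty$, so each may be harmlessly discarded in the opposite variable after sub-averaging. The only care needed is to shrink $V_0, W_0, B_\varepsilon(w_0), B_\delta(o)$ so that their closures stay within the neighborhood where $I$ is finite and where the relevant upper bounds on the psh weights are effective; since the hypothesis is local at each $(o,w_0)$, this shrinking is always available.
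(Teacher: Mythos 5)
Your proposal is correct and takes essentially the same approach as the paper: a sub-mean-value inequality combined with Fubini's theorem, with the key observation that each plurisubharmonic weight ($\varphi_1+\psi_1$ and $\varphi_2$) is locally bounded above, so the unused exponential factor can be inserted at the cost of a constant. The only cosmetic difference is that you apply the sub-mean-value inequality to $|f|^2$ pointwise before integrating, whereas the paper first integrates in one variable and applies the sub-mean-value inequality to the resulting integral; the two orderings are equivalent via Fubini and yield the same bound.
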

	
	\begin{proof}
		Since $f\in \mathcal{I}(\varphi+\psi)_{(o,w)}$, we can find some $r>0$ such that
		\begin{equation*}
			\int_{\Delta^n((o,w),r)}|f|^2e^{-\varphi-\psi}<+\infty,
		\end{equation*}
		where $\Delta^n((o,w),r)$ is the polydisc centered on $(o,w)$ with radius $r$. Then according to the Fubini's Theorem, we have
		\begin{flalign*}
			\begin{split}
				&\int_{z\Delta^m(o,r)}|f(z,w)|^2e^{-\varphi_1-\psi_1}\\
				\leq&\frac{1}{(\pi r^2)^{n-m}}\int_{w'\in\Delta^{n-m}(w,r)}\left(\int_{z\in\Delta^m(o,r)}|f(z,w')|^2e^{-\varphi_1-\psi_1}\right)\\
				\leq&\frac{C}{(\pi r^2)^{n-m}}\int_{w'\in\Delta^{n-m}(w,r)}\left(\int_{z\in\Delta^m(o,r)}|f(z,w')|^2e^{-\varphi_1-\psi_1}\right)e^{-\varphi_2}\\
				=&\frac{C}{(\pi r^2)^{n-m}}\int_{(z,w')\in\Delta^n((o,w),r)}|f(z,w')|^2e^{-\varphi-\psi}<+\infty,
			\end{split}
		\end{flalign*}
		where $C$ is a positive constant. And
		\begin{flalign*}
			\begin{split}
				&\int_{w'\in\Delta^{n-m}(w,r)}|f(o,w')|^2e^{-\varphi_2}\\
				\leq&\frac{1}{(\pi r^2)^m}\int_{w'\in\Delta^{n-m}(w,r)}\left(\int_{z\in\Delta^m(o,r)}|f(z,w')|^2\right)e^{-\varphi_2}\\
				\leq&\frac{\sup_{\Delta^m(o,r)}e^{\varphi_1+\psi_1}}{(\pi r^2)^m}\int_{(z,w')\in\Delta^n((o,w),r)}|f(z,w')|^2e^{-\varphi-\psi}<+\infty.
			\end{split}
		\end{flalign*}
		It means that $(f(\cdot,w),o)\in\mathcal{I}(\varphi_1+\psi_1)_o$ on $X$, and $(f(o,\cdot),w)\in\mathcal{I}(\varphi_2)_w$ on $Y$.	
	\end{proof}	
	
	\begin{lemma}\label{e-varphic-psi}
		Assume that
		\begin{equation*}
			\int_M|f|^2e^{-\varphi}c(-\psi)<+\infty.
		\end{equation*}
		Then for any $w\in \Delta^{n-m}$,
		\begin{equation*}
			\int_{z\in\Delta^m}|f(z,w)|^2e^{-\varphi_1}c(-\psi_1)<+\infty.
		\end{equation*}
	\end{lemma}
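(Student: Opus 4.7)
The goal is to upgrade an integral estimate over the product $M=X\times Y$ to an integral estimate over $X$ alone, at \emph{every} fixed $w\in Y$ (not merely almost every $w$, which is what Fubini gives directly). The main idea is to exploit the plurisubharmonicity of $|f(z,\cdot)|^{2}$ in the $Y$-direction to pass from a fiber to an average over a small polydisc, which then lets me bring in the full product integral.

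Fix $w_{0}\in\Delta^{n-m}$ and choose $r>0$ small enough that the closed polydisc $\overline{\Delta^{n-m}(w_{0},r)}$ is contained in $\Delta^{n-m}$. Since $f$ is holomorphic on $M$, the function $w\mapsto |f(z,w)|^{2}$ is plurisubharmonic on $\Delta^{n-m}$ for each fixed $z\in X$. The iterated sub-mean value inequality on the polydisc therefore gives
\begin{equation*}
|f(z,w_{0})|^{2}\;\leq\;\frac{1}{(\pi r^{2})^{n-m}}\int_{\Delta^{n-m}(w_{0},r)}|f(z,w)|^{2}\,dV(w)
\end{equation*}
for every $z\in X$. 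This is the key inequality; the rest is bookkeeping.

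Next I multiply through by $e^{-\varphi_{1}(z)}c(-\psi_{1}(z))$ (which is nonnegative) and integrate over $z\in X$. Since $\varphi_{2}$ is plurisubharmonic on $Y$, it is bounded above on the compact set $\overline{\Delta^{n-m}(w_{0},r)}$, say by a constant $C$, so $1\leq e^{C}e^{-\varphi_{2}(w)}$ on that polydisc. Combining this with Tonelli's theorem, I obtain
\begin{equation*}
\int_{X}|f(z,w_{0})|^{2}e^{-\varphi_{1}}c(-\psi_{1})\,dV(z)\leq\frac{e^{C}}{(\pi r^{2})^{n-m}}\int_{X\times\Delta^{n-m}(w_{0},r)}|f|^{2}e^{-\varphi}c(-\psi)\,dV,
\end{equation*}
and the right-hand side is bounded by $\frac{e^{C}}{(\pi r^{2})^{n-m}}\int_{M}|f|^{2}e^{-\varphi}c(-\psi)<+\infty$ by hypothesis. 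This gives the conclusion for the arbitrary point $w_{0}\in\Delta^{n-m}$.

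I do not expect any serious obstacle here: the only point to check is the existence of an upper bound for $\varphi_{2}$ on the chosen polydisc, which is immediate because plurisubharmonic functions are locally bounded above. The sub-mean value inequality applied fiberwise is precisely what promotes the almost-everywhere statement of Fubini to an everywhere statement, which is what the lemma asserts.
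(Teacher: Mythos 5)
Your proof is correct and takes essentially the same route as the paper: apply the sub-mean value inequality for the plurisubharmonic function $w\mapsto|f(z,w)|^{2}$ over a polydisc around $w_{0}$, multiply by the nonnegative weight $e^{-\varphi_{1}}c(-\psi_{1})$, integrate in $z$ via Tonelli, and absorb the factor $e^{-\varphi_{2}}$ using the upper bound for $\varphi_{2}$ on the closed polydisc. Your version is slightly more explicit about the sub-mean-value step (the paper folds it into a reference to Fubini) and has the sign of the constant controlling $\varphi_{2}$ written correctly, whereas the paper's $e^{T}$ with $T=-\sup\varphi_{2}$ appears to be a small typo for $e^{-T}$; otherwise the two arguments coincide.
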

	
	\begin{proof}
		According to the Fubini's Theorem, we have
		\begin{flalign*}
			\begin{split}
				&\int_{z\in\Delta^m}|f(z,w)|^2e^{-\varphi_1}c(-\psi_1)\\
				\leq&\frac{1}{(\pi r^2) ^{n-m}}\int_{w'\in\Delta^{n-m}(w,r)}\left(\int_{z\in\Delta^m}|f(z,w')|^2e^{-\varphi_1}c(-\psi_1)\right)\\
				\leq&\frac{e^T}{(\pi r^2) ^{n-m}}\int_{w'\in\Delta^{n-m}(w,r)}\left(\int_{z\in\Delta^m}|f(z,w')|^2e^{-\varphi_1}c(-\psi_1)\right)e^{-\varphi_2}\\
				\leq&\frac{e^T}{(\pi r^2)^{n-m}}\int_{\Delta^n}|f|^2e^{-\varphi}c(-\psi)<+\infty,
			\end{split}
		\end{flalign*}
		where $r>0$ such that $\Delta^{n-m}(w,r)\subset\subset\Delta^{n-m}$, and $T:=-\sup_{w'\in\Delta^{n-m}(w,r)}\varphi_2(w')$.
	\end{proof}
	
	\begin{lemma}\label{f1zf2w}
		Let $f_1(z)$ be a holomorphic function on  $X$ such that $(f_1,o)\in \mathcal{I}(\varphi_1+\psi_1)_o$, and $f_2(w)$ be a holomorphic function on $Y$ such that $(f_2,w)\in\mathcal{I}(\varphi_2)_w$ for any $w\in Y$. Let $\tilde{f}(z,w)=f_1(z)f_2(w)$ on $M$, then $(\tilde{f},(o,w))\in\mathcal{I}(\varphi+\psi)_{(o,w)}$ for any $(o,w)\in Y_o$.
	\end{lemma}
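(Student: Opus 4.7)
\textbf{Proof plan for Lemma \ref{f1zf2w}.} The statement reduces to a direct Fubini factorization, since the weight $e^{-\varphi-\psi}$ splits as a product: $e^{-\varphi-\psi}=\pi_1^*\bigl(e^{-\varphi_1-\psi_1}\bigr)\cdot\pi_2^*\bigl(e^{-\varphi_2}\bigr)$, and $|\tilde f(z,w)|^2=|f_1(z)|^2|f_2(w)|^2$ splits the same way. My plan is to turn the two germ hypotheses into genuine integrability statements on small polydiscs, multiply them by Fubini on a product polydisc around $(o,w_0)$, and read off the required germ condition.

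First, from $(f_1,o)\in\mathcal{I}(\varphi_1+\psi_1)_o$, choose $r_1>0$ with $\Delta^m(o,r_1)\subset X$ such that
\[
\int_{\Delta^m(o,r_1)}|f_1(z)|^2 e^{-\varphi_1(z)-\psi_1(z)}\, dV_z=:A<+\infty .
\]
Fix an arbitrary $w_0\in Y$. From $(f_2,w_0)\in\mathcal{I}(\varphi_2)_{w_0}$, choose $r_2>0$ with $\Delta^{n-m}(w_0,r_2)\subset Y$ such that
\[
\int_{\Delta^{n-m}(w_0,r_2)}|f_2(w)|^2 e^{-\varphi_2(w)}\, dV_w=:B<+\infty .
\]
Let $U:=\Delta^m(o,r_1)\times\Delta^{n-m}(w_0,r_2)$, which is a neighborhood of $(o,w_0)$ in $M$.

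Next, apply Fubini's theorem to the nonnegative integrand $|f_1(z)|^2|f_2(w)|^2 e^{-\varphi_1(z)-\psi_1(z)}e^{-\varphi_2(w)}$ on $U$:
\[
\int_{U}|\tilde f|^2 e^{-\varphi-\psi}
=\left(\int_{\Delta^m(o,r_1)}|f_1|^2 e^{-\varphi_1-\psi_1}\right)\left(\int_{\Delta^{n-m}(w_0,r_2)}|f_2|^2 e^{-\varphi_2}\right)=AB<+\infty .
\]
This is exactly the local integrability that defines membership in the multiplier ideal, so $(\tilde f,(o,w_0))\in\mathcal{I}(\varphi+\psi)_{(o,w_0)}$. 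Since $w_0\in Y$ was arbitrary, the conclusion holds for every $(o,w)\in Y_o$.

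There is no real obstacle here; the only thing to be careful about is that the two weights are genuinely pullbacks from the two factors, so the product weight factors cleanly and Fubini applies to the nonnegative integrand without any integrability pre-check. This lemma is essentially the converse direction of Lemma \ref{local-germ}, which already used the same Fubini splitting.
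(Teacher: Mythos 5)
Your proof is correct and follows essentially the same argument as the paper: extract finite local integrals from the two germ hypotheses, then factor the integral of $|\tilde f|^2e^{-\varphi-\psi}$ over a product polydisc by Fubini. The only cosmetic difference is that you allow two radii $r_1,r_2$ where the paper uses a single $r$, which changes nothing.
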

	
	\begin{proof}
		According to $(f_1,o)\in \mathcal{I}(\varphi_1+\psi_1)_o$ and $(f_2,w)\in\mathcal{I}(\varphi_2)_w$, we can find some $r>0$ such that
		\begin{equation*}
			\int_{\Delta^m(o,r)}|f_1|^2e^{-\varphi_1-\psi_1}<+\infty,
		\end{equation*}
		and
		\begin{equation*}
			\int_{\Delta^{n-m}(w,r)}|f_2|^2e^{-\varphi_2}<+\infty.
		\end{equation*}
		Then using Fubini's Theorem, we get
		\begin{equation*}
			\int_{\Delta^n((o,w),r)}|\tilde{f}|^2e^{-\varphi-\psi}=\int_{\Delta^m(o,r)}|f_1|^2e^{-\varphi_1-\psi_1}\int_{\Delta^{n-m}(w,r)}|f_2|^2e^{-\varphi_2}<+\infty,
		\end{equation*}
		which means that $(\tilde{f},(o,w))\in\mathcal{I}(\varphi+\psi)_{(o,w)}$.
	\end{proof}	
	
	Let $X=\Delta^m$ be the unit polydisc, where the coordinate is $w=(w_1,\ldots,w_m)$. Let $Y$ be an $(n-m)$ dimensional complex manifold, and let $M=X\times Y$.
	
	\begin{lemma}\label{decomp}
		For any $(n,0)$ holomorphic form $F$ on $M$, there exists a unique sequence of $(n-m,0)$ holomorphic forms $\{F_{\alpha}\}_{\alpha\in\mathbb{N}^m}$ on $Y$ such that
		\begin{equation*}
			F=\sum_{\alpha\in\mathbb{N}^m}\pi_1^*(w^{\alpha}dw)\wedge \pi_2^*(F_{\alpha}),
		\end{equation*}
		where the right side is uniformly convergent on any compact subset of $U$. Here $\pi_1$, $\pi_2$ are the natural projections from $M$ to $X$ and $Y$, $dw=dw_1\wedge\cdots\wedge dw_m$, and $w^{\alpha}=w_1^{\alpha_1}\cdots w_n^{\alpha_m}$ for any $\alpha=(\alpha_1,\cdots,\alpha_m)\in\mathbb{N}^m$.
	\end{lemma}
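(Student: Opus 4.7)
The plan is to work in local coordinate charts on $Y$, apply the standard Taylor expansion of a holomorphic function on a polydisc in the variable $w$, and then check that the resulting coefficients patch together into global holomorphic $(n-m,0)$ forms on $Y$. First, cover $Y$ by coordinate charts $(V,y)$ with $y=(y_1,\dots,y_{n-m})$. On $X\times V$, any holomorphic $(n,0)$ form $F$ admits a unique representation $F=h(w,y)\,dw\wedge dy$, where $dw:=dw_1\wedge\cdots\wedge dw_m$, $dy:=dy_1\wedge\cdots\wedge dy_{n-m}$, and $h$ is holomorphic on $\Delta^m\times V$. For each fixed $y\in V$, the function $h(\cdot,y)$ is holomorphic on the polydisc $\Delta^m$, hence has the Taylor expansion
$$h(w,y)=\sum_{\alpha\in\mathbb{N}^m}h_\alpha(y)\,w^\alpha,\qquad h_\alpha(y):=\tfrac{1}{\alpha!}\partial_w^\alpha h(0,y).$$
Differentiating the iterated Cauchy integral representation of $h(w,\cdot)$ under the integral sign shows that each $h_\alpha$ is holomorphic on $V$, and the usual Cauchy estimate $\|h_\alpha\|_{L^\infty(K_V)}\le M/r^{|\alpha|}$ holds for any compact $K_V\Subset V$ and $0<r<1$, with $M=\sup_{\overline{\Delta^m(0,r')}\times K_V}|h|$ for some $r<r'<1$.

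Next, set $F_\alpha := h_\alpha(y)\,dy$ on $V$. To see that this produces a globally defined holomorphic $(n-m,0)$ form on $Y$, take two overlapping charts $(V,y)$ and $(V',y')$. On the overlap one has $dy=J\,dy'$ where $J$ is the Jacobian of $y$ with respect to $y'$, and correspondingly $h'(w,y')=h(w,y(y'))\,J$. Expanding both sides in powers of $w$ and matching coefficients yields $h'_\alpha(y')\,dy'=h_\alpha(y(y'))\,dy$, so $F_\alpha$ is coordinate-independent and glues to a holomorphic $(n-m,0)$ form on $Y$. With $F_\alpha$ defined on $Y$, the identity $F=\sum_\alpha \pi_1^*(w^\alpha dw)\wedge \pi_2^*(F_\alpha)$ holds pointwise; the Cauchy estimates above give local uniform convergence on any compact subset $K\subset M$ by covering $K$ with finitely many sets of the form $\overline{\Delta^m(0,r)}\times K_V$ and bounding the tail $\sum_{|\alpha|\ge N}\|h_\alpha\|_{L^\infty(K_V)}r^{|\alpha|}$ via a convergent geometric series.

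For uniqueness, suppose $\sum_\alpha \pi_1^*(w^\alpha dw)\wedge\pi_2^*(F_\alpha)=\sum_\alpha \pi_1^*(w^\alpha dw)\wedge\pi_2^*(G_\alpha)$. Fix any $y_0\in Y$ and a chart around it; then the two expressions, written as $h(w,y)\,dw\wedge dy$ and $\tilde h(w,y)\,dw\wedge dy$, have the same Taylor coefficients in $w$ at $y_0$, so $F_\alpha(y_0)=G_\alpha(y_0)$ for every $\alpha$; since $y_0$ is arbitrary, $F_\alpha=G_\alpha$ on $Y$. The main (though mild) obstacle in the argument is confirming that the locally defined coefficients $F_\alpha$ patch together into well-defined forms on $Y$; this reduces to the fact that the change of $y$-coordinates on $Y$ commutes with the Taylor expansion in the independent variable $w$, which we check directly as above.
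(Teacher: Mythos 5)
Your proof is correct and follows essentially the same route as the paper: local Taylor expansion in $w$ on $\Delta^m$, verification that the coefficient forms are independent of the choice of $y$-coordinates on $Y$ (because the change of $y$-coordinates commutes with differentiation in $w$), and gluing, with uniqueness from uniqueness of Taylor coefficients. You are somewhat more explicit about the Cauchy estimates giving local uniform convergence, but this is a matter of detail rather than a different argument.
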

	
	\begin{proof}
		Firstly we consider the local case. Assume that $Y=\Delta^{n-m}$, and the coordinate is $w'=(w_1',\ldots,w_{n-m}')$. Then there exists a holomorphic function $\tilde{F}(w,w')$ on $M=\Delta^n$ such that
		\begin{equation*}
			F=\tilde{F}(w,w')dw\wedge dw'.
		\end{equation*}
		Considering the Taylor's expansion of $\tilde{F}$, we can assume that
		\begin{flalign*}
			\begin{split}
			\tilde{F}(w,w')&=\sum_{\alpha\in\mathbb{N}^m,\alpha'\in \mathbb{N}^{n-m}}d_{\alpha,\alpha'}w^\alpha{w'}^{\alpha'}\\
			&=\sum_{\alpha\in\mathbb{N}^m}w^{\alpha}\left(\sum_{\alpha'\in\mathbb{N}^{n-m}}d_{\alpha,\alpha'}{w'}^{\alpha'}\right)\\
			&=\sum_{\alpha\in\mathbb{N}^m}w^{\alpha}\tilde{F}_{\alpha}(w'),
			\end{split}
		\end{flalign*}
		where
		\begin{equation*}
			\tilde{F}_{\alpha}(w')=\frac{1}{\alpha!}\cdot\left(\frac{\partial^{\alpha}\tilde{F}(w,w')}{\partial w^{\alpha}}\bigg|_{w=0}\right)(w'),
		\end{equation*}		
		and the summations are uniformly convergent on any compact subset of $M$. Let $F_{\alpha}=\tilde{F}_{\alpha}(w')dw'$ on $Y=\Delta^{n-m}$, then we have
		\begin{equation*}
			F=\sum_{\alpha\in\mathbb{N}^m}\pi_1^*(w^{\alpha}dw)\wedge \pi_2^*(F_{\alpha}).
		\end{equation*}
		
		Secondly, we need to prove that $F_{\alpha}$ is independent of the choices of the local coordinates of $Y$. Assume that $z'=(z_1'\ldots,z_{n-m}')$ is another coordinate on $Y=\Delta^{n-m}$, and $F=\tilde{F}_0(w,z')dw\wedge dz'$. Then we have $\tilde{F}(w,w'(z'))dw'=\tilde{F}_0(w,z')dz'$, which induces that
		\begin{flalign*}
			\begin{split}
				F_{\alpha}&=\frac{1}{\alpha!}\cdot\left(\frac{\partial^{\alpha}\tilde{F}(w,w')}{\partial w^{\alpha}}\bigg|_{w=0}\right)(w')dw'\\
				&=\frac{1}{\alpha!}\cdot\left(\frac{\partial^{\alpha} \tilde{F}_0(w,z')}{\partial w^{\alpha}}\bigg|_{w=0}\right)(z')dz'.
			\end{split}
		\end{flalign*}
		It means that $F_{\alpha}$ is independent of the choices of the coordinates for any $\alpha\in\mathbb{N}^m$.
		
		For general $Y$, we can find holomorphic $(n-m,0)$ forms $F_{\alpha}$ on $Y$ such that $F=\sum_{\alpha\in\mathbb{N}^m}\pi_1^*(w^{\alpha}dw)\wedge \pi_2^*(F_{\alpha})$ with gluing.
		
		Finally, for the uniqueness, in the local case, we have
		\begin{equation*}
			\tilde{F}(w,w')dw\wedge dw'=F=\sum_{\alpha\in\mathbb{N}^m}\pi_1^*(w^{\alpha}dw)\wedge\pi_2^*(F_{\alpha}),
		\end{equation*}
		which implies that
		\begin{equation*}
			F_{\alpha}(w')=\frac{1}{\alpha!}\cdot\left(\frac{\partial^\alpha \tilde{F}(w,w')}{\partial w^{\alpha}}|_{w=0}\right)(w')dw'.
		\end{equation*}
		Then we get the uniqueness in the local case, which can also implies the uniqueness for the general $Y$.
	\end{proof}
	
	Let $\tilde{M}$ be an $n-$dimensional complex submanifold of $M=\Delta^m\times Y$ such that $\{o\}\times Y\subset \tilde{M}$, where $o$ is the origin of $\Delta^m$.
	\begin{lemma}\label{decomp-tildeM}
		For any holomorphic $(n,0)$ form $F$ on $\tilde{M}$, there exist a unique sequence of holomorphic $(n-m,0)$ forms $\{F_{\alpha}\}_{\alpha\in\mathbb{N}}$ on $Y$ and a neighborhood $U\subset \tilde{M}$ of $\{o\}\times Y$ such that
		\begin{equation*}
			 F=\sum_{\alpha\in\mathbb{N}^m}\pi_1^*(w^{\alpha}dw)\wedge \pi_2^*(F_{\alpha})
		\end{equation*}
	on $U$, where the right side is uniformly convergent on any compact subset of $U$.
	\end{lemma}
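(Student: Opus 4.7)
The plan is to reduce to the local case already handled by Lemma \ref{decomp} by covering $\{o\}\times Y$ by tubular polydisc neighborhoods inside $\tilde{M}$, then gluing via the uniqueness statement of Lemma \ref{decomp}. Since $\tilde{M}$ has the same dimension $n$ as $M$, the assumption that $\tilde{M}$ is a submanifold of $M$ containing $\{o\}\times Y$ means $\tilde{M}$ is an open subset of $M$ containing $\{o\}\times Y$, which is the key structural fact I will use.

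First, for each $y_0\in Y$, I will pick a coordinate polydisc $V_{y_0}\subset Y$ around $y_0$ such that $\overline{V_{y_0}}$ is compact in $Y$ and $\{o\}\times\overline{V_{y_0}}\subset\tilde{M}$. Because $\tilde{M}$ is open in $\Delta^m\times Y$ and $\{o\}\times\overline{V_{y_0}}$ is compact, there exists $r_{y_0}\in(0,1)$ with $\Delta^m(o,r_{y_0})\times V_{y_0}\subset\tilde{M}$. On this polydisc product, Lemma \ref{decomp} applies to $F$ restricted there and produces a unique sequence of holomorphic $(n-m,0)$ forms $\{F_\alpha^{(y_0)}\}_{\alpha\in\mathbb{N}^m}$ on $V_{y_0}$ with
\[
F=\sum_{\alpha\in\mathbb{N}^m}\pi_1^*(w^\alpha dw)\wedge\pi_2^*\bigl(F_\alpha^{(y_0)}\bigr)
\]
uniformly on compact subsets of $\Delta^m(o,r_{y_0})\times V_{y_0}$.

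Next I will glue. If $V_{y_1}\cap V_{y_2}\neq\emptyset$, then setting $r:=\min(r_{y_1},r_{y_2})$ both local expansions are valid on $\Delta^m(o,r)\times(V_{y_1}\cap V_{y_2})$. The uniqueness clause in Lemma \ref{decomp} (explicitly, the formula $F_\alpha=\tfrac{1}{\alpha!}(\partial^\alpha\tilde F/\partial w^\alpha)|_{w=0}\,dw'$ proved there, which is coordinate-independent) forces $F_\alpha^{(y_1)}=F_\alpha^{(y_2)}$ on $V_{y_1}\cap V_{y_2}$ for every $\alpha$. Hence the local pieces assemble to global holomorphic $(n-m,0)$ forms $F_\alpha$ on $Y$.

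Finally, I will define
\[
U:=\bigcup_{y_0\in Y}\Delta^m(o,r_{y_0})\times V_{y_0}\subset\tilde{M},
\]
which is open and contains $\{o\}\times Y$. On any compact subset $K\subset U$, finitely many polydisc products $\Delta^m(o,r_{y_0^{(i)}})\times V_{y_0^{(i)}}$ cover $K$, and uniform convergence on the compact $K\cap(\Delta^m(o,r_{y_0^{(i)}})\times V_{y_0^{(i)}})$ was supplied by Lemma \ref{decomp}; hence the global series converges uniformly on $K$. Uniqueness of $\{F_\alpha\}$ on $Y$ is immediate from the local uniqueness in Lemma \ref{decomp}. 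The only point requiring care, and the one small obstacle worth checking, is producing a single neighborhood $U$ on which the series converges; but since convergence is automatic within each locally chosen polydisc product and these cover $\{o\}\times Y$, taking their union suffices and no uniform lower bound on $r_{y_0}$ is needed.
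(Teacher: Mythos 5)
Your proof is correct and follows essentially the same route as the paper: cover $\{o\}\times Y$ by product neighborhoods $\Delta^m(o,r)\times V\subset\tilde M$ obtained from the openness of $\tilde M$ in $M$ and the compactness of $\{o\}\times\overline V$, apply Lemma \ref{decomp} on each piece, and glue the coefficient forms via the uniqueness clause of Lemma \ref{decomp}. The only cosmetic difference is that the paper works with arbitrary relatively compact open $V\subset\subset Y$ while you use coordinate polydiscs around each point, which changes nothing substantive.
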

	
	\begin{proof}
	For any open subset $V$ of $Y$ with $V\subset\subset Y$, there exists $r_V\in (0,1)$ such that $\Delta_{r_V}^m\subset \tilde{M}$. It follows from Lemma \ref{decomp} that there exists a unique sequence of holomorphic $(n-m,0)$ forms $\{F_{\alpha,V}\}_{\alpha\in\mathbb{N}}$ on $V$ such that
		\begin{equation*}
	F=\sum_{\alpha\in\mathbb{N}^m}\pi_1^*(w^{\alpha}dw)\wedge \pi_2^*(F_{\alpha,V})
	\end{equation*}	
	on $\Delta_{r_V}^m\times V$, where the right side is uniformly convergent on any compact subset of $\Delta_{r_V}^m\times V$. According to the uniqueness of the decomposition in Lemma \ref{decomp}, we get that there exists a unique sequence of holomorphic $(n-m,0)$ forms $\{F_{\alpha}\}_{\alpha\in\mathbb{N}}$ on $Y$ and a neighborhood $U\subset \tilde{M}$ of $\{o\}\times Y$, such that \begin{equation*}
		F=\sum_{\alpha\in\mathbb{N}^m}\pi_1^*(w^{\alpha}dw)\wedge \pi_2^*(F_{\alpha})
	\end{equation*}
	on $U$, where the right side is uniformly convergent on any compact subset of $U$.
	\end{proof}
	
	Let $X$ be an $n_1-$dimensional complex manifold, and let $Y$ be an $n_2-$dimensional complex manifold. Let $M=X\times Y$ be an $n-$dimensional complex manifold, where $n=n_1+n_2$. Let $\pi_1$ and $\pi_2$ be the natural projections from $M$ to $X$ and $Y$ respectively.
	\begin{lemma}
		\label{decom-product}
		Let $F\not\equiv0$ be a holomorphic $(n,0)$ form on $M$. Let $f_1$ be a holomorphic $(n_1,0)$ form on an open subset $U$ of $X$, and let $f_2$ be a holomorphic $(n_2,0)$ form on an open subset $V$ of $Y$. If
		$$F=\pi_1^*(f_1)\wedge\pi_2^*(f_2)$$
		on $U\times V$, there exist a holomorphic $(n_1,0)$ form $F_1$ on $X$ and a holomorphic $(n_2,0)$ form $F_2$ on $Y$ such that $F_1=f_1$ on $U$, $F_2=f_2$ on $V$, and
		$$F=\pi_1^*(F_1)\wedge\pi_2^*(F_2)$$
		on $M$.
	\end{lemma}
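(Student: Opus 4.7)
The plan is to construct $F_1$ and $F_2$ by ``evaluating $F$ along slices'' through a point where the complementary factor is nonzero, and then verify the global product equation via the identity principle. Since $F\not\equiv 0$ while $F=\pi_1^*(f_1)\wedge\pi_2^*(f_2)$ on $U\times V$, neither $f_1$ nor $f_2$ vanishes identically, so I may pick $x_0\in U$ with $f_1(x_0)\neq 0$ and $y_0\in V$ with $f_2(y_0)\neq 0$.

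To build $F_1$, I choose local coordinates $(y_1,\ldots,y_{n_2})$ on a neighborhood $W$ of $y_0$ in $Y$ and write $f_2=h_2(y)\,dy_1\wedge\cdots\wedge dy_{n_2}$ on $W\cap V$, with $h_2(y_0)\neq 0$. On $X\times W$ any holomorphic $(n,0)$ form admits the unique decomposition
\[F=\Phi\wedge\pi_2^*(dy_1\wedge\cdots\wedge dy_{n_2}),\]
where $\Phi$ is a holomorphic family $\{\Phi(\cdot,y)\}_{y\in W}$ of $(n_1,0)$ forms on $X$ (in any product chart $(z,y)$ with $F=g(z,y)\,dz\wedge dy$, one simply has $\Phi(\cdot,y)=g(\cdot,y)\,dz$). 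I set $F_1:=\Phi(\cdot,y_0)/h_2(y_0)$, a holomorphic $(n_1,0)$ form on all of $X$. Comparing $F=h_2(y)\,\pi_1^*(f_1)\wedge\pi_2^*(dy_1\wedge\cdots\wedge dy_{n_2})$ on $U\times(W\cap V)$ with the decomposition above forces $\Phi(z,y)=h_2(y)\,f_1(z)$ there, so $F_1=f_1$ on $U$. I construct $F_2$ symmetrically: pick coordinates on $X$ near $x_0$, write $f_1=h_1(z)\,dz_1\wedge\cdots\wedge dz_{n_1}$ with $h_1(x_0)\neq 0$, decompose $F$ on the corresponding slab as $\pi_1^*(dz_1\wedge\cdots\wedge dz_{n_1})\wedge\Psi$, and set $F_2:=\Psi(x_0,\cdot)/h_1(x_0)$; the same comparison gives $F_2=f_2$ on $V$.

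Finally, both $F$ and $\pi_1^*(F_1)\wedge\pi_2^*(F_2)$ are holomorphic $(n,0)$ forms on the connected manifold $M=X\times Y$, and by construction they coincide on the nonempty open set $U\times V$. Applying the identity principle to the coefficient of their difference in any local product chart of $M$ yields $F=\pi_1^*(F_1)\wedge\pi_2^*(F_2)$ on all of $M$.

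The main subtlety I anticipate is checking that the definition $F_1:=\Phi(\cdot,y_0)/h_2(y_0)$ is intrinsic, i.e.\ independent of the chosen local coordinates on $Y$ near $y_0$. Under a biholomorphic change $y=y(y')$ with Jacobian $J$, the form $\Phi$ picks up a factor $J(y')$ and the coefficient $h_2$ transforms by the same factor $J(y')$, so the ratio is unchanged; this shows $F_1$ (and by symmetry $F_2$) is well defined globally. Once this coordinate invariance and the existence/uniqueness of the local decomposition (parallel to Lemma \ref{decomp}) are in place, the remainder of the argument is routine.
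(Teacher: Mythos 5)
Your proposal is correct and follows essentially the same approach as the paper: both localize $F$ to a slab $X\times W$ (resp. $U_1\times Y$) where the complementary factor carries a coordinate chart, decompose $F$ against the coordinate differential of that chart (the paper via the Taylor-type expansion of Lemma~\ref{decomp}, you via the equivalent slice-evaluation $\Phi(\cdot,y_0)$), read off a global extension of $f_1$ (resp.\ $f_2$) by dividing out the nonzero scalar, and finish with the identity principle. The coordinate-invariance check you flag as the main subtlety is precisely the uniqueness assertion in Lemma~\ref{decomp}, so the two proofs are the same argument in transposed form.
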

	\begin{proof}
		As $F\not\equiv0$ and $F=\pi_1^*(f_1)\wedge\pi_2^*(f_2)$, we have $f_1\not\equiv0$ and $f_2\not\equiv0$. Choosing $x\in U$ such that $f_1(x)\not=0$, there exists a local coordinate $w=(w_1,\ldots,w_{n_1})$ on a neighborhood $U_1\subset U$ satisfying $w(x)=0$ and $w(U_1)=\Delta^{n_1}$. Then $f_1=\sum_{\alpha\in\mathbb{N}^{n_1}}b_{\alpha}w^{\alpha}dw_1\wedge\ldots\wedge dw_{n_1}$, where $b_{\alpha}$ is a constant for any $\alpha$ and $b_{0}\not=0$, hence
		$$F=\sum_{\alpha\in\mathbb{N}^{n_1}}b_{\alpha}\pi_1^*(w^{\alpha}dw_1\wedge\ldots\wedge dw_{n_1})\wedge\pi_2^*(f_2)$$
		on $U_1\times V$. It follows from Lemma \ref{decomp} that there exists a sequence of holomorphic $(n_2,0)$ forms $\{h_{\alpha}\}_{\alpha\in\mathbb{N}^{n_1}}$ on $Y$ such that
		$$F=\sum_{\alpha\in\mathbb{N}^{n_1}}\pi_1^*(w^{\alpha}dw_1\wedge\ldots\wedge dw_{n_1})\wedge\pi_2^*(h_{\alpha})$$
		on $U_1\times Y$. It follows from the uniqueness of the decomposition in Lemma \ref{decomp} that $b_{\alpha}f_2=h_{\alpha}$ on $V$, thus there exists a holomorphic $(n_2,0)$ form $F_2$ on $Y$ such that $F_2=f_2$ on $V$. With a similar discussion, we know that there exists a holomorphic $(n_1,0)$ form on $X$ such that $F_1=f_1$ on $X$. As $F=\pi_1^*(f_1)\wedge\pi_2^*(f_2)$ on $U\times V$, we have $F=\pi_1^*(F_1)\wedge\pi_2^*(F_2)$ on $M$.
	\end{proof}
	
	Let $\Omega=\Delta$ be the unit disk in $\mathbb{C}$, where the coordinate is $w$. Let $Y=\Delta^{n-1}$ be the unit polydisc in $\mathbb{C}^n$, where the coordinate is $w'=(w'_1,\ldots,w'_{n-1})$. Let $M=\Omega\times Y$. Let $\pi_1$, $\pi_2$ be the natural projections from $M$ to $\Omega$ and $Y$.
	
	Let $\psi_1=2p\log|w|+\psi_0$ on $\Omega$, where $p>0$ and $\psi_0$ is a negative subharmonic function on $\Omega$ with $\psi_0(0)>-\infty$. Let $\varphi_1$ be a Lebesgue measurable function on $\Omega$ such that $\varphi_1+\psi_1=2\log|g|+2\log|w|+2u$, where $g$ is a holomorphic function on $\Omega$ with $ord(g)_0=k_0\in\mathbb{N}$, $u$ is a subharmonic function on $\Omega$ such that $v(dd^cu,z)\in[0,1)$ for any $z\in \Omega$. Let $\varphi_2$ be a plurisubharmonic function on $Y$. Let $\psi:=\pi_1^*(\psi_1)$ and $\varphi:=\pi_1^*(\varphi_1)+\pi_2^*(\varphi_2)$ on $M$.
	Let $F$ be a holomorphic $(n,0)$ form on $M$, where
	\begin{equation*}
		F=\sum_{j=k}^{\infty}\pi_1^*(w^jdw)\wedge \pi_2^*(F_j)
	\end{equation*}
	according to Lemma \ref{decomp}. Here $k\in\mathbb{N}$ and $F_j$ is a holomorphic $(n-1,0)$ form on $Y$ for any $j\geq k$.
	
	\begin{lemma}\label{k>k0}
		Let $c$ be a positive measurable function on $(0,+\infty)$ such that $c(t)e^{-t}$ is decreasing on $(0,+\infty)$, $c$ is increasing near $+\infty$, and $\int_0^{+\infty}c(s)e^{-s}ds<+\infty$. Assume that $k>k_0$, and
		\[\int_M|F|^2e^{-\varphi}c(-\psi)<+\infty.\]
		Then
		\[(F,(0,y))\in(\mathcal{O}(K_M))_{(0,y)}\otimes\mathcal{I}(\varphi+\psi)_{(0,y)}\]
		for any $y\in Y$.
	\end{lemma}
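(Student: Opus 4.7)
The plan is to show that for each $y_0\in Y$, the function $|F|^2 e^{-(\varphi+\psi)}$ is locally integrable at $(0,y_0)$. I would work in a small polydisc $\Delta_{r_0}\times V\ni(0,y_0)$ on which $g(w)=w^{k_0}\tilde g(w)$ with $|\tilde g|$ bounded above and below, $\psi_0$ bounded, and $c$ increasing on the corresponding range of $-\psi_1$. By Lemma \ref{decomp} expand $F=H\,dw\wedge dw'_1\wedge\cdots\wedge dw'_{n-1}$ with $H(w,w')=\sum_{j\geq k}H_j(w')w^j$ converging uniformly on compacta, where $F_j=H_j(w')\,dw'_1\wedge\cdots\wedge dw'_{n-1}$. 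This breaks the proof into two subproblems: showing that each summand $\pi_1^*(w^jdw)\wedge\pi_2^*F_j$ lies in $\mathcal{I}(\varphi+\psi)_{(0,y_0)}$, and then passing to the full sum.

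For the summand step, I would first establish $a_j:=\int_V|H_j|^2 e^{-\varphi_2}dV<+\infty$ for every $j\geq k$. Restricting the hypothesis to an annulus $\{r_0/2<|w|<r_0\}\times V$, on which $e^{-\varphi_1}$ and $c(-\psi_1)$ are bounded below by positive constants (using Skoda's local $L^1$-integrability of $e^{-2u}$, which holds since $v(dd^cu,z)<1$), gives $\int_{\mathrm{annulus}\times V}|H|^2 e^{-\varphi_2}dA\,dV<+\infty$; Parseval in $\theta=\arg w$ then yields $\sum_{j\geq k}a_j\int_{r_0/2}^{r_0}\rho^{2j+1}d\rho<+\infty$, so each $a_j$ is finite. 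Applying Lemma \ref{f1zf2w} with $f_1=w^j$, $f_2=F_j$, the condition $(w^j,0)\in\mathcal{I}(\varphi_1+\psi_1)_0$ is $|\tilde g|^{-2}|w|^{2(j-k_0-1)}e^{-2u}\in L^1_{\mathrm{loc}}$, which holds because $j\geq k>k_0$ makes the $|w|$-power bounded and $e^{-2u}\in L^1_{\mathrm{loc}}$ by Skoda, while $(F_j,y_0)\in\mathcal{I}(\varphi_2)_{y_0}$ is just $a_j<+\infty$.

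For the sum step, put $F_N=\sum_{j=k}^N\pi_1^*(w^jdw)\wedge\pi_2^*F_j\to F$ uniformly on compacta. By Fatou's lemma, it suffices to establish a uniform-in-$N$ bound $\int_{\Delta_{r_0'}\times V'}|F_N|^2 e^{-(\varphi+\psi)}\leq C$ for some neighborhood $\Delta_{r_0'}\times V'$ of $(0,y_0)$. The pointwise ratio of the two weights is
\[
\frac{e^{-(\varphi+\psi)}}{e^{-\varphi}c(-\psi)}=\frac{|w|^{-2p}e^{-\psi_0}}{c(-\psi_1)},
\]
which is essentially radial (after absorbing the bounded factors $e^{\pm\psi_0}$ and using $c(-2p\log|w|)\asymp c(-\psi_1)$ via the monotonicity of $c(t)e^{-t}$), so the nonradial factor $e^{-2u}$ is common to both integrals. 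The ratio blows up as $|w|\to 0$, but the compensating vanishing comes from $k>k_0$: after Parseval in $\theta$ and the substitution $t=-2p\log\rho$, the per-mode comparison reduces to the uniform estimate
\[
\int_0^{r_0'}\rho^{2(j-k_0)-1}d\rho \leq C\int_0^{r_0}\rho^{2(j-k_0)+2p-1}c(-2p\log\rho)\,d\rho \qquad (j-k_0\geq 1),
\]
which follows (in the spirit of Lemma \ref{c(t)e^{-at}}) from the monotonicity of $c(t)e^{-t}$ combined with $c$ being eventually increasing. The main obstacle is justifying Parseval's separation when the background measure $e^{-2u}|\tilde g|^{-2}\,dA\,dV$ is not rotation-invariant in $w$; I would handle this by passing to the rotation-averaged measure $\bigl(\frac{1}{2\pi}\int_0^{2\pi}e^{-2u(\rho e^{i\theta})}|\tilde g(\rho e^{i\theta})|^{-2}d\theta\bigr)\rho\,d\rho\,dV$ and controlling the remaining angular cross-terms using the $L^{1+\epsilon}$-integrability of $e^{-2u}$ granted by Skoda (uniform Lelong-number bound $<1$ on compacta) together with Hölder's inequality, so that the radial per-mode comparison above carries through with a constant uniform in $j$.
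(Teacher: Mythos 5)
Your approach is genuinely different from the paper's: you try to extract \emph{all} coefficients $F_j$ at once via a Parseval argument on an annulus, and then pass to the limit $F$ by establishing a uniform-in-$N$ bound on the partial sums in $L^2(e^{-(\varphi+\psi)})$ and invoking Fatou. The paper instead extracts only the \emph{leading} coefficient $F_k$ via a radial computation that uses the sub-mean-value inequality for the subharmonic function $|\tilde h/\hat h|^2e^{\psi_0}$ (after replacing $u$ and $\psi_0$ by smooth decreasing approximants in the appropriate places), subtracts the corresponding summand and inducts, and finally passes to the limit with Lemma~\ref{module} (Grauert--Remmert closedness of ideal germs under uniform convergence) rather than with any quantitative $L^2$ estimate on partial sums.

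Both of the steps where your route diverges have genuine gaps. In the annulus step, you claim $e^{-\varphi_1}c(-\psi_1)$ has a positive lower bound on $\{r_0/2<|w|<r_0\}$ so that the $c$-weighted finiteness implies $\sum_j a_j\int\rho^{2j+1}d\rho<\infty$; but writing $e^{-\varphi_1}c(-\psi_1)=|g|^{-2}|w|^{2p-2}e^{-2u}\bigl[e^{\psi_1}c(-\psi_1)\bigr]$, the bracketed factor satisfies only the upper bound $e^{\psi_1}c(-\psi_1)\leq c(T)e^{-T}$ (from monotonicity of $c(t)e^{-t}$), and goes to $0$ as $\psi_1\to-\infty$ because $c(t)e^{-t}$ is decreasing and integrable. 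Since $\psi_0$ is merely a negative subharmonic function with $\psi_0(0)>-\infty$, it may well take the value $-\infty$ (and be unbounded below in any neighborhood of measure-zero set of its polar set) on the annulus, so no pointwise lower bound exists, and Skoda's theorem --- which concerns \emph{upper} $L^{p}$ control of $e^{-2u}$, not lower pointwise bounds of $e^{\psi_0}$ --- does not help. The paper sidesteps this by keeping $e^{\psi_0}$ inside the integrand and exploiting that $|\tilde h/\hat h|^2e^{\psi_0}$ is (log-)subharmonic, so the angular average is $\geq 2\pi|\tilde h(0,w')|^2e^{\psi_0(0)}$ with the finite constant $\psi_0(0)$ appearing, no pointwise lower bound required. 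In the Fatou step, your per-mode radial inequality is fine, but it does not control $\int|F_N|^2e^{-(\varphi+\psi)}$ because the weight $e^{-2u}|\tilde g|^{-2}$ is not rotation-invariant: the Fourier truncation $F\mapsto F_N$ is not a bounded operator on $L^2$ of a nonradial weight, and your proposed rotation-averaging plus H\"older fix does not obviously control the off-diagonal terms $\int w^j\overline{w^l}\,e^{-2u}\,dA$ uniformly in $N$ (summing the H\"older bounds over $j\neq l\leq N$ diverges). The paper avoids any uniform $L^2$ bound on partial sums entirely: once each summand's germ lies in $\mathcal{I}(\varphi+\psi)_{(0,y)}$, the partial sums' germs are in the ideal, they converge uniformly on compacta to $F$ (Lemma~\ref{decomp}), and Lemma~\ref{module} closes the argument algebraically. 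I'd recommend adopting this softer closing step; it removes the second gap outright, and then the first gap can be repaired by following the paper's subharmonicity trick to get $a_k<\infty$ and induct.
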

	
	\begin{proof}
		It follows from $\Omega$ is a Stein manifold  and Lemma \ref{l:FN1} that there exist smooth subharmonic functions $u_l$ and $\Psi_l$ on $\Omega$ such that $u_l$ are decreasingly convergent to $u$, and $\Psi_l$ are decreasingly convergent to $\psi_0$.
		
		Then there exists some $t_1\geq 0$ such that $c$ is increasing on $(t_1,+\infty)$, and for any $t>t_1$,
		\begin{flalign}\label{k>k0-1}
			\begin{split}
				&\int_{\{\psi<-t\}}|F|e^{-\varphi}c(-\psi)\\
				=&\int_{\{\pi_1^*(2p\log|w|+\psi_0)<-t\}}|F|^2e^{\pi_1^*(-2\log|g|-2u+(2p-2)\log|w|+\psi_0)-\pi_2^*(\varphi_2)}c(\pi_1^*(-2p\log|w|-\psi_0))\\
				\geq&\int_{\{\pi_1^*(2p\log|w|+\Psi_l)<-t\}}|F|^2e^{\pi_1^*(-2\log|g|-2u_l+(2p-2)\log|w|+\psi_0)-\pi_2^*(\varphi_2)}c(\pi_1^*(-2p\log|w|-\Psi_l)).
			\end{split}
		\end{flalign}
		
		For any $\varepsilon>0$, there exists $\rho>0$ such that
		
		(1).
		\begin{equation*}
			\sup_{|z|<\rho} 2|u_l(z)-u_l(0)|<\varepsilon;
		\end{equation*}
		
		(2).
		\begin{equation*}
			\sup_{|z|<\rho} |\Psi_l(z)-\Psi_l(0)|<\varepsilon.
		\end{equation*}
		
		Then there exists $t'>0$ such that for any $t>t'$, we have
		\begin{flalign}\label{k>k0-2}
			\begin{split}
				&\int_{\{\psi<-t\}}|F|e^{-\varphi}c(-\psi)\\
				\geq&\int_{\{\pi_1^*(2p\log|w|+\Psi_l)<-t\}}|F|^2e^{\pi_1^*(-2\log|g|-2u_l+(2p-2)\log|w|+\psi_0)-\pi_2^*(\varphi_2)}c(\pi_1^*(-2p\log|w|-\Psi_l))\\
				\geq&\int_{\{2p\log|w|+\Psi_l(0)<-t-\varepsilon,w'\in Y\}}|w^k\tilde{h}(w,w')|^2|w|^{2p-2}e^{-2\log|g(w)|-2u_l(0)-\varepsilon+\psi_0(w)}e^{-\varphi_2(w')}\\
				&\cdot c(-2p\log|w|-\Psi_l(0)-\varepsilon)|dw\wedge dw'|^2\\
				\geq&\int_{w'\in Y}\left(\int_0^{e^{-\frac{t+\Psi_l(0)+\varepsilon}{2p}}}\int_0^{2\pi}2r^{2k+1}\left|\frac{\tilde{h}(re^{i\theta},w')}{g(re^{i\theta})}\right|^2r^{2p-2}c(-2p\log r-\Psi_l(0)-\varepsilon)e^{\psi_0(re^{i\theta})}drd\theta\right)\\
				&\cdot e^{-2u_l(0)-\varepsilon}e^{-\varphi_2(w')}|dw'|^2\\
				\geq&\frac{2\pi}{p|d|^2}e^{-2u_l(0)-\varepsilon+\psi_0(0)-\left(\frac{k-k_0}{p}+1\right)(\Psi_l(0)+\varepsilon)}\int_t^{+\infty}c(s)e^{-\left(\frac{k-k_0}{p}+1\right)s}ds\\
				&\cdot\int_{w'\in Y}|\tilde{h}(0,w')|^2e^{-\varphi_2(w')}|dw'|^2,
			\end{split}
		\end{flalign}
		where $\tilde{h}(w,w')$ is a holomorphic function on $M$ such that $F=w^k\tilde{h}(w,w')dw\wedge dw'$ on $M$, and $d:=\lim\limits_{w\rightarrow 0}\frac{g(w)}{w^{k_0}}$. Since $k>k_0$ and $\int_0^{+\infty}c(s)e^{-s}ds<+\infty$, we have
		\begin{equation*}
			\int_t^{+\infty}c(s)e^{-\left(\frac{k-k_0}{p}+1\right)s}ds<+\infty.
		\end{equation*}
		Then it follows from $\int_{\{\psi<-t\}}|F|e^{-\varphi}c(-\psi)<+\infty$ and inequality (\ref{k>k0-2}) that
		\begin{equation}\label{k>k0-3}
			\int_Y|F_k|^2e^{-\varphi_2}=\int_{w'\in Y}|\tilde{h}(0,w')|^2e^{-\varphi_2(w')}|dw'|^2<+\infty.
		\end{equation}
		In addition, we get that $w^k\in\mathcal{I}(\varphi_1+\psi_1)_0$ according to $k>k_0$. Then it follows from Lemma \ref{local-germ} that
		\begin{equation}
			(\pi_1^*(w^kdw)\wedge\pi_2^*(F_k),(0,y))\in(\mathcal{O}(K_M))_{(0,y)}\otimes\mathcal{I}(\varphi+\psi)_{(0,y)}.
		\end{equation}
		
		According to the Fubini's Theorem and inequality (\ref{k>k0-3}), there exists $r>0$ such that
		\begin{flalign}
			\begin{split}
				&\int_{\Delta(0,r)\times Y}|\pi_1^*(w^kdw)\wedge \pi_2^*(F_k)|^2e^{-\varphi}c(-\psi)\\
				=&\int_{\Delta(0,r)}|w^k|^2e^{-\varphi_1}c(-\psi_1)|dw|^2\cdot\int_Y|F_k|^2e^{-\varphi_2}\\
				\leq&C\int_{\Delta(0,r)}|w^k|^2e^{-\varphi_1-\psi_1}|dw|^2\cdot\int_Y|F_k|^2e^{-\varphi_2}\\
				<&+\infty,
			\end{split}
		\end{flalign}
		where $C$ is a positive constant independent of $F$. Then we have
		\begin{equation*}
			\int_{\Delta(0,r)\times Y}|F-\pi_1^*(w^kdw)\wedge\pi_2^*(F_k)|e^{-\varphi}c(-\psi)<+\infty,
		\end{equation*}
		since $\int_{\Delta(0,r)\times Y}|F|e^{-\varphi}c(-\psi)<+\infty$. Note that
		\begin{equation*}
			F-\pi_1^*(w^kdw)\wedge\pi_2^*(F_k)=\sum_{j=k+1}^{\infty}\pi_1^*(w^jdw)\wedge\pi_2^*(F_j)
		\end{equation*}
		and $k+1>k_0$. Using the same methods we can get that
		\begin{equation*}
			(\pi_1^*(w^{k+1}dw)\wedge\pi_2^*(F_{k+1}),(0,y))\in(\mathcal{O}(K_M))_{(0,y)}\otimes\mathcal{I}(\varphi+\psi)_{(0,y)}
		\end{equation*}
		for any $y\in Y$ and
		\begin{equation*}
			\int_{\Delta(0,r')\times Y}\left|F-\pi_1^*(w^kdw)\wedge\pi_2^*(F_k)-\pi_1^*(w^{k+1}dw)\wedge\pi_2^*(F_{k+1})\right|^2e^{-\varphi}c(-\psi)<+\infty
		\end{equation*}
		for some $r'\in (0,r)$. Now with inductions, we can get that
		\begin{equation*}
			(\pi_1^*(w^jdw)\wedge\pi_2^*(F_j),(0,y))\in(\mathcal{O}(K_M))_{(0,y)}\otimes\mathcal{I}(\varphi+\psi)_{(0,y)}
		\end{equation*}
		for any $j\geq k$, $y\in Y$. Then it follows from Lemma \ref{module} that
		\begin{equation*}
			(F,(0,y))=(\sum_{j=k}^{\infty}\pi_1^*(w^jdw)\wedge\pi_2^*(F_j),(0,y))\in(\mathcal{O}(K_M))_{(0,y)}\otimes\mathcal{I}(\varphi+\psi)_{(0,y)}
		\end{equation*}
		for any $y\in Y$.
	\end{proof}

	\subsection{Some other lemmas}
	\
	The following lemmas will be used in the proofs of the main results and their applications.
	
	Let $Z_0':=\{z_j:j\in \mathbb{N}_+ \ \& \ j<\gamma\}$ be a discrete subset of the open Riemann surface $\Omega$, where $\gamma\in\mathbb{N}_+\cup\{+\infty\}$. Let $Y$ be an $n-1$ dimensional weakly pseudoconvex K\"{a}hler manifold. Let $M=\Omega\times Y$ be a complex manifold, and $K_M$ be the canonical line bundle on $M$. Let $\pi_1$, $\pi_2$ be the natural projections from $M$ to $\Omega$ and $Y$. Let $\psi_1$ be a subharmonic function on $\Omega$ such that $p_j=\frac{1}{2}v(dd^c\psi_1,z_j)>0$, and let $\varphi_1$ be a Lebesgue measurable function on $\Omega$ such that $\varphi_1+\psi_1$ is subharmonic on $\Omega$. Let $\varphi_2$ be a plurisubharmonic function on $Y$. Denote that $\psi:=\pi_1^*(\psi_1)$, $\varphi:=\pi_1^*(\varphi_1)+\pi_2^*(\varphi_2)$. Using the Weierstrass Theorem on open Riemann surfaces (see \cite{OF81}) and
	the Siu’s Decomposition Theorem, we have
	\[\varphi_1+\psi_1=2\log |g_0|+2u_0,\]
	where $g_0$ is a holomorphic function on $\Omega$ and $u_0$ is a subharmonic function on $\Omega$ such that $v(dd^cu_0,z)\in [0,1)$ for any $z\in\Omega$.
	
	Let $w_j$ be a local coordinate on a neighborhood $V_{z_j}\subset\subset\Omega$ of $z_j$ satisfying $w_j(z_j)=0$ for $z_j\in Z'_0$, where $V_{z_j}\cap V_{z_k}=\emptyset$ for any $j,k$, $j\neq k$. Denote that $V_0:=\bigcup_{1\leq j<\gamma}V_{z_j}$. Assume that $g_0=d_jw_j^{k_j}h_j$ on $V_{z_j}$, where $d_j$ is a constant, $k_j$ is a nonnegative integer, and $h_j$ is a holomorphic function on $V_{z_j}$ such that $h_j(z_j)=1$ for any $j$ ($1\leq j<\gamma$).
	
	Let $c(t)$ be a positive measurable function on $(0,+\infty)$ satisfying that $c(t)e^{-t}$ is decreasing and $\int_0^{+\infty}c(t)e^{-t}dt<+\infty$. We can get the following lemmas.
	
	\begin{lemma}\label{L2ext-finite-f}
		Let $F$ be a holomorphic $(n,0)$ form on $V_0\times Y$ such that $F=\pi_1^*(w_j^{\tilde{k}_j}f_jdw_j)\wedge \pi_2^*(\tilde{F}_j)$ on $V_{z_j}\times Y$, where $\tilde{k_j}$ is a nonnegative integer, $f_j$ is a holomorphic function on $V_{z_j}$ such that $f_j(z_j)=a_j\in\mathbb{C}\setminus\{0\}$, and $\tilde{F}_j$ is a holomorphic $(n-1,0)$ form on $Y$ for any $j$ ($1\leq j<\gamma$).
		 
		Denote that $I_F:=\{j : 1\leq j<\gamma\& \tilde{k}_j+1-k_j\leq 0\}$. Assume that $\tilde{k}_j+1-k_j=0$ and $u_0(z_j)>-\infty$ for $j\in I_F$. If
		\begin{equation}
			\int_Y|\tilde{F}_j|^2e^{-\varphi_2}<+\infty
		\end{equation}
		for any $j$ with $1\leq j<\gamma$, and
		\begin{equation}
			\sum_{j\in I_F}\frac{2\pi|a_j|^2 e^{-2u_0(z_j)}}{p_j|d_j|^2}\int_Y|\tilde{F}_j|^2e^{-\varphi_2}<+\infty,
		\end{equation}
		then there exists a holomorphic $(n,0)$ form $\tilde{F}$ on $M$, such that $(\tilde{F}-F,(z_j,y))\in(\mathcal{O}(K_M)\otimes\mathcal{I}(\varphi+\psi))_{(z_j,y)}$ for any $j$ ($1\leq j<\gamma$) and $y\in Y$, and
		\begin{equation}
			\int_M|\tilde{F}|^2e^{-\varphi}c(-\psi)\leq \left(\int_0^{+\infty}c(s)e^{-s}ds\right)\sum_{j\in I_F}\frac{2\pi|a_j|^2 e^{-2u_0(z_j)}}{p_j|d_j|^2}\int_Y|\tilde{F}_j|^2e^{-\varphi_2}.	
		\end{equation}
	\end{lemma}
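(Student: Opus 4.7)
The plan is to apply the concavity theorem (Theorem~\ref{Concave}) to the minimal $L^{2}$ integral $G(t;c)$ associated with $F$ and the multiplier ideal sheaves $\mathcal{F}_{(z_j,y)}:=\mathcal{I}(\varphi+\psi)_{(z_j,y)}$, and then to bound $G(0)$ from above by the asymptotic slope $\limsup_{t\to+\infty}G(t)/h(t)$, where $h(t):=\int_{t}^{+\infty}c(s)e^{-s}ds$; Lemma~\ref{F_t} then produces the required unique minimizer $\tilde F$ on $M$ realizing $G(0)$ and satisfying the claimed ideal-sheaf condition.

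For $t$ sufficiently large, $\{\psi<-t\}=\{\psi_1<-t\}\times Y\subset V_0\times Y$, so $F$ itself is a legitimate competitor giving $G(t)\le\int_{\{\psi<-t\}}|F|^2 e^{-\varphi}c(-\psi)<+\infty$; pathological configurations are handled by a $\bar\partial$-cutoff via Lemma~\ref{dbarequa}. Theorem~\ref{Concave} then gives that $r\mapsto G(h^{-1}(r))$ is concave and nonnegative on $(0,h(0))$ with limit $0$ at $r=0$, hence (using that $g(r)/r$ is decreasing for any such concave function vanishing at $0$)
\[
G(0)\le h(0)\cdot\limsup_{t\to+\infty}\frac{G(t)}{h(t)}.
\]
Using $F$ as competitor and Fubini,
\[
\int_{\{\psi<-t\}\cap(V_{z_j}\times Y)}|F|^2 e^{-\varphi}c(-\psi)=\left(\int_{\{\psi_1<-t\}\cap V_{z_j}}|w_j^{\tilde k_j}f_j|^2 e^{-\varphi_1}c(-\psi_1)\right)\int_Y|\tilde F_j|^2 e^{-\varphi_2}.
\]
Substituting $\varphi_1+\psi_1=2\log|g_0|+2u_0$ and $g_0=d_jw_j^{k_j}h_j$ with $h_j(z_j)=1$, the inner integrand becomes $\frac{|f_j|^2}{|d_jh_j|^2}|w_j|^{2(\tilde k_j-k_j)}e^{-2u_0}e^{\psi_1}c(-\psi_1)$. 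Noting that for $j\in I_F$ the intrinsic quantity $\alpha_j=(\varphi_1+\psi_1-2k_jG_\Omega(\cdot,z_j))(z_j)$ is finite and equals $2u_0(z_j)+2\log|d_j|-2k_j\log c_\beta(z_j)$, an asymptotic analysis via polar coordinates and the substitution $s=-\psi_1$ (with $\psi_1\sim 2p_j\log|w_j|$ from Siu decomposition) yields, as $t\to+\infty$,
\[
\int_{\{\psi_1<-t\}\cap V_{z_j}}|w_j^{\tilde k_j}f_j|^2 e^{-\varphi_1}c(-\psi_1)\sim\frac{2\pi|a_j|^2 e^{-2u_0(z_j)}}{p_j|d_j|^2}\int_t^{+\infty}c(s)e^{-s(1+(\tilde k_j+1-k_j)/p_j)}ds.
\]
For $j\in I_F$ the exponent is $-s$, giving $\frac{2\pi|a_j|^2 e^{-2u_0(z_j)}}{p_j|d_j|^2}h(t)(1+o(1))$; for $j\notin I_F$ the exponent is $-s(1+\delta_j)$ with $\delta_j=(\tilde k_j+1-k_j)/p_j>0$, and Lemma~\ref{c(t)e^{-at}} forces this to be $o(h(t))$. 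Summing over $j$ and using the summability hypothesis to exchange $\limsup$ and $\sum_{j\in I_F}$ yields the desired bound on $\limsup_{t\to+\infty}G(t)/h(t)$, and combined with the concavity inequality this gives the claimed estimate on $G(0)$. Lemma~\ref{F_t} then produces the minimizer $\tilde F$.

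The main obstacle is the asymptotic computation near each $z_j$ and the uniform control needed to commute $\limsup_t$ with the (possibly infinite) sum $\sum_{j\in I_F}$. The comparison $\psi_1\sim 2p_j\log|w_j|$ uses the Siu decomposition $\psi_1=2p_j\log|w_j|+\psi_1^{(0)}$ together with the upper-semicontinuity bound $\psi_1\le 2p_j\log|w_j|+C$ on $\psi_1^{(0)}$, converted into an integrand upper bound via monotonicity of $c(s)e^{-s}$. The replacement of the $e^{-2u_0}$ contribution by $e^{-2u_0(z_j)}$ is the delicate point (subharmonic $u_0$ need not be pointwise continuous at $z_j$ even when $u_0(z_j)>-\infty$); one uses instead that $\alpha_j$ is the intrinsic quantity appearing in Theorem~\ref{c:L2-1d-char}, which accounts correctly for the integral asymptotics. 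For $j\notin I_F$ the polynomial factor $|w_j|^{2(\tilde k_j+1-k_j)}$ produces an extra $e^{-s(\tilde k_j+1-k_j)/p_j}$ after substitution, enabling Lemma~\ref{c(t)e^{-at}} regardless of whether $u_0(z_j)>-\infty$. Finally, the interchange of $\limsup$ with $\sum_{j\in I_F}$ is justified by the summability hypothesis and the uniform error estimates from the asymptotic expansion.
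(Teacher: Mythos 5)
The overall strategy you propose — define the minimal-$L^2$ integral $G(t)$ for $F$ with respect to $\mathcal{I}(\varphi+\psi)$, invoke the concavity Theorem~\ref{Concave}, bound $G(0)$ by the asymptotic slope $\lim_{t\to\infty}G(t)/h(t)$, and extract a minimizer via Lemma~\ref{F_t} — is genuinely different from the paper's, which constructs $\tilde F$ directly via the $\bar\partial$-equation lemma (Lemma~\ref{dbarequa}) with weight $e^{-2u_l}$ for a decreasing sequence of \emph{smooth} subharmonic $u_l\downarrow u_0$, and then passes to the limit twice using Lemma~\ref{s_K} and Fatou, after first exhausting $\Omega$ by relatively compact $\Omega_l$ meeting only finitely many $z_j$ (see Remark~\ref{rem-L2ext} inside the proof). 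Unfortunately, the key estimate in your route does not hold up.

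The problem is the claimed asymptotic
\[
\int_{\{\psi_1<-t\}\cap V_{z_j}}|w_j^{\tilde k_j}f_j|^2 e^{-\varphi_1}c(-\psi_1)
\ \sim\ \frac{2\pi|a_j|^2 e^{-2u_0(z_j)}}{p_j|d_j|^2}\int_t^{\infty}c(s)e^{-s(1+(\tilde k_j+1-k_j)/p_j)}\,ds,
\]
and more precisely the \emph{upper}-bound half of it, which you need in order to bound $\liminf_t\int_{\{\psi<-t\}}|F|^2e^{-\varphi}c(-\psi)/h(t)$ from above. After substituting $\varphi_1+\psi_1=2\log|g_0|+2u_0$ the integrand carries a factor $e^{-2u_0}$, and $u_0$ is only \emph{upper} semicontinuous. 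Upper semicontinuity gives $\limsup_{z\to z_j}u_0(z)\le u_0(z_j)$, i.e.\ a \emph{lower} bound $e^{-2u_0(z)}\ge e^{-2u_0(z_j)-o(1)}$ near $z_j$; there is no corresponding upper bound, since $u_0$ can dip arbitrarily far below $u_0(z_j)$ on a set of positive measure near $z_j$. The paper's Lemma~\ref{fanxiangineq} establishes exactly the \emph{lower}-bound direction ($\liminf\ge$), and it does so precisely because a decreasing smooth approximation $u_l\downarrow u_0$ gives $e^{-2u_0}\ge e^{-2u_l}$; that monotonicity only works for the $\ge$ direction. A matching upper bound would require an \emph{increasing} smooth subharmonic approximation to $u_0$, which does not exist in general. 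Consequently, using $F$ itself as the competitor on $\{\psi<-t\}$ need not give the slope $\sum_{j\in I_F}\frac{2\pi|a_j|^2e^{-2u_0(z_j)}}{p_j|d_j|^2}\int_Y|\tilde F_j|^2e^{-\varphi_2}$; the extension $\tilde F$ produced by the lemma genuinely improves on $F$, and the competitor inequality $G(t)\le\int_{\{\psi<-t\}}|F|^2e^{-\varphi}c(-\psi)$ is too weak. The paper sidesteps this by letting the $\bar\partial$-lemma (with the smooth weight $e^{-2u_l}$) manufacture a better competitor $F_{l,t}$ whose estimate involves $u_l(z_j)$, then sending $l\to\infty$ with Lemma~\ref{s_K}.

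Two further points you flag but do not resolve are in fact part of the same gap. First, for $\gamma=+\infty$ you would need $G(t_0)<+\infty$ for some $t_0$ to even start Theorem~\ref{Concave}, and $F$ itself need not be square-integrable against $e^{-\varphi}c(-\psi)$ over the full (non-compact) sublevel set; your hypotheses give no uniform control over the tail $j\notin I_F$. The paper avoids this entirely by proving the finite case on each $\Omega_l\times Y$ and then applying Lemma~\ref{s_K} to the resulting sequence of extensions — a step that has no analogue in your argument. Second, the appeal to ``uniform error estimates from the asymptotic expansion'' to justify interchanging $\limsup_t$ with $\sum_j$ assumes exactly the kind of $j$-uniform control that the singular $u_0$ destroys. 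In short, the concavity framing is an attractive idea, but without the smooth-approximation $\bar\partial$-construction (which is the actual engine of the paper's proof, and of the concavity theorem itself), the slope computation does not close.
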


	\begin{proof}
		Lemma \ref{l:green-sup2} tells us that $\psi_1\leq 2\sum_{1\leq j<\gamma}p_jG_{\Omega}(\cdot,z_j)$. Since $c(t)e^{-t}$ is decreasing on $(0,+\infty)$, we have
		\begin{equation*}
			e^{-\varphi}c(-\psi)\leq e^{-\pi_1^*\left(\varphi_1+\psi_1-2\sum_{1\leq j<\gamma}p_jG_{\Omega}(\cdot,z_j)\right)-\pi_2^*(\varphi_2)}c\left(-\pi_1^*(2\sum_{1\leq j<\gamma}p_jG_{\Omega}(\cdot,z_j))\right).
		\end{equation*}
		Thus we can assume that $\psi_1=2\sum_{1\leq j<\gamma}p_jG_{\Omega}(\cdot,z_j)$. 
		
		The following remark shows that we only need to prove Lemma \ref{L2ext-finite-f} when $Z_0'$ is a finite set.
		
		\begin{remark}\label{rem-L2ext}
		It follows from Lemma \ref{green-approx} that there exists a sequence of Riemann surfaces $\{\Omega_l\}_{l\in\mathbb{N}_+}$ such that $\Omega_l\subset\subset\Omega_{l+1}\subset\subset\Omega$ for any $l$, $\bigcup_{l\in\mathbb{N}_+}\Omega_l=\Omega$, and $\{G_{\Omega_l}(\cdot,z_0)-G_{\Omega}(\cdot,z_0)\}$ is decreasingly convergent to $0$ for any $z_0\in\Omega$. As $Z'_0$ is a discrete subset of $\Omega$, $Z_l:=\Omega_l\cap Z'_0$ is a set of finite points. Denote that
		\begin{equation*}
			\psi_{1,l}:=2\sum_{z_j\in Z_l}p_jG_{\Omega_l}(\cdot,z_j),
		\end{equation*}
		and
		\begin{equation*}
			\varphi_{1,l}:=\varphi_1+\psi_1-\psi_{1,l}.
		\end{equation*}
		Then we have $\varphi_{1,l}+\psi_{1,l}=\varphi_1+\psi_1$ on $\Omega_l$. Denote that $\varphi'_l:=\pi_1^*(\varphi_{1,l})$, and $\psi'_l:=\pi_1^*(\psi_{1,l})$. Denote that $I_l:=I_F\cap \{j : z_j\in Z_l\}$. After this remark we will prove that there exists a holomorphic $(n,0)$ form $F_l$ on $M_l$ such that $(F_l-F,(z_j,y))\in (\mathcal{O}(K_M)\otimes\mathcal{I}(\varphi'_l+\psi'_l))_{(z_j,y)}= (\mathcal{O}(K_M)\otimes\mathcal{I}(\varphi+\psi))_{(z_j,y)}$ for any $z_j\in Z_l$, $y\in Y$ and
		\begin{equation*}
			\int_{M_l}|F_l|^2e^{-\varphi'_l}c(-\psi'_l)\leq \left(\int_0^{+\infty}c(t)e^{-t}dt\right)\sum_{j\in I_l}\frac{2\pi|a_j|^2 e^{-2u_0(z_j)}}{p_j|d_j|^2}\int_Y|\tilde{F}_j|^2e^{-\varphi_2}
		\end{equation*}
		where $M_l:=\Omega_l\times Y$.
		
		Since $\psi\leq\psi'_l$ and $c(t)e^{-t}$ is decreasing on $(0,+\infty)$, we have
		\begin{flalign}
			\begin{split}
				&\int_{M_l}|F_l|^2e^{-\varphi}c(-\psi)\\
				\leq&\int_{M_l}|F_l|^2e^{-\varphi'_l}c(-\psi'_l)\\
				\leq&\left(\int_0^{+\infty}c(s)e^{-s}ds\right)\sum_{j\in I_l}2\pi\frac{|a_j|^2e^{-2u_0(z_j)}}{p_j|d_j|^2}\int_Y|\tilde{F}_j|^2e^{-\varphi_2}.
			\end{split}
		\end{flalign}
		Note that $\psi$ is smooth on $M\setminus (Z'_0\times Y)$. For any compact subset $K$ of $M\setminus (Z'_0\times Y)$, there exsits $s_K>0$ such that $\int_Ke^{-s_K\psi}dV_M<+\infty$, where $dV_M$ is a continuous volume form on $M$. Then we have
		\begin{equation*}
			\int_K\left(\frac{e^{\varphi}}{c(-\psi)}\right)^{s_K}dV_M=\int_K\left(\frac{e^{\varphi+\psi}}{c(-\psi)}\right)^{s_K}e^{-s_K\psi}dV_M\leq C\int_Ke^{-s_K\psi}dV_M <+\infty,
		\end{equation*}
		where $C$ is a constant. It follows from Lemma \ref{s_K} and the diagonal method that there exists a subsequence of $\{F_l\}$, denoted also by $\{F_l\}$, which is uniformly convergent to a holomorphic $(n,0)$ form $\tilde{F}$ on $M$ on any compact subset of $M$ and
		\begin{flalign}
			\begin{split}
				&\int_M|\tilde{F}|^2e^{-\varphi}c(-\psi)\\
				\leq&\lim_{l\rightarrow+\infty}\int_{M_l}|F_l|^2e^{-\varphi}c(-\psi)\\
				\leq&\lim_{l\rightarrow+\infty}\left(\int_0^{+\infty}c(s)e^{-s}ds\right)\sum_{j\in I_l}2\pi\frac{|a_j|^2e^{-2u_0(z_j)}}{p_j|d_j|^2}\int_Y|\tilde{F}_j|^2e^{-\varphi_2}.\\
				=&\left(\int_0^{+\infty}c(s)e^{-s}ds\right)\sum_{j\in I_F}2\pi\frac{|a_j|^2e^{-2u_0(z_j)}}{p_j|d_j|^2}\int_Y|\tilde{F}_j|^2e^{-\varphi_2}.
			\end{split}
		\end{flalign}
		Since $\{F_l\}$ is uniformly convergent to $\tilde{F}$ on any compact subset of $M$ and $(F_l-F,(z_j,y))\in(\mathcal{O}(K_M)\otimes\mathcal{I}(\varphi+\psi))_{(z_j,y)}$ for any $l$, $z_j\in Z'_l$, and $y\in Y$. Then it follows from Lemma \ref{module} that $(\tilde{F}-F,(z_j,y))\in(\mathcal{O}(K_M)\otimes\mathcal{I}(\varphi+\psi))_{(z_j,y)}$ for any $z_j\in Z'_0$, and $y\in Y$.	
		\end{remark}

		According to the above discussions, we assume that $\gamma=m+1$ and $I_F=\{1,2,\ldots,m_1\}$, where $m$ is a positive integer and $m_1<m$ is a nonnegative integer ($I_F=\emptyset$ if and only is $m_1=0$). And we can replace $\Omega_l$, $\psi'_l$, $\varphi'_l$ by $\Omega$, $\psi$, $\varphi_1$.
		
		Since $\Omega$ is a Stein manifold, there exist smooth subharmonic functions $u_l$ on $\Omega$, which are decreasingly convergent to $u_0$ with respect to $l$.
		
		Using Lemma \ref{l:G-compact}, we know that there exists $t_0>0$ such that $\{\psi_1<-t\}\subset\subset V_0$ for any $t\geq t_0$, which implies that $\int_{\{\psi<-t\}}|F|^2<+\infty$ according to the Fubini's Theorem and $\int_Y|\tilde{F}_j|^2e^{-\varphi_2}<+\infty$ for any $j$, $1\leq j<\gamma$. Using Lemma \ref{dbarequa}, there exists a holomorphic $(n,0)$ form $F_{l,t}$ on $M$, such that
		\begin{flalign}\label{L2ext-1}
			\begin{split}
				&\int_M|F_{l,t}-(1-b_{t,1}(\psi))F|^2e^{\pi_1^*(-2\log |g_0|-2u_l+v_{t,1}(\psi_1))-\pi_2^*(\varphi_2)}c(-v_{t,1}(\psi))\\
				\leq&\left(\int_0^{t+1}c(s)e^{-s}ds\right)\int_M\mathbb{I}_{\{-t-1<\psi<-t\}}|F|^2e^{\pi_1^*(-2\log|g_0|-2u_l)-\pi_2^*(\varphi_2)}
			\end{split}
		\end{flalign}
		for any $t\geq t_0$, where according to the Fubini's Theorem we can know that
		\begin{equation*}
			\int_M\mathbb{I}_{\{-t-1<\psi<-t\}}|F|^2e^{\pi_1^*(-2\log|g_0|-2u_l)-\pi_2^*(\varphi_2)}<+\infty.
		\end{equation*} 
		Note that $b_{t,1}(s)=0$ for large enough $s$, then $(F_{l,t}-F,(z_j,y))\in(\mathcal{O}(K_M)\otimes \mathcal{I}(\pi_1^*(2\log|g_0|)+\pi_2^*(\varphi_2)))_{(z_j,y)}=\mathcal{ O}(K_M)\otimes \mathcal{I}(\varphi+\psi)_{(z_j,y)}$ for any $j\in \{1,2,\ldots,m\}$.
		
		For any $y\in Y$, we can choose some neighborhood $U_y\subset\subset Y$ such that $\tilde{F}_j=\tilde{h}_j(w')dw'$ on $U_y$, where $w'=(w'_1,\ldots,w'_{n-1})$ is a local coordinate on $U_y$ such that $w'_1(y)=\cdots=w'_{n-1}(y)=0$, and $\tilde{h}_j$ is a holomorphic function on $U_y$.
		
		For any $\varepsilon>0$, there exists $t_1>t_0$ such that
		
		(1). for any $j\in\{1,2,\ldots,m\}$,
		\begin{equation*}
			\sup_{z\in\{\psi_1<-t_1\}\cap V_{z_j}}|g_1(z)-g_1(z_j)|<\varepsilon,
		\end{equation*}
		where $g_1$ is a smooth function on $V_0$ satisfying that
		\begin{equation*}
			g_1|_{V_{z_j}}=\psi_1-2p_j\log |w_j|;
		\end{equation*}
		
		(2). for any $j\in\{1,2,\ldots,m\}$,
		\begin{equation*}
			\sup_{z\in\{\psi_1<-t_1\}\cap V_{z_j}} 2|u_l(z)-u_l(z_j)|<\varepsilon;
		\end{equation*}
		
		(3). for any $j\in\{1,2,\ldots,m\}$,
		\begin{equation*}
			\sup_{z\in\{\psi_1<-t_1\}\cap V_{z_j}} |f_j(z)-a_j|<\varepsilon.
		\end{equation*}
		
		Note that $\tilde{k}_j+1-k_j=0$ for $1\leq j\leq m_1$ and $\tilde{k}_j+1-k_j>0$ for $m_1<j\leq m$. With direct calculating, we have
		\begin{flalign*}
			\begin{split}
				&\limsup_{t\rightarrow+\infty}\int_{\Omega\times U_y}\mathbb{I}_{\{-t-1\leq\psi_1<-t\}}|F|^2e^{\pi_1^*(-2\log|g_0|-2u_l)-\pi_2^*(\varphi_2)}\\
				\leq&\limsup_{t\rightarrow+\infty}\sum_{1\leq j\leq m}\int_{\{-t-1-\varepsilon<2p_j\log |w_j|+g_1(z_j)<-t+\varepsilon,w'\in U_y\}}\left|\frac{\tilde{h}_j(w')}{d_j}\right|^2(|a_j|+\varepsilon)^2\\
				&\cdot|w_j|^{2(\tilde{k}_j-k_j)}e^{-2u_l(z_j)+\varepsilon}e^{-\varphi_2(w')}|dw'|^2\\
				=&\limsup_{t\rightarrow+\infty}\sum_{1\leq j\leq m}\int_{w'\in U_y}\int_{\{-t-1-\varepsilon<2p_j\log |w_j|+g_1(z_j)<-t+\varepsilon\}}\left|\frac{\tilde{h}_j(w')}{d_j}\right|^2(|a_j|+\varepsilon)^2\\
				&\cdot|w_j|^{2(\tilde{k}_j-k_j)}e^{-2u_l(z_j)+\varepsilon}e^{-\varphi_2(w')}|dw'|^2\\
				\leq&\limsup_{t\rightarrow+\infty}\sum_{1\leq j\leq m}\int_{w'\in U_y}\left|\frac{\tilde{h}_j(w')}{d_j}\right|^2(|a_j|+\varepsilon)^2e^{-2u_l(z_j)+\varepsilon}e^{-\varphi_2(w')}|dw'|^2\\
				&\cdot 4\pi\int_{e^{-\frac{t+1+\varepsilon+g_1(z_j)}{2p_j}}}^{e^{-\frac{t-\varepsilon+g_1(z_j)}{2p_j}}}r^{2(\tilde{k}_j-k_j)+1}dr\\
				=&\sum_{1\leq j\leq m}\int_{w'\in U_y}\left|\frac{\tilde{h}_j(w')}{d_j}\right|^2(|a_j|+\varepsilon)^2e^{-2u_l(z_j)+\varepsilon}e^{-\varphi_2(w')}|dw'|^2\\
				&\cdot 4\pi\limsup_{t\rightarrow+\infty}\int_{e^{-\frac{t+1+\varepsilon+g_1(z_j)}{2p_j}}}^{e^{-\frac{t-\varepsilon+g_1(z_j)}{2p_j}}}r^{2(\tilde{k}_j-k_j)+1}dr\\
				=&\sum_{1\leq j\leq m_1}2\pi\int_{w'\in U_y}\left|\frac{\tilde{h}_j(w')}{d_j}\right|^2(|a_j|+\varepsilon)^2e^{-2u_l(z_j)+\varepsilon}e^{-\varphi_2(w')}|dw'|^2\frac{1+2\varepsilon}{p_j}.
			\end{split}
		\end{flalign*}
		Letting $\varepsilon\rightarrow 0+$, we get that
		\begin{flalign}
			\begin{split}
				&\limsup_{t\rightarrow+\infty}\int_{\Omega\times U_y}\mathbb{I}_{\{-t-1\leq\psi_1<-t\}}|F|^2e^{\pi_1^*(-2\log|g_0|-2u_l)-\pi_2^*(\varphi_2)}\\
				\leq&\sum_{1\leq j\leq m_1}2\pi\frac{|a_j|^2e^{-2u_l(z_j)}}{p_j|d_j|^2}\int_{w'\in U_y}|\tilde{h}_j(w')|^2e^{-\varphi_2(w')}|dw'|^2.
			\end{split}
		\end{flalign}
		According to the arbitrariness of $y$ and $U_y$, it follows from the above calculations in local case that
		\begin{flalign}\label{L2ext-2}
			\begin{split}
				&\limsup_{t\rightarrow+\infty}\int_M\mathbb{I}_{\{-t-1\leq\psi_1<-t\}}|F|^2e^{\pi_1^*(-2\log|g_0|-2u_l)-\pi_2^*(\varphi_2)}\\
				\leq&\sum_{1\leq j\leq m_1}2\pi\frac{|a_j|^2e^{-2u_l(z_j)}}{p_j|d_j|^2}\int_Y|\tilde{F}_j|^2e^{-\varphi_2}<+\infty.
			\end{split}
		\end{flalign}
		Since $v_{t,1}(\psi)\geq\psi$ and $c(t)e^{-t}$ is decreasing, combining inequality (\ref{L2ext-1}) with inequality (\ref{L2ext-2}), we have
		\begin{flalign}\label{L2ext-3}
			\begin{split}
				&\limsup_{t\rightarrow+\infty}\int_M|F_{l,t}-(1-b_{t,1}(\psi))F|^2e^{\pi_1^*(-2\log |g_0|-2u_l+\psi_1)-\pi_2^*(\varphi_2)}c(-\psi)\\
				\leq&\limsup_{t\rightarrow+\infty}\int_M|F_{l,t}-(1-b_{t,1}(\psi))F|^2e^{\pi_1^*(-2\log |g_0|-2u_l+v_{t,1}(\psi_1))-\pi_2^*(\varphi_2)}c(-v_{t,1}(\psi))\\
				\leq&\limsup_{t\rightarrow+\infty}\left(\int_0^{t+1}c(s)e^{-s}ds\right)\int_M\mathbb{I}_{\{-t-1\leq\psi_1<-t\}}|F|^2e^{-\pi_1^*(2\log|g_0|-2u_l)-\pi_2^*(\varphi_2)}\\
				\leq&\left(\int_0^{+\infty}c(s)e^{-s}ds\right)\sum_{1\leq j\leq m_1}2\pi\frac{|a_j|^2e^{-2u_l(z_j)}}{p_j|d_j|^2}\int_Y|\tilde{F}_j|^2e^{-\varphi_2}<+\infty.
			\end{split}
		\end{flalign}
		As $\tilde{k}_j-k_j=-1$ for $1<j\leq m_1$, $\tilde{k}_j-k_j\geq 0$ for $m_1<j\leq m$, $\{\psi_1<-t_0\}\subset\subset V_0$ and $\psi_1=2\sum_{1\leq j<m}p_jG_{\Omega}(\cdot,z_j)$, we have
		\begin{flalign}\label{|F|^2}
			\begin{split}
			&\int_{\{\psi<-t'\}}|F|^2e^{\pi_1^*(-2\log |g_0|+\psi_1)-\pi_2^*(\varphi_2)}c(-\psi)\\
			\leq&\sum_{j=1}^m\int_{\{\psi_1<-t'\}\cap V_{z_j}}|w_j^{\tilde{k}_j}f_jdw_j|^2e^{-2\log |g_0|+\psi_1}c(-\psi_1)\cdot\int_Y|F_j|^2e^{-\varphi_2}\\
			\leq&C\sum_{j=1}^m\int_{\{2p_j\log|w_j|<-C'\}}|w_j|^{-2}e^{2p_j\log|w_j|}c(-2p_j\log|w_j|)\cdot\int_Y|F_j|^2e^{-\varphi_2}<+\infty,
			\end{split}
		\end{flalign}
		for some $t'>0$, where $C,C'$ are positive constants. Inequality (\ref{|F|^2}) implies that
		\begin{equation*}
			\limsup_{t\rightarrow+\infty}\int_M|(1-b_{t,1}(\psi))F|^2e^{\pi_1^*(-2\log |g_0|-2u_l+\psi_1)-\pi^*_2(\varphi_2)}c(-\psi)<+\infty.
		\end{equation*}
		Thus we have
		\begin{equation*}
			\limsup_{t\rightarrow+\infty}\int_M|F_{l,t}|^2e^{\pi_1^*(-2\log |g_0|-2u_l+\psi_1)-\pi^*_2(\varphi_2)}c(-\psi)<+\infty.
		\end{equation*}
		According to Lemma \ref{s_K}, we obtain that there exists a subsequence of $\{F_{l,t}\}_{t\rightarrow+\infty}$ (also denoted by $\{F_{l,t}\}_{t\rightarrow+\infty}$) compactly convergent to a holomorphic $(n,0)$ form on $M$ denoted by $F_l$. Then it follows from inequality (\ref{L2ext-3}) and Fatou's Lemma that
		\begin{flalign}
			\begin{split}
				&\int_M|F_l|^2e^{\pi_1^*(-2\log |g_0|-2u_l+\psi_1)-\pi^*_2(\varphi_2)}c(-\psi)\\
				=&\int_M\liminf_{t\rightarrow+\infty}|F_l-(1-b_{t,1}(\psi))F|^2e^{\pi_1^*(-2\log |g_0|-2u_l+\psi_1)-\pi^*_2(\varphi_2)}c(-\psi)\\
				\leq&\liminf_{t\rightarrow+\infty}\int_M|F_l-(1-b_{t,1}(\psi))F|^2e^{\pi_1^*(-2\log |g_0|-2u_l+\psi_1)-\pi^*_2(\varphi_2)}c(-\psi)\\
				\leq&\left(\int_0^{+\infty}c(s)e^{-s}ds\right)\sum_{1\leq j\leq m_1}2\pi\frac{|a_j|^2e^{-2u_l(z_j)}}{p_j|d_j|^2}\int_Y|\tilde{F}_j|^2e^{-\varphi_2}<+\infty.
			\end{split}
		\end{flalign}
		Since $u_l$ decreasingly converges to $u_0$, which implies that $\lim\limits_{l\rightarrow+\infty}u_l(z_j)=u_0(z_j)>-\infty$ for $1\leq j\leq m_1$, then we have
		\begin{flalign}
			\begin{split}
				&\limsup_{l\rightarrow+\infty}\int_M|F_l|^2e^{\pi_1^*(-2\log |g_0|-2u_l+\psi_1)-\pi^*_2(\varphi_2)}c(-\psi)\\
				\leq&\left(\int_0^{+\infty}c(s)e^{-s}ds\right)\sum_{1\leq j\leq m_1}2\pi\frac{|a_j|^2e^{-2u_0(z_j)}}{p_j|d_j|^2}\int_Y|\tilde{F}_j|^2e^{-\varphi_2}<+\infty.
			\end{split}
		\end{flalign}
		According to Lemma \ref{s_K}, we obtain that there exists a subsequence of $\{F_l\}$ (also denoted by $\{F_l\}$) compactly convergent to a holomorphic $(n,0)$ form on $M$ denoted by $\tilde{F}$, which satisfies that $(\tilde{F}-F,(z_j,y))\in(\mathcal{O}(K_M)\otimes\mathcal{I}(\varphi+\psi))_{(z_j,y)}$ for any $j\in\{1,2,\ldots,m\}$, $y\in Y$, and
		\begin{flalign}
			\begin{split}
				&\int_M|\tilde{F}|^2e^{-\varphi}c(-\psi)\\
				=&\int_M|\tilde{F}|^2e^{\pi_1^*(-2\log |g_0|-2u_0+\psi_1)-\pi^*_2(\varphi_2)}c(-\psi)\\
				\leq&\left(\int_0^{+\infty}c(s)e^{-s}ds\right)\sum_{1\leq j\leq m_1}2\pi\frac{|a_j|^2e^{-2u_0(z_j)}}{p_j|d_j|^2}\int_Y|\tilde{F}_j|^2e^{-\varphi_2}.
			\end{split}
		\end{flalign}
		
	Finally combining with Remark \ref{rem-L2ext}, the proof of Lemma \ref{L2ext-finite-f} is completed. 	
	\end{proof}
	
	\begin{lemma}\label{fanxiangineq}
		Assume that $(\psi_1-2p_jG_{\Omega}(\cdot,z_j))(z_j)>-\infty$ for any $j$ ($1\leq j<\gamma$) and $c$ is increasing near $+\infty$. Let $F$ be a holomorphic $(n,0)$ form on $M$ such that $F=\sum_{l=\tilde{k}_j}\pi_1^*(w_j^ldw_j)\wedge F_{j,l}$ on $V_{z_j}\times Y$ for any $j$ ($1\leq j<\gamma$) according to Lemma \ref{decomp}, where $\tilde{k}_j$ is a nonnegative integer, $F_{j,l}$ is a holomorphic $(n-1,0)$ form on $V_{z_j}\times Y$ for any $j,l$, and $\tilde{F}_j:=F_{j,\tilde{k}_j}\not\equiv 0$ on $Y$. Denote that
		\begin{equation*}
			I_F:=\{j:1\leq j<\gamma \ \& \ \tilde{k}_j+1-k_j\leq 0\}.
		\end{equation*}
		Assume that
		\begin{equation*}
			\liminf_{t\rightarrow+\infty}\frac{\int_{\{\psi<-t\}}|F|^2e^{-\varphi}c(-\psi)}{\int_t^{+\infty}c(s)e^{-s}ds}<+\infty.
		\end{equation*}
		 Then $\tilde{k}_j+1-k_j=0$ for any $j\in I_F$, and
		\begin{equation}
			\liminf_{t\rightarrow+\infty}\frac{\int_{\{\psi<-t\}}|F|^2e^{-\varphi}c(-\psi)}{\int_t^{+\infty}c(s)e^{-s}ds}\geq\sum_{j\in I_F}\frac{2\pi e^{-2u_0(z_j)}}{p_j|d_j|^2}\int_Y|\tilde{F}_j|^2e^{-\varphi_2}.
		\end{equation}
	\end{lemma}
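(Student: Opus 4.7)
The plan is to localize the analysis near each $z_j$ with $j\in I_F$ and to compute the leading contribution via polar coordinates. First I apply Lemma \ref{decomp} to $F$ on each $V_{z_j}\times Y$ to get $F=\sum_{l\ge \tilde k_j}\pi_1^*(w_j^l dw_j)\wedge\pi_2^*(\tilde F_{j,l})$, with $\tilde F_{j,l}$ holomorphic $(n-1,0)$ forms on $Y$ and $\tilde F_{j,\tilde k_j}=\tilde F_j$. For $t$ large the (pairwise disjoint) sets $\{\psi_1<-t\}\cap V_{z_j}$ are contained in arbitrarily small disks around $z_j$ in the coordinate $w_j$, so dropping the contribution away from $\bigsqcup_{j\in I_F}V_{z_j}\times Y$ (which is nonnegative) gives
\begin{equation*}
\int_{\{\psi<-t\}}|F|^2 e^{-\varphi}c(-\psi)\ge\sum_{j\in I_F}\int_{(V_{z_j}\cap\{\psi_1<-t\})\times Y}|F|^2 e^{-\varphi}c(-\psi).
\end{equation*}
After dividing by $\int_t^{+\infty}c(s)e^{-s}ds$ and applying Fatou's lemma to the nonnegative series, the problem reduces to bounding the liminf of each single-$j$ term from below.

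For fixed $j\in I_F$, the hypothesis $(\psi_1-2p_jG_\Omega(\cdot,z_j))(z_j)>-\infty$ yields a decomposition $\psi_1=2p_j\log|w_j|+g_j$ near $z_j$, where $g_j$ is subharmonic with $g_j(z_j)=:\alpha_j>-\infty$. Writing $\varphi_1+\psi_1=2\log|g_0|+2u_0$ with $g_0=d_j w_j^{k_j}h_j$ and $h_j(z_j)=1$, and using $e^{-\varphi_1}=|g_0|^{-2}e^{-2u_0}e^{\psi_1}$, the integrand becomes $|F|^2 |d_j|^{-2}|w_j|^{-2k_j}|h_j|^{-2}e^{-2u_0}e^{\psi_1}c(-\psi_1)e^{-\varphi_2}$. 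Set $\beta_{j,l}:=l+1-k_j$ so that $\beta_{j,\tilde k_j}=\beta_j$. Using upper semicontinuity of $g_j$ and $u_0$ at $z_j$, combined with Lemma \ref{l:FN1} to get smooth subharmonic decreasing approximations and sub-mean-value arguments on concentric annuli, I replace the weight by its radial model at cost $(1+O(\epsilon))$ in the exponent. On the radial model the angular identity $\int_0^{2\pi}e^{i(l-l')\theta}d\theta=2\pi\delta_{l,l'}$ decouples the distinct powers of $w_j$ in $|F|^2$, and substituting $w_j=re^{i\theta}$ then $s=-2p_j\log r-\alpha_j$, the contribution of the $l$-th Taylor term becomes, asymptotically as $t\to+\infty$,
\begin{equation*}
\frac{2\pi e^{-2u_0(z_j)-\beta_{j,l}\alpha_j/p_j}}{p_j|d_j|^2}\int_Y|\tilde F_{j,l}|^2 e^{-\varphi_2}\cdot\int_t^{+\infty}c(s)e^{-(1+\beta_{j,l}/p_j)s}ds,
\end{equation*}
up to factors $1+O(\epsilon)$ that I let tend to $1$ by sending $\epsilon\to 0$ after the $t$-limit.

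The final step compares these contributions with $\int_t^{+\infty}c(s)e^{-s}ds$ via Lemma \ref{c(t)e^{-at}}. For the leading term $l=\tilde k_j$ with $\beta_j<0$, part (3) of that lemma makes the quotient $+\infty$; since $\tilde F_j\not\equiv 0$ and $\varphi_2$ is plurisubharmonic force $\int_Y|\tilde F_j|^2 e^{-\varphi_2}>0$, this would push the single-$j$ liminf to $+\infty$, contradicting the finiteness hypothesis. Hence $\beta_j=0$ for every $j\in I_F$, and the leading term contributes exactly $\frac{2\pi e^{-2u_0(z_j)}}{p_j|d_j|^2}\int_Y|\tilde F_j|^2 e^{-\varphi_2}$. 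Higher-order terms with $l>\tilde k_j$ have $\beta_{j,l}>0$ and, by Lemma \ref{c(t)e^{-at}}(2), contribute $0$. Any cross terms surviving on the non-radial part of the weight are dominated by Cauchy--Schwarz using the same $l$-by-$l$ estimates and vanish as $\epsilon\to 0$. Summing the single-$j$ contributions via Fatou yields the asserted inequality. The main obstacle is the passage from the genuine subharmonic weight to its radial model: since $g_j$ and $u_0$ are only upper semicontinuous and may be $-\infty$ on sets of positive measure, controlling the "bad" angular sector requires exploiting the sub-mean-value inequality together with the monotone approximations, which is the only delicate point in an otherwise computational argument.
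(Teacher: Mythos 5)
Your outline correctly identifies the localisation near each $z_j$, the use of Lemma \ref{l:FN1} for smooth decreasing subharmonic approximants, and the final comparison via Lemma \ref{c(t)e^{-at}}; these match the paper. But the central step -- obtaining a lower bound on the angular integral near $z_j$ -- is not carried out, and the route you sketch for it would not work.

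You propose to ``replace the weight by its radial model at cost $(1+O(\epsilon))$ in the exponent'' and then decouple the Taylor coefficients of $F$ in $w_j$ by the orthogonality $\int_0^{2\pi}e^{i(l-l')\theta}\,d\theta=2\pi\delta_{l,l'}$. For the pieces $e^{-2u_l}$ and $c(-\psi_1)$ (with $c$ increasing) the $(1+O(\epsilon))$ replacement is fine, because after smoothing you may bound those from one side by their values at $z_j$ plus $\varepsilon$. But the factor $e^{\psi_0}$ (your $e^{g_j}$), where $\psi_0$ is the global subharmonic residual with $\psi_0(z_j)>-\infty$, cannot be replaced by anything comparable to $e^{\psi_0(z_j)}$ from below: a subharmonic function is only upper semicontinuous, it can be arbitrarily negative on a set of positive measure in any neighbourhood of $z_j$ (it is $-\infty$ only on a polar set, but ``very negative'' on a positive-measure set is enough to ruin the $(1+O(\epsilon))$ comparison). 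You explicitly flag this as ``the only delicate point,'' and you gesture at sub-mean-value to fix it, but you neither explain how that fix interacts with the Fourier decomposition nor notice that the two devices work in opposite directions: once you make the weight radial you can decouple by Fourier orthogonality, but sub-mean-value gives no radial replacement -- it gives a lower bound on the angular average of a whole subharmonic expression in terms of its value at the centre.

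The paper's proof avoids the radial reduction entirely. The key manoeuvre you are missing is to write, on a small disk $U_j$ around $z_j$, $e^{G/p_j}=|\tilde g_j|^2$ with $\tilde g_j$ holomorphic and $\operatorname{ord}_{z_j}\tilde g_j=1$ (possible since $G=2\sum p_k G_\Omega(\cdot,z_k)$ has a harmonic conjugate away from the poles and a simple logarithmic pole at $z_j$). After also writing $g_0=d_jw_j^{k_j}h_j$ and $\tilde g_j=\hat d_j w_j\hat h_j$ with $h_j(z_j)=\hat h_j(z_j)=1$, the angular integrand becomes
\[
\Bigl|\tfrac{\tilde h_j(re^{i\theta},w')}{h_j(re^{i\theta})}\Bigr|^2\,|\hat h_j(re^{i\theta})|^{2p_j}\,e^{\psi_0(re^{i\theta})},
\]
whose logarithm is a sum of (sub)harmonic terms with nonnegative weights, hence subharmonic in $w_j$. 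The sub-mean-value inequality applied to this single expression then yields the lower bound $2\pi\,|\tilde h_j(z_j,w')|^2 e^{\psi_0(z_j)}$ directly. Nothing is decoupled term by term; only the leading coefficient survives, as desired, and there are no cross terms to control. The Fourier-orthogonality detour is both unnecessary and incompatible with a lower bound for the genuine (non-radial) weight. This is a real gap: without the $\tilde g_j$ trick (or an equivalent), the delicate step you name is not resolved.
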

	
	\begin{proof}
		According to the Siu's Decomposition Theorem and Lemma \ref{l:green-sup}, we can assume that
		\begin{equation}
			\psi_1=2\sum_{1\leq j<\gamma}p_jG_{\Omega}(\cdot,z_j)+\psi_0,
		\end{equation}
		where $\psi_0$ is a negative subharmonic function on $\Omega$ such that $\psi_0(z_j)>-\infty$ for any $j$, $1\leq j<\gamma$.
		
		It follows from $\Omega$ is a Stein manifold and Lemma \ref{l:FN1} that there exist smooth subharmonic functions $u_l$ and $\Psi_l$ on $\Omega$ such that $u_l$ are decreasingly convergent to $u$, and $\Psi_l$ are decreasingly convergent to $\psi_0$.
		
		Denote that
		\begin{equation*}
			G:=2\sum_{1\leq j<\gamma}p_jG_{\Omega}(\cdot,z_j).
		\end{equation*}
		Then there exists some $t_1\geq 0$ such that $c$ is increasing on $(t_1,+\infty)$ and for any $t>t_1$,
		\begin{flalign}\label{fanxiang-1}
			\begin{split}
				&\int_{\{\psi<-t\}}|F|^2e^{-\varphi}c(-\psi)\\
				=&\int_{\{\pi_1^*(G+\psi_0)<-t\}}|F|^2e^{\pi_1^*(-2\log|g_0|-2u_0+G+\psi_0)-\pi_2^*(\varphi_2)}c(\pi_1^*(-G-\psi_0))\\
				\geq&\int_{\{\pi_1^*(G+\Psi_l)<-t\}}|F|^2e^{\pi_1^*(-2\log|g_0|-2u_l+G+\psi_0)-\pi_2^*(\varphi_2)}c(\pi_1^*(-G-\Psi_l)).
			\end{split}
		\end{flalign}
		
		For any $\varepsilon>0$, $m\in\mathbb{N}$, there exists $s_0>0$ such that
		
		(1). for any $j\in\{1,2,\ldots,m\}$,
		\begin{equation*}
			U_j:=\{|w_j(z)|<s_0:z\in V_{z_j}\}\subset\subset V_{z_j};
		\end{equation*}
		
		(2). for any $j\in\{1,2,\ldots,m\}$, there exists a holomorphic function $\tilde{g}_j$ on $U_j$ such that $|\tilde{g}_j|^2=e^{\frac{G}{p_j}}$;
		
		(3). for any $j\in\{1,2,\ldots,m\}$,
		\begin{equation*}
			\sup_{z\in U_j} 2|u_l(z)-u_l(z_j)|<\varepsilon;
		\end{equation*}
		
		(4). for any $j\in\{1,2,\ldots,m\}$,
		\begin{equation*}
			\sup_{z\in U_j} |H_j(z)-H_j(z_j)|<\varepsilon,
		\end{equation*}
		where $H_j:=G-2p_j\log|w_j|+\Psi_l+\varepsilon$ is smooth function on $U_j$.
		
		Note that $G+\Psi_l\leq 2p_j\log|w_j|+H_j(z_j)$ on $U_j$, then it follows from Lemma \ref{l:G-compact} that there exists $t'>t_1$ such that
		\begin{equation*}
			(\{G+\Psi_l<-t'\}\cap U_j)\subset\subset U_j.
		\end{equation*}
		Then for $t>t'$, following from inequality (\ref{fanxiang-1}) we get
		\begin{flalign}\label{fanxiang-2}
			\begin{split}
				&\int_{\{\psi<-t\}}|F|e^{-\varphi}c(-\psi)\\
				\geq&\int_{\{\pi_1^*(G+\Psi_l)<-t\}}|F|^2e^{\pi_1^*(-2\log|g_0|-2u_l+G+\psi_0)-\pi_2^*(\varphi_2)}c(\pi_1^*(-G-\Psi_l))\\
				\geq&\sum_{1\leq j\leq m}\int_{(\{G+\Psi_l<-t\}\cap U_j)\times Y}|F|^2e^{\pi_1^*(-2\log|g_0|-2u_l+G+\psi_0)-\pi_2^*(\varphi_2)}c(\pi_1^*(-G-\Psi_l))\\
				\geq&\sum_{1\leq j\leq m}\int_{(\{2p_j\log |w_j|+H_j(z_j)<-t\}\cap U_j)\times Y}|F|^2e^{\pi_1^*(-2\log|g_0|-2u_l+G+\psi_0)-\pi_2^*(\varphi_2)}c(\pi_1^*(-G-\Psi_l))\\
				\geq&\sum_{1\leq j\leq m}\int_{(\{2p_j\log |w_j|+H_j(z_j)<-t\}\cap U_j)\times Y}|F|^2|\pi_1^*(\tilde{g}_j)|^{2p_j}e^{\pi_1^*(-2\log|g_0|-2u_l(z_j)-\varepsilon+\psi_0)-\pi_2^*(\varphi_2)}\\
				&\cdot c(\pi_1^*(-2p_j\log|w_j|-H_j(z_j))).
			\end{split}
		\end{flalign}
		
		For any $y\in Y$, we can choose some neighborhood $U_y\subset\subset Y$ such that $F=w_j^{\tilde{k}_j}\tilde{h}_j(w_j,w')dw_j\wedge dw'$ on $V_j\times U_y$, where $w'=(w'_1,\ldots,w'_{n-1})$ is a local coordinate on $U_y$ such that $w'_1(y)=\cdots=w'_{n-1}(y)=0$, and $\tilde{h}_j$ is a holomorphic function on $V_j\times U_y$. Note that $ord_{z_j}\tilde{g}_j=1$ for any $j$ ($1\leq j\leq m$), then we can assume that $\tilde{g}_j=\hat{d}_jw_j\hat{h}_j$ on $U_j$, where $\hat{d}_j\neq 0$ are constants, $\hat{h}_j$ are holomorphic functions on $U_j$ such that $\hat{h}_j(z_j)=1$ for $j\in \{1,2,\ldots,m\}$.
		
		Then it follows from inequality (\ref{fanxiang-2}) that
		\begin{flalign}
			\begin{split}
				&\int_{\{\psi_1<-t\}\times U_y}|F|^2e^{-\varphi}c(-\psi)\\
				\geq&\sum_{1\leq j\leq m}\int_{\{w_j\in\{2p_j\log |w_j|+H_j(z_j)<-t\}\cap U_j,w'\in U_y\}}|\tilde{h}_j(w_j,w')|^2|\tilde{g}_j(w_j)|^{2p_j}|w_j|^{2\tilde{k}_j}\\
				&\cdot e^{-2\log|g_0(w_j)|-2u_l(z_j)-\varepsilon+\psi_0(w_j))-\varphi_2(w')}c(-2p_j\log|w_j|-H_j(z_j))|dw_j\wedge dw'|^2\\
				\geq&\sum_{1\leq j\leq m}\frac{2|\hat{d}_j|^{2p_j}e^{-2u_l(z_j)-\varepsilon}}{|d_j|^2}\int_{w'\in U_y}\bigg(\int_0^{e^{-\frac{t+H_j(z_j)}{2p_j}}}\int_0^{2\pi}|r|^{2(\tilde{k}_j+p_j-k_j)+1}\\
				&\cdot\left|\frac{\tilde{h}_j(re^{i\theta},w')}{h_j(re^{i\theta})}\right|^2|\hat{h}_j(re^{i\theta})|^{2p_j}e^{\psi_0(re^{i\theta})}c(-2p_j\log r-H_j(z_j))d\theta dr\bigg)e^{-\varphi_2(w')}|dw'|^2\\
				\geq&\sum_{1\leq j\leq m}\frac{2\pi|\hat{d}_j|^{2p_j}}{p_j|d_j|^2}e^{-2u_l(z_j)-\varepsilon+\psi_0(z_j)-\frac{\tilde{k}_j-k_j+1+p_j}{p_j}H_j(z_j)}\\
				&\cdot\int_{w'\in U_y}\left(|\tilde{h}_j(z_j,w')|^2\int_t^{+\infty}c(s)e^{-\left(\frac{\tilde{k}_j+1-k_j}{p_j}+1\right)s}ds\right)e^{-\varphi_2(w')}|dw'|^2.
			\end{split}
		\end{flalign}
	where $k_j=ord_{z_j}(g)$, and $d_j=\lim_{z\rightarrow z_j}(g/w_j^{k_j})(z)$.
	
		Denote that
		\begin{equation*}
			I_m:=\{j\in\{1,2,\ldots,m\}:\tilde{k}_j+1-k_j\leq 0\}.
		\end{equation*}
		As
		 \[\liminf_{t\rightarrow+\infty}\frac{\int_{\{\psi<-t\}}|F|^2e^{-\varphi}c(-\psi)}{\int_t^{+\infty}c(s)e^{-s}ds}<+\infty,\]
		 $\int_0^{+\infty}c(s)e^{-s}ds<+\infty$, and $\tilde{h}_j(z_j,w')\not\equiv 0$ on $U_y$, we have
		\begin{equation}\label{k_j}
			\tilde{k}_j+1-k_j=0, \ \forall j\in I_m
		\end{equation}
		according to Lemma \ref{c(t)e^{-at}}. And we have
		\begin{flalign}\label{fanxiang-3}
			\begin{split}
				&\liminf_{t\rightarrow+\infty}\frac{\int_{\{\psi_1<-t\}\times U_y}|F|^2e^{-\varphi}c(-\psi)}{\int_t^{+\infty}c(s)e^{-s}ds}\\
				\geq&\sum_{j\in I_m}\frac{2\pi|\hat{d}_j|^{2p_j}}{p_j|d_j|^2}e^{-2u_l(z_j)-\varepsilon+\psi_0(z_j)-\frac{\tilde{k}_j-k_j+p_j+1}{p_j}H_j(z_j)}\\
				&\cdot\int_{w'\in U_y}\left(|\tilde{h}_j(z_j,w')|^2\cdot\lim_{t\rightarrow+\infty}\frac{\int_t^{+\infty}c(s)e^{-\left(\frac{\tilde{k}_j+1-k_j}{p_j}+1\right)s}ds}{\int_t^{+\infty}c(s)e^{-s}ds}\right)e^{-\varphi_2(w')}|dw'|^2\\
				=&\sum_{j\in I_m}\frac{2\pi|\hat{d}_j|^{2p_j}}{p_j|d_j|^2}e^{-2u_l(z_j)-\varepsilon+\psi_0(z_j)-H_j(z_j)}\int_{w'\in U_y}|\tilde{h}_j(z_j,w')|^2e^{-\varphi_2(w')}|dw'|^2.
			\end{split}
		\end{flalign}
		Note that
		\begin{flalign*}
			\begin{split}
				H_j(z_j)=&\Psi_l(z_j)+\varepsilon+\lim_{z\rightarrow z_j}(G-2p_j\log|w_j|)\\
				=&\Psi_l(z_j)+\varepsilon+\log\left(\lim_{z\rightarrow z_j}\frac{|\tilde{g}|^{2p_j}}{|w_j|^{2p_j}}\right)\\
				=&\Psi_l(z_j)+\varepsilon+2p_j\log|\hat{d}_j|.
			\end{split}
		\end{flalign*}
		Letting $\varepsilon\rightarrow 0+$, from inequality (\ref{fanxiang-3}) we get
		\begin{flalign}\label{fanxiang-4}
			\begin{split}
				\liminf_{t\rightarrow+\infty}\frac{\int_{\{\psi_1<-t\}\times U_y}|F|^2e^{-\varphi}c(-\psi)}{\int_t^{+\infty}c(s)e^{-s}ds}\geq&\sum_{j\in I_m}\frac{2\pi}{p_j|d_j|^2}e^{-2u_l(z_j)+\psi_0(z_j)-\Psi_l(z_j)}\\
				&\cdot\int_{w'\in U_y}|\tilde{h}_j(z_j,w')|^2e^{-\varphi_2(w')}|dw'|^2.
			\end{split}
		\end{flalign}
		Letting $l\rightarrow +\infty$, from inequality (\ref{fanxiang-4}) we get
		\begin{flalign}\label{fanxiang-5}
			\begin{split}
				\liminf_{t\rightarrow+\infty}\frac{\int_{\{\psi_1<-t\}\times U_y}|F|^2e^{-\varphi}c(-\psi)}{\int_t^{+\infty}c(s)e^{-s}ds}\geq&\sum_{j\in I_m}\frac{2\pi}{p_j|d_j|^2}e^{-2u_0(z_j)}\\
				&\cdot\int_{w'\in U_y}|\tilde{h}_j(z_j,w')|^2e^{-\varphi_2(w')}|dw'|^2.
			\end{split}
		\end{flalign}
		Letting $m\rightarrow +\infty$, from inequality (\ref{fanxiang-5}) we get
		\begin{flalign}\label{fanxiang-6}
			\begin{split}
				\liminf_{t\rightarrow+\infty}\frac{\int_{\{\psi_1<-t\}\times U_y}|F|^2e^{-\varphi}c(-\psi)}{\int_t^{+\infty}c(s)e^{-s}ds}\geq&\sum_{j\in I_F}\frac{2\pi}{p_j|d_j|^2}e^{-2u_0(z_j)}\\
				&\cdot\int_{w'\in U_y}|\tilde{h}_j(z_j,w')|^2e^{-\varphi_2(w')}|dw'|^2.
			\end{split}
		\end{flalign}
	 And we also have $\tilde{k}_j+1-k_j=0$ for any $j\in I_F$ from equality (\ref{k_j}). According to the arbitrariness of $y$ and $U_y$, from the above calculations in the local cases we get
		\begin{equation*}
			\liminf_{t\rightarrow+\infty}\frac{\int_{\{\psi<-t\}}|F|^2e^{-\varphi}c(-\psi)}{\int_t^{+\infty}c(s)e^{-s}ds}\geq\sum_{j\in I_F}\frac{2\pi e^{-2u_0(z_j)}}{p_j|d_j|^2}\int_Y|\tilde{F}_j|^2e^{-\varphi_2}.
		\end{equation*}
	 Especially, we have $\int_Y|\tilde{F}_j|^2e^{-\varphi_2}<+\infty$ and $u_0(z_j)>-\infty$ for any $j\in I_F$.
	\end{proof}

	\section{Proof of Theorem \ref{one-p} and Remark \ref{rem-p}}
	In this section, we give the proof of Theorem \ref{one-p} and Remark \ref{rem-p}.
	
	\subsection{Proof of the sufficiency in Theorem \ref{one-p} and Remark \ref{rem-p}}
	\
	we give the proof of the sufficiency in Theorem \ref{one-p} and the proof of Remark \ref{rem-p}.
	
	\begin{proof}[Proof of the sufficiency in Theorem \ref{one-p} and Remark \ref{rem-p}]
		It follows from the statement (2) in Theorem \ref{one-p} that we can assume that $f=\pi_1^*(w^kdw)\wedge \pi_2^*(f_{Y})$ on $V_{z_0}\times Y$. Denote that
		\begin{equation*}
			F:=c_0\pi_1^*(gP_*(f_udf_{z_0}))\wedge\pi_2^*(f_{Y}),
		\end{equation*}
		where $f_u$ is a holomorphic function on $\Delta$ such that $|f_u|=P^*(e^u)$, $f_{z_0}$ is a holomorphic function on $\Delta$ such that $|f_{z_0}|=P^*(e^{G_{\Omega}(\cdot,z_0)})$, and
		\begin{equation*}
			c_0:=\lim_{z\rightarrow z_0}\frac{w^kdw}{gP_*(f_udf_{z_0})}\in\mathbb{C}\setminus\{0\}.
		\end{equation*}
		Then Lemma \ref{f1zf2w} implies that $(F-f,(z_0,y))\in (\mathcal{O}(K_M))_{(z_0,y)}\otimes\mathcal{I}(\varphi+\psi)_{(z_0,y)}$ for any $(z_0,y)\in Z_0$.
		
		For any $y\in Y$, let $w'=(w_1',\ldots,w_{n-1}')$ be a local coordinate on a neighborhood $U_y$ of $y$ satisfying $w'_j(y)=0$ for any $j\in \{1,\ldots,n-1\}$. Assume that $f_{Y}=h(w') dw'$ on $U_y$, where $h$ is a holomorphic function on $U_y$, $dw'=dw_1'\wedge\cdots\wedge dw_{n-1}'$. Then we have
		\begin{equation}
			f=w^kh(w')dw\wedge dw'	
		\end{equation}
		on $V_{z_0}\times U_y$, and	
		\begin{equation}
			F=c_0\pi_1^*(gP_*(f_udf_{z_0}))\wedge h(w')dw'
		\end{equation}
		on $\Omega\times U_y$.
		
		For any $w'\in U_y$, $t\geq 0$, denote that
		\begin{flalign*}
			\begin{split}
				G_{w'}(t):=\inf\bigg\{\int_{\{\psi_1<-t\}}|\tilde{f}|^2e^{-\varphi_1}c(-\psi_1) : (\tilde{f}-h(w')w^kdw,z_0)\in (\mathcal{O}(K_{\Omega}))_{z_0}\otimes\mathcal{I}(\varphi_1+\psi_1)_{z_0}&\\
				\& \ \tilde{f}\in H^0(\{\psi_1<-t\},\mathcal{O}(K_{\Omega}))\bigg\}.&
			\end{split}
		\end{flalign*}
		Denote that
		\begin{equation*}
			F_{w'}:=c_0h(w')gP_*(f_udf_{z_0})
		\end{equation*}
		on $\Omega$. Then it follows from Theorem \ref{thm:m-points} and Remark \ref{rem-finite-1d} that $G_{w'}(h^{-1}(r))$ is linear and
		\begin{flalign}\label{Gw't}
			\begin{split}
				G_{w'}(t)&=\int_{\{\psi_1<-t\}}|F_{w'}|^2e^{-\varphi_1}c(-\psi_1)\\
				&=\left(\int_t^{+\infty}c(s)e^{-s}ds\right)\frac{2\pi e^{-2u(z_0)}}{p|d|^2}|h(w')|^2
			\end{split}
		\end{flalign}
		for any $t\geq 0$, $w'\in U_y$, where
		\begin{equation*}
			d:=\lim_{z\rightarrow z_0}\frac{g}{w^k}(z).
		\end{equation*}
		Then following from the Fubini's Theorem, we have
		\begin{flalign*}
			\begin{split}
				\int_{\{\psi_1<-t\}\times U_y}|F|^2e^{-\varphi}c(-\psi)&=\left(\int_t^{+\infty}c(s)e^{-s}ds\right)\frac{2\pi e^{-2u(z_0)}}{p|d|^2}\int_{U_y}|h(w')|^2e^{-\varphi_2(w')}|dw'|^2\\
				&=\left(\int_t^{+\infty}c(s)e^{-s}ds\right)\frac{2\pi e^{-2u(z_0)}}{p|d|^2}\int_{U_y}|f_Y|^2e^{-\varphi_2}.
			\end{split}
		\end{flalign*}
		According to the arbitrariness of $y$ and $U_y$, for general $Y$ and any $t\geq 0$ we have
		\begin{equation}
			\int_{\{\psi<-t\}}|F|^2e^{-\varphi}c(-\psi)=\left(\int_t^{+\infty}c(s)e^{-s}ds\right)\frac{2\pi e^{-2u(z_0)}}{p|d|^2}\int_Y|f_Y|^2e^{-\varphi_2}.
		\end{equation}
		
		Assume that $t\geq 0$. According to Lemma \ref{F_t}, let $\tilde{F}$ be the holomorphic $(n,0)$ form on $\{\psi<-t\}$ such that $(\tilde{F}-f,(z_0,y))\in (\mathcal{O}(K_M))_{(z_0,y)}\otimes\mathcal{I}(\varphi+\psi)_{(z_0,y)}$ for any $(z_0,y)\in Z_0$ and
		\begin{equation*}
			\int_{\{\psi<-t\}}|\tilde{F}|^2e^{-\varphi}c(-\psi)=G(t;c).
		\end{equation*}
		Following from Lemma \ref{decomp}, we assume that
		\begin{equation*}
			\tilde{F}=\sum_{j=\tilde{k}}^{\infty}\pi_1^*(w^jdw)\wedge\pi_2^*(\tilde{F}_j)
		\end{equation*}
		on $V_{z_0}\times Y$, where $\tilde{k}\in\mathbb{N}$, $\tilde{F}_j$ is a holomorphic $(n-1,0)$ form on $Y$ for any $j\geq \tilde{k}$, and $\tilde{F}_{\tilde{k}}\not\equiv 0$. Now we prove that $\tilde{k}=k$ and $\tilde{F}_{\tilde{k}}=f_Y$.
		
		For any $y\in Y$, let $w'=(w_1',\ldots,w_{n-1}')$ be a local coordinate on a neighborhood $U_y$ of $y$ satisfying $w'_j(y)=0$ for any $j\in \{1,\ldots,n-1\}$. Assume that $f_{Y}=h(w') dw'$ on $U_y$, $\tilde{F}=w^{\tilde{k}}\tilde{h}(w,w')dw\wedge dw'$ on $V_{z_0}\times U_y$, where $h$ is a holomorphic function on $U_y$, $\tilde{h}$ is a holomorphic function on $V_{z_0}\times U_y$. Then we have
		\begin{equation*}
			f=w^kh(w')dw\wedge dw', \ \tilde{F}=w^{\tilde{k}}\tilde{h}(w,w')dw\wedge dw'
		\end{equation*}
		on $V_{z_0}\times U_y$, and $\tilde{F}_{\tilde{k}}=\tilde{h}(z_j,w')dw'$ on $U_y$. According to Lemma \ref{local-germ}, we have $w^{\tilde{k}}\tilde{h}(w,w')-w^kh(w')\in\mathcal{I}(\varphi_1+\psi_1)_{z_0}$, which implies that $\tilde{k}=k$ and $\tilde{h}(z_0,w')=h(w')$ if $h(w')\neq 0$. Since $\int_Y|f_Y|e^{-\varphi_2}\in (0,+\infty)$ and $h$ is holomorphic, we have $\tilde{k}=k$ and $\tilde{h}(z_0,w')=h(w')$ for any $w'\in U_y$. According to the arbitrariness of $y$ and $U_y$, we get that $\tilde{k}=k$ and $\tilde{F}_{\tilde{k}}=f_Y$.
		
		Besides, following from Lemma \ref{decomp}, we can also assume that
		\begin{equation*}
			\tilde{F}=\sum_{\alpha'\in\mathbb{N}^{n-1}}\pi_1^*(\tilde{F}_{\alpha'})\wedge\pi_2^*({w'}^{\alpha'}dw')
		\end{equation*}
		on $\{\psi_1<-t\}\times U_y$, where $\tilde{F}_{\alpha'}$ is a holomorphic $(1,0)$ form on $\{\psi_1<-t\}$ for any $\alpha'\in\mathbb{N}^{n-1}$. According to the discussions above, we can assume that
		\begin{equation*}
			\tilde{F}_{\alpha'}=w^k\tilde{f}_{\alpha'}dw
		\end{equation*}
		on $V_{z_0}\cap\{\psi_1<-t\}$, where $\tilde{f}_{\alpha'}$ is a holomorphic function on $V_{z_0}\cap\{\psi_1<-t\}$. And we also have
		\begin{equation*}
			\sum_{\alpha'\in\mathbb{N}^{n-1}}\tilde{f}_{\alpha'}(z_0){w'}^{\alpha'}=\tilde{h}(z_0,w')=h(w'),
		\end{equation*}
		which implies that
		\begin{equation*}
			\sum_{\alpha'\in\mathbb{N}^{n-1}}\tilde{f}_{\alpha'}(z_0){w'}^{\alpha'}dw'=f_Y
		\end{equation*}
		on $U_y$. Then by the definition of $G_{w'}(t)$, it follows from equality (\ref{Gw't}) that
		\begin{equation*}
			\int_{\{\psi_1<-t\}}\left|\sum_{\alpha'\in\mathbb{N}^{n-1}}\tilde{F}_{\alpha'}{w'}^{\alpha'}\right|^2e^{-\varphi_1}c(-\psi_1)\geq \left(\int_t^{+\infty}c(s)e^{-s}ds\right)\frac{2\pi e^{-2u(z_0)}}{p|d|^2}|h(w')|^2.
		\end{equation*}
		Then we have
		\begin{flalign*}
			\begin{split}
				&\int_{\{\psi_1<-t\}\times U_y}|\tilde{F}|^2e^{-\varphi}c(-\psi)\\
				=&\int_{\{\psi_1<-t\}\times U_y}\left|\sum_{\alpha'\in\mathbb{N}^{n-1}}\pi_1^*(\tilde{F}_{\alpha'})\wedge\pi_2^*({w'}^{\alpha'}dw')\right|^2e^{-\varphi}c(-\psi)\\
				=&\int_{w'\in U_y}\left(\int_{\{\psi_1<-t\}}\left|\sum_{\alpha'\in\mathbb{N}^{n-1}}\tilde{F}_{\alpha'}{w'}^{\alpha'}\right|^2e^{-\varphi_1}c(-\psi_1)\right)e^{-\varphi_2(w')}|dw'|^2\\
				\geq&\int_{w'\in U_y}\left(\int_t^{+\infty}c(s)e^{-s}ds\right)\frac{2\pi e^{-2u(z_0)}}{p|d|^2}|h(w')|^2e^{-\varphi_2(w')}|dw'|^2\\
				=&\int_{\{\psi_1<-t\}\times U_y}|F|^2e^{-\varphi}c(-\psi).
			\end{split}
		\end{flalign*}
		According to the arbitrariness of $y$ and $U_y$, we get that
		\begin{equation*}
			\int_{\{\psi<-t\}}|\tilde{F}|^2e^{-\varphi}c(-\psi)\geq\int_{\{\psi<-t\}}|F|^2e^{-\varphi}c(-\psi),
		\end{equation*}
		which implies that $F\equiv\tilde{F}$ on $\{\psi<-t\}$ by the choice of $\tilde{F}$.
		
		Now we get that
		\begin{equation}
			G(t;c)=\int_{\{\psi<-t\}}|F|^2e^{-\varphi}c(-\psi)=\left(\int_t^{+\infty}c(s)e^{-s}ds\right)\frac{2\pi e^{-2u(z_0)}}{p|d|^2}\int_Y|f_Y|^2e^{-\varphi_2}
		\end{equation}
		for any $t\geq 0$. Then $G(h^{-1}(r))$ is linear with respect to $r\in (0,\int_0^{+\infty}c(s)e^{-s}ds]$, and $F=c_0\pi_1^*(gP_*(f_udf_{z_0}))\wedge\pi_2^*(f_{Y})$ is the unique holomorphic $(n,0)$ form $F$ on $M$ such that $(F-f,(z_0,y))\in (\mathcal{O}(K_M))_{(z_0,y)}\otimes\mathcal{I}(\varphi+\psi)_{(z_0,y)}$ for any $(z_0,y)\in Z_0$ and
		\begin{equation*}
			G(t)=\int_{\{\psi<-t\}}|F|^2e^{-\varphi}c(-\psi)=\left(\int_t^{+\infty}c(s)e^{-s}ds\right)\frac{2\pi e^{-2u(z_0)}}{p|d|^2}\int_{Y}|f_{Y}|^2e^{-\varphi_2}
		\end{equation*}
		for any $t\geq 0$. Then the proof of the sufficiency in Theorem \ref{one-p} and the proof of Remark \ref{rem-p} are done.
	\end{proof}
	
	\subsection{Proof of the necessity in Theorem \ref{one-p}}
	\
	we give the proof of the necessity in Theorem \ref{one-p}.
	
	\begin{proof}[Proof of the necessity in Theorem \ref{one-p}]
		Assume that $G(h^{-1}(r))$ is linear with respect to $r\in (0,\int_0^{+\infty}c(s)e^{-s}ds]$. Then according to Lemma \ref{linear}, there exists a unique holomorphic $(n,0)$ form $F$ on $M$ satisfying $(F-f)\in H^0(Z_0,(\mathcal{O}(K_M)\otimes\mathcal{I}(\varphi+\psi))|_{Z_0})$, and $G(t;c)=\int_{\{\psi<-t\}}|F|^2e^{-\varphi}c(-\psi)$ for any $t\geq 0$. Then according to Lemma \ref{linear}, Remark \ref{ctildec} and Lemma \ref{tildecincreasing}, we can assume that $c$ is increasing near $+\infty$.
		
		Following from Lemma \ref{decomp}, we assume that
		\begin{equation*}
			F=\sum_{j=k}^{\infty}\pi_1^*(w^jdw)\wedge \pi_2^*(F_j)
		\end{equation*}
		on $V_{z_0}\times Y$, where $k\in\mathbb{N}$, $F_j$ is a holomorphic $(n-1,0)$ form on $Y$ for any $j\geq k$, and $F_k\not\equiv 0$.
		
		Using the Weierstrass Theorem on open Riemann surfaces (see \cite{OF81}) and
		the Siu’s Decomposition Theorem, we have
		\[\varphi_1+\psi_1=2\log |g|+2\log G_{\Omega}(\cdot,z_0)+2u,\]
		where $g$ is a holomorphic function on $\Omega$ and $u$ is a subharmonic function on $\Omega$ such that $v(dd^cu,z)\in [0,1)$ for any $z\in\Omega$. Denote that $k_0:=ord_{z_0}(g)$.
		
		We prove that $k=k_0$. Firstly, as $G(h^{-1}(r))$ is linear and $c$ is increasing near $+\infty$, using Lemma \ref{fanxiangineq}, we can get that $k\geq k_0$ and $\int_Y|F_k|^2e^{-\varphi_2}<+\infty$. Secondly, if $k>k_0$, then it follows from Lemma \ref{k>k0} that $(F,(z_0,y))\in(\mathcal{O}(K_M)\otimes\mathcal{I}(\varphi+\psi))_{(z_0,y)}$ for any $(z_0,y)\in Z_0$, which contradicts to $G(0)\neq 0$ according to Lemma \ref{G(t)=0}. Thus we get that $k=k_0$.
		
		Since $k=k_0$, according to Lemma \ref{e-varphic-psi}, Lemma \ref{f1zf2w} and Lemma \ref{k>k0}, we have
		\begin{equation}
			(F-\pi_1^*(w^kdw)\wedge\pi_2^*(F_k),(z_0,y))\in(\mathcal{O}(K_M)\otimes\mathcal{I}(\varphi+\psi))_{(z_0,y)}
		\end{equation}
		for any $(z_0,y)\in Z_0$. Then it follows from $(F-f,(z_0,y))\in(\mathcal{O}(K_M)\otimes\mathcal{I}(\varphi+\psi))_{(z_0,y)}$ that $f=\pi_1^*(w^kdw)\wedge\pi_2^*(F_k)+f_0$, where $(f_0,(z_0,y))\in (\mathcal{O}(K_M))_{(z_0,y)}\otimes\mathcal{I}(\varphi+\psi)_{(z_0,y)}$ for any $(z_0,y)\in Z_0$.
		
		For $t\geq 0$, denote that	
		\begin{flalign*}
			\begin{split}
				G_{\Omega}(t):=\inf\bigg\{\int_{\{\psi_1<-t\}}|\tilde{f}|^2e^{-\varphi_1}c(-\psi_1) : (\tilde{f}-w^kdw,z_0)\in (\mathcal{O}(K_{\Omega}))_{z_0}\otimes\mathcal{I}(\varphi_1+\psi_1)_{z_0}&\\
				\& \ \tilde{f}\in H^0(\{\psi_1<-t\},\mathcal{O}(K_{\Omega}))\bigg\}.&
			\end{split}
		\end{flalign*}
		Then it follows from Lemma \ref{F_t} that there exists a unique holomorphic $(1,0)$ form $\tilde{f}_t$ on $\{\psi_1<-t\}$ such that $(\tilde{f}_t-w^kdw,z_0)\in (\mathcal{O}(K_{\Omega}))_{z_0}\otimes\mathcal{I}(\varphi_1+\psi_1)_{z_0}$ and
		\begin{equation*}
			G_{\Omega}(t)=\int_{\{\psi_1<-t\}}|\tilde{f}_t|^2e^{-\varphi_1}c(-\psi_1)
		\end{equation*}
		for any $t\geq 0$. Following from Lemma \ref{L2ext-finite-f} (when $Y$ is a single point), we have
		\begin{equation*}
			\frac{\int_{\{\psi_1<-t\}}|\tilde{f}_t|^2e^{-\varphi_1}c(-\psi_1)}{\int_t^{+\infty}c(s)e^{-s}ds}=\frac{G_{\Omega}(t)}{\int_t^{+\infty}c(s)e^{-s}ds}\leq \frac{2\pi e^{-2u(z_0)}}{p|d|^2}.
		\end{equation*}
		Denote that $\tilde{F}_t:=\pi_1^*(\tilde{f}_t)\wedge\pi_2^*(F_k)$. Then according to the Fubini's theorem we can get that
		\begin{equation}\label{ne-1}
			\frac{G(t)}{\int_t^{+\infty}c(s)e^{-s}ds}\leq\frac{\int_{\{\psi<-t\}}|\tilde{F}_t|^2e^{-\varphi}c(-\psi)}{\int_t^{+\infty}c(s)e^{-s}ds}\leq \frac{2\pi e^{-2u(z_0)}}{p|d|^2}\int_Y|F_k|^2e^{-\varphi},
		\end{equation}
		which means that
		\begin{equation}\label{ne-2}
			\int_{\{\psi<-t\}}|F|^2e^{-\varphi}c(-\psi)\leq\left(\int_t^{+\infty}c(s)e^{-s}ds\right)\frac{2\pi e^{-2u(z_0)}}{p|d|^2}\int_Y|F_k|^2e^{-\varphi}.
		\end{equation}
		Besides, according to $ord_{z_0}(g)=k$ and Lemma \ref{fanxiangineq}, we have
		\begin{equation}\label{ne-3}
			\int_{\{\psi<-t\}}|F|^2e^{-\varphi}c(-\psi)\geq\left(\int_t^{+\infty}c(s)e^{-s}ds\right)\frac{2\pi e^{-2u(z_0)}}{p|d|^2}\int_Y|F_k|^2e^{-\varphi}
		\end{equation}
		for any $t\geq 0$. Combining inequality (\ref{ne-1}), inequality (\ref{ne-2}) and inequality (\ref{ne-3}), we have
		\begin{flalign}\label{ne-4}
			\begin{split}
				&\frac{G(t)}{\int_t^{+\infty}c(s)e^{-s}ds}\\
				=&\frac{\int_{\{\psi<-t\}}|F|^2e^{-\varphi}c(-\psi)}{\int_t^{+\infty}c(s)e^{-s}ds}\\
				=&\frac{\int_{\{\psi<-t\}}|F_t|^2e^{-\varphi}c(-\psi)}{\int_t^{+\infty}c(s)e^{-s}ds}\\
				=&\frac{2\pi e^{-2u(z_0)}}{p|d|^2}\int_Y|F_k|^2e^{-\varphi}.
			\end{split}
		\end{flalign}
		According to the uniqueness of $F$, we have $F\equiv F_t$ on $\{\psi<-t\}$. In addition, we have
		\begin{equation*}
			\int_{\{\psi<-t\}}|F_t|^2e^{-\varphi}c(-\psi)=\int_{\{\psi_1<-t\}}|\tilde{f}_t|^2e^{-\varphi_1}c(-\psi_1)\cdot\int_Y|F_k|^2e^{-\varphi_2}
		\end{equation*}
		according to the Fubini's Theorem. It means that
		\begin{equation*}
			G_{\Omega}(t)=\int_{\{\psi_1<-t\}}|\tilde{f}_t|^2e^{-\varphi_1}c(-\psi_1)=\left(\int_t^{+\infty}c(s)e^{-s}ds\right)\frac{2\pi e^{-2u(z_0)}}{p|d|^2}
		\end{equation*}
		for any $t\geq 0$, i.e. $G_{\Omega}(h^{-1}(r))$ is linear with respect to $r\in (0,\int_0^{+\infty}c(s)e^{-s}ds]$. It follows from Theorem \ref{thm:m-points} that the statements (1), (3) and (4) in Theorem \ref{one-p} hold. Denote that $f_Y:=F_k$, then $\int_Y|f_Y|^2e^{-\varphi_2}\in (0,+\infty)$.
		
		Now the proof of the necessity of Theorem \ref{one-p} is done.
		
	\end{proof}
	
	\section{Proofs of Theorem \ref{finite-p} and Remark \ref{rem-finite}}
	In this section, we give the proofs of Theorem \ref{finite-p} and Remark \ref{rem-finite}.
	
	\subsection{Proofs of the sufficiency in Theorem \ref{finite-p} and Remark \ref{rem-finite}}
	\
	we give the proof of the sufficiency in Theorem \ref{finite-p} and the proof of Remark \ref{rem-finite}.
	
	\begin{proof}[Proofs of the sufficiency in Theorem \ref{finite-p} and Remark \ref{rem-finite}]
		We may assume that $f=\pi_1^*(a_jw_j^{k_j}dw_j)\wedge \pi_2^*(f_Y)$ on $V_{z_j}\times Y$ for any $j\in\{1,2,\ldots,m\}$ according to the statement (2) in Theorem \ref{finite-p}. Denote that
		\begin{equation*}
			F:=c_0\pi_1^*\left(gP_*\left(f_u\left(\prod\limits_{l=1}^mf_{z_l}\right)\left(\sum\limits_{l=1}^mp_l\dfrac{d{f_{z_{l}}}}{f_{z_{l}}}\right)\right)\right)\wedge\pi_2^*(f_{Y}),
		\end{equation*}
		where $f_u$ is a holomorphic function on $\Delta$ such that $|f_u|=P^*(e^u)$, $f_{z_j}$ is a holomorphic function on $\Delta$ such that $|f_{z_j}|=P^*(e^{G_{\Omega}(\cdot,z_j)})$ for any $j\in\{1,2,\ldots,m\}$, and $c_0$ is the constant in statement (5) of Theorem \ref{finite-p}.
		
		Then Theorem \ref{thm:m-points}, Remark \ref{rem-finite-1d} and Lemma \ref{f1zf2w} implie that $(F-f,(z_j,y))\in (\mathcal{O}(K_M))_{(z_j,y)}\otimes\mathcal{I}(\varphi+\psi)_{(z_j,y)}$ for any $(z_j,y)\in Z_0$.
		
		For any $y\in Y$, let $w'=(w_1',\ldots,w_{n-1}')$ be a local coordinate on a neighborhood $U_y$ of $y$ satisfying $w'_l(y)=0$ for any $l\in \{1,\ldots,n-1\}$. Assume that $f_{Y}=h(w') dw'$ on $U_y$, where $h$ is a holomorphic function on $U_y$, $dw'=dw_1'\wedge\cdots\wedge dw_{n-1}'$. Then we have
		\begin{equation}
			f=a_jw_j^{k_j}h(w')dw_j\wedge dw'
		\end{equation}
		on $V_{z_j}\times U_y$, and	
		\begin{equation}
			F=c_0\pi_1^*\left(gP_*\left(f_u\left(\prod\limits_{l=1}^mf_{z_l}\right)\left(\sum\limits_{l=1}^mp_l\dfrac{d{f_{z_{l}}}}{f_{z_{l}}}\right)\right)\right)\wedge h(w')dw'
		\end{equation}
		on $\Omega\times U_y$.
		
		For any $w'\in U_y$, $t\geq 0$, denote that
		\begin{flalign*}
			\begin{split}
				G_{w'}(t):=\inf\bigg\{\int_{\{\psi_1<-t\}}|\tilde{f}|^2e^{-\varphi_1}c(-\psi_1) : (\tilde{f}-a_jh(w')w_j^{k_j}dw,z_j)\in (\mathcal{O}(K_{\Omega}))_{z_j}\otimes\mathcal{I}(\varphi_1+\psi_1)_{z_j}&\\
				\text{for \ any\ } j\in\{1,2,\ldots,m\}\& \ \tilde{f}\in H^0(\{\psi_1<-t\},\mathcal{O}(K_{\Omega}))\bigg\}.&
			\end{split}
		\end{flalign*}
		Denote that
		\begin{equation*}
			F_{w'}:=c_0h(w')gP_*\left(f_u\left(\prod\limits_{l=1}^mf_{z_l}\right)\left(\sum\limits_{l=1}^mp_l\dfrac{d{f_{z_{l}}}}{f_{z_{l}}}\right)\right)
		\end{equation*}
		on $\Omega$. Then it follows from Theorem \ref{thm:m-points} and Remark \ref{rem-finite-1d} that $G_{w'}(h^{-1}(r))$ is linear and
		\begin{flalign}\label{Gw't-finite}
			\begin{split}
				G_{w'}(t)&=\left(\int_t^{+\infty}c(s)e^{-s}ds\right)\left(\sum_{j=1}^m\frac{2\pi |a_j|^2e^{-2u(z_j)}}{p_j|d_j|^2}\right)|h(w')|^2
			\end{split}
		\end{flalign}
		for any $t\geq 0$, $w'\in U_y$, where
		\begin{equation*}
			d_j:=\lim_{z\rightarrow z_j}\frac{g}{w_j^{k_j}}(z).
		\end{equation*}
		Then following from the Fubini's Theorem, we have
		\begin{flalign*}
			\begin{split}
				\int_{\{\psi_1<-t\}\times U_y}|F|^2e^{-\varphi}c(-\psi)&=\left(\int_t^{+\infty}c(s)e^{-s}ds\right)\left(\sum_{j=1}^m\frac{2\pi |a_j|^2e^{-2u(z_j)}}{p_j|d_j|^2}\right)\int_{U_y}|f_Y|^2e^{-\varphi_2}.
			\end{split}
		\end{flalign*}
		According to the arbitrariness of $y$ and $U_y$, for general $Y$ and any $t\geq 0$ we have
		\begin{equation}
			\int_{\{\psi<-t\}}|F|^2e^{-\varphi}c(-\psi)=\left(\int_t^{+\infty}c(s)e^{-s}ds\right)\left(\sum_{j=1}^m\frac{2\pi |a_j|^2e^{-2u(z_j)}}{p_j|d_j|^2}\right)\int_Y|f_Y|^2e^{-\varphi_2}.
		\end{equation}
		
		Assume that $t\geq 0$. According to Lemma \ref{F_t}, let $\tilde{F}$ be the holomorphic $(n,0)$ form on $\{\psi<-t\}$ such that $(\tilde{F}-f,(z_j,y))\in (\mathcal{O}(K_M))_{(z_j,y)}\otimes\mathcal{I}(\varphi+\psi)_{(z_j,y)}$ for any $(z_j,y)\in Z_0$ and
		\begin{equation*}
			\int_{\{\psi<-t\}}|\tilde{F}|^2e^{-\varphi}c(-\psi)=G(t;c).
		\end{equation*}
		Following from Lemma \ref{decomp}, we assume that
		\begin{equation*}
			\tilde{F}=\sum_{l=\tilde{k}_j}^{\infty}\pi_1^*(w_j^ldw_j)\wedge \pi_2^*(F_{j,l})
		\end{equation*}
		on $V_{z_j}\times Y$, where $\tilde{k}_j\in\mathbb{N}$, $\tilde{F}_{j,l}$ is a holomorphic $(n-1,0)$ form on $Y$ for any $l\geq \tilde{k}_j$, and $\tilde{F}_{j,\tilde{k}_j}\not\equiv 0$. Now we prove that $\tilde{k}_j=k_j$ and $\tilde{F}_{j,\tilde{k}_j}=a_jf_Y$.
		
		For any $y\in Y$, let $w'=(w_1',\ldots,w_{n-1}')$ be a local coordinate on a neighborhood $U_y$ of $y$ satisfying $w'_j(y)=0$ for any $j\in \{1,\ldots,n-1\}$. Assume that $f_{Y}=h(w') dw'$ on $U_y$, $F=w_j^{\tilde{k}_j}\tilde{h}_j(w_j,w')dw'$ on $V_{z_j}\times U_y$, where $h$ is a holomorphic function on $U_y$, $\tilde{h}_j$ is a holomorphic function on $V_{z_j}\times U_y$. Then we have
		\begin{equation*}
			f=a_jw_j^{k_j}h(w')dw_j\wedge dw', \ \tilde{F}=w_j^{\tilde{k}_j}\tilde{h}_j(w_j,w')dw_j\wedge dw'
		\end{equation*}
		on $V_{z_j}\times U_y$, and $\tilde{F}_{j,\tilde{k}_j}=\tilde{h}_j(z_j,w')dw'$ on $U_y$. According to Lemma \ref{local-germ}, we have $w_j^{\tilde{k}_j}\tilde{h}_j(w_j,w')-a_jw_j^{k_j}h(w')\in\mathcal{I}(\varphi_1+\psi_1)_{z_j}$, which implies that $\tilde{k}_j=k_j$ and $\tilde{h}_j(z_j,w')=a_jh(w')$ if $h(w')\neq 0$. Since $\int_Y|f_Y|^2e^{-\varphi_2}\in (0,+\infty)$ and $h$ is holomorphic, we have $\tilde{k}_j=k_j$ and $\tilde{h}_j(z_j,w')=a_jh(w')$ for any $w'\in U_y$. According to the arbitrariness of $y$ and $U_y$, we get that $\tilde{k}_j=k_j$ and $\tilde{F}_{j,\tilde{k}_j}=a_jf_Y$.
		
		Besides, following from Lemma \ref{decomp}, we can also assume that
		\begin{equation*}
			\tilde{F}=\sum_{\alpha'\in\mathbb{N}^{n-1}}\pi_1^*(\tilde{F}_{\alpha'})\wedge\pi_2^*({w'}^{\alpha'}dw')
		\end{equation*}
		on $\{\psi_1<-t\}\times U_y$, where $\tilde{F}_{\alpha'}$ is a holomorphic $(1,0)$ form on $\{\psi_1<-t\}$ for any $\alpha'\in\mathbb{N}^{n-1}$. According to the discussions above, we can assume that
		\begin{equation*}
			\tilde{F}_{\alpha'}=w_j^{k_j}\tilde{f}_{j,\alpha'}dw_j
		\end{equation*}
		on $V_{z_j}\cap\{\psi_1<-t\}$, where $\tilde{f}_{j,\alpha'}$ is a holomorphic function on $V_{z_j}\cap\{\psi_1<-t\}$. And we also have
		\begin{equation*}
			\sum_{\alpha'\in\mathbb{N}^{n-1}}\tilde{f}_{j,\alpha'}(z_j){w'}^{\alpha'}=\tilde{h}_j(z_j,w')=a_jh(w'),
		\end{equation*}
		which implies that
		\begin{equation*}
			\sum_{\alpha'\in\mathbb{N}^{n-1}}\tilde{f}_{j,\alpha'}(z_j){w'}^{\alpha'}dw'=a_jf_Y
		\end{equation*}
		on $U_y$. Then by the definition of $G_{w'}(t)$, it follows from equality (\ref{Gw't-finite}) that
		\begin{flalign*}
			\begin{split}
				\int_{\{\psi_1<-t\}}\left|\sum_{\alpha'\in\mathbb{N}^{n-1}}\tilde{F}_{\alpha'}{w'}^{\alpha'}\right|^2e^{-\varphi_1}c(-\psi_1)\geq& \left(\int_t^{+\infty}c(s)e^{-s}ds\right)\\
				&\cdot\left(\sum_{j=1}^m\frac{2\pi |a_j|^2e^{-2u(z_j)}}{p_j|d_j|^2}\right)|h(w')|^2.
			\end{split}
		\end{flalign*}
		Then we have
		\begin{flalign*}
			\begin{split}
				&\int_{\{\psi_1<-t\}\times U_y}|\tilde{F}|^2e^{-\varphi}c(-\psi)\\
				=&\int_{\{\psi_1<-t\}\times U_y}\left|\sum_{\alpha'\in\mathbb{N}^{n-1}}\pi_1^*(\tilde{F}_{\alpha'})\wedge\pi_2^*({w'}^{\alpha'}dw')\right|^2e^{-\varphi}c(-\psi)\\
				=&\int_{w'\in U_y}\left(\int_{\{\psi_1<-t\}}\left|\sum_{\alpha'\in\mathbb{N}^{n-1}}\tilde{F}_{\alpha'}{w'}^{\alpha'}\right|^2e^{-\varphi_1}c(-\psi_1)\right)e^{-\varphi_2(w')}|dw'|^2\\
				\geq&\int_{w'\in U_y}\left(\int_t^{+\infty}c(s)e^{-s}ds\right)\cdot\left(\sum_{j=1}^m\frac{2\pi |a_j|^2e^{-2u(z_j)}}{p_j|d_j|^2}\right)|h(w')|^2e^{-\varphi_2(w')}|dw'|^2\\
				=&\int_{\{\psi_1<-t\}\times U_y}|F|^2e^{-\varphi}c(-\psi).
			\end{split}
		\end{flalign*}
		According to the arbitrariness of $y$ and $U_y$, we get that
		\begin{equation*}
			\int_{\{\psi<-t\}}|\tilde{F}|^2e^{-\varphi}c(-\psi)\geq\int_{\{\psi<-t\}}|F|^2e^{-\varphi}c(-\psi),
		\end{equation*}
		which implies that $F\equiv\tilde{F}$ on $\{\psi<-t\}$ by the choice of $\tilde{F}$.
		
		Now we get that
		\begin{equation}
			G(t)=\int_{\{\psi<-t\}}|F|^2e^{-\varphi}c(-\psi)=\left(\int_t^{+\infty}c(s)e^{-s}ds\right)\left(\sum_{j=1}^m\frac{2\pi |a_j|^2e^{-2u(z_j)}}{p_j|d_j|^2}\right)\int_Y|f_Y|^2e^{-\varphi_2}
		\end{equation}
		for any $t\geq 0$. Then $G(h^{-1}(r))$ is linear with respect to $r\in (0,\int_0^{+\infty}c(s)e^{-s}ds]$, and
		\[F=c_0\pi_1^*\left(gP_*\left(f_u\left(\prod\limits_{l=1}^mf_{z_l}\right)\left(\sum\limits_{l=1}^mp_l\dfrac{d{f_{z_{l}}}}{f_{z_{l}}}\right)\right)\right)\wedge\pi_2^*(f_{Y})\]
		is the unique holomorphic $(n,0)$ form $F$ on $M$ such that $(F-f,(z_j,y))\in (\mathcal{O}(K_M))_{(z_j,y)}\otimes\mathcal{I}(\varphi+\psi)_{(z_j,y)}$ for any $(z_j,y)\in Z_0$ and
		\begin{equation*}
			G(t)=\int_{\{\psi<-t\}}|F|^2e^{-\varphi}c(-\psi)=\left(\int_t^{+\infty}c(s)e^{-s}ds\right)\left(\sum_{j=1}^m\frac{2\pi |a_j|^2e^{-2u(z_j)}}{p_j|d_j|^2}\right)\int_Y|f_Y|^2e^{-\varphi_2}
		\end{equation*}
		for any $t\geq 0$. Then the proof of the sufficiency in Theorem \ref{finite-p} and the proof of Remark \ref{rem-finite} are done.
		
	\end{proof}
	
	\subsection{Proof of the necessity in Theorem \ref{finite-p}}
	\
	we give the proof of the necessity in Theorem \ref{finite-p}.
	
	\begin{proof}[Proof of the necessity in Theorem \ref{finite-p}]
		Assume that $G(h^{-1}(r))$ is linear with respect to $r\in (0,\int_0^{+\infty}c(s)e^{-s}ds]$. Then according to Lemma \ref{linear}, there exists a unique holomorphic $(n,0)$ form $F$ on $M$ satisfying $(F-f)\in H^0(Z_0,(\mathcal{O}(K_M)\otimes\mathcal{I}(\varphi+\psi))|_{Z_0})$, and $G(t;c)=\int_{\{\psi<-t\}}|F|^2e^{-\varphi}c(-\psi)$ for any $t\geq 0$. Then according to Lemma \ref{linear}, Remark \ref{ctildec} and Lemma \ref{tildecincreasing}, we can assume that $c$ is increasing near $+\infty$.
		
		Following from Lemma \ref{decomp}, for any $j\in\{1,2,\ldots,m\}$, we assume that
		\begin{equation*}
			F=\sum_{l=\tilde{k}_j}^{\infty}\pi_1^*(w_j^ldw_j)\wedge \pi_2^*(F_{j,l})
		\end{equation*}
		on $V_{z_j}\times Y$, where $\tilde{k}_j\in\mathbb{N}$, $F_{j,l}$ is a holomorphic $(n-1,0)$ form on $Y$ for any $l\geq \tilde{k}_j$, and $\tilde{F}_j:=F_{j,\tilde{k}_j}\not\equiv 0$.
		
		Using the Weierstrass Theorem on open Riemann surfaces (see \cite{OF81}) and
		the Siu’s Decomposition Theorem, we have
		\[\varphi_1+\psi_1=2\log |g_0|+2u,\]
		where $g_0$ is a holomorphic function on $\Omega$ and $u$ is a subharmonic function on $\Omega$ such that $v(dd^cu,z)\in [0,1)$ for any $z\in\Omega$. Denote that $k_j:=ord_{z_j}(g_0)$, and $d_j:=\lim_{z\rightarrow z_j}(g_0/w_j^{k_j})(z)$.
		
		We prove that $\psi_1=2\sum_{j=1}^mp_jG_{\Omega}(\cdot,z_j)$. As $G(h^{-1}(r))$ is linear and $c$ is increasing near $+\infty$, using Lemma \ref{fanxiangineq}, we can get that $\tilde{k}_j-k_j+1=0$ for any $j\in I_F$ and 
		\begin{equation}\label{geqfinite}
			\frac{\int_M|F|^2e^{-\varphi}c(-\psi)}{\int_0^{+\infty}c(s)e^{-s}ds}=\liminf_{t\rightarrow+\infty}\frac{\int_{\{\psi<-t}|F|^2e^{-\varphi}c(-\psi)}{\int_t^{+\infty}c(s)e^{-s}ds}\geq \sum_{j\in I_F}\frac{2\pi e^{-2u_(z_j)}}{p_j|d_j|^2}\int_Y|\tilde{F}_j|^2e^{-\varphi_2},
		\end{equation}
		where $I_F:=\{j:\tilde{k}_j-k_j+1\leq 0\}$. Especially, $\int_Y|\tilde{F}_j|^2e^{-\varphi_2}<+\infty$ and $u_0(z_j)>-\infty$ for any $j\in I_F$. Then according to Lemma \ref{e-varphic-psi}, and Lemma \ref{k>k0}, we have
		\begin{equation*}
			(F-\pi_1^*(w_j^{\tilde{k}_j}dw_j)\wedge\pi_2^*(\tilde{F}_j),(z_j,y))\in(\mathcal{O}(K_M)\otimes\mathcal{I}(\varphi+\psi))_{(z_j,y)}
		\end{equation*}
		for any $j\in I_F$, $y\in Y$. And according to Lemma \ref{k>k0} we also have
		\begin{equation*}
			(F,(z_j,y))\in(\mathcal{O}(K_M)\otimes\mathcal{I}(\varphi+\psi))_{(z_j,y)}
		\end{equation*}
		for any $j\in\{1,2,\ldots,m\}\setminus I_F$, $y\in Y$. Denote that $\tilde{\psi}_1:=2\sum_{j=1}^mp_jG_{\Omega}(\cdot,z_j)$, $\tilde{\varphi}_1:=\varphi_1+\psi_1-\tilde{\psi}_1$, $\tilde{\psi}:=\pi_1^*(\tilde{\psi}_1)$, and $\tilde{\varphi}:=\pi_1^*(\tilde{\varphi}_1)+\pi_2^*(\varphi_2)$. Then according to Lemma \ref{L2ext-finite-f}, there exists a holomorphic $(n,0)$ form $\tilde{F}$ on $M$ such that $(\tilde{F}-\pi_1^*(w_j^{\tilde{k}_j}dw_j)\wedge\pi_2^*(\tilde{F}_j),(z_j,y))\in(\mathcal{O}(K_M)\otimes\mathcal{I}(\tilde{\varphi}+\tilde{\psi}))_{(z_j,y)}$ for any $j\in I_F$, $y\in Y$, $(\tilde{F},(z_j,y))\in(\mathcal{O}(K_M)\otimes\mathcal{I}(\tilde{\varphi}+\tilde{\psi}))_{(z_j,y)}$ for any $j\in\{1,2,\ldots,m\}\setminus I_F$, $y\in Y$, and 
		\begin{equation}\label{leqfinite}
			\int_M|\tilde{F}|^2e^{-\tilde{\varphi}}c(-\tilde{\psi})\leq \left({\int_0^{+\infty}c(s)e^{-s}ds}\right)\sum_{j\in I_F}\frac{2\pi e^{-2u_(z_j)}}{p_j|d_j|^2}\int_Y|\tilde{F}_j|^2e^{-\varphi_2}
		\end{equation}
		(Here in Lemma \ref{L2ext-finite-f} letting $\tilde{F}_j\equiv 0$ for $j\notin I_F$). Then $(\tilde{F}-F,(z_j,y))\in(\mathcal{O}(K_M)\otimes\mathcal{I}(\varphi+\psi))_{(z_j,y)}$ for any $(z_j,y)\in Z_0$. Combining inequality (\ref{geqfinite}) with inequality (\ref{leqfinite}), we have that
		\begin{equation*}
			\int_M|\tilde{F}|^2e^{-\varphi}c(-\psi)\geq \int_M|F|^2e^{-\varphi}c(-\psi)\geq \int_M|\tilde{F}|^2e^{-\tilde{\varphi}}c(-\tilde{\psi}).
		\end{equation*}
		According to Lemma \ref{l:psi=G}, we get that $\psi_1=2\sum_{j=1}^mp_jG_{\Omega}(\cdot,z_j)$.
		
		It follows from Lemma \ref{l:G-compact} that there exists $s_0>0$ such that $\{\psi_1<-s_0\}\subset\subset V_0=\bigcup_{j=1}^mV_{z_j}$, and $U_j:=V_{z_j}\cap\{\psi_1<-s_0\}$ is conformally equivalent to the unit disc. Denote that
		\begin{flalign*}
			\begin{split}
				G_j(t):=\inf\bigg\{\int_{(\{\psi_1<-t\}\cap U_j)\times Y}|\tilde{f}|^2e^{-\varphi}c(-\psi) : (\tilde{f}-f,(z_j,y))\in (\mathcal{O}(K_M)\otimes\mathcal{I}(\varphi+\psi))_{(z_j,y)}&\\
				\& \ \tilde{f}\in H^0((\{\psi_1<-t\}\cap U_j)\times Y,\mathcal{O}(K_M))\bigg\},&
			\end{split}
		\end{flalign*}	
		where $t>s_0$ and $j\in\{1,2,\ldots,m\}$. Theorem \ref{Concave} shows that $G_j(h^{-1}(r))$ is concave respect to $r\in (0,\int_{s_0}^{+\infty}c(s)e^{-s}ds]$ for any $j\in\{1,2,\ldots,m\}$, where $h(t)=\int_t^{+\infty}c(s)e^{-s}ds$. Since
		\begin{equation}\label{G=sumG_j}
			G(h^{-1}(r))=\sum_{j=1}^mG_j(h^{-1}(r))
		\end{equation}
		and $G(h^{-1}(r))$ is linear with respect to $r\in (0,\int_{s_0}^{+\infty}c(s)e^{-s}ds]$, then $G_j(h^{-1}(r))$ is linear with respect to $(0,\int_{s_0}^{+\infty}c(s)e^{-s}ds]$ and
		\begin{equation}\label{Gj-F}
			G_j(h^{-1}(r))=\int_{(\{\psi_1<-t\}\cap U_j)\times Y}|F|^2e^{-\varphi}c(-\psi)
		\end{equation}
		for any $j\in \{1,2,\ldots,m\}$. As $G(0)\in (0,+\infty)$, according to equality (\ref{G=sumG_j}) and Lemma \ref{G(t)=0} there exists $j_0\in\{1,2,\ldots,m\}$ such that $G_{j_0}(s_0)\neq 0$. Then it follows from Theorem \ref{one-p}, Remark \ref{rem-p} and equality (\ref{Gj-F}) that there exist a holomorphic $(1,0)$ form $f_0$ on $U_{j_0}$ and a holomorphic $(n-1,0)$ form $F_Y$ on $Y$ such that $\int_Y|F_Y|^2e^{-\varphi_2}\in(0,+\infty)$ and
		\begin{equation*}
			F=\pi_1^*(f_0)\wedge\pi_2^*(F_Y)
		\end{equation*}
		on $U_{j_0}\times Y$. By Lemma \ref{decom-product}, there exists a holomorphic $(1,0)$ form $F_0$ on $\Omega$ such that $F_0=f_0$ on $U_{j_0}$ and
		\begin{equation}\label{FF0FY}
			F=\pi_1^*(F_0)\wedge\pi_2^*(F_Y)
		\end{equation}
		on $M$.
		
		For any $t\geq 0$, denote that
		\begin{flalign*}
			\begin{split}
				G_{\Omega}(t):=\inf\bigg\{\int_{\{\psi_1<-t\}}|\tilde{f}|^2e^{-\varphi_1}c(-\psi_1) : (\tilde{f}-F_0,z_j)\in (\mathcal{O}(K_{\Omega}))_{z_j}\otimes\mathcal{I}(\varphi_1+\psi_1)_{z_j}&\\
				\text{for \ any\ } j\in\{1,2,\ldots,m\}\& \ \tilde{f}\in H^0(\{\psi_1<-t\},\mathcal{O}(K_{\Omega}))\bigg\}.&
			\end{split}
		\end{flalign*}
		Then it follows from Lemma \ref{F_t} that there exists a unique holomorphic $(1,0)$ form $\tilde{f}_t$ on $\{\psi_1<-t\}$ such that $(\tilde{f}_t-F_0,z_j)\in (\mathcal{O}(K_{\Omega}))_{z_j}\otimes\mathcal{I}(\varphi_1+\psi_1)_{z_j}$ for any $j\in\{1,2,\ldots,m\}$ and
		\begin{equation*}
			G_{\Omega}(t)=\int_{\{\psi_1<-t\}}|\tilde{f}_t|^2e^{-\varphi_1}c(-\psi_1).
		\end{equation*}
		According to the definition of $G_{\Omega}(t)$, we have
		\begin{equation}\label{tildefgeqF0}
			\int_{\{\psi_1<-t\}}|\tilde{f}_t|^2e^{-\varphi}c(-\psi)=G_{\Omega}(t)\leq\int_{\{\psi_1<-t\}}|F_0|^2e^{-\varphi}c(-\psi).
		\end{equation}
		Denote that $\tilde{F}_t:=\pi_1^*(\tilde{f}_t)\wedge\pi_2^*(F_Y)$. Lemma \ref{f1zf2w} shows that
		$(\tilde{F}_t-F,(z_j,y))\in (\mathcal{O}(K_M))_{(z_j,y)}\otimes\mathcal{I}(\varphi+\psi)_{(z_j,y)}$
		for any $(z_j,y)\in Z_0$, which implies that
		\begin{equation*}
			\int_{\{\psi<-t\}}|\tilde{F}_t|^2e^{-\varphi}c(-\psi)\geq\int_{\{\psi<-t\}}|F|^2e^{-\varphi}c(-\psi).
		\end{equation*}
		However, it follows from equality (\ref{FF0FY}), inequality (\ref{tildefgeqF0}) and the Fubini's Theorem that
		\begin{flalign*}
			\begin{split}
				&\int_{\{\psi<-t\}}|\tilde{F}_t|^2e^{-\varphi}c(-\psi)\\
				=&\int_{\{\psi_1<-t\}}|\tilde{f}_t|^2e^{-\varphi_1}c(-\psi_1)\cdot\int_Y|F_Y|^2e^{-\varphi_2}\\
				\leq&\int_{\{\psi_1<-t\}}|F_0|^2e^{-\varphi_1}c(-\psi_1)\cdot\int_Y|F_Y|^2e^{-\varphi_2}\\
				=&\int_{\{\psi<-t\}}|F|^2e^{-\varphi}c(-\psi).
			\end{split}
		\end{flalign*}
		Thus we have
		\begin{equation*}
			\int_{\{\psi<-t\}}|\tilde{F}_t|^2e^{-\varphi}c(-\psi)=\int_{\{\psi<-t\}}|F|^2e^{-\varphi}c(-\psi),
		\end{equation*}
		then $F\equiv \tilde{F}_t$ on $\{\psi<-t\}$, which implies that $F_0\equiv\tilde{f}_t$ on $\{\psi_1<-t\}$. Then we have
		\begin{flalign*}
			\begin{split}
				G(t)&=\int_{\{\psi<-t\}}|F|^2e^{-\varphi}c(-\psi)\\
				&=\int_{\{\psi_1<-t\}}|F_0|^2e^{-\varphi_1}c(-\psi_1)\cdot\int_Y|F_Y|^2e^{-\varphi_2}\\
				&=G_{\Omega}(t)\cdot\int_Y|F_Y|^2e^{-\varphi_2}
			\end{split}
		\end{flalign*}
		for any $t\geq 0$, which implies that $G_{\Omega}(h^{-1}(r))$ is linear with respect to $r\in (0,\int_0^{+\infty}c(s)e^{-s}ds]$. According to Theorem \ref{thm:m-points} and Remark \ref{rem-finite-1d}, we have
		
		(1). $\varphi_1+\psi_1=2\log|g|+2\sum_{j=1}^mG_{\Omega}(\cdot,z_j)+2u$, where $g$ is a holomorphic function on $\Omega$ such that $ord_{z_j}(g)=ord_{z_j}(f_1)$ for any $j\in\{1,2,...,m\}$ and $u$ is a harmonic function on $\Omega$, here $F_0=f_1dw_j$ on $V_{z_j}$;
		
		(2). $\prod_{j=1}^m\chi_{z_j}=\chi_{-u}$, where $\chi_{-u}$ and $\chi_{z_j}$ are the characters associated to the functions $-u$ and $G_{\Omega}(\cdot,z_j)$ respectively;
		
		(3).
		\[F_0=c_0gP_*\left(f_u(\prod_{j=1}^mf_{z_j})(\sum_{j=1}^mp_{j}\frac{d{f_{z_{j}}}}{f_{z_{j}}})\right)\]
		where $c_0\in\mathbb{C}\backslash\{0\}$ is a constant, $f_u$ is a holomorphic function on $\Delta$ such that $|f_u|=P^*(e^u)$, and $f_{z_j}$ is a holomorphic function on $\Delta$ such that $|f_{z_j}|=P^*(e^{G_{\Omega}(\cdot,z_j)})$ for any $j\in\{1,2,\ldots,m\}$;
		
		(4). \begin{equation*}
			G_{\Omega}(t)=\int_{\{\psi<-t\}}|F_0|^2e^{-\varphi_1}c(-\psi_1)=\left(\int_t^{+\infty}c(s)e^{-s}ds\right)\sum_{j=1}^m\frac{2\pi |a_j|^2e^{-2u(z_j)}}{p_j|d_j|^2}
		\end{equation*}
		for any $t\geq 0$, where $a_j:=\lim\limits_{z\rightarrow z_j}\frac{F_0}{w^{k_j}dw_j}(z)$
		and $d_j:=\lim\limits_{z\rightarrow z_j}\frac{g}{w^{k_j}}(z)$, here $k_j:=ord_{z_j}(g)$.
		
		Then we have
		\begin{equation*}
			c_0=\lim_{z\rightarrow z_j}\frac{a_jw_j^{k_j}dw_j}{gP_*\left(f_u\left(\prod\limits_{l=1}^mf_{z_l}\right)\left(\sum\limits_{l=1}^mp_l\dfrac{d{f_{z_{l}}}}{f_{z_{l}}}\right)\right)},
		\end{equation*}
		and $a_j\neq 0$. Thus the proofs of statements (1), (3), (4) and (5) in Theorem \ref{finite-p} are done. Besides, we have
		\begin{equation*}
			(F_0-a_jw_j^{k_j}dw_j)\in(\mathcal{O}(K_{\Omega})\otimes\mathcal{I}(\varphi_1+\psi_1))_{z_j}
		\end{equation*}
		for any $j\in\{1,2,\ldots,m\}$, which implies that
		\begin{equation*}
			(F-\pi_1^*(a_jw_j^{k_j}dw_j)\wedge\pi_2^*(F_Y))\in(\mathcal{O}(K_M)\otimes\mathcal{I}(\varphi+\psi))_{(z_j,y)}
		\end{equation*}
		for any $(z_j,y)\in Z_0$ according to $\int_Y|F_Y|^2e^{-\varphi_2}<+\infty$ and Lemma \ref{f1zf2w}. Thus we have $f=\pi_1^*(a_jw_j^{k_j}dw_j)\wedge \pi_2^*(f_{Y})+f_j$ on $V_{z_j}\times Y$ for any $j\in\{1,2,\ldots,m\}$, where $(f_j,(z_j,y))\in (\mathcal{O}(K_M))_{(z_j,y)}\otimes\mathcal{I}(\varphi+\psi)_{(z_j,y)}$ for any $j\in\{1,2,\ldots,m\}$ and $y\in Y$. Then the proof of statement (2) in Theorem \ref{finite-p} is done, and the proof of necessity in Theorem \ref{finite-p} is completed.
	\end{proof}
	
	\section{Proof of Proposition \ref{infinite-p}}
	
	In this section, we give the proof of Proposition \ref{infinite-p}.
	
	\begin{proof}[Proof of Proposition \ref{infinite-p}]
		Assume that $G(h^{-1}(r))$ is linear with respect to $r\in (0,\int_0^{+\infty}c(s)e^{-s}ds]$. Then according to Lemma \ref{linear}, there exists a unique holomorphic $(n,0)$ form $F$ on $M$ satisfying $(F-f)\in H^0(Z_0,(\mathcal{O}(K_M)\otimes\mathcal{I}(\varphi+\psi))|_{Z_0})$, and $G(t;c)=\int_{\{\psi<-t\}}|F|^2e^{-\varphi}c(-\psi)$ for any $t\geq 0$. Then according to Lemma \ref{linear}, Remark \ref{ctildec} and Lemma \ref{tildecincreasing}, we can assume that $c$ is increasing near $+\infty$.
		
		Following from Lemma \ref{decomp}, for any $j\in\{1,2,\ldots,m\}$, we assume that
		\begin{equation*}
			F=\sum_{l=\tilde{k}_j}^{\infty}\pi_1^*(w_j^ldw_j)\wedge \pi_2^*(F_{j,l})
		\end{equation*}
		on $V_{z_j}\times Y$, where $\tilde{k}_j\in\mathbb{N}$, $F_{j,l}$ is a holomorphic $(n-1,0)$ form on $Y$ for any $l\geq \tilde{k}_j$, and $\tilde{F}_j:=F_{j,\tilde{k}_j}\not\equiv 0$.
		
		Using the Weierstrass Theorem on open Riemann surfaces (see \cite{OF81}) and
		the Siu’s Decomposition Theorem, we have
		\[\varphi_1+\psi_1=2\log |g_0|+2u,\]
		where $g_0$ is a holomorphic function on $\Omega$ and $u$ is a subharmonic function on $\Omega$ such that $v(dd^cu,z)\in [0,1)$ for any $z\in\Omega$. Denote that $k_j:=ord_{z_j}(g_0)$, and $d_j:=\lim_{z\rightarrow z_j}(g_0/w_j^{k_j})(z)$.
		
		We prove that $\psi_1=2\sum_{j=1}^{\infty}p_jG_{\Omega}(\cdot,z_j)$. As $G(h^{-1}(r))$ is linear and $c$ is increasing near $+\infty$, using Lemma \ref{fanxiangineq}, we can get that $\tilde{k}_j-k_j+1=0$ for any $j\in I_F$ and 
		\begin{equation}\label{geqinfinite}
			\frac{\int_M|F|^2e^{-\varphi}c(-\psi)}{\int_0^{+\infty}c(s)e^{-s}ds}=\liminf_{t\rightarrow+\infty}\frac{\int_{\{\psi<-t}|F|^2e^{-\varphi}c(-\psi)}{\int_t^{+\infty}c(s)e^{-s}ds}\geq \sum_{j\in I_F}\frac{2\pi e^{-2u_(z_j)}}{p_j|d_j|^2}\int_Y|\tilde{F}_j|^2e^{-\varphi_2},
	\end{equation}
where $I_F:=\{j\in\mathbb{N}_+:\tilde{k}_j- k_j+1\leq 0\}$. Especially, $\int_Y|\tilde{F}_j|^2e^{-\varphi_2}<+\infty$ and $u_0(z_j)>-\infty$ for any $j\in I_F$. Then according to Lemma \ref{e-varphic-psi}, and Lemma \ref{k>k0}, we have
\begin{equation*}
(F-\pi_1^*(w_j^{\tilde{k}_j}dw_j)\wedge\pi_2^*(\tilde{F}_j),(z_j,y))\in(\mathcal{O}(K_M)\otimes\mathcal{I}(\varphi+\psi))_{(z_j,y)}
\end{equation*}
for any $j\in I_F$, $y\in Y$. And according to Lemma \ref{k>k0} we also have
\begin{equation*}
(F,(z_j,y))\in(\mathcal{O}(K_M)\otimes\mathcal{I}(\varphi+\psi))_{(z_j,y)}
\end{equation*}
for any $j\in\mathbb{N}_+\setminus I_F$, $y\in Y$. Denote that $\tilde{\psi}_1:=2\sum_{j=1}^{\infty}p_jG_{\Omega}(\cdot,z_j)$, $\tilde{\varphi}_1:=\varphi_1+\psi_1-\tilde{\psi}_1$, $\tilde{\psi}:=\pi_1^*(\tilde{\psi}_1)$, and $\tilde{\varphi}:=\pi_1^*(\tilde{\varphi}_1)+\pi_2^*(\varphi_2)$. Then according to Lemma \ref{L2ext-finite-f}, there exists a holomorphic $(n,0)$ form $\tilde{F}$ on $M$ such that $(\tilde{F}-\pi_1^*(w_j^{\tilde{k}_j}dw_j)\wedge\pi_2^*(\tilde{F}_j),(z_j,y))\in(\mathcal{O}(K_M)\otimes\mathcal{I}(\tilde{\varphi}+\tilde{\psi}))_{(z_j,y)}$ for any $j\in I_F$, $y\in Y$, $(\tilde{F},(z_j,y))\in(\mathcal{O}(K_M)\otimes\mathcal{I}(\tilde{\varphi}+\tilde{\psi}))_{(z_j,y)}$ for any $j\in\mathbb{N}_+\setminus I_F$, $y\in Y$, and 
\begin{equation}\label{leqinfinite}
\int_M|\tilde{F}|^2e^{-\tilde{\varphi}}c(-\tilde{\psi})\leq \left({\int_0^{+\infty}c(s)e^{-s}ds}\right)\sum_{j\in I_F}\frac{2\pi e^{-2u_(z_j)}}{p_j|d_j|^2}\int_Y|\tilde{F}_j|^2e^{-\varphi_2}
\end{equation}
(Here in Lemma \ref{L2ext-finite-f} letting $\tilde{F}_j\equiv 0$ for $j\notin I_F$). Then $(\tilde{F}-F,(z_j,y))\in(\mathcal{O}(K_M)\otimes\mathcal{I}(\tilde{\varphi}+\tilde{\psi}))_{(z_j,y)}$ for any $(z_j,y)\in Z_0$. Combining inequality (\ref{geqinfinite}) with inequality (\ref{leqinfinite}), we have that
\begin{equation*}
\int_M|\tilde{F}|^2e^{-\varphi}c(-\psi)\geq \int_M|F|^2e^{-\varphi}c(-\psi)\geq \int_M|\tilde{F}|^2e^{-\tilde{\varphi}}c(-\tilde{\psi}).
\end{equation*}
According to Lemma \ref{l:psi=G}, we get that $\psi_1=2\sum_{j=1}^{\infty}p_jG_{\Omega}(\cdot,z_j)$.

		For $p\in Z_0^1$ (without loss of generality, we can assume $p=z_1$), there exists $s_1>0$ such that $\{\psi_1<-s_1\}\cap \partial U_1=\emptyset$, where $U_1:=\{|w_1(z)|<r_1 : z\in V_{z_1}\}\subset\subset V_{z_1}$ is a neighborhood of $z_1$ in $\Omega$. For $l\in\{1,2\}$, denote that
		\begin{flalign*}
			\begin{split}
				G_l(t):=\inf\bigg\{\int_{(\{\psi_1<-t\}\cap D_l)\times Y}|\tilde{f}|^2e^{-\varphi}c(-\psi) : (\tilde{f}-f,(z_j,y))\in (\mathcal{O}(K_M)\otimes\mathcal{I}(\varphi+\psi))_{(z_j,y)}&\\
				\text{for \ any \ } z_j\in D_l \ \& \ \tilde{f}\in H^0((\{\psi_1<-t\}\cap D_l)\times Y,\mathcal{O}(K_M))\bigg\},&
			\end{split}
		\end{flalign*}
		where $t\geq s_1$, $D_1:=\{\psi_1<-s_1\}\cap U_1$, and $D_2:=\{\psi_1<-s_1\}\setminus \overline{U_1}$. Theorem \ref{Concave} shows that $G_1(h^{-1}(r))$ and $G_2(h^{-1}(r))$ are concave with respect to $r\in(0,\int_{s_1}^{+\infty}c(s)e^{-s}ds]$. As $G(t)=G_1(t)+G_2(t)$ for $t\geq s_1$ and $G(h^{-1}(r))$ is linear with respect to $r$, we have that $G_1(h^{-1}(r))$ and $G_2(h^{-1}(r))$ are linear with respect to $r\in(0,\int_{s_1}^{+\infty}c(s)e^{-s}ds]$, and
		\begin{equation}
			G_l(h^{-1}(r))=\int_{(\{\psi_1<-t\}\cap D_l)\times Y}|F|^2e^{-\varphi}c(-\psi)
		\end{equation}
		for $l\in\{1,2\}$. If $G_1(t)=0$ for some $t\geq s_1$, we have $F\equiv 0$, which contradicts to $G(0)\neq 0$. Thus $G_1(t)\neq 0$ for any $t\geq s_1$. Note that $\{j\in\mathbb{N}_+ : z_j\in D_1\}=\{z_1\}$, then it follows from Theorem \ref{one-p} that there exist a holomorphic $(1,0)$ form $f_0$ on $D_1$ and a holomorphic $(n-1,0)$ form $F_Y$ on $Y$ such that $\int_Y|F_Y|^2e^{-\varphi_2}\in(0,+\infty)$ and
		\begin{equation*}
			F=\pi_1^*(f_0)\wedge\pi_2^*(F_Y)
		\end{equation*}
		on $D_1\times Y$. By Lemma \ref{decom-product}, there exists a holomorphic $(1,0)$ form $F_0$ on $\Omega$ such that $F_0=f_0$ on $D_1$ and
		\begin{equation}\label{FF0FY-infinite}
			F=\pi_1^*(F_0)\wedge\pi_2^*(F_Y)
		\end{equation}
		on $M$.
		
		For any $t\geq 0$, denote that
		\begin{flalign*}
			\begin{split}
				G_{\Omega}(t):=\inf\bigg\{\int_{\{\psi_1<-t\}}|\tilde{f}|^2e^{-\varphi_1}c(-\psi_1) : (\tilde{f}-F_0,z_j)\in (\mathcal{O}(K_{\Omega}))_{z_j}\otimes\mathcal{I}(\varphi_1+\psi_1)_{z_j}&\\
				\text{for \ any\ } j\in\mathbb{N}_+\& \ \tilde{f}\in H^0(\{\psi_1<-t\},\mathcal{O}(K_{\Omega}))\bigg\}.&
			\end{split}
		\end{flalign*}
		Then it follows from Lemma \ref{F_t} that there exists a unique holomorphic $(1,0)$ form $\tilde{f}_t$ on $\{\psi_1<-t\}$ such that $(\tilde{f}_t-F_0,z_j)\in (\mathcal{O}(K_{\Omega}))_{z_j}\otimes\mathcal{I}(\varphi_1+\psi_1)_{z_j}$ for any $j\in\mathbb{N}_+$ and
		\begin{equation*}
			G_{\Omega}(t)=\int_{\{\psi_1<-t\}}|\tilde{f}_t|^2e^{-\varphi_1}c(-\psi_1).
		\end{equation*}
		According to the definition of $G_{\Omega}(t)$, we have
		\begin{equation}\label{tildefgeqF0-infinite}
			\int_{\{\psi_1<-t\}}|\tilde{f}_t|^2e^{-\varphi}c(-\psi)=G_{\Omega}(t)\leq\int_{\{\psi_1<-t\}}|F_0|^2e^{-\varphi}c(-\psi).
		\end{equation}
		Denote that $\tilde{F}_t:=\pi_1^*(\tilde{f}_t)\wedge\pi_2^*(F_Y)$. Lemma \ref{f1zf2w} shows that
		$(\tilde{F}_t-F,(z_j,y))\in (\mathcal{O}(K_M))_{(z_j,y)}\otimes\mathcal{I}(\varphi+\psi)_{(z_j,y)}$
		for any $(z_j,y)\in Z_0$, which implies that
		\begin{equation*}
			\int_{\{\psi<-t\}}|\tilde{F}_t|^2e^{-\varphi}c(-\psi)\geq\int_{\{\psi<-t\}}|F|^2e^{-\varphi}c(-\psi).
		\end{equation*}
		However, it follows from equality (\ref{FF0FY-infinite}), inequality (\ref{tildefgeqF0-infinite}) and the Fubini's Theorem that
		\begin{flalign*}
			\begin{split}
				&\int_{\{\psi<-t\}}|\tilde{F}_t|^2e^{-\varphi}c(-\psi)\\
				=&\int_{\{\psi_1<-t\}}|\tilde{f}_t|^2e^{-\varphi_1}c(-\psi_1)\cdot\int_Y|F_Y|^2e^{-\varphi_2}\\
				\leq&\int_{\{\psi_1<-t\}}|F_0|^2e^{-\varphi_1}c(-\psi_1)\cdot\int_Y|F_Y|^2e^{-\varphi_2}\\
				=&\int_{\{\psi<-t\}}|F|^2e^{-\varphi}c(-\psi).
			\end{split}
		\end{flalign*}
		Thus we have
		\begin{equation*}
			\int_{\{\psi<-t\}}|\tilde{F}_t|^2e^{-\varphi}c(-\psi)=\int_{\{\psi<-t\}}|F|^2e^{-\varphi}c(-\psi),
		\end{equation*}
		then $F\equiv \tilde{F}_t$ on $\{\psi<-t\}$, which implies that $F_0\equiv\tilde{f}_t$ on $\{\psi_1<-t\}$. Then we have
		\begin{flalign*}
			\begin{split}
				G(t)&=\int_{\{\psi<-t\}}|F|^2e^{-\varphi}c(-\psi)\\
				&=\int_{\{\psi_1<-t\}}|F_0|^2e^{-\varphi_1}c(-\psi_1)\cdot\int_Y|F_Y|^2e^{-\varphi_2}\\
				&=G_{\Omega}(t)\cdot\int_Y|F_Y|^2e^{-\varphi_2}
			\end{split}
		\end{flalign*}
		for any $t\geq 0$, which implies that $G_{\Omega}(h^{-1}(r))$ is linear with respect to $r\in (0,\int_0^{+\infty}c(s)e^{-s}ds]$. According to Proposition \ref{p:infinite}, we have
		
		(1). $\varphi_1+\psi_1=2\log |g|$, where $g$ is a holomorphic function on $\Omega$ such that $ord_{z_j}(g)=ord_{z_j}(f_1)+1$ for any $j\in\mathbb{N}_+$, here $F_0=f_1dw_j$ on $V_{z_j}$;
		
		(2). for any $j\in\mathbb{N}_+$,
		\begin{equation}
			\frac{p_j}{ord_{z_j}g}\lim_{z\rightarrow z_j}\frac{dg}{F_0}=c_0,
		\end{equation}
		where $c_0\in\mathbb{C}\setminus\{0\}$ is a constant independent of $j$;
		
		(3). $\sum\limits_{j\in\mathbb{N}_+}p_j<+\infty$.
		
		Denote that $a_j:=\lim\limits_{z\rightarrow z_j}\frac{F_0}{w^{k_j}dw_j}(z)$, where $k_j:=ord_{z_j}(f_1)$. Then we have that
		\begin{equation*}
			(F_0-a_jw_j^{k_j}dw_j)\in(\mathcal{O}(K_{\Omega})\otimes\mathcal{I}(\varphi_1+\psi_1))_{z_j}
		\end{equation*}
		and
		\begin{equation}
			\frac{p_j}{ord_{z_j}g}\lim_{z\rightarrow z_j}\frac{dg}{a_jw_j^{k_j}dw_j}=c_0,
		\end{equation}
		for any $j\in\mathbb{N}_+$. Then we obtain that
		\begin{equation*}
			(F-\pi_1^*(a_jw_j^{k_j}dw_j)\wedge\pi_2^*(F_Y))\in(\mathcal{O}(K_M)\otimes\mathcal{I}(\varphi+\psi))_{(z_j,y)}
		\end{equation*}
		for any $(z_j,y)\in Z_0$ according to $\int_Y|F_Y|^2e^{-\varphi_2}<+\infty$ and Lemma \ref{f1zf2w}. Thus we have $f=\pi_1^*(a_jw_j^{k_j}dw_j)\wedge \pi_2^*(f_{Y})+f_j$ on $V_{z_j}\times Y$ for any $j\in\mathbb{N}_+$, where $(f_j,(z_j,y))\in (\mathcal{O}(K_M))_{(z_j,y)}\otimes\mathcal{I}(\varphi+\psi)_{(z_j,y)}$ for any $j\in\mathbb{N}_+$ and $y\in Y$. Then the proof of Proposition \ref{infinite-p} is done.
	
	\end{proof}

	\section{Proof of Proposition \ref{tildeM}}
	In this section, we prove Proposition \ref{tildeM}.
	\begin{proof}[Proof of Proposition \ref{tildeM}]
	Assume that $\tilde{G}(h^{-1}(r))$ is linear with respect to $r\in (0,\int_0^{+\infty}c(s)e^{-s}ds]$. Then according to Lemma \ref{linear}, there exists a unique holomorphic $(n,0)$ form $F$ on $\tilde{M}$ satisfying that $(F-f)\in H^0(Z_0,(\mathcal{O}(K_{\tilde{M}})\otimes\mathcal{I}(\varphi+\psi))|_{Z_0})$, and $\tilde{G}(t;c)=\int_{\{\psi<-t\}}|F|^2e^{-\varphi}c(-\psi)$ for any $t\geq 0$. Then according to Lemma \ref{linear}, Remark \ref{ctildec} and Lemma \ref{tildecincreasing}, we can assume that $c$ is increasing near $+\infty$.
	
	Using the Weierstrass Theorem on open Riemann surfaces (see \cite{OF81}) and
	the Siu’s Decomposition Theorem, we have
	\[\varphi_1+\psi_1=2\log |g_0|+2u_0,\]
	where $g_0$ is a holomorphic function on $\Omega$ and $u_0$ is a subharmonic function on $\Omega$ such that $v(dd^cu_0,z)\in [0,1)$ for any $z\in\Omega$.
	
	Assume that $F=\sum_{l=\tilde{k}_j}^{\infty}\pi_1^*(w_j^ldw_j)\wedge \pi_2^*(F_{j,l})$ on $U$ according to Lemma \ref{decomp-tildeM}, where $U\subset \tilde{M}$ is a neighborhood of $Z_0$, $\tilde{k_j}$ is a nonnegative integer, $F_{j,l}$ is a holomorphic $(n-1,0)$ on $Y$ for any $j,l$, and $\tilde{F}_j:=F_{j,\tilde{k}_j}\not\equiv 0$. Assume that $g_0=d_jw_j^{k_j}h_j$ on $V_{z_j}$, where $d_j$ is a constant, $k_j$ is a nonnegative integer, and $h_j$ is a holomorphic function on $V_{z_j}$ such that $h_j(z_j)=1$ for any $j$ ($1\leq j<\gamma$).
	
	Let $W$ be an open subset of $Y$ such that $W\subset\subset Y$. Then for any $j$, $1\leq j<\gamma$, there exists $r_{j,W}>0$ such that $U_{j,W}\times W\subset\tilde{M}$, where $U_{j,W}:=\{z\in\Omega : |w_j(z)|<r_{j,W}\}$.
	
	According to Lemma \ref{fanxiangineq}, we have that $\tilde{k}_j+1-k_j=0$ for any $j\in I_F$ and
	\begin{flalign}
		\begin{split}
			&\frac{\tilde{G}(0)}{\int_0^{+\infty}c(s)e^{-s}ds}\\
			=&\lim_{t\rightarrow+\infty}\frac{\tilde{G}(t)}{\int_t^{+\infty}c(s)e^{-s}ds}\\
			\geq&\sum_{j\in I_F}\liminf_{t\rightarrow+\infty}\frac{\int_{(\{\psi_1<-t\}\cap U_{j,W})\times W}|F|^2e^{-\varphi}c(-\psi)}{\int_t^{+\infty}c(s)e^{-s}ds}\\
			\geq&\sum_{j\in I_F}\frac{2\pi e^{-2u_0(z_j)}}{p_j|d_j|^2}\int_W|\tilde{F}_j|^2e^{-\varphi_2},
		\end{split}
	\end{flalign}
	where $I_F:=\{j: 1\leq j<\gamma \ \&\ \tilde{k}_j+1-k_j\leq 0\}$. Besides, we can know that $u_0(z_j)>-\infty$. According to the arbitrariness of $W$, we have $\int_Y|\tilde{F}_j|^2e^{-\varphi_2}<+\infty$ for any $j\in I_F$, and
	\begin{equation}\label{tildeM-1}
		\frac{\tilde{G}(0)}{\int_0^{+\infty}c(s)e^{-s}ds}\geq \sum_{j\in I_F}\frac{2\pi e^{-2u_0(z_j)}}{p_j|d_j|^2}\int_Y|\tilde{F}_j|^2e^{-\varphi_2}.
	\end{equation}

	 Then according to Lemma \ref{e-varphic-psi}, Lemma \ref{f1zf2w}, Lemma \ref{k>k0} and $\tilde{k}_j+1-k_j=0$, $\int_Y|F_j|^2e^{-\varphi_2}<+\infty$ for $j\in I_F$, we know that $(F-\pi_1^*(w_j^{\tilde{k}_j}dw_j)\wedge \pi_2^*(\tilde{F}_j),(z_j,y))\in(\mathcal{O}(K_{\tilde{M}})\otimes\mathcal{I}(\varphi+\psi))_{(z_j,y)}$ for any $j\in I_F$ and $y\in Y$. According to Lemma \ref{k>k0}, we also have $(F,(z_j,y))\in(\mathcal{O}(K_{\tilde{M}})\otimes\mathcal{I}(\varphi+\psi))_{(z_j,y)}$ for any $j\notin I_F$ and $y\in Y$.
	
	By Lemma \ref{L2ext-finite-f}, there exists a holomorphic $(n,0)$ form $\tilde{F}$ on $M$, such that $(\tilde{F}-\pi_1^*(w_j^{\tilde{k}_j}dw_j)\wedge \pi_2^*(\tilde{F}_j),(z_j,y))\in(\mathcal{O}(K_M)\otimes\mathcal{I}(\varphi+\psi))_{(z_j,y)}$ for any $j\in I_F$, $y\in Y$, $(\tilde{F},(z_j,y))\in(\mathcal{O}(K_M)\otimes\mathcal{I}(\varphi+\psi))_{(z_j,y)}$ for any $j\notin I_F$, $y\in Y$ and
	\begin{equation*}
		\int_M|\tilde{F}|^2e^{-\varphi}c(-\psi)\leq \left(\int_0^{+\infty}c(s)e^{-s}ds\right)\sum_{j\in I_F}\frac{2\pi e^{-2u_0(z_j)}}{p_j|d_j|^2}\int_Y|\tilde{F}_j|^2e^{-\varphi_2}.	
	\end{equation*}
	Then $(\tilde{F}-F,(z_j,y))\in(\mathcal{O}(K_M)\otimes\mathcal{I}(\varphi+\psi))_{(z_j,y)}$ for any $j$ ($1\leq j<\gamma$) and $y\in Y$.

	Now According to the choice of $F$, we have that
	\begin{flalign}\label{tildeM-2}
		\begin{split}
			\tilde{G}(0)=&\int_{\tilde{M}}|F|^2e^{-\varphi}c(-\psi)\leq \int_{\tilde{M}}|\tilde{F}|^2e^{-\varphi}c(-\psi)\\
			\leq&\int_M|\tilde{F}|^2e^{-\varphi}c(-\psi)\\
			\leq&\left(\int_0^{+\infty}c(s)e^{-s}ds\right)\sum_{j\in I_F}\frac{2\pi e^{-2u_0(z_j)}}{p_j|d_j|^2}\int_Y|\tilde{F}_j|^2e^{-\varphi_2}.	
		\end{split}
	\end{flalign}	
	 Combining inequality (\ref{tildeM-1}) with inequality (\ref{tildeM-2}), we get that
	\begin{equation*}
		\int_{\tilde{M}}|\tilde{F}|^2e^{-\varphi}c(-\psi)=\int_M|\tilde{F}|^2e^{-\varphi}c(-\psi).
	\end{equation*}
	As $\tilde{F}\not\equiv 0$, the equality above implies that $\tilde{M}=M$.
	\end{proof}

	\section{Proofs of Theorem \ref{fib-L2ext} and Remark \ref{rem:fib-L2ext}}
	
	In this section, we give the proofs of Theorem \ref{fib-L2ext} and Remark \ref{rem:fib-L2ext}.
	
	\begin{proof}[Proof of Theorem \ref{fib-L2ext}]
		 Using the Weierstrass Theorem on open Riemann surfaces (see \cite{OF81}) and
		the Siu’s Decomposition Theorem, we have
		\[\varphi_1+\psi_1=2\log |g_0|+2u_0,\]
		where $g_0$ is a holomorphic function on $\Omega$ and $u_0$ is a subharmonic function on $\Omega$ such that $v(dd^cu_0,z)\in [0,1)$ for any $z\in\Omega$. Note that $ord_{z_j}g_0=k_j+1$ and
		\begin{equation*}
			e^{2u_0(z_j)}\lim_{z\rightarrow z_j}\left|\frac{g_0}{w_j^{k_j+1}}(z)\right|^2=e^{\alpha_j}c_{\beta}(z_j)^{2(k_j+1)}.
		\end{equation*}
		Using Lemma \ref{L2ext-finite-f}, we obtain that there exists a holomorphic $(n,0)$ form $F$ on $M$ such that $(F-f,(z_j,y))\in(\mathcal{O}(K_M)\otimes\mathcal{I}(\varphi+\psi))_{(z_j,y)}$ for any $(z_j,y)\in Z_0$ and
		\begin{equation*}
			\int_M|F|^2e^{-\varphi}c(-\psi)\leq\left(\int_0^{+\infty}c(s)e^{-s}ds\right)\sum_{j=1}^m\frac{2\pi|a_j|^2e^{-\alpha_j}}{p_jc_{\beta}(z_j)^{2(k_j+1)}}\int_Y|F_j|^2e^{-\varphi_2}.
		\end{equation*}
		
		Thus $(F-f,(z_j,y))\in(\mathcal{O}(K_{\tilde{M}})\otimes\mathcal{I}(\varphi+\psi))_{(z_j,y)}$ for any $(z_j,y)\in Z_0$ and
		\begin{equation*}
			\int_{\tilde{M}}|F|^2e^{-\varphi}c(-\psi)\leq\left(\int_0^{+\infty}c(s)e^{-s}ds\right)\sum_{j=1}^m\frac{2\pi|a_j|^2e^{-\alpha_j}}{p_jc_{\beta}(z_j)^{2(k_j+1)}}\int_Y|F_j|^2e^{-\varphi_2}.
		\end{equation*}
		
		In the following, we prove the characterization of the holding of the equality $\left(\int_0^{+\infty}c(s)e^{-s}ds\right)\sum\limits_{j=1}^m\frac{2\pi|a_j|^2e^{-\alpha_j}}{p_jc_{\beta}(z_j)^{2(k_j+1)}}\int_Y|F_j|^2e^{-\varphi_2}=\inf\big\{ \int_{\tilde{M}}|\tilde{F}|^2e^{-\varphi}c(-\psi):\tilde{F}$ is a holomorphic $(n,0)$ form on $\tilde{M}$ such that $(\tilde{F}-f,(z_j,y))\in(\mathcal{O}(K_{\tilde{M}})\otimes\mathcal{I}(\varphi+\psi))_{(z_j,y)}$ for any $(z_j,y)\in Z_0\big\}$.
		
		According to the above discussions (replacing $\psi_1$ by $\psi_1+t$, replacing $c(\cdot)$ by $c(\cdot+t)$ and replacing $\Omega$ by $\{\psi_1<-t\}$), for any $t\geq 0$, there exists a holomorphic $(n,0)$ form $F_t$ on $\{\psi<-t\}$ such that $(F_t-f,(z_j,y))\in(\mathcal{O}(K_M)\otimes\mathcal{I}(\varphi+\psi))_{(z_j,y)}$ for any $(z_j,y)\in Z_0$ and
		\begin{equation}\label{8.2}
			\int_{\{\psi<-t\}}|F_t|^2e^{-\varphi}c(-\psi)\leq\left(\int_t^{+\infty}c(s)e^{-s}ds\right)\sum_{j=1}^m\frac{2\pi|a_j|^2e^{-\alpha_j}}{p_jc_{\beta}(z_j)^{2(k_j+1)}}\int_Y|F_j|^2e^{-\varphi_2}.
		\end{equation}	
		
		Firstly, we prove the necessity. According to the above discussions, there exists a holomorphic $(n,0)$ form $\tilde{F}_1\neq 0$ on $M$ such that $(\tilde{F}_1-f,(z_j,y))\in(\mathcal{O}(K_M)\otimes\mathcal{I}(\varphi+\psi))_{(z_j,y)}$ for any $(z_j,y)\in Z_0$ and
		\begin{flalign}
			\begin{split}
				&\int_M|\tilde{F}_1|^2e^{-\pi_1^*(\varphi_1+\psi_1-2\sum_{j=1}^mp_jG_{\Omega}(\cdot,z_j))-\pi_2^*(\varphi_2)}c\left(\pi_1^*(-2\sum_{j=1}^mp_jG_{\Omega}(\cdot,z_j))\right)\\
				\leq&\left(\int_0^{+\infty}c(s)e^{-s}ds\right)\sum_{j=1}^m\frac{2\pi|a_j|^2e^{-\alpha_j}}{p_jc_{\beta}(z_j)^{2(k_j+1)}}\int_Y|F_j|^2e^{-\varphi_2}.
			\end{split}
		\end{flalign}	
		
		As $c(t)e^{-t}$ is decreasing on $(0,+\infty)$, $\psi_1\leq 2\sum_{j=1}^mp_jG_{\Omega}(\cdot,z_j)$ and $\left(\int_0^{+\infty}c(s)e^{-s}ds\right)$ $\cdot\sum\limits_{j=1}^m\frac{2\pi|a_j|^2e^{-\alpha_j}}{p_jc_{\beta}(z_j)^{2(k_j+1)}}\int_Y|F_j|^2e^{-\varphi_2}=\inf\big\{ \int_{\tilde{M}}|\tilde{F}|^2e^{-\varphi}c(-\psi):\tilde{F}$ is a holomorphic $(n,0)$ form on $\tilde{M}$ such that $(\tilde{F}-f,(z_j,y))\in(\mathcal{O}(K_{\tilde{M}})\otimes\mathcal{I}(\varphi+\psi))_{(z_j,y)}$ for any $(z_j,y)\in Z_0\big\}$, we have
		\begin{flalign*}
			\begin{split}
				&\int_{\tilde{M}}|\tilde{F}_1|^2e^{-\varphi}c(-\psi)\\
				=&\int_M|\tilde{F}_1|^2e^{-\pi_1^*(\varphi_1+\psi_1-2\sum_{j=1}^mp_jG_{\Omega}(\cdot,z_j))-\pi_2^*(\varphi_2)}c\left(\pi_1^*(-2\sum_{j=1}^mp_jG_{\Omega}(\cdot,z_j))\right).
			\end{split}
		\end{flalign*}
		As $\frac{1}{2}v(dd^c\psi_1,z_j)=p_j>0$, $c(t)e^{-t}$ is decreasing and $\int_0^{+\infty}c(s)e^{-s}ds<+\infty$, we have $\tilde{M}=M$, and $\psi_1=2\sum_{j=1}^mp_jG_{\Omega}(\cdot,z_j)$ according to Lemma \ref{l:psi=G}.
		
		As $e^{-\varphi_1}c(-\psi_1)=e^{-\varphi_1-\psi_1}e^{\psi_1}c(-\psi_1)$, $c(t)e^{-t}$ is decreasing on $(0,+\infty)$, and $\varphi_2$ is a plurisubharmonic function on $Y$, $e^{-\varphi}c(-\psi)$ has locally positive lower bound on $M\setminus Z_0$. Thus $G(h^{-1}(r))$ is concave with respect to $r$. According to the definition of $G(t)$ and inequality (\ref{8.2}), we have
		\begin{equation}
			\frac{G(t)}{\int_t^{+\infty}c(s)e^{-s}ds}\leq\sum_{j=1}^m\frac{2\pi|a_j|^2e^{-\alpha_j}}{p_jc_{\beta}(z_j)^{2(k_j+1)}}\int_Y|F_j|^2e^{-\varphi_2}=\frac{G(0)}{\int_0^{+\infty}c(s)e^{-s}ds}
		\end{equation}
		for any $t\geq 0$. Then $G(h^{-1}(r))$ is linear with respect to $r\in (0,\int_0^{+\infty}c(s)e^{-s}ds]$. From Theorem \ref{finite-p}, we get that $f=\pi_1^*(a'_jw_j^{k'_j}dw_j)\wedge \pi_2^*(F_{Y})+f_j$ on $V_{z_j}\times Y$, $\varphi_1+\psi_1=2\log |g_1|+2\sum\limits_{j=1}^mG_{\Omega}(\cdot,z_j)+2u_1$, $\prod\limits_{j=1}^m\chi_{z_j}=\chi_{-u_1}$, and
		\begin{equation*}
			\lim_{z\rightarrow z_j}\frac{a'_jw_j^{k'_j}dw_j}{g_1P_*\left(f_{u_1}\left(\prod\limits_{l=1}^mf_{z_l}\right)\left(\sum\limits_{l=1}^mp_l\dfrac{d{f_{z_{l}}}}{f_{z_{l}}}\right)\right)}=c_0
		\end{equation*}
		for any $j\in\{1,2,\ldots,m\}$, where
		$a'_j\in\mathbb{C}\setminus \{0\}$ is a constant, $k'_j$ is a nonnegative integer, $F_{Y}$ is a holomorphic $(n-1,0)$ form on $Y$ such that $\int_{Y}|F_{Y}|^2e^{-\varphi_2}\in (0,+\infty)$, $(f_j,(z_j,y))\in (\mathcal{O}(K_M))_{(z_j,y)}\otimes\mathcal{I}(\varphi+\psi)_{(z_j,y)}$ for any $(z_j,y)\in Z_0$, $g_1$ is a holomorphic function on $\Omega$ such that $ord_{z_j}(g_1)=k'_j$, $u_1$ is a harmonic function on $\Omega$
		and $c_0\in\mathbb{C}\setminus\{0\}$ is a constant independent of $j$. Since $f=\pi_1^*(a_jw_j^{k_j}dw_j)\wedge \pi_2^*(F_j)$ on $V_{z_j}\times Y$ for any $j$, we have $k_j=k'_j$, and $c_jF_j=c_0F_Y$ for any $j$, where \begin{equation}\label{c_j}
			c_j:=\lim_{z\rightarrow z_j}\frac{a_jw_j^{k_j}dw_j}{g_1P_*\left(f_{u_1}\left(\prod\limits_{l=1}^mf_{z_l}\right)\left(\sum\limits_{l=1}^mp_l\dfrac{d{f_{z_{l}}}}{f_{z_{l}}}\right)\right)}\in\mathbb{C}\setminus\{0\}.
		\end{equation}
		Thus $a_j\neq 0$ and $c_j\neq 0$ for any $j\in\{1,2,\ldots,m\}$. As $\Omega$ is an open Riemann surface, then there exists a holomorphic function $g_2$ on $\Omega$ such that $g_2(z)\neq 0$ for any $z\in\Omega\setminus\{z_1,z_2,\ldots,z_m\}$ and $ord_{z_j}(g_2)=k_j$ for any $j$. Denote that $g:=g_1/g_2$ and $u:=u_1+\log |g_1|-\sum_{j=1}^mk_jG_{\Omega}(\cdot,z_j)$. Then $g$ is a holomorphic function on $\Omega$ such that $g(z_j)\neq 0$ for any $j$, $u$ is a harmonic function on $\Omega$,
		\begin{equation*}
			\varphi_1+\psi_1=2\log|g|+2\sum_{j=1}^m(k_j+1)G_{\Omega}(\cdot,z_j)+2u
		\end{equation*}
		and
		\begin{equation*}
			\prod_{j=1}^m\chi_{z_j}^{k_j+1}=\chi_{-u}.
		\end{equation*}
		Note that there exists a holomorphic function
		\begin{equation*}
			f_u:=f_{u_1}\frac{P^*(g_2)}{\prod_{j=1}^mf_{z_j}^{k_j}}
		\end{equation*}
		on $\Delta$ such that $|f_u|=P^*e^u$. Then it follows from equality (\ref{c_j}) that
		\begin{equation*}
			c_j=\lim_{z\rightarrow z_j}\frac{a_jw_j^{k_j}dw_j}{gP_*\left(f_u\left(\prod\limits_{l=1}^mf_{z_l}^{k_l+1}\right)\left(\sum\limits_{l=1}^mp_l\dfrac{d{f_{z_{l}}}}{f_{z_{l}}}\right)\right)}.
		\end{equation*}
		Thus the five statements in Theorem \ref{fib-L2ext} hold.
		
		Secondly, we prove the sufficiency.  Assume that the five statements in Theorem \ref{fib-L2ext}	hold. As $\Omega$ is an open Riemann surface, then there exists a holomorphic function $g_2$ on $\Omega$ such that $g_2(z)\neq 0$ for any $z\in\Omega\setminus\{z_1,z_2,\ldots,z_m\}$ and $ord_{z_j}(g_2)=k_j$ for any $j$. Denote that $g_1:=gg_2$ is a holomorphic function on $\Omega$ such that $ord_{z_j}(g_1)=k_j$ for any $j$, and $u_1:=u+\sum_{j=1}^mk_jG_{\Omega}(\cdot,z_j)-\log|g_2|$ is a harmonic function $\Omega$. It follows from $\varphi_1+\psi_1=2\log|g|+2\sum_{j=1}^m(k_j+1)G_{\Omega}(\cdot,z_j)+2u$ and $\chi_{-u}=\prod_{j=1}^m\chi_{z_j}^{k_j+1}$ that
		\begin{equation*}
			\varphi_1+\psi_1=2\log|g_1|+2\sum_{j=1}^mG_{\Omega}(\cdot,z_j)+2u_1
		\end{equation*}
		and
		\begin{equation*}
			\chi_{-u_1}=\prod_{j=1}^m\chi_{z_j}.
		\end{equation*}
		Note that there exists a holomorphic function
		\begin{equation*}
			f_{u_1}:=f_u\frac{\prod_{j=1}^mf_{z_j}^{k_j}}{P^*(g_2)}
		\end{equation*}
		on $\Delta$ such that $|f_{u_1}|=P^*e^{u_1}$. As
		\begin{equation*}
			\lim_{z\rightarrow z_j}\frac{a_jw_j^{k_j}dw_j}{gP_*\left(f_u\left(\prod\limits_{l=1}^mf_{z_l}^{k_l+1}\right)\left(\sum\limits_{l=1}^mp_l\dfrac{d{f_{z_{l}}}}{f_{z_{l}}}\right)\right)}=c_j\in\mathbb{C}\setminus\{0\},
		\end{equation*}
		Then we have $a_j\neq 0$ and
		\begin{equation}
			\lim_{z\rightarrow z_j}\frac{a_jw_j^{k_j}dw_j}{g_1P_*\left(f_{u_1}\left(\prod\limits_{l=1}^mf_{z_l}\right)\left(\sum\limits_{l=1}^mp_l\dfrac{d{f_{z_{l}}}}{f_{z_{l}}}\right)\right)}=c_j.
		\end{equation}
		Besides, we have $c_jF_j=c_0F_Y$ and $a_jF_j=a_j'F_Y$ for some $a_j'\in\mathbb{C}\setminus\{0\}$. Then Theorem \ref{finite-p} shows that $G(h^{-1}(r))$ is linear with respect to $r$. It follows from Lemma \ref{linear} that there exists a holomorphic $(n,0)$ form $\tilde{F}$ such that $(\tilde{F}-f,(z_j,y))\in(\mathcal{O}(K_M)\otimes\mathcal{I}(\varphi+\psi))_{(z_j,y)}$ for any $(z_j,y)\in Z_0$ and
		\begin{equation*}
			\int_{\{\psi<-t\}}|\tilde{F}|^2e^{-\varphi}c(-\psi)=G(t)
		\end{equation*}
		for any $t\geq 0$. Assume that $\tilde{F}=\sum_{l=\tilde{k}_j}^{\infty}\pi_1^*(w_j^ldw_j)\wedge \tilde{F}_{j,l}$ on $V_{z_j}\times Y$ for any $j$ according to Lemma \ref{decomp}, where $F_{j,l}$ is a holomorphic $(n-1,0)$ form on $Y$ for any $j,l$ and $F_{j,\tilde{k}_j}\not\equiv 0$. Since $(\tilde{F}-f,(z_j,y))\in(\mathcal{O}(K_M)\otimes\mathcal{I}(\varphi+\psi))_{(z_j,y)}$ for any $(z_j,y)\in Z_0$, according to Lemma \ref{local-germ}, we have $\tilde{k}_j=k_j$ and $a_jF_j=\tilde{F}_{j,k_j}$ for any $j\in\{1,2,\ldots,m\}$. Then	Lemma \ref{fanxiangineq} shows that
		\begin{equation*}
			\frac{G(t)}{\int_t^{+\infty}c(s)e^{-s}ds}=\frac{\int_{\{\psi<-t\}}|\tilde{F}|^2e^{-\varphi}c(-\psi)}{\int_t^{+\infty}c(s)e^{-s}ds}\geq\sum_{j=1}^m\frac{2\pi|a_j|^2 e^{-2u_1(z_j)}}{p_j|d_j|^2}\int_Y|F_j|^2e^{-\varphi_2}
		\end{equation*}
		for sufficiently large $t$, where $d_j=\lim_{z\rightarrow z_j}(g_1/w_j^{k_j})(z)$ for any $j$. And we have
		\begin{equation*}
			\frac{e^{-2u_1(z_j)}}{|d_j|^2}=\frac{e^{-\alpha_j}}{c_{\beta}(z_j)^{2(k_j+1)}},
		\end{equation*}
		which implies that
		\begin{equation*}
			\frac{G(t)}{\int_t^{+\infty}c(s)e^{-s}ds}\geq\sum_{j=1}^m\frac{2\pi|a_j|^2 e^{-\alpha_j}}{p_jc_{\beta}(z_j)^{2(k_j+1)}}\int_Y|F_j|^2e^{-\varphi_2}
		\end{equation*}
		for sufficiently large $t$. As $G(h^{-1}(r))$ is linear with respect to $r$, according to inequality (\ref{L2result}), we have
		\begin{equation*}
			G(t)=\left(\int_t^{+\infty}c(s)e^{-s}ds\right)\sum_{j=1}^m\frac{2\pi|a_j|^2 e^{-\alpha_j}}{p_jc_{\beta}(z_j)^{2(k_j+1)}}\int_Y|F_j|^2e^{-\varphi_2}
		\end{equation*}
		for any $t\geq 0$. Especially, for $t=0$, we get that equality $\left(\int_0^{+\infty}c(s)e^{-s}ds\right)$ $\cdot\sum\limits_{j=1}^m\frac{2\pi|a_j|^2e^{-\alpha_j}}{p_jc_{\beta}(z_j)^{2(k_j+1)}}\int_Y|F_j|^2e^{-\varphi_2}=\inf\big\{ \int_M|\tilde{F}|^2e^{-\varphi}c(-\psi):\tilde{F}$ is a holomorphic $(n,0)$ form on $M$ such that $(\tilde{F}-f,(z_j,y))\in(\mathcal{O}(K_M)\otimes\mathcal{I}(\varphi+\psi))_{(z_j,y)}$ for any $(z_j,y)\in Z_0\big\}$ holds.
		
		The proof of Theorem \ref{fib-L2ext} is done.
	\end{proof}
	
	\subsection{Proof of Remark \ref{rem:fib-L2ext}}
	\
	Now we prove Remark \ref{rem:fib-L2ext}.
	
	\begin{proof}[Proof of Remark \ref{rem:fib-L2ext}]
		Follow the notation in the above subsection. As $G(h^{-1}(r))$ is linear with respect to $r$ and
		\begin{equation*}
			G(0)=\left(\int_0^{+\infty}c(s)e^{-s}ds\right)\sum_{j=1}^m\frac{2\pi|a_j|^2e^{-\alpha_j}}{p_jc_{\beta}(z_j)^{2(k_j+1)}}\int_Y|F_j|^2e^{-\varphi_2},
		\end{equation*}
		it follows from Remark \ref{rem-finite} that
		\begin{equation*}
			c_0\pi_1^*\left(g_1P_*\left(f_{u_1}\left(\prod\limits_{l=1}^mf_{z_l}\right)\left(\sum\limits_{l=1}^mp_l\dfrac{d{f_{z_{l}}}}{f_{z_{l}}}\right)\right)\right)\wedge\pi_2^*(F_{Y})
		\end{equation*}
		is the unique holomorphic $(n,0)$ form $F$ on $M$ such that $(F-f,(z_j,y))\in(\mathcal{O}(K_M)\otimes\mathcal{I}(\varphi+\psi))_{(z_j,y)}$ for any $(z_j,y)\in Z_0$ and
		\begin{equation*}
			\int_M|F|^2e^{-\varphi}c(-\psi)\leq\left(\int_0^{+\infty}c(s)e^{-s}ds\right)\sum_{j=1}^m\frac{2\pi|a_j|^2e^{-\alpha_j}}{p_jc_{\beta}(z_j)^{2(k_j+1)}}\int_Y|F_j|^2e^{-\varphi_2}.
		\end{equation*}
		Note that
		\begin{equation*}
			f_u:=f_{u_1}\frac{P^*(g_2)}{\prod_{j=1}^mf_{z_j}^{k_j}}
		\end{equation*}
		on $\Delta$ and $gg_2=g_1$. Then we obtain that
		\begin{equation*}
			c_0\pi_1^*\left(gP_*\left(f_u\left(\prod\limits_{l=1}^mf_{z_l}^{k_l+1}\right)\left(\sum\limits_{l=1}^mp_l\dfrac{d{f_{z_{l}}}}{f_{z_{l}}}\right)\right)\right)\wedge\pi_2^*(F_{Y})
		\end{equation*}
		is the unique holomorphic $(n,0)$ form $F$ on $M$ such that $(F-f,(z_j,y))\in(\mathcal{O}(K_M)\otimes\mathcal{I}(\varphi+\psi))_{(z_j,y)}$ for any $(z_j,y)\in Z_0$ and
		\begin{equation*}
			\int_M|F|^2e^{-\varphi}c(-\psi)\leq\left(\int_0^{+\infty}c(s)e^{-s}ds\right)\sum_{j=1}^m\frac{2\pi|a_j|^2e^{-\alpha_j}}{p_jc_{\beta}(z_j)^{2(k_j+1)}}\int_Y|F_j|^2e^{-\varphi_2}.
		\end{equation*}
		
	\end{proof}
	
	\section{Proofs of Theorem \ref{fib-L2ext-infinite}}
	
	In this section, we give the proofs of Theorem \ref{fib-L2ext-infinite}.
	\begin{proof}[Proof of Theorem \ref{fib-L2ext-infinite}]
		Using the Weierstrass Theorem on open Riemann surfaces (see \cite{OF81}) and
		the Siu’s Decomposition Theorem, we have
		\[\varphi_1+\psi_1=2\log |g_0|+2u_0,\]
		where $g_0$ is a holomorphic function on $\Omega$ and $u_0$ is a subharmonic function on $\Omega$ such that $v(dd^cu_0,z)\in [0,1)$ for any $z\in\Omega$. Note that $ord_{z_j}g_0=k_j+1$ and
		\begin{equation}
			e^{2u_0(z_j)}\lim_{z\rightarrow z_j}\left|\frac{g_0}{w_j^{k_j+1}}(z)\right|^2=e^{\alpha_j}c_{\beta}(z_j)^{2(k_j+1)}.
		\end{equation}
		Using Lemma \ref{L2ext-finite-f}, we obtain that there exists a holomorphic $(n,0)$ form $F$ on $M$ such that $(F-f,(z_j,y))\in(\mathcal{O}(K_M)\otimes\mathcal{I}(\varphi+\psi))_{(z_j,y)}$ for any $(z_j,y)\in Z_0$ and
		\begin{equation*}
			\int_M|F|^2e^{-\varphi}c(-\psi)\leq\left(\int_0^{+\infty}c(s)e^{-s}ds\right)\sum_{j=1}^{\infty}\frac{2\pi|a_j|^2e^{-\alpha_j}}{(k_j+1)c_{\beta}(z_j)^{2(k_j+1)}}\int_Y|F_j|^2e^{-\varphi_2}.
		\end{equation*}
		
		In the following, we prove that the equality $\left(\int_0^{+\infty}c(s)e^{-s}ds\right)\sum\limits_{j=1}^{\infty}\frac{2\pi|a_j|^2e^{-\alpha_j}}{(k_j+1)c_{\beta}(z_j)^{2(k_j+1)}}$ $\cdot\int_Y|F_j|^2e^{-\varphi_2}=\inf\big\{ \int_M|\tilde{F}|^2e^{-\varphi}c(-\psi):\tilde{F}$ is a holomorphic $(n,0)$ form on $M$ such that $(\tilde{F}-f,(z_j,y))\in(\mathcal{O}(K_M)\otimes\mathcal{I}(\varphi+\psi))_{(z_j,y)}$ for any $(z_j,y)\in Z_0\big\}$ can not hold. In the following, we assume that the equality holds and get a contradiction.
		
		According to the above discussions (replacing $\psi_1$ by $\psi_1+t$, replacing $c(\cdot)$ by $c(\cdot+t)$ and replacing $\Omega$ by $\{\psi_1<-t\}$), for any $t\geq 0$, there exists a holomorphic $(n,0)$ form $F_t$ on $\{\psi<-t\}$ such that $(F_t-f,(z_j,y))\in(\mathcal{O}(K_M)\otimes\mathcal{I}(\varphi+\psi))_{(z_j,y)}$ for any $(z_j,y)\in Z_0$ and
		\begin{equation}\label{9.2}
			\int_{\{\psi<-t\}}|F_t|^2e^{-\varphi}c(-\psi)\leq\left(\int_t^{+\infty}c(s)e^{-s}ds\right)\sum_{j=1}^{\infty}\frac{2\pi|a_j|^2e^{-\alpha_j}}{(k_j+1)c_{\beta}(z_j)^{2(k_j+1)}}\int_Y|F_j|^2e^{-\varphi_2}.
		\end{equation}	
		
		According to the above discussions, there exists a holomorphic $(n,0)$ form $\tilde{F}_1\neq 0$ on $M$ such that $(\tilde{F}_1-f,(z_j,y))\in(\mathcal{O}(K_M)\otimes\mathcal{I}(\varphi+\psi))_{(z_j,y)}$ for any $(z_j,y)\in Z_0$ and
		\begin{flalign}
			\begin{split}
				&\int_M|\tilde{F}_1|^2e^{-\pi_1^*(\varphi_1+\psi_1-2\sum_{j=1}^{\infty}{(k_j+1)G_{\Omega}(\cdot,z_j))-\pi_2^*(\varphi_2)}}c\left(\pi_1^*(-2\sum_{j=1}^{\infty}(k_j+1)G_{\Omega}(\cdot,z_j))\right)\\
				\leq&\left(\int_0^{+\infty}c(s)e^{-s}ds\right)\sum_{j=1}^{\infty}\frac{2\pi|a_j|^2e^{-\alpha_j}}{(k_j+1)c_{\beta}(z_j)^{2(k_j+1)}}\int_Y|F_j|^2e^{-\varphi_2}.
			\end{split}
		\end{flalign}	
		
		As $c(t)e^{-t}$ is decreasing on $(0,+\infty)$, $\psi_1\leq 2\sum_{j=1}^{\infty}(k_j+1)G_{\Omega}(\cdot,z_j)$ and $\left(\int_0^{+\infty}c(s)e^{-s}ds\right)$ $\cdot\sum\limits_{j=1}^{\infty}\frac{2\pi|a_j|^2e^{-\alpha_j}}{(k_j+1)c_{\beta}(z_j)^{2(k_j+1)}}\int_Y|F_j|^2e^{-\varphi_2}=\inf\big\{ \int_M|\tilde{F}|^2e^{-\varphi}c(-\psi):\tilde{F}$ is a holomorphic $(n,0)$ form on $M$ such that $(\tilde{F}-f,(z_j,y))\in(\mathcal{O}(K_M)\otimes\mathcal{I}(\varphi+\psi))_{(z_j,y)}$ for any $(z_j,y)\in Z_0\big\}$, we have
		\begin{flalign*}
			\begin{split}
				&\int_M|\tilde{F}_1|^2e^{-\varphi}c(-\psi)\\
				=&\int_M|\tilde{F}_1|^2e^{-\pi_1^*(\varphi_1+\psi_1-2\sum_{j=1}^{\infty}(k_j+1)G_{\Omega}(\cdot,z_j))-\pi_2^*(\varphi_2)}c\left(\pi_1^*(-2\sum_{j=1}^{\infty}(k_j+1)G_{\Omega}(\cdot,z_j))\right).
			\end{split}
		\end{flalign*}
		As $\frac{1}{2}v(dd^c\psi_1,z_j)=k_j+1>0$, $c(t)e^{-t}$ is decreasing and $\int_0^{+\infty}c(s)e^{-s}ds<+\infty$, we have $\psi_1=2\sum_{j=1}^{\infty}(k_j+1)G_{\Omega}(\cdot,z_j)$ according to Lemma \ref{l:psi=G}.
		
		As $e^{-\varphi_1}c(-\psi_1)=e^{-\varphi_1-\psi_1}e^{\psi_1}c(-\psi_1)$, $c(t)e^{-t}$ is decreasing on $(0,+\infty)$, and $\varphi_2$ is a plurisubharmonic function on $Y$, $e^{-\varphi}c(-\psi)$ has locally positive lower bound on $M\setminus Z_0$. Thus $G(h^{-1}(r))$ is concave with respect to $r$. According to the definition of $G(t)$ and inequality (\ref{9.2}), we have
		\begin{equation}
			\frac{G(t)}{\int_t^{+\infty}c(s)e^{-s}ds}\leq\sum_{j=1}^{\infty}\frac{2\pi|a_j|^2e^{-\alpha_j}}{(k_j+1)c_{\beta}(z_j)^{2(k_j+1)}}\int_Y|F_j|^2e^{-\varphi_2}=\frac{G(0)}{\int_0^{+\infty}c(s)e^{-s}ds}
		\end{equation}
		for any $t\geq 0$. Then $G(h^{-1}(r))$ is linear with respect to $r\in (0,\int_0^{+\infty}c(s)e^{-s}ds]$.
		
		Note that $\sum\limits_{j\rightarrow+\infty}(k_j+1)=+\infty$, which contradicts to Proposition \ref{infinite-p}. Thus we have that $\left(\int_0^{+\infty}c(s)e^{-s}ds\right)\sum\limits_{j=1}^{\infty}\frac{2\pi|a_j|^2e^{-\alpha_j}}{(k_j+1)c_{\beta}(z_j)^{2(k_j+1)}}$ $\cdot\int_Y|F_j|^2e^{-\varphi_2}>\inf\big\{ \int_M|\tilde{F}|^2e^{-\varphi}c(-\psi):\tilde{F}$ is a holomorphic $(n,0)$ form on $M$ such that $(\tilde{F}-f,(z_j,y))\in(\mathcal{O}(K_M)\otimes\mathcal{I}(\varphi+\psi))_{(z_j,y)}$ for any $(z_j,y)\in Z_0\big\}$, which implies that there exists a holomorphic $(n,0)$ form $F$ on $M$ such that $(F-f,(z_j,y))\in(\mathcal{O}(K_M)\otimes\mathcal{I}(\varphi+\psi))_{(z_j,y)}$ for any $(z_j,y)\in Z_0$ and
		\begin{equation*}
			\int_M|F|^2e^{-\varphi}c(-\psi)<\left(\int_0^{+\infty}c(s)e^{-s}ds\right)\sum_{j=1}^{\infty}\frac{2\pi|a_j|^2e^{-\alpha_j}}{(k_j+1)c_{\beta}(z_j)^{2(k_j+1)}}\int_Y|F_j|^2e^{-\varphi_2}.
		\end{equation*}	
	\end{proof}

\section{Proofs of Theorem \ref{thm:suita}, Remark \ref{r:suita},  Theorem \ref{thm:extend} and Remark \ref{r:extend}}
In this section, we prove Theorem \ref{thm:suita}, Remark \ref{r:suita},  Theorem \ref{thm:extend} and Remark \ref{r:extend}.

\subsection{Proofs of Theorem \ref{thm:suita} and Remark \ref{r:suita}}
\begin{proof}[Proofs of Theorem \ref{thm:suita} and Remark \ref{r:suita}]

Let $f_1=dw\wedge d\tilde w_1\wedge\ldots\wedge d\tilde{w}_{n-1}$ on $V_{z_0}\times U_{y_0}$, and let $f_2=d\tilde w_1\wedge\ldots\wedge d\tilde{w}_{n-1}$ on $U_{y_0}$. Let $\psi=\pi_1^*(2G_{\Omega}(\cdot,z_0))$. It follows from Lemma \ref{local-germ}, that $(F_1-F_2,(z_0,y))\in\mathcal{I}(\psi)_{(z_0,y)}$ if and only if $F_1(z_0,y)=F_2(z_0,y)$ for any $y\in Y$, where $F_1$ and $F_2$ are holomorphic functions on $V_{z_0}\times Y$. Let $f$ be a holomorphic $(n-1,0)$ form on $Y$ satisfying $\int_Y|f|^2<+\infty$. It follows from Theorem \ref{fib-L2ext} that there exists a holomorphic $(n,0)$ form $F$ on $M$ such that
$F|_{\{z_0\}\times Y}=\pi_1^*(dw)\wedge\pi_2^*(f)$
and
\[\int_M|F|^2\leq\frac{2\pi}{c_{\beta}(z_0)^2}\int_Y|f|^2.\]
Note that
\[B_Y(y_0)=\frac{2^{n-1}}{\inf\left\{\int_Y|f|^2:f\in H^0(Y,\mathcal{O}(K_Y))\,\&\,f(y_0)=f_2(y_0)\right\}}\]
and
\[B_M((z_0,y_0))=\frac{2^{n}}{\inf\left\{\int_M|F|^2:F\in H^0(M,\mathcal{O}(K_M))\,\&\,F((z_0,y_0))=f_1((z_0,y_0))\right\}}.\]
Thus, we have
\[c_{\beta}(z_0)^2B_Y(y_0)\leq\pi B_M((z_0,y_0)).\]

In the following, we prove the characterization of the holding of the equality $c_{\beta}(z_0)^2B_Y(y_0)\leq\pi B_M((z_0,y_0))$.

By the definition of $B_Y$, there exists a holomorphic $(n-1,0)$ form $f_0$ on $Y$ such that $f_0(y_0)=f_2(y_0)$ and
\[B_Y(y_0)=\frac{2^{n-1}}{\int_Y|f_0|^2}>0.\]
It follows from Theorem \ref{fib-L2ext} that there exists a holomorphic $(n,0)$ form $F_0$ on $M$ such that $F_0=\pi_1^*(dw)\wedge\pi_2^*(f_0)$ and
\begin{equation}\label{eq:1226d}
	\int_M|F_0|^2\leq\frac{2\pi}{c_{\beta}(z_0)^2}\int_Y|f_0|^2.
\end{equation}

Firstly, we prove the necessity. Note that \[B_M((z_0,y_0))\ge\frac{2^n}{\int_M|\tilde F|^2},\]
where $\tilde F$ is a holomorphic $(n,0)$ form on $M$ satisfying  $\tilde F=\pi_1^*(dw)\wedge\pi_2^*(f_0)$ on $\{z_0\}\times Y$. According to inequality \eqref{eq:1226d},
\[c_{\beta}(z_0)^2 B_Y(y_0)= \pi B_M((z_0,y_0)),\]
and
\[B_Y(y_0)=\frac{2^{n-1}}{\int_Y|f_0|^2},\]
we obtain that
\[\frac{2\pi}{c_{\beta}(z_0)^2}\int_Y|f_0|^2=\inf\left\{\int_M|\tilde F|^2:\tilde F\in H^0(M,\mathcal{O}(K_M))\ \&\  \tilde F|_{\{z_0\}\times Y}=\pi_1^*(dw)\wedge\pi_2^*(f_0)\right\}.\]
It follows from Theorem \ref{fib-L2ext} that $\chi_{z_0}=1$. $\chi_{z_0}=1$ implies that there exists a holomorphic function $\tilde{f}$ on $\Omega$ such that $|\tilde{f}|=e^{G_{\Omega}(\cdot,z_0)}$, thus $\Omega$ is conformally equivalent to the unit disc less a (possible) closed set of inner capacity zero (see \cite{suita72}, see also \cite{Yamada} and \cite{GZ15}).

Secondly, we prove the sufficiency. As $\Omega$ is conformally equivalent to the unit disc less a (possible) closed set of inner capacity zero, we have $\chi_{z_0}=1$. We prove
\[c_{\beta}(z_0)^2 B_Y(y_0)= \pi B_M((z_0,y_0))\]
by contradiction: if not, there exists a holomorphic $(n,0)$ form $\tilde F_0$ on $M$ such that $\tilde F_0((z_0,y_0))=f_1((z_0,y_0))$ and
\begin{equation}
	\label{eq:1226e}
	\int_M|\tilde F_0|^2<\frac{2\pi}{c_{\beta}(z_0)^2}\int_Y|f_0|^2.
\end{equation}
There exists a holomorphic $(n-1,0)$ form $\tilde f_0$ on $Y$ such that $\tilde F_0=\pi_1^*(dw)\wedge\pi_2^*(\tilde f_0)$ on $\{z_0\}\times Y$. Hence $\tilde f_0(y_0)=f_2(y_0)=f_0(y_0)$, which implies that $\int_Y|\tilde f|^2\geq \int_Y|f_0|^2$. Combining with inequality \ref{eq:1226e}, we have
\[\inf\left\{\int_M|\tilde F|^2:F\in H^0(M,\mathcal{O}(K_M))\ \&\ \tilde F|_{\{z_0\}\times Y}=\pi_1^*(dw)\wedge\pi_2^*(\tilde f_0)\right\}<\frac{2\pi}{c_{\beta}(z_0)^2}\int_Y|\tilde f_0|^2,\]
which contradicts to Theorem \ref{fib-L2ext}. Hence
\[c_{\beta}(z_0)^2 B_Y(y_0)= \pi B_M((z_0,y_0)).\]

Thus, Theorem \ref{thm:suita} holds.

Note that $B_{\tilde{M}}((z_0,y_0))\ge B_M((z_0,y_0))>0$ and $B_{\tilde{M}}((z_0,y_0))= B_M((z_0,y_0))$ if and only if $M=\tilde{M}$, thus Theorem \ref{thm:suita} shows that Remark \ref{r:suita} holds.

\end{proof}

\subsection{Proof of Theorem \ref{thm:extend} and Remark \ref{r:extend}}
\
\begin{proof}[Proofs of Theorem \ref{thm:extend} and Remark \ref{r:extend}]
	
	Let $f_1=dw\wedge d\tilde w_1\wedge\ldots\wedge dw_{n-1}$ on $V_{z_0}\times U_{y_0}$, and let $f_2=d\tilde w_1\wedge\ldots\wedge dw_{n-1}$ on $U_{y_0}$. Let $\psi=\pi_1^*(2G_{\Omega}(\cdot,z_0))$. It follows from Lemma \ref{local-germ}, that $(F_1-F_2,(z_0,y))\in\mathcal{I}(\psi)_{(z_0,y)}$ if and only if $F_1(z_0,y)=F_2(z_0,y)$ for any $y\in Y$, where $F_1$ and $F_2$ are holomorphic functions on $V_{z_0}\times Y$. Let $f$ be a holomorphic $(n-1,0)$ form on $Y$ satisfying $\int_Y|f|^2<+\infty$. It follows from Theorem \ref{fib-L2ext} that there exists a holomorphic $(n,0)$ form $F$ on $M$ such that
	$F|_{\{z_0\}\times Y}=\pi_1^*(dw)\wedge\pi_2^*(f)$
	and
	\[\int_M|F|^2\rho\leq\frac{2\pi\rho(z_0)}{c_{\beta}(z_0)^2}\int_Y|f|^2.\]
	Note that
	\[B_Y(y_0)=\frac{2^{n-1}}{\inf\left\{\int_Y|f|^2:f\in H^0(Y,\mathcal{O}(K_Y))\,\&\,f(y_0)=f_2(y_0)\right\}}\]
	and
	\[B_{M,\rho}((z_0,y_0))=\frac{2^{n}}{\inf\left\{\int_M|F|^2\rho:F\in H^0(M,\mathcal{O}(K_M))\,\&\,F((z_0,y_0))=f_1((z_0,y_0))\right\}}.\]
	Thus, we have
	\[c_{\beta}(z_0)^2B_Y(y_0)\leq\pi\rho(z_0) B_{M,\rho}((z_0,y_0)).\]
	
	In the following, we prove the characterization of the holding of the equality $c_{\beta}(z_0)^2B_Y(y_0)\leq\pi\rho(z_0) B_{M,\rho}((z_0,y_0))$.
	
	By the definition of $B_Y$, there exists a holomorphic $(n-1,0)$ form $f_0$ on $Y$ such that $f_0(y_0)=f_2(y_0)$ and
	\[B_Y(y_0)=\frac{2^{n-1}}{\int_Y|f_0|^2}>0.\]
	It follows from Theorem \ref{fib-L2ext} that there exists a holomorphic $(n,0)$ form $F_0$ on $M$ such that $F_0=\pi_1^*(dw)\wedge\pi_2^*(f_0)$ and
	\begin{equation}\label{eq:1226d-e}
		\int_M|F_0|^2\rho\leq\frac{2\pi\rho(z_0)}{c_{\beta}(z_0)^2}\int_Y|f_0|^2.
	\end{equation}
	
	Firstly, we prove the necessity. Note that \[B_{M,\rho}((z_0,y_0))\ge\frac{2^n}{\int_M|\tilde F|^2\rho},\]
	where $\tilde F$ is a holomorphic $(n,0)$ form on $M$ satisfying  $\tilde F=\pi_1^*(dw)\wedge\pi_2^*(f_0)$ on $\{z_0\}\times Y$. According to inequality \eqref{eq:1226d-e},
	\[c_{\beta}(z_0)^2 B_Y(y_0)= \pi\rho(z_0) B_{M,\rho}((z_0,y_0)),\]
	and
	\[B_Y(y_0)=\frac{2^{n-1}}{\int_Y|f_0|^2},\]
	we obtain that
	\[\frac{2\pi\rho(z_0)}{c_{\beta}(z_0)^2}\int_Y|f_0|^2=\inf\left\{\int_M|\tilde F|^2\rho:\tilde F\in H^0(M,\mathcal{O}(K_M))\ \&\  \tilde F|_{\{z_0\}\times Y}=\pi_1^*(dw)\wedge\pi_2^*(f_0)\right\}.\]
	It follows from Theorem \ref{fib-L2ext} that $\chi_{z_0}=\chi_{-u}$.
	
	Secondly, we prove the sufficiency. We prove
	\[c_{\beta}(z_0)^2B_Y(y_0)\leq\pi\rho(z_0) B_{M,\rho}((z_0,y_0))\]
	by contradiction: if not, there exists a holomorphic $(n,0)$ form $\tilde F_0$ on $M$ such that $\tilde F_0((z_0,y_0))=f_1((z_0,y_0))$ and
	\begin{equation}
		\label{eq:1226e-e}
		\int_M|\tilde F_0|^2\rho<\frac{2\pi\rho(z_0)}{c_{\beta}(z_0)^2}\int_Y|f_0|^2.
	\end{equation}
	There exists a holomorphic $(n-1,0)$ form $\tilde f_0$ on $Y$ such that $\tilde F_0=\pi_1^*(dw)\wedge\pi_2^*(\tilde f_0)$ on $\{z_0\}\times Y$. Hence $\tilde f_0(y_0)=f_2(y_0)=f_0(y_0)$, which implies that $\int_Y|\tilde f|^2\geq \int_Y|f_0|^2$. Combining with inequality \ref{eq:1226e-e}, we have
	\[\inf\left\{\int_M|\tilde F|^2\rho:F\in H^0(M,\mathcal{O}(K_M))\ \&\ \tilde F|_{\{z_0\}\times Y}=\pi_1^*(dw)\wedge\pi_2^*(\tilde f_0)\right\}<\frac{2\pi\rho(z_0)}{c_{\beta}(z_0)^2}\int_Y|\tilde f_0|^2,\]
	which contradicts to Theorem \ref{fib-L2ext}. Hence
	\[c_{\beta}(z_0)^2B_Y(y_0)\leq\pi\rho(z_0) B_{M,\rho}((z_0,y_0)).\]
	
	Thus, Theorem \ref{thm:extend} holds.
	
	Note that $B_{\tilde{M},\rho}((z_0,y_0))\geq B_{M,\rho}((z_0,y_0))>0$ and $B_{\tilde{M},\rho}((z_0,y_0))= B_{M,\rho}((z_0,y_0))$ if and only if $M=\tilde{M}$, thus Theorem \ref{thm:extend} shows that Remark \ref{r:extend} holds.
	
\end{proof}

	%%%------------------------------------------------------------------------
	
	\vspace{.1in} {\em Acknowledgements}. The authors would like to thank Dr. Zhitong Mi for checking this paper and pointing
	out some mistakes. The second named author was supported by National Key R\&D Program of China 2021YFA1003100, NSFC-11825101, NSFC-11522101 and NSFC-11431013.

\end{document}